\documentclass[10pt]{amsart}

\usepackage[T1]{fontenc}
\usepackage{lmodern}
\usepackage{amsmath}
\usepackage{amssymb}
\usepackage{amsthm}
\usepackage{mathtools}
\usepackage{mathrsfs}
\usepackage{xcolor}
\usepackage{tikz}
\PassOptionsToPackage{hyphens}{url}
\usepackage{hyperref}
\usepackage{microtype}
\usepackage[textwidth=400pt,textheight=640pt,hcentering,heightrounded]{geometry}

\hypersetup{
  colorlinks=true,
  linkcolor=blue!55!black,
  citecolor=green!40!black,
  urlcolor=blue!65!black
}
\mathtoolsset{showonlyrefs=true}
\allowdisplaybreaks
\newtheorem{theorem}{Theorem}[section]
\newtheorem{proposition}[theorem]{Proposition}
\newtheorem{lemma}[theorem]{Lemma}
\newtheorem{corollary}[theorem]{Corollary}

\theoremstyle{definition}
\newtheorem{definition}[theorem]{Definition}

\theoremstyle{remark}
\newtheorem{remark}[theorem]{Remark}
\newtheorem{fact}[theorem]{Fact}

\makeatletter
\def\subsection{\@startsection{subsection}{2}%
  \z@{.5\linespacing\@plus.7\linespacing}{.3\linespacing}%
  {\normalfont\bfseries}}
\makeatother

\makeatletter
\def\thm@space@setup{\thm@preskip=2pt \thm@postskip=\thm@preskip}
\g@addto@macro\normalsize{%
  \setlength{\abovedisplayskip}{3pt plus 6pt minus 2pt}%
  \setlength{\belowdisplayskip}{3pt plus 6pt minus 2pt}%
  \setlength{\abovedisplayshortskip}{0pt plus 6pt}%
  \setlength{\belowdisplayshortskip}{3pt plus 6pt minus 2pt}%
}
\makeatother

\newcommand{\C}{\mathbb C}
\newcommand{\Q}{\mathbb Q}
\newcommand{\Z}{\mathbb Z}
\newcommand{\N}{\mathbb N}
\newcommand{\R}{\mathbb R}
\newcommand{\bbeta}{\boldsymbol{\beta}}
\newcommand{\bv}{\boldsymbol{v}}
\newcommand{\bu}{\boldsymbol{u}}
\newcommand{\bg}{\boldsymbol{g}}
\newcommand{\bh}{\boldsymbol{h}}
\newcommand{\bs}{\boldsymbol{s}}
\newcommand{\bx}{\boldsymbol{x}}
\newcommand{\bpartial}{\boldsymbol{\partial}}
\newcommand{\nsupp}{\operatorname{nsupp}}
\newcommand{\inw}{\operatorname{in}_{\boldsymbol w}}
\newcommand{\Span}{\operatorname{Span}_{\C}}
\newcommand{\Hilb}{\operatorname{Hilb}}
\newcommand{\rank}{\operatorname{rank}}
\newcommand{\gr}{\operatorname{gr}}
\newcommand{\coker}{\operatorname{coker}}
\newcommand{\tp}[1]{\prescript{t}{}{#1}}
\newcommand{\Tor}{\operatorname{Tor}}
\newcommand{\Rhat}{\widehat R}
\newcommand{\Shat}{\widehat S}
\newcommand{\cB}{\mathcal B}
\newcommand{\cC}{\mathcal C}
\newcommand{\cR}{\mathcal R}
\newcommand{\cD}{\mathcal D}
\newcommand{\cE}{\mathcal E}
\newcommand{\cF}{\mathcal F}
\newcommand{\cG}{\mathcal G}
\newcommand{\cH}{\mathcal H}
\newcommand{\cI}{\mathcal I}
\newcommand{\cJ}{\mathcal J}
\newcommand{\cK}{\mathcal K}
\newcommand{\cN}{\mathcal N}
\newcommand{\cP}{\mathcal P}
\newcommand{\cS}{\mathcal S}
\newcommand{\cV}{\mathcal V}
\newcommand{\cW}{\mathcal W}
\newcommand{\cZ}{\mathcal Z}
\newcommand{\cQ}{\mathcal Q}
\newcommand{\cT}{\mathcal T}
\newcommand{\cU}{\mathcal U}
\newcommand{\cX}{\mathcal X}
\newcommand{\sE}{\mathscr E}
\newcommand{\sF}{\mathscr F}
\newcommand{\sO}{\mathscr O}
\newcommand{\sP}{\mathscr P}
\newcommand{\sS}{\mathscr S}
\newcommand{\fD}{\mathfrak D}
\newcommand{\fE}{\mathfrak E}
\newcommand{\fF}{\mathfrak F}
\newcommand{\fH}{\mathfrak H}
\newcommand{\fK}{\mathfrak K}
\newcommand{\fT}{\mathfrak T}
\newcommand{\fC}{\mathfrak C}
\newcommand{\fj}{\mathfrak j}
\newcommand{\fb}{\mathfrak b}
\newcommand{\fc}{\mathfrak c}
\newcommand{\fa}{\mathfrak a}
\newcommand{\fm}{\mathfrak m}
\newcommand{\fp}{\mathfrak p}

\newcommand{\fs}{\mathfrak s}
\newcommand{\ba}{\boldsymbol{a}}
\newcommand{\bb}{\boldsymbol{b}}
\newcommand{\bc}{\boldsymbol{c}}
\newcommand{\bp}{\boldsymbol{p}}
\newcommand{\bq}{\boldsymbol{q}}
\newcommand{\br}{\boldsymbol{r}}
\newcommand{\bt}{\boldsymbol{t}}
\newcommand{\bw}{\boldsymbol{w}}
\newcommand{\by}{\boldsymbol{y}}
\newcommand{\bz}{\boldsymbol{z}}
\newcommand{\balpha}{\boldsymbol{\alpha}}
\newcommand{\bgamma}{\boldsymbol{\gamma}}
\newcommand{\bxi}{\boldsymbol{\xi}}
\newcommand{\bzeta}{\boldsymbol{\zeta}}
\newcommand{\bk}{\boldsymbol{k}}

\newcommand{\bzero}{\boldsymbol{0}}
\newcommand{\bone}{\boldsymbol{1}}
\newcommand{\Ann}{\operatorname{Ann}}
\newcommand{\Sol}{\operatorname{Sol}}
\newcommand{\im}{\operatorname{im}}
\newcommand{\Supp}{\operatorname{Supp}}
\newcommand{\supp}{\operatorname{supp}}
\newcommand{\Sing}{\operatorname{Sing}}
\newcommand{\pr}{\operatorname{pr}}

\newcommand{\amb}{\mathrm{amb}}
\newcommand{\intr}{\mathrm{intr}}
\newcommand{\mov}{\mathrm{mov}}
\newcommand{\ord}{\mathrm{ord}}
\hypersetup{
  pdftitle={Obstructions to intrinsic perturbation for A-hypergeometric series},
  pdfauthor={NAKANO Ryunosuke}
}

\title[Obstructions to intrinsic perturbation]{Obstructions to
intrinsic perturbation for \(A\)-hypergeometric series}
\author{NAKANO Ryunosuke}
\address{Graduate School of Science, Hokkaido University, Sapporo 060-0810, Japan}
\email{nakano.ryunosuke.i3@elms.hokudai.ac.jp}
\date{}

\subjclass[2020]{Primary 33C70; Secondary 13D07, 13P10, 13F55, 14M25}
\keywords{\(A\)-hypergeometric system, Frobenius method, intrinsic perturbation, fake exponent, Nilsson series, Gr\"obner basis, affine semigroup, colon ideal, syzygy, hyperplane arrangement, complete intersection}

\begin{document}

\begin{abstract}
	We show that, at a fake exponent of an \(A\)-hypergeometric system and for an ordered negative support family, intrinsic perturbation within \(\ker_{\Z}(A)\) can produce a strictly smaller coefficient space than ambient perturbation, which answers a question of Okuyama--Saito.
	We measure the gap by an intrinsic-perturbation obstruction module, a quotient of two colon ideals whose graded dual is the ambient coefficient space modulo the intrinsic one, and we realize this module by two right-exact sequences involving \(\Tor_1\) and present it finitely by two antichains.
	For a homogeneous system, we present as a finite-dimensional cokernel the quotient of the canonical formal solution space by the span of the canonical series obtained by intrinsic perturbation, taken over all exponents occurring in that space and all ordered negative support families, and we compute the codimension of that span; the presentation and the codimension transfer to the holomorphic solutions on a common nonsingular domain.
	We give configurations of lattice rank 1 and 2 with nonzero obstruction, one of which has an ordered distinguished collection and settles the case left open by Okuyama--Saito.
	We give, for every integer \(q\geq1\), a configuration of lattice rank 2 with a connected column matroid and with a normal affine semigroup generated by the reduced Gr\"obner basis vectors, whose obstruction module has dimension \(q^2\) and for which the sums of the canonical series obtained by intrinsic perturbation span a subspace of the holomorphic solution space of codimension at least \(\lceil3q^2/4\rceil\), so this codimension is unbounded at fixed lattice rank.
\end{abstract}

\maketitle
\section{Introduction}

The Frobenius method produces logarithmic solutions of a resonant differential system by perturbing an exponent and differentiating with respect to the resulting perturbation parameters.
For \(A\)-hypergeometric systems, this method is developed by Saito and by Okuyama--Saito; see \cite{Sai20,OS22}.
Okuyama--Saito \cite{OS25} fix a fake exponent \(\bv\) and an ordered negative support family \(\cN\), construct the ambient coefficient space, and compare it with the coefficient space obtained by intrinsic perturbation within \(L\); their Theorem~5.5 identifies the ambient coefficient space with the full coefficient space, and their Proposition~6.2 characterizes the equality of the two spaces by an equality of two colon ideals.
Question~7.3 of \cite{OS25} asks whether that equality can fail, and the discussion there records that the problem remains open even for the set that we call the distinguished collection \(\cN_{\bv}\).
In this paper, we answer this question affirmatively, both for a general ordered negative support family and for the distinguished collection, and we organize the failure into a module that governs it.

Let \(A\in\Z^{d\times n}\) be the matrix of the system, put \(L=\ker_{\Z}(A)\) and \(r=\rank L\), and write \(\bt=\tp{(t_1,\ldots,t_n)}\).
We work in \(\Rhat=\C[[t_1,\ldots,t_n]]\) with the ideal \(U=\langle A\bt\rangle\), the monomial ideals \(M_{\cN}\) and \(P_{\cN}(\bt)\) of \eqref{eq:obstruction-M} and \eqref{eq:obstruction-P}, and the monomial \(e=\bt^{\nsupp(\bv)\setminus K_{\cN}}\), where \(\nsupp(\bv)\) is the set of indices \(j\) with \(v_j\) a negative integer and \(K_{\cN}\) is the intersection of the members of \(\cN\).
We define the intrinsic-perturbation obstruction module by
\[
	\fD_{\cN}(e)
	=\frac{(U+P_{\cN}(\bt)):e}
	{(UM_{\cN}+P_{\cN}(\bt)):e}.
\]
Theorem~\ref{thm:lattice-obstruction} identifies this module with the classes represented by multiples of \(e\) in a quotient of \(\Tor_1\), and shows that for an ordered negative support family the module \(\fD_{\cN}(e)\) vanishes precisely when the two coefficient spaces agree.
In the lattice coordinates, Theorem~\ref{thm:finite-boundary-classification} presents \(\fD_{\cN}(e)\) by two antichains as a graded quotient \(J_{\cN}^{\intr}/J_{\cN}^{\amb}\) of two ideals in \(r\) variables, and Proposition~\ref{prop:defect-coefficient-duality} identifies the graded dual of that quotient with the ambient coefficient space modulo the intrinsic one, so the Hilbert series of \(J_{\cN}^{\intr}/J_{\cN}^{\amb}\) records the missing coefficients degree by degree.

For a homogeneous system, we assemble the local obstructions along each \(L\)-coset.
Theorem~\ref{thm:formal-solution-exact-cokernel} presents, as the cokernel of a block-triangular map between finite-dimensional spaces, the quotient of the canonical formal solution space by the span of the canonical series obtained by intrinsic perturbation.
Here the span is taken over all exponents occurring in that space and all ordered negative support families.
Theorem~\ref{thm:analytic-realization} shows that summation on a common nonsingular domain identifies the canonical formal solution space with the full holomorphic solution space, so the presentation and the resulting codimension transfer to the holomorphic solutions.

Fix a generic weight \(\bw\), and write \(\cC(\bw)\subseteq L\) for the affine semigroup generated by the reduced Gr\"obner basis vectors of the toric ideal of \(A\) in the direction \(\bw\).
Theorem~\ref{thm:lattice-counterexample} answers Question~7.3 by a five-column configuration of lattice rank 2 for which \(\cC(\bw)\) is normal, and Theorem~\ref{thm:ordered-distinguished-counterexample} settles the distinguished-collection case left open in \cite{OS25} by an example that fails two of the three hypotheses of the vanishing criterion in Theorem~\ref{thm:rank-two-moving-binary-vanishing}.
Neither normality of \(\cC(\bw)\) nor connectivity of the column matroid forces equality: Theorem~\ref{thm:three-connected-obstruction} gives a six-column configuration whose column matroid is three-connected, and Theorem~\ref{thm:fixed-rank-unbounded} gives, for every integer \(q\geq1\), a configuration of lattice rank 2 with a connected column matroid and normal \(\cC(\bw)\) whose obstruction module has dimension \(q^2\).
By Theorem~\ref{thm:fixed-rank-unbounded-solution-codimension}, the sums of the canonical series obtained by intrinsic perturbation span a subspace of the holomorphic solution space of codimension at least \(\lceil3q^2/4\rceil\), so this codimension is unbounded at fixed lattice rank.

We use Theorem~5.5 of \cite{OS25}, and the regular-sequence criteria and bounds below correspond to Proposition~6.6 and Corollary~6.7 there.
Nagamine \cite{Nag26} computes the Hilbert series of the local fake indicial inverse system itself, where the Artinian object is a quotient of a Stanley--Reisner ring.
The quotient studied here measures the failure of intrinsic perturbation to realize the ambient coefficient space and is a quotient of two colon ideals.
The ancillary scripts use exact arithmetic to check the successive exact sequences for two orders of the factors of \(e\) and the formulas for \(J_{\cN}^{\amb}\) and \(J_{\cN}^{\intr}\), independently verify the five- and six-column examples and the codimension in the canonical formal solution space, and check low-parameter instances of the family in Section~\ref{sec:unbounded-families}.

\section{Fake exponents and negative supports}
\label{sec:fake-exponents}

In this section, we recall the \(A\)-hypergeometric system, the negative support families, and the perturbation constructions.

\subsection{The \texorpdfstring{\(A\)}{A}-hypergeometric system}

Let \(A=[\ba_1,\ldots,\ba_n]\in\Z^{d\times n}\) have rank \(d\), and assume throughout this paper that the columns of \(A\) lie in an affine hyperplane not containing the origin.
Set \(L=\ker_{\Z}(A)\) and \(r=\rank L=n-d\).
For \(\bu\in L\), define \(|\bu|\) by \(|\bu|=\sum_{j=1}^n u_j\).
Throughout, \(\N=\{0,1,2,\ldots\}\).

The toric ideal is \(I_A= \left\langle \bpartial^{\bu_+} -\bpartial^{\bu_-} \ \middle|\ \bu\in L \right\rangle \subseteq\C[\partial_1,\ldots,\partial_n]\), where \(\bu=\bu_+-\bu_-\) has disjoint nonnegative parts.
For \(\bbeta\in\C^d\), the \(A\)-hypergeometric ideal \(H_A(\bbeta)\) is generated in the Weyl algebra \(D\) by \(I_A\) and the Euler operators \(E_i-\beta_i =\sum_{j=1}^n a_{ij}x_j\partial_j-\beta_i\) for \(i=1,\ldots,d\).
Here \(\bx=\tp{(x_1,\ldots,x_n)}\) and \(\bpartial=\tp{(\partial_1,\ldots,\partial_n)}\), and we write \(M_A(\bbeta)=D/H_A(\bbeta)\).
For the foundational construction and the fake-exponent formalism, see \cite{GKZ89,GKZ94,Sai02,SST00}.

Fix a generic weight \(\bw\in\R^n\).
Here and below, genericity means that the initial ideal \(\inw(I_A)\) is monomial and that \(\bw\) lies in the interior of a full-dimensional cone of the small Gr\"obner fan of \(H_A(\bbeta)\), so the initial ideal \(\operatorname{in}_{(-\bw,\bw)}(H_A(\bbeta))\) in the Weyl algebra is constant on that interior; see \cite{SST00}.
Every full-dimensional Gr\"obner cone of \(I_A\) contains such weights.
Each wall of these fans is the locus where two monomials of the same \(A\)-degree acquire equal \(\bw\)-weight, and the difference of their exponents is a nonzero element of \(L\).
A weight is therefore generic if its values on the nonzero elements of \(L\) are all nonzero, since such a weight lies on no wall.
The column matroid of \(A\) is the matroid represented over \(\Q\) by the columns of \(A\).
Let \(\cG_{\bw} = \left\{ \bpartial^{\bg^{(i)}_+} -\bpartial^{\bg^{(i)}_-} \ \middle|\ i=1,\ldots,g_{\bw} \right\}\) be the reduced Gr\"obner basis of \(I_A\), written so that in each binomial the first monomial is the initial one.
Thus \(\bw\cdot\bg^{(i)}>0\), where \(\bg^{(i)}=\bg^{(i)}_+-\bg^{(i)}_-\).
Define the affine semigroup associated with \(\bw\) by \(\cC(\bw) =\sum_{i=1}^{g_{\bw}}\N\bg^{(i)}\subseteq L\).
We say that \(\cC(\bw)\) is normal if \(\cC(\bw)\) contains every element of the group generated by \(\cC(\bw)\) that lies in the real cone spanned by \(\cC(\bw)\).
This semigroup lies in \(L\), so its normality is not the normality of the affine semigroup generated by the columns of \(A\).

For \(\bv\in\C^n\), define \(\nsupp(\bv)=\{j\mid v_j\in\Z_{<0}\}\).
A vector \(\bv\) is a fake exponent of \(H_A(\bbeta)\) in the direction \(\bw\) if \(A\bv=\bbeta\) and \(\bv\) is a zero of the fake indicial ideal of \cite[Section~3.2]{SST00}.
By \cite[Corollary~3.2.3]{SST00}, a vector \(\bv\) with \(A\bv=\bbeta\) is a fake exponent if and only if there is a standard pair \((\ba,\sigma)\) of \(\inw(I_A)\) such that \(v_j=a_j\) for \(j\notin\sigma\).
We use this equivalence repeatedly.

\subsection{Negative support families}

Fix a fake exponent \(\bv\), and, for \(\bu\in L\), put \(I_{\bu}=\nsupp(\bv+\bu)\), \(I_{\bzero}=\nsupp(\bv)\), and \(\sS(\bv)=\{I_{\bu}\mid\bu\in L\}\).
For \(I\in\sS(\bv)\), we call \(\cF_I(\bv)=\{\bu\in L\mid I_{\bu}=I\}\) the support fiber of \(I\), and we write \(\cF_I\) when \(\bv\) is fixed.
We call the set defined in \cite[Section~3]{OS25} the distinguished collection and write
\[
	\cN_{\bv}
	=
	\left\{
	I\in\sS(\bv)
	\ \middle|\
	\bw\cdot\bu\geq0
	\text{ for every }\bu\in\cF_I
	\right\}.
\]
Since \(\bv\) is a fake exponent, \(I_{\bzero}\in\cN_{\bv}\); see the discussion preceding \cite[Definition~3.1]{OS25}.
A negative support family for \(\bv\) is a subset \(\cN\) satisfying \(I_{\bzero}\in\cN\subseteq\cN_{\bv}\).
In the terminology of \cite[Definition~3.1]{OS25}, a negative support family \(\cN\) is ordered if, for \(I\in\cN\) and \(J\in\sS(\bv)\), the inclusion \(J\subseteq I\) implies \(J\in\cN\).
We do not assume that the distinguished collection \(\cN_{\bv}\) is ordered.
Put \(K_{\cN}=\bigcap_{I\in\cN}I\).

\subsection{The perturbation constructions}

Choose a Gale dual \(B=(\bb^{(1)},\ldots,\bb^{(r)})\) of \(A\) whose columns form a \(\Z\)-basis of \(L\), and let \(\bs=\tp{(s_1,\ldots,s_r)}\).
For \(\bz\in\C^n\) and \(\bp\in\N^n\), define the falling factorial by \([\bz]_{\bp} = \prod_{j=1}^n z_j(z_j-1)\cdots(z_j-p_j+1)\).
For \(\bu\in L\), set
\[
	a_{\bu}(\bs)
	=
	\frac{[\bv+B\bs]_{\bu_-}}
	{[\bv+B\bs+\bu]_{\bu_+}}.
\]
For a negative support family \(\cN\), put \(m_{\bv,\cN}(\bs) = \prod_{j\in I_{\bzero}\setminus K_{\cN}}(B\bs)_j\), and define the intrinsic perturbation series by
\[
	\Psi_{\cN}(\bx,\bs)
	=
	m_{\bv,\cN}(\bs)
	\sum_{\substack{\bu\in L\\I_{\bu}\in\cN}}
	a_{\bu}(\bs)\bx^{\bv+B\bs+\bu}.
\]
The full coefficient space at \(\bv\) for \(\cN\) is the space of logarithmic polynomials that occur as the coefficient of \(\bx^{\bv}\) in a formal series solution of \(H_A(\bbeta)\) in the direction \(\bw\) all of whose negative supports \(I_{\bu}\) lie in \(\cN\).
If \(\cN\) is ordered, the intrinsic coefficient space at \(\bv\) for \(\cN\) is the space of coefficients of \(\bx^{\bv}\) obtained from \(\Psi_{\cN}(\bx,\bs)\) by the construction in \cite[Theorem~3.2]{OS25}.
Under the same hypothesis, the ambient coefficient space at \(\bv\) for \(\cN\) is the space of coefficients of \(\bx^{\bv}\) obtained by the ambient perturbation construction in \cite[Theorem~5.5]{OS25}.
Theorem~5.5 of \cite{OS25} identifies the ambient coefficient space with the full coefficient space, and Proposition~6.2 of \cite{OS25} compares the intrinsic and ambient coefficient spaces.
The index set of \(m_{\bv,\cN}\) is independent of \(B\), but the polynomial expression of \(m_{\bv,\cN}\) depends on \(B\).
If \(B'=BT\) with \(T\in\operatorname{GL}_r(\Z)\), then the substitution \(f(\bs)\mapsto f(T\bs)\) carries the expression for \(B\) to the expression for \(B'\).

For each \(i\), put \(G^{(i)}(\bv) =I_{-\bg^{(i)}}\setminus I_{\bzero}\), and define
\[
	P_B(\bv)
	=
	\left\langle
	\prod_{j\in G^{(i)}(\bv)}(B\bs)_j
	\ \middle|\
	i=1,\ldots,g_{\bw}
	\right\rangle
	\subseteq\C[\bs].
\]
For \(f(\bs)\in\C[\bs]\) and \(q(\bpartial_{\bs})\in\C[\bpartial_{\bs}]\), put
\[
	\left\langle f,q(\bpartial_{\bs})\right\rangle_{\bs}
	=
	\left.q(\bpartial_{\bs})f(\bs)\right|_{\bs=\bzero}.
\]
For a homogeneous ideal \(I\subseteq\C[\bs]\), the inverse system is
\[
	I^\perp
	=
	\left\{
	q(\bpartial_{\bs})
	\ \middle|\
	\left\langle f,q(\bpartial_{\bs})\right\rangle_{\bs}=0
	\text{ for every }f\in I
	\right\}.
\]
We use this construction for \(P_B(\bv)\) and, in Section~\ref{subsec:finite-boundary-presentation}, for the ideals \(J_{\cN}^{\amb}\) and \(J_{\cN}^{\intr}\).

The \(A\)-hypergeometric system and the fake exponents are taken from \cite{GKZ89,GKZ94,Sai02,SST00}; the negative support families, the perturbation constructions, and the ideal \(P_B(\bv)\) are taken from \cite{Sai20,OS22,OS25}.
The affine semigroup \(\cC(\bw)\), generated by the reduced Gr\"obner basis vectors, also occurs in \cite{Nag26}.

\begin{remark}
	\label{rem:homogeneity-scope}
	The standing assumption on the columns of \(A\) is equivalent to the existence of a linear functional \(\bh\) with \(\bh\ba_j=1\) for every \(j\), and this assumption makes \(I_A\) homogeneous for the standard grading.
	The algebraic part of this paper does not use that grading.
	The following results use only \(\rank(A)=d\) and the standard gradings of \(R_0\) and \(S_0\), introduced in Section~\ref{subsec:finite-boundary-presentation}, in which every entry of \(A\bt\) and every linear form \((B\bs)_j\) has degree one: the obstruction module, the two right-exact sequences, the finite presentation by two antichains, the localization and completion arguments, the Artinian conclusion for the distinguished collection, the Cohen--Macaulay criterion and its refinement at each factor, and the local obstruction computations and vanishing results in low lattice rank.
	The assumption is used where a weight is moved to \(\bw+\lambda\bone\), with \(\bone\) the all-ones vector and \(\lambda\) a positive number, without changing the weight of any lattice vector.
	That move requires \(\bone\) to lie in the row space of \(A\), and the move underlies the canonical formal solution space, the codimension formula, and the analytic realization.
	The assumption also enters through the results of \cite{OS25}, which are stated for a homogeneous configuration and supply the identification of the two coefficient spaces.
	We therefore keep the assumption throughout rather than tracking it statement by statement.
\end{remark}

\section{The obstruction to intrinsic perturbation}
\label{sec:lattice-obstruction}

Okuyama--Saito's ambient perturbation construction allows perturbation outside \(L\) and realizes the full coefficient space for an ordered negative support family \cite[Theorem~5.5]{OS25}.
Their intrinsic perturbation construction gives a subspace of that coefficient space, and Proposition~6.2 of \cite{OS25} characterizes the equality of the two spaces by an equality of two colon ideals.
In this section, we form the quotient of these two colon ideals, prove that, for an ordered negative support family, this quotient is the graded dual of the quotient of the two coefficient spaces, and give a finite presentation of the quotient of the colon ideals.

Table~\ref{tab:obstruction-notation} lists the notation used in this section.
\begin{table}[ht]
	\centering
	\small
	\begin{tabular}{@{}p{.29\linewidth}p{.65\linewidth}@{}}
		\(\sS(\bv)\)                            &
		the set of realized negative supports \(I_{\bu}\)                                                   \\
		\(\cN_{\bv}\)                           &
		the distinguished collection                                                                        \\
		\(\cN\)                                 &
		a negative support family                                                                           \\
		\(\Rhat\), \(R_0\)                      &
		\(\C[[t_1,\ldots,t_n]]\) and \(\C[t_1,\ldots,t_n]\), respectively                                   \\
		\(\Shat\), \(S_0\)                      &
		\(\C[[s_1,\ldots,s_r]]\) and \(\C[s_1,\ldots,s_r]\), respectively                                   \\
		\(K_{\cN}\), \(E\), \(e\)               &
		\(\bigcap_{I\in\cN}I\),
		\(I_{\bzero}\setminus K_{\cN}\), and
		\(\bt^E\), respectively                                                                             \\
		\(M_{\cN}\)                             &
		the monomial ideal in \eqref{eq:obstruction-M}                                                      \\
		\(P_{\cN}(\bt)\)                        &
		the monomial ideal in \eqref{eq:obstruction-P}                                                      \\
		\(\cG_{\cN}\), \(\cH_{\cN}\)            &
		the antichains in \eqref{eq:boundary-G-antichain} and \eqref{eq:boundary-H-antichain}, respectively \\
		\(Q_{\cN}(\bt)\)                        &
		\(UM_{\cN}+P_{\cN}(\bt)\)                                                                           \\
		\(\fD_{\cN}(e)\)                        &
		the intrinsic-perturbation obstruction module                                                       \\
		\(\fT_{\cN}\)                           &
		the module in \eqref{eq:boundary-obstruction}                                                       \\
		\(J_{\cN}^{\amb}\), \(J_{\cN}^{\intr}\) &
		the two colon ideals in \eqref{eq:boundary-comparison-ideals}                                       \\
		\(\Gamma_{\cN}\), \(\epsilon_{\cN}\)    &
		the matrix whose kernel gives \(J_{\cN}^{\amb}\) in \eqref{eq:boundary-matrix-kernel} and the class in \eqref{eq:distinguished-boundary-class}, respectively
	\end{tabular}
	\caption{Notation for the obstruction module.}
	\label{tab:obstruction-notation}
\end{table}

\subsection{The obstruction module}

Fix a fake exponent \(\bv\) and a negative support family \(\cN\) as in Section~\ref{sec:fake-exponents}.
The algebraic constructions below do not require \(\cN\) to be ordered.
The additional hypothesis that \(\cN\) is ordered is used for the interpretation in terms of the ambient and intrinsic coefficient spaces.
The obstruction depends on \(\cN\); it is not an invariant of \(A\), \(\bw\), and \(\bv\) alone.
Put \(\bt^J=\prod_{j\in J}t_j\), with \(\bt^{\varnothing}=1\), and \(e=\bt^{I_{\bzero}\setminus K_{\cN}}\).
In the complete local ring \(\Rhat=\C[[t_1,\ldots,t_n]]\), write \(\bt=\tp{(t_1,\ldots,t_n)}\) and define
\begin{align}
	U&=\langle A\bt\rangle,
	\label{eq:obstruction-U}\\
	M_{\cN}
	&=\left\langle
	\bt^{I\setminus K_{\cN}}
	\ \middle|\ I\in\cN
	\right\rangle,
	\label{eq:obstruction-M}\\
	P_{\cN}(\bt)
	&=\left\langle
	\bt^{(I\cup J)\setminus K_{\cN}}
	\ \middle|\
	I\in\cN,\ J\in\sS(\bv)\setminus\cN
	\right\rangle,
	\label{eq:obstruction-P}\\
	Q_{\cN}(\bt)
	&=UM_{\cN}+P_{\cN}(\bt).
	\label{eq:obstruction-Q}
\end{align}
Let \(\Shat=\C[[s_1,\ldots,s_r]]\), and consider the surjection
\[
	\Phi:\Rhat\longrightarrow\Shat, \qquad t_j\longmapsto(B\bs)_j.
\]
The kernel of \(\Phi\) is \(U\).
Write \(\Pi_{\cN}=\Phi(P_{\cN}(\bt))\) and \(m_{\bv,\cN}=\Phi(e)\).

\begin{fact}
	\label{fact:ambient-realization}
	Let \(A\) be homogeneous, let \(\bw\) be generic, let \(\bv\) be a fake exponent of \(H_A(\bbeta)\) in the direction \(\bw\), and let \(\cN\) be an ordered negative support family for \(\bv\).
	By \cite[Theorem~5.5]{OS25}, each series obtained by ambient perturbation is a formal series solution of \(H_A(\bbeta)\) in the direction \(\bw\), and the coefficients of \(\bx^{\bv}\) in these series are exactly the elements of the full coefficient space at \(\bv\) for \(\cN\).
\end{fact}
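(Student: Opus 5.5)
The statement is recorded as a Fact, so there is almost nothing new to prove: I plan to check that the hypotheses here are exactly those of \cite[Theorem~5.5]{OS25} and that the objects named in the statement agree with the objects there.

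\textbf{Hypotheses.} First I will list the hypotheses of \cite[Theorem~5.5]{OS25}: a homogeneous configuration, a generic weight, a fake exponent, and an ordered negative support family in their sense. Then I will check these against the standing conventions of Section~\ref{sec:fake-exponents}.
\begin{itemize}
\item Homogeneity is the standing assumption of Remark~\ref{rem:homogeneity-scope}.
\item Our genericity condition (monomial \(\inw(I_A)\) and \(\bw\) interior to a small Gr\"obner cone of \(H_A(\bbeta)\)) is at least as strong as theirs.
\item A fake exponent is defined through \cite[Corollary~3.2.3]{SST00}, as in their setting.
\item The distinguished collection \(\cN_{\bv}\) and the notion of an ordered family were copied verbatim from \cite[Section~3, Definition~3.1]{OS25}. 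The condition \(I_{\bzero}\in\cN\subseteq\cN_{\bv}\) matches their definition of a negative support family.
\end{itemize}

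\textbf{Matching the objects.} Next I will match the spaces named in the statement with those in \cite{OS25}.
\begin{itemize}
\item The ambient coefficient space was \emph{defined} above as the set of coefficients of \(\bx^{\bv}\) produced by the construction of \cite[Theorem~5.5]{OS25}.
\item The full coefficient space was defined as the set of coefficients of \(\bx^{\bv}\) in formal solutions in the direction \(\bw\) whose negative supports lie in \(\cN\). This is the space that theorem compares against.
\end{itemize}
The two assertions then follow directly from that theorem. First, each ambient series is a formal solution in the direction \(\bw\). Second, the two coefficient spaces coincide. The inclusion of ambient coefficients in the full space comes from the first assertion together with the support condition \(I_{\bu}\in\cN\) built into the construction; the reverse inclusion is the substantive content of their theorem.

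\textbf{Expected obstacle.} The only real risk is a mismatch of conventions. Possible sources are the sign convention for \(\bw\) (initial terms versus \(\bw\cdot\bg^{(i)}>0\)), the choice of the Gale dual \(B\), and whether their ``formal series solution in the direction \(\bw\)'' is defined with the same support condition. I would check the sign convention first, since it determines \(\cN_{\bv}\) through \(\bw\cdot\bu\geq0\). For \(B\), I would invoke the remark that a change \(B'=BT\) with \(T\in\operatorname{GL}_r(\Z)\) acts by the substitution \(f(\bs)\mapsto f(T\bs)\), so the coefficient spaces do not depend on this choice.
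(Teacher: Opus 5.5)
Your proposal is correct and takes the same approach as the paper. The paper gives no independent argument: it records the statement as the content of \cite[Theorem~5.5]{OS25}, with the homogeneity hypothesis supplied by the standing assumption discussed in Remark~\ref{rem:homogeneity-scope}, and with the ambient and full coefficient spaces defined in Section~\ref{sec:fake-exponents} precisely as the two sides of that theorem, so your convention checks (the sign of \(\bw\), the choice of \(B\)) are the only work involved.
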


Fact~\ref{fact:ambient-realization} states that the ambient coefficient space at \(\bv\) for \(\cN\) equals the full coefficient space at \(\bv\) for \(\cN\).

\begin{fact}
	\label{fact:colon-criterion}
	Let \(A\), \(\bw\), \(\bv\), and \(\cN\) be as in Fact~\ref{fact:ambient-realization}, and fix a Gale dual \(B\) of \(A\) and the resulting map \(\Phi\).
	By \cite[Proposition~6.2]{OS25}, the intrinsic coefficient space at \(\bv\) for \(\cN\) is contained in the ambient coefficient space, and the two are equal if and only if \(\Phi\bigl(Q_{\cN}(\bt):e\bigr) = \Pi_{\cN}:m_{\bv,\cN}\).
\end{fact}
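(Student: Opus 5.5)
The statement is a restatement of \cite[Proposition~6.2]{OS25}, so the plan is to rebuild its proof from the two constructions of \cite{OS25}, working entirely on the coefficient of \(\bx^{\bv}\). The idea is to write both coefficient spaces as images of inverse systems under one injective linear map, and then compare the ideals those inverse systems annihilate.

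\emph{Intrinsic side.} Expanding \(\bx^{\bv+B\bs}=\bx^{\bv}\exp\bigl(\tp{\bs}\,\tp{B}\log\bx\bigr)\), I will show that the coefficient of \(\bx^{\bv}\) extracted from \(\Psi_{\cN}(\bx,\bs)\) by the construction of \cite[Theorem~3.2]{OS25} is
\[
	q(\bpartial_{\bs})\bigl(m_{\bv,\cN}(\bs)\,\bx^{B\bs}\bigr)\big|_{\bs=\bzero}.
\]
Here \(q\) ranges over the operators for which the resulting series solves \(H_A(\bbeta)\), that is, \(q\in(\Pi_{\cN}:m_{\bv,\cN})^\perp\). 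The generators of \(\Pi_{\cN}\) are exactly the factors that must be killed to suppress the terms with \(I_{\bu}\notin\cN\) (and, through \(P_B(\bv)\), the boundary terms). Dividing out \(m_{\bv,\cN}\) is what produces the colon ideal.

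\emph{Ambient side.} The ambient construction of \cite[Theorem~5.5]{OS25} perturbs \(\bv\) to \(\bv+\bt\) with \(\bt\in\C^n\) and multiplies by \(e\). Admissibility of a differential operator in \(\bpartial_{\bt}\) then means that it kills \(Q_{\cN}(\bt):e\). The factor \(U\) enters because the Euler operators force \(A\bt=0\) only to the order dictated by \(M_{\cN}\). Composing with \(\Phi\), which restricts to \(\bt=B\bs\), I expect the ambient coefficient space to be the image of \(\bigl(\Phi(Q_{\cN}(\bt):e)\bigr)^\perp\) under the same map \(q\mapsto q(\bpartial_{\bs})\bx^{B\bs}|_{\bs=\bzero}\), up to the factor \(m_{\bv,\cN}\).

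\emph{Comparison.} Since \(\Phi(U)=0\), \(\Phi(P_{\cN}(\bt))=\Pi_{\cN}\) and \(\Phi(e)=m_{\bv,\cN}\), we have \(\Phi(Q_{\cN}(\bt):e)\subseteq\Pi_{\cN}:m_{\bv,\cN}\). Taking perpendiculars reverses this inclusion of inverse systems and gives the containment of the intrinsic space in the ambient one. The map \(q\mapsto q(\tp{B}\log\bx)\) is injective because the columns of \(B\) are linearly independent, so the entries of \(\tp{B}\log\bx\) are algebraically independent. Hence equality of the coefficient spaces is equivalent to equality of the two inverse systems. Finally, Macaulay duality \((I^\perp)^\perp=I\) holds for ideals of \(\Shat\) closed in the \(\fm\)-adic topology, which every ideal of a Noetherian complete local ring is by Krull's intersection theorem. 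It follows that equality of the inverse systems is equivalent to \(\Phi(Q_{\cN}(\bt):e)=\Pi_{\cN}:m_{\bv,\cN}\).

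The main obstacle is the ambient step: showing that the admissible ambient operators are exactly \((Q_{\cN}(\bt):e)^\perp\), pulled back along \(\Phi\), and in particular that \(UM_{\cN}\) rather than \(U\) is the correct correction term. I would first isolate the contribution of a single support fiber \(\cF_I\) and check that it imposes the condition \(\bt^{I\setminus K_{\cN}}\cdot U\). This is the point where the hypothesis that \(\cN\) is ordered should be used.
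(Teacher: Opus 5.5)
\textbf{The gap.} The paper gives no proof of this statement; it imports it from \cite[Proposition~6.2]{OS25} as a black box, so the intended justification is the citation. Your final paragraph is sound. The inclusion \(\Phi(Q_{\cN}(\bt):e)\subseteq\Pi_{\cN}:m_{\bv,\cN}\) follows from \(\Phi(UM_{\cN})=0\), and reversal under \(\perp\), injectivity of \(p\mapsto p(\tp{B}\log\bx)\), and \((I^\perp)^\perp=I\) in \(\Shat\) are all correct. But that paragraph only compares two inverse systems. The substance of the statement is that the intrinsic and ambient coefficient spaces \emph{are} the images of \((\Pi_{\cN}:m_{\bv,\cN})^\perp\) and \(\Phi(Q_{\cN}(\bt):e)^\perp\), and you establish neither identification.

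\textbf{Intrinsic side.} Your formula is wrong as written. The operators admitted in \cite[Theorem~3.2]{OS25} are those killing \(\Pi_{\cN}\), which is the domain \(\cP_{\bv,\cN}^{\perp}\) of \(\sF_{\bv,\cN}\) in the paper; they are not \((\Pi_{\cN}:m_{\bv,\cN})^\perp\). The coefficient of \(\bx^{\bv}\) is \(\langle m_{\bv,\cN}\bx^{B\bs},q\rangle_{\bs}\). If \(q\) ranges over \((\Pi_{\cN}:m_{\bv,\cN})^\perp\) while you keep the factor \(m_{\bv,\cN}\), you get the wrong space. For \(\cN^{\circ}\) in Theorem~\ref{thm:lattice-counterexample}, \((\Pi_{\cN}:m_{\bv,\cN})^\perp=\C\) and \(m_{\bv,\cN}=\ell_3\) vanishes at \(\bzero\), so your formula returns \(0\). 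The intrinsic coefficient space there is \(\C\): for example \(q=\partial_{s_1}\in\Pi_{\cN}^\perp\) gives the constant \(-3\).

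The colon has to come from a lemma you never state. Put \(m=m_{\bv,\cN}\) and define the contraction \(m\circ q\) by \(\langle f,m\circ q\rangle_{\bs}=\langle mf,q\rangle_{\bs}\). Then \(q\mapsto m\circ q\) maps \(\Pi_{\cN}^\perp\) \emph{onto} \((\Pi_{\cN}:m)^\perp\). This is the Matlis dual of the injection \(\Shat/(\Pi_{\cN}:m)\xrightarrow{\cdot m}\Shat/\Pi_{\cN}\).

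\textbf{Ambient side.} You flag this step yourself as the main obstacle, and it is the content of \cite[Theorem~5.5 and Proposition~6.2]{OS25}. The proposal leaves it as an expectation, so the comparison step presupposes what has to be proved. The same slip recurs here: if the ambient series is multiplied by \(e\), the operators to characterize are those killing \(Q_{\cN}(\bt)\), and the colon by \(e\) again comes from the contraction lemma. As written you impose the colon twice. To close the step you would need to show three things.
\begin{enumerate}
	\item The Euler operators contribute exactly \(UM_{\cN}\), not \(U\), and the boundary terms contribute exactly \(P_{\cN}(\bt)\).
	\item Every operator in the resulting inverse system actually yields a solution whose negative supports lie in \(\cN\); this is where orderedness is used.
	\item \(U\subseteq Q_{\cN}(\bt):e\), which holds because \(e\in M_{\cN}\). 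Hence every element of \((Q_{\cN}(\bt):e)^\perp\) is killed by the derivations along the rows of \(A\), so it is a polynomial in \(\tp{B}\log\bx\). This is what justifies passing through \(\Phi\) to \(\Phi(Q_{\cN}(\bt):e)^\perp\).
\end{enumerate}
Unless you carry out this analysis of the ambient series, the honest proof is the paper's: cite \cite{OS25}.
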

Define the intrinsic-perturbation obstruction module by
\begin{equation}
	\fD_{\cN}(e)
	=
	\frac{(U+P_{\cN}(\bt)):e}
	{(UM_{\cN}+P_{\cN}(\bt)):e}.
	\label{eq:defect-module}
\end{equation}
Define also
\begin{equation}
	\fT_{\cN}
	=
	\frac{(U\cap M_{\cN})+P_{\cN}(\bt)}
	{UM_{\cN}+P_{\cN}(\bt)}.
	\label{eq:boundary-obstruction}
\end{equation}

\begin{theorem}
	\label{thm:lattice-obstruction}
	For a fixed Gale dual \(B\) and the resulting map \(\Phi\), there are natural \(\Shat\)-module isomorphisms
	\begin{align}
		\fD_{\cN}(e)
		&\simeq
		\frac{\Pi_{\cN}:m_{\bv,\cN}}
		{\Phi(Q_{\cN}(\bt):e)}
		\label{eq:defect-image}\\
		&\simeq
		\frac{
		e\Rhat\cap
		\bigl((U\cap M_{\cN})+P_{\cN}(\bt)\bigr)}
		{e\Rhat\cap
		\bigl(UM_{\cN}+P_{\cN}(\bt)\bigr)}.
		\label{eq:defect-intersection}
	\end{align}
	If \(\cN\) is ordered, the intrinsic coefficient space equals the ambient coefficient space if and only if \(\fD_{\cN}(e)=0\).

	Moreover, \eqref{eq:defect-intersection} identifies \(\fD_{\cN}(e)\) with the submodule of \(\fT_{\cN}\) consisting of the classes represented by multiples of \(e\).
	The module \(\fT_{\cN}\) is a quotient of
	\[
		\Tor^{\Rhat}_1
		\bigl(\Rhat/U,\Rhat/M_{\cN}\bigr)
		\simeq
		\frac{U\cap M_{\cN}}{UM_{\cN}}.
	\]
	In particular,
	\[
		U\cap M_{\cN}
		\subseteq UM_{\cN}+P_{\cN}(\bt)
		\quad\Longrightarrow\quad
		\fD_{\cN}(e)=0.
	\]
\end{theorem}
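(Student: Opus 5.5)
The plan is to carry out a sequence of elementary ideal-theoretic manipulations in \(\Rhat\), using only that \(\Phi\) is surjective with kernel \(U\), and then to invoke Fact~\ref{fact:colon-criterion}.

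\textbf{Isomorphism \eqref{eq:defect-image}.} Since \(\ker\Phi=U\), for any ideal \(I\supseteq U\) we have \(\Phi(I):\Phi(e)=\Phi(I:e)\). Indeed, \(\Phi(f)\Phi(e)\in\Phi(I)\) means \(fe\in I+U=I\). Taking \(I=U+P_{\cN}(\bt)\) gives \(\Phi\bigl((U+P_{\cN}(\bt)):e\bigr)=\Pi_{\cN}:m_{\bv,\cN}\). Next, \(Q_{\cN}(\bt)\subseteq U+P_{\cN}(\bt)\), so \(Q_{\cN}(\bt):e\subseteq(U+P_{\cN}(\bt)):e\). Moreover \(U\subseteq Q_{\cN}(\bt):e\), since \(Ue\subseteq UM_{\cN}\) whenever \(e\in M_{\cN}\). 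This holds because \(I_{\bzero}\in\cN\), so \(\bt^{I_{\bzero}\setminus K_{\cN}}=e\) is a generator of \(M_{\cN}\). Hence both colon ideals contain \(\ker\Phi\), and \(\Phi\) induces an isomorphism of the quotient \(\fD_{\cN}(e)\) onto \((\Pi_{\cN}:m_{\bv,\cN})/\Phi(Q_{\cN}(\bt):e)\). This map is \(\Rhat\)-linear, hence \(\Shat\)-linear via \(\Phi\), and it is natural. Fact~\ref{fact:colon-criterion} then gives the equivalence for ordered \(\cN\).

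\textbf{Isomorphism \eqref{eq:defect-intersection}.} Consider multiplication by \(e\), sending \(f\mapsto ef\). It maps \((U+P_{\cN}):e\) onto \(e\Rhat\cap(U+P_{\cN})\), and \(Q_{\cN}:e\) onto \(e\Rhat\cap Q_{\cN}\). Since \(\Rhat\) is a domain and \(e\neq0\), multiplication by \(e\) is injective, so it induces an isomorphism of quotients, because \(f\in Q_{\cN}:e\) iff \(ef\in Q_{\cN}\). It remains to see that \(e\Rhat\cap(U+P_{\cN})=e\Rhat\cap\bigl((U\cap M_{\cN})+P_{\cN}\bigr)\); this is the main step. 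Write \(ef=u+p\) with \(u\in U\) and \(p\in P_{\cN}(\bt)\). Every generator \(\bt^{(I\cup J)\setminus K_{\cN}}\) of \(P_{\cN}\) is divisible by \(\bt^{I\setminus K_{\cN}}\in M_{\cN}\), so \(P_{\cN}\subseteq M_{\cN}\). Also \(e\in M_{\cN}\). Hence \(u=ef-p\in M_{\cN}\), giving \(u\in U\cap M_{\cN}\). The reverse inclusion is trivial. Compatibility with the denominators is immediate, and the isomorphism again respects the \(\Shat\)-structure, since \(U\) annihilates everything modulo \(Q_{\cN}\) by the same containment \(U e\subseteq UM_{\cN}\).

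\textbf{The remaining claims.} The right side of \eqref{eq:defect-intersection} is \(\bigl(e\Rhat\cap((U\cap M_{\cN})+P_{\cN})\bigr)/\bigl(e\Rhat\cap Q_{\cN}\bigr)\). This quotient embeds into \(\fT_{\cN}\) because the denominators are the restriction of \(Q_{\cN}\). Its image is exactly the set of classes of \(\fT_{\cN}\) represented by multiples of \(e\). For the \(\Tor\) statement, use the standard identification \(\Tor_1^{\Rhat}(\Rhat/U,\Rhat/M)\simeq(U\cap M)/UM\), obtained from the long exact sequence of \(0\to U\to\Rhat\to\Rhat/U\to0\) tensored with \(\Rhat/M\). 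The map \(U\cap M_{\cN}\to\fT_{\cN}\) is surjective, since \(P_{\cN}\) dies in the quotient, and it kills \(UM_{\cN}\). Therefore \(\fT_{\cN}\) is a quotient of \((U\cap M_{\cN})/UM_{\cN}\). Finally, if \(U\cap M_{\cN}\subseteq Q_{\cN}\), then \(\fT_{\cN}=0\), so its submodule \(\fD_{\cN}(e)\) also vanishes.
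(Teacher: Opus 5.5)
Your proof is correct and follows the paper's argument essentially step by step. You use the preimage description of \(\Pi_{\cN}:m_{\bv,\cN}\) via \(\ker\Phi=U\subseteq Q_{\cN}(\bt):e\), then multiplication by \(e\), and your elementwise check that \(e\Rhat\cap(U+P_{\cN}(\bt))=e\Rhat\cap((U\cap M_{\cN})+P_{\cN}(\bt))\) is exactly the modular-law step in the paper. The only cosmetic difference is in the \(\Tor_1\) identification: you tensor \(0\to U\to\Rhat\to\Rhat/U\to0\) with \(\Rhat/M_{\cN}\), while the paper tensors \(0\to M_{\cN}\to\Rhat\to\Rhat/M_{\cN}\to0\) with \(\Rhat/U\), and both give \((U\cap M_{\cN})/UM_{\cN}\) by balancing of \(\Tor\).
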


\begin{proof}
	Every generator of \(P_{\cN}(\bt)\) is divisible by a generator of \(M_{\cN}\), so \(P_{\cN}(\bt)\subseteq M_{\cN}\); also \(e\in M_{\cN}\), whence \(eU\subseteq UM_{\cN}\subseteq Q_{\cN}(\bt)\) and \(U\subseteq Q_{\cN}(\bt):e\).
	For \(f\in\Rhat\), the surjectivity of \(\Phi\) and the equality \(\ker\Phi=U\) make the following three conditions equivalent: \(f\in\Phi^{-1}(\Pi_{\cN}:m_{\bv,\cN})\), \(ef\in U+P_{\cN}(\bt)\), and \(f\in(U+P_{\cN}(\bt)):e\).
	Taking the quotient by \(Q_{\cN}(\bt):e\), which already contains \(U\), then proves \eqref{eq:defect-image}.
	If \(\cN\) is ordered, Proposition~6.2 of \cite{OS25} states that the intrinsic coefficient space equals the ambient coefficient space if and only if the two ideals in \eqref{eq:defect-image} are equal.
	Multiplication by \(e\) maps the quotient in \eqref{eq:defect-module} injectively onto \((e\Rhat\cap(U+P_{\cN}(\bt)))/(e\Rhat\cap Q_{\cN}(\bt))\), and since \(e\Rhat\subseteq M_{\cN}\) and \(P_{\cN}(\bt)\subseteq M_{\cN}\), the modular law gives \(M_{\cN}\cap(U+P_{\cN}(\bt))=(U\cap M_{\cN})+P_{\cN}(\bt)\), which proves \eqref{eq:defect-intersection} and the assertion concerning \(\fT_{\cN}\).
	Finally, tensoring \(0\to M_{\cN}\to\Rhat\to \Rhat/M_{\cN}\to0\) with \(\Rhat/U\) gives the description of \(\Tor_1\) in the statement; see \cite[Section~3.2]{Wei94}.
\end{proof}

\begin{remark}
	The word ``natural'' in Theorem~\ref{thm:lattice-obstruction} refers to the fixed Gale dual \(B\) and the resulting map \(\Phi\).
	If \(B'=BT\) with \(T\in\operatorname{GL}_r(\Z)\), then the substitution \(f(\bs)\mapsto f(T\bs)\) identifies the quotient \(J_{\cN}^{\intr}/J_{\cN}^{\amb}\) formed from \(B\) with the one formed from \(B'\).
	The same substitution identifies the completions of these two quotients and the resulting two presentations of \(\fD_{\cN}(e)\).
	Thus the obstruction module is compatible with changes of \(B\).
\end{remark}

\begin{remark}
	No finite-length hypothesis is imposed on \(\fD_{\cN}(e)\).
	When \(\fD_{\cN}(e)\) has finite length, we write \(\dim_{\C}\fD_{\cN}(e)\) for its dimension.
	For an ordered negative support family, Proposition~\ref{prop:defect-coefficient-duality} identifies this number with the dimension of the ambient coefficient space modulo the intrinsic coefficient space.
	Theorems~\ref{thm:lattice-counterexample}, \ref{thm:three-connected-obstruction}, and \ref{thm:fixed-rank-unbounded} concern this finite-length case.
\end{remark}

\subsection{Exact sequences of Koszul homology}

The exact sequences of this subsection describe the quotient of \(\Tor_1\) in Theorem~\ref{thm:lattice-obstruction} and the submodule of that quotient consisting of the classes represented by multiples of \(e\).
For the next theorem, abbreviate \(M=M_{\cN}\), \(P=P_{\cN}(\bt)\), and \(Q=UM+P\).

\begin{theorem}
	\label{thm:koszul-cokernels}
	There are natural right-exact sequences
	\begin{equation}
		\Tor^{\Rhat}_1
		(\Rhat/U,\Rhat/P)
		\longrightarrow
		\Tor^{\Rhat}_1
		(\Rhat/U,\Rhat/M)
		\longrightarrow
		\fT_{\cN}
		\longrightarrow0
		\label{eq:boundary-tor-cokernel}
	\end{equation}
	and
	\begin{equation}
		\Tor^{\Rhat}_1
		(\Rhat/\langle e\rangle,\Rhat/Q)
		\longrightarrow
		\Tor^{\Rhat}_1
		(\Rhat/\langle e\rangle,
		\Rhat/(U+P))
		\longrightarrow
		\fD_{\cN}(e)
		\longrightarrow0.
		\label{eq:distinguished-tor-cokernel}
	\end{equation}
	In \eqref{eq:boundary-tor-cokernel} and \eqref{eq:distinguished-tor-cokernel}, the first maps are induced by the quotient maps associated with \(P\subseteq M\) and with \(Q\subseteq U+P\), respectively.
	The second sequence is induced by the natural exact sequence
	\begin{align}
		\Tor^{\Rhat}_1
		(\Rhat/\langle e\rangle,\Rhat/Q)
		&\longrightarrow
		\Tor^{\Rhat}_1
		(\Rhat/\langle e\rangle,\Rhat/(U+P))
		\xrightarrow{\partial_e}
		\frac{U+P}{Q}\notag\\
		&\longrightarrow
		\frac{\Rhat}{Q+\langle e\rangle}
		\longrightarrow
		\frac{\Rhat}{U+P+\langle e\rangle}
		\longrightarrow0.
		\label{eq:distinguished-derived-exact-sequence}
	\end{align}
	The image of \(\partial_e\) is naturally isomorphic to \(\fD_{\cN}(e)\).
	Equivalently,
	\begin{equation}
		\fD_{\cN}(e)
		\simeq
		\ker\left(
		\frac{U+P}{Q}
		\longrightarrow
		\frac{\Rhat}{Q+\langle e\rangle}
		\right).
		\label{eq:distinguished-specialization-kernel}
	\end{equation}
	Under multiplication by \(e\), the last term in \eqref{eq:distinguished-tor-cokernel} is identified with the submodule of \(\fT_{\cN}\) consisting of the classes represented by multiples of \(e\).
\end{theorem}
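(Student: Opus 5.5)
The plan is to obtain both sequences from the long exact $\Tor$ sequence attached to a short exact sequence of cyclic modules, and then to identify the cokernels using Theorem~\ref{thm:lattice-obstruction}.

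\textbf{First sequence.} Since $P\subseteq M$, there is a short exact sequence
\[
0\to M/P\to \Rhat/P\to \Rhat/M\to 0.
\]
Tensoring with $\Rhat/U$ gives the exact sequence
\[
\Tor_1(\Rhat/U,\Rhat/P)\to\Tor_1(\Rhat/U,\Rhat/M)\xrightarrow{\delta} M/(P+UM)\to \Rhat/(U+P)\to\cdots .
\]
Using $\Tor_1(\Rhat/U,\Rhat/I)\simeq (U\cap I)/UI$, as in the proof of Theorem~\ref{thm:lattice-obstruction}, the connecting map $\delta$ sends the class of $x\in U\cap M$ to its class in $M/(UM+P)$. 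Its image is therefore $((U\cap M)+P)/(UM+P)=\fT_{\cN}$. I will check this directly: an element of $M/(UM+P)$ dies in $\Rhat/(U+P)$ exactly when it lies in $M\cap(U+P)$. By the modular law this equals $(U\cap M)+P$, since $P\subseteq M$. This gives \eqref{eq:boundary-tor-cokernel}. Naturality of the first map follows from functoriality of $\Tor$ in the second variable.

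\textbf{Second sequence.} I use $Q\subseteq U+P$ and the short exact sequence
\[
0\to (U+P)/Q\to\Rhat/Q\to\Rhat/(U+P)\to0.
\]
Tensoring with $\Rhat/\langle e\rangle$ gives a long exact sequence. Here I use the identification $(U+P)/Q\otimes\Rhat/\langle e\rangle=(U+P)/(Q+e(U+P))$. To reach exactly the displayed form \eqref{eq:distinguished-derived-exact-sequence}, with $(U+P)/Q$ in the third slot, I note that $e(U+P)\subseteq eU+P\subseteq Q$. The first inclusion holds because $P$ is an ideal. For $eU\subseteq UM$ I use $e\in M$, as shown in the proof of Theorem~\ref{thm:lattice-obstruction}. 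Hence $Q+e(U+P)=Q$, so the tensor product is $(U+P)/Q$ itself, and the sequence takes the form \eqref{eq:distinguished-derived-exact-sequence}.

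\textbf{Identifying the image of $\partial_e$.} By exactness, the image of $\partial_e$ is the kernel in \eqref{eq:distinguished-specialization-kernel}, namely
\[
\bigl((U+P)\cap(Q+\langle e\rangle)\bigr)/Q.
\]
Since $Q\subseteq U+P$, the modular law rewrites this as $\bigl(Q+(e\Rhat\cap(U+P))\bigr)/Q$. The second isomorphism theorem then identifies it with
\[
\bigl(e\Rhat\cap(U+P)\bigr)/\bigl(e\Rhat\cap Q\bigr).
\]
By the proof of Theorem~\ref{thm:lattice-obstruction}, this quotient is the image of $\fD_{\cN}(e)$ under multiplication by $e$, and that multiplication is injective. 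To make the isomorphism explicit, I will compute $\partial_e$ via $\Tor_1(\Rhat/\langle e\rangle,N)\simeq\{n\in N: en=0\}$. Here $e$ is a nonzerodivisor in the domain $\Rhat$, so the resolution $0\to\Rhat\xrightarrow{e}\Rhat$ of $\Rhat/\langle e\rangle$ can be used. Thus
\[
\Tor_1(\Rhat/\langle e\rangle,\Rhat/(U+P))\simeq\bigl((U+P):e\bigr)/(U+P),
\]
and $\partial_e$ sends $\bar f\mapsto \overline{ef}$. Its image is $\{ef: f\in(U+P):e\}$ modulo $Q$, and its kernel is the set of $\bar f$ with $ef\in Q$. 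Hence the image is isomorphic to $((U+P):e)/\bigl((Q:e)+(U+P)\bigr)$. Since $U+P\subseteq Q:e$ (using $U\subseteq Q:e$ and $eP\subseteq P\subseteq Q$), this quotient equals $\fD_{\cN}(e)$.

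This gives \eqref{eq:distinguished-tor-cokernel} and \eqref{eq:distinguished-specialization-kernel}. The final sentence of the statement is then \eqref{eq:defect-intersection} together with the "Moreover" part of Theorem~\ref{thm:lattice-obstruction}.

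The main obstacle I expect is bookkeeping. Specifically, I must make sure that the abstract connecting map $\partial_e$ agrees with the explicit map $\bar f\mapsto\overline{ef}$ under the identification of $\Tor_1$ with the colon quotient. I also need the identification of $(U+P)/Q\otimes\Rhat/\langle e\rangle$ with $(U+P)/Q$, which relies on $eU\subseteq UM$. I would verify the first point by a direct diagram chase on the two-term resolution of $\Rhat/\langle e\rangle$.
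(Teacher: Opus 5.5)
Your proposal is correct and follows essentially the paper's proof: you use the long exact $\Tor$ sequences attached to $0\to M/P\to\Rhat/P\to\Rhat/M\to0$ and $0\to(U+P)/Q\to\Rhat/Q\to\Rhat/(U+P)\to0$, the modular law to identify $\fT_{\cN}$, and the one-term resolution of $\Rhat/\langle e\rangle$, under which $\partial_e$ is multiplication by $e$ and the inclusion $U+P\subseteq Q:e$ produces $\fD_{\cN}(e)$. The only cosmetic difference is that you obtain the cokernel in the second sequence as the image of $\partial_e$, whereas the paper computes the image of the first map directly as $(Q:e)/(U+P)$.
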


\begin{proof}
	The entries \(\theta_1,\ldots,\theta_d\) of \(A\bt\) are linearly independent, so the Koszul complex on them resolves \(\Rhat/U\).
	Tensoring that complex with \(0\to M/P\to\Rhat/P\to\Rhat/M\to0\) gives the homology sequence
	\[
		\Tor^{\Rhat}_1(\Rhat/U,\Rhat/P) \longrightarrow \Tor^{\Rhat}_1(\Rhat/U,\Rhat/M) \longrightarrow M/(UM+P) \longrightarrow \Rhat/(U+P),
	\]
	whose last map has kernel \(M\cap(U+P)/(UM+P)=((U\cap M)+P)/(UM+P)=\fT_{\cN}\) by the modular law.
	This proves \eqref{eq:boundary-tor-cokernel}.

	Since \(e\) is a nonzero monomial, the one-element Koszul complex on \(e\) resolves \(\Rhat/\langle e\rangle\) and gives \(\Tor^{\Rhat}_1(\Rhat/\langle e\rangle,\Rhat/Q)\simeq(Q:e)/Q\) and \(\Tor^{\Rhat}_1(\Rhat/\langle e\rangle,\Rhat/(U+P))\simeq((U+P):e)/(U+P)\).
	The inclusions \(eU\subseteq UM\) and \(eP\subseteq P\) give \(U+P\subseteq Q:e\), so the image of the first map in \eqref{eq:distinguished-tor-cokernel} is \((Q:e)/(U+P)\) and its cokernel is \(\fD_{\cN}(e)\).
	The same inclusions give \(e(U+P)\subseteq Q\), so \(e\) annihilates \((U+P)/Q\), and applying \(\Rhat/\langle e\rangle\otimes_{\Rhat}^{\mathbf L}-\) to \(0\to(U+P)/Q\to\Rhat/Q\to\Rhat/(U+P)\to0\) yields \eqref{eq:distinguished-derived-exact-sequence}.
	At the chain level \(\partial_e\) is multiplication by \(e\), which sends the class of \(f\in(U+P):e\) to the class of \(ef\) in \(\fT_{\cN}\); this class is zero exactly when \(f\in Q:e\), so the image of \(\partial_e\) consists of the classes represented by multiples of \(e\), and the last assertion follows.
\end{proof}

\subsection{A finite presentation of the two colon ideals}
\label{subsec:finite-boundary-presentation}

We next describe the two colon ideals in Theorem~\ref{thm:lattice-obstruction} by a finite presentation.
This presentation gives a necessary and sufficient criterion for the vanishing of \(\fD_{\cN}(e)\) in terms of finite data.

Set \(R_0=\C[t_1,\ldots,t_n]\), \(S_0=\C[s_1,\ldots,s_r]\), and \(\fm=\langle s_1,\ldots,s_r\rangle\).
For \(1\leq i\leq d\), put \(\theta_i=\sum_{j=1}^n a_{ij}t_j\), and put
\[
	U_0=\langle\theta_1,\ldots,\theta_d\rangle,
	\qquad
	\Phi_0:R_0\longrightarrow S_0,
	\qquad
	t_j\longmapsto\ell_j=(B\bs)_j.
\]
Thus \(\ker\Phi_0=U_0\).
Let \(M_{\cN}^0\) and \(P_{\cN}^0(\bt)\) be the monomial ideals of \(R_0\) generated by the monomials in \eqref{eq:obstruction-M} and \eqref{eq:obstruction-P}, respectively, and write \(Q_{\cN}^0(\bt)=U_0M_{\cN}^0+P_{\cN}^0(\bt)\).
For a set \(Y\subseteq\{1,\ldots,n\}\), use the notation \(\ell^Y=\prod_{j\in Y}\ell_j\) and \(\ell^{\varnothing}=1\).

The two antichains used below are
\begin{align}
	\cG_{\cN}
	&=
	\min_{\subseteq}
	\{I\setminus K_{\cN}\mid I\in\cN\}
	=\{G_1,\ldots,G_g\},
	\label{eq:boundary-G-antichain}\\
	\cH_{\cN}
	&=
	\min_{\subseteq}
	\{(I\cup J)\setminus K_{\cN}
	\mid I\in\cN,\
	J\in\sS(\bv)\setminus\cN\}
	=\{H_1,\ldots,H_h\}.
	\label{eq:boundary-H-antichain}
\end{align}
The second antichain is empty when \(\sS(\bv)=\cN\).
Each \(H_a\) contains some \(G_i\).
Choose one such index \(\iota(a)\).
Let
\[
	F_{\cN}
	=
	\bigoplus_{i=1}^g S_0(-|G_i|)\eta_i
\]
be the free graded \(S_0\)-module with basis \(\eta_1,\ldots,\eta_g\), and let \(Z_{\cN}\subseteq F_{\cN}\) be the submodule generated by
\begin{align}
	\tau_{ij}
	&=
	\ell^{G_j\setminus G_i}\eta_i
	-
	\ell^{G_i\setminus G_j}\eta_j
	&&(1\leq i<j\leq g),
	\label{eq:boundary-Taylor-relations}\\
	\rho_a
	&=
	\ell^{H_a\setminus G_{\iota(a)}}\eta_{\iota(a)}
	&&(1\leq a\leq h).
	\label{eq:boundary-generator-relations}
\end{align}
Put \(E=I_{\bzero}\setminus K_{\cN}\).
Since \(I_{\bzero}\in\cN\), some \(G_i\) is contained in \(E\).
For any such \(i\), define
\begin{equation}
	\epsilon_{\cN}
	=
	\ell^{E\setminus G_i}\overline{\eta_i}
	\in F_{\cN}/Z_{\cN}.
	\label{eq:distinguished-boundary-class}
\end{equation}
Finally, define two homogeneous ideals of \(S_0\) by
\begin{equation}
	J_{\cN}^{\amb}=\Ann_{S_0}(\epsilon_{\cN}), \qquad J_{\cN}^{\intr}=\langle\ell^{H_1},\ldots,\ell^{H_h}\rangle:\ell^E.
	\label{eq:boundary-comparison-ideals}
\end{equation}
For every \(j\in E\), there are \(I,I'\in\cN\) with \(j\in I\) and \(j\notin I'\), so the \(j\)-th row of \(B\) and the linear form \(\ell_j\) are nonzero.
Thus \(\ell^E\ne0\).
If \(h=0\), then \(J_{\cN}^{\intr}\) is zero.
For the distinguished collection, if \(\rank L\geq1\), then we have \(h\geq1\), because \(L\) contains some \(\bu\) with \(\bw\cdot\bu<0\), and such a \(\bu\) gives \(I_{\bu}\in\sS(\bv)\setminus\cN_{\bv}\).

For the completed obstruction module, we write \(u\) for the variable of the Hilbert series and use the notation
\begin{equation}
	\Hilb
	\bigl(\fD_{\cN}(e);u\bigr)
	=
	\sum_{q\geq0}
	\dim_{\C}
	\left(
	J_{\cN}^{\intr}/J_{\cN}^{\amb}
	\right)_q u^q.
	\label{eq:boundary-completed-Hilbert-definition}
\end{equation}
The definition in \eqref{eq:boundary-completed-Hilbert-definition} uses the graded quotient \(J_{\cN}^{\intr}/J_{\cN}^{\amb}\); we do not regard the completed module as a direct-sum graded module.

Let \(\Gamma_{\cN}\) be the matrix whose columns are the vectors in \eqref{eq:boundary-Taylor-relations} and \eqref{eq:boundary-generator-relations}, followed, as the last column, by a lift of \(\epsilon_{\cN}\) to \(F_{\cN}\).
Grade the source so that \(\Gamma_{\cN}\) becomes a homogeneous map \(\bigoplus_jS_0(-d_j)\to F_{\cN}\), where \(d_j\) is the degree of the \(j\)-th column.
For a vector \(z\), write \(z_{\mathrm{last}}\) for its last coordinate.

\begin{theorem}
	\label{thm:finite-boundary-classification}
	Up to a graded relabeling of the basis, the submodule \(Z_{\cN}\), the class \(\epsilon_{\cN}\), and the ideals \(J_{\cN}^{\amb}\) and \(J_{\cN}^{\intr}\) are independent of the orderings and choices made above.
	Moreover,
	\begin{equation}
		J_{\cN}^{\amb}
		=
		\Phi_0\bigl(
		Q_{\cN}^0(\bt):e
		\bigr)
		\subseteq
		J_{\cN}^{\intr},
		\label{eq:boundary-annihilator-colon}
	\end{equation}
	and there is an isomorphism
	\begin{equation}
		\fD_{\cN}(e)
		\simeq
		\Shat\otimes_{S_{0,\fm}}
		\left(
		\frac{J_{\cN}^{\intr}}{J_{\cN}^{\amb}}
		\right)_{\fm}
		\simeq
		\Shat\otimes_{S_0}
		\frac{J_{\cN}^{\intr}}{J_{\cN}^{\amb}}.
		\label{eq:boundary-classification-isomorphism}
	\end{equation}
	In particular,
	\begin{equation}
		\fD_{\cN}(e)=0
		\quad\Longleftrightarrow\quad
		J_{\cN}^{\amb}=J_{\cN}^{\intr}.
		\label{eq:boundary-classification-equality}
	\end{equation}
	Furthermore,
	\begin{equation}
		\Hilb
		\bigl(\fD_{\cN}(e);u\bigr)
		=
		\Hilb(S_0/J_{\cN}^{\amb};u)
		-
		\Hilb(S_0/J_{\cN}^{\intr};u).
		\label{eq:boundary-classification-Hilbert}
	\end{equation}
	For the distinguished collection at a fake exponent, both quotient rings on the right are Artinian, and hence
	\begin{equation}
		\dim_{\C}\fD_{\cN_{\bv}}(e)
		=
		\dim_{\C}(S_0/J_{\cN_{\bv}}^{\amb})
		-
		\dim_{\C}(S_0/J_{\cN_{\bv}}^{\intr}).
		\label{eq:boundary-classification-length}
	\end{equation}

	If \(\cZ\) is any generating set of \(\ker\Gamma_{\cN}\), then
	\begin{equation}
		J_{\cN}^{\amb}
		=
		\left\langle
		z_{\mathrm{last}}\ \middle|\ z\in\cZ
		\right\rangle.
		\label{eq:boundary-matrix-kernel}
	\end{equation}
	The extensions of \(J_{\cN}^{\amb}\) and \(J_{\cN}^{\intr}\) to \(\Shat\) are \(\Phi(Q_{\cN}(\bt):e)\) and \(\Pi_{\cN}:m_{\bv,\cN}\), respectively.
	Thus the obstruction module is determined by the antichains \(\cG_{\cN}\) and \(\cH_{\cN}\), the monomial \(e\), and the linear forms \(\ell_j\), and hence so are its vanishing and its Hilbert series.
\end{theorem}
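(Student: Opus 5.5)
The plan is to work first over the polynomial rings \(R_0\) and \(S_0\), where everything is graded, and then pass to \(\Rhat\) and \(\Shat\) by flatness of completion. The central step is to identify \(F_{\cN}/Z_{\cN}\) with \(M^0_{\cN}/(U_0M^0_{\cN}+P^0_{\cN}(\bt))\) via \(\Phi_0\).

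Since \(M^0_{\cN}\) is generated by the monomials \(\bt^{G_i}\), the Taylor syzygies \(\bt^{G_j\setminus G_i}\eta_i-\bt^{G_i\setminus G_j}\eta_j\) generate the syzygies of \(M^0_{\cN}\). Tensoring with \(R_0/U_0=S_0\) gives
\[
M^0_{\cN}/U_0M^0_{\cN}\simeq S_0^{g}/\langle\tau_{ij}\rangle,
\]
with \(\eta_i\mapsto\overline{\bt^{G_i}}\). The images of the generators \(\bt^{H_a}\) of \(P^0_{\cN}\) are \(\bt^{H_a\setminus G_{\iota(a)}}\bt^{G_{\iota(a)}}\), i.e.\ the relations \(\rho_a\); by minimality, \(\cH_{\cN}\) generates \(P^0_{\cN}\). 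Hence
\[
F_{\cN}/Z_{\cN}\simeq M^0_{\cN}/Q^0_{\cN}(\bt),
\]
and \(\epsilon_{\cN}\) corresponds to the class of \(e=\bt^E\). Its annihilator in \(S_0=R_0/U_0\) is therefore \(\Phi_0(Q^0_{\cN}:e)\), since \(U_0\subseteq Q^0_{\cN}:e\). Independence of \(Z_{\cN}\) and \(\epsilon_{\cN}\) from the choices of \(\iota(a)\) and \(i\) follows because they correspond to intrinsic objects under this isomorphism; modulo Taylor relations, any two choices differ by elements of \(Z_{\cN}\).

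For \(J^{\intr}_{\cN}\), I argue exactly as in the first step of the proof of Theorem~\ref{thm:lattice-obstruction}, using surjectivity of \(\Phi_0\) with kernel \(U_0\). This gives
\[
\Phi_0\bigl((U_0+P^0_{\cN}):e\bigr)=\Phi_0(P^0_{\cN}):\ell^E=\langle\ell^{H_a}\rangle:\ell^E=J^{\intr}_{\cN}.
\]
The inclusion \(J^{\amb}_{\cN}\subseteq J^{\intr}_{\cN}\) follows from \(Q^0_{\cN}\subseteq U_0+P^0_{\cN}\).

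Completion \(R_0\to\Rhat\) is flat and commutes with colon ideals of finitely generated ideals, with sums, and with products. Also \(\Phi\) is the completion of \(\Phi_0\). So the extensions of \(J^{\amb}_{\cN}\) and \(J^{\intr}_{\cN}\) to \(\Shat\) are \(\Phi(Q_{\cN}:e)\) and \(\Pi_{\cN}:m_{\bv,\cN}\). Theorem~\ref{thm:lattice-obstruction}\eqref{eq:defect-image} together with exactness of \(\Shat\otimes_{S_0}-\) then gives \eqref{eq:boundary-classification-isomorphism}; the factorization through \(S_{0,\fm}\) is clear. For \eqref{eq:boundary-classification-equality}: both ideals are homogeneous, so \(J^{\intr}_{\cN}/J^{\amb}_{\cN}\) is graded, and a graded module vanishes iff its completion at \(\fm\) does, since \(\Shat\) is faithfully flat over \(S_{0,\fm}\) and graded modules are supported at \(\fm\) whenever nonzero. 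The Hilbert series identity \eqref{eq:boundary-classification-Hilbert} is additivity along \(0\to J^{\intr}/J^{\amb}\to S_0/J^{\amb}\to S_0/J^{\intr}\to0\). The kernel description \eqref{eq:boundary-matrix-kernel} is the standard computation of an annihilator of a class in a cokernel: \(c\epsilon_{\cN}\in Z_{\cN}\) iff \((\ast,c)\in\ker\Gamma_{\cN}\) for some \(\ast\), up to the sign of the last coordinate.

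\textbf{Main obstacle: the Artinian claim for \(\cN_{\bv}\).} Since \(J^{\amb}\subseteq J^{\intr}\), it suffices to show that \(S_0/J^{\amb}_{\cN_{\bv}}\) is Artinian. I would try to prove the stronger statement that \(\Pi_{\cN_{\bv}}=\Phi(P)\) is \(\fm\)-primary, i.e.\ the forms \(\ell^{H_a}\) have no common nonzero zero. Then \(\fm^N\epsilon_{\cN}\subseteq\fm^N F/Z=0\), because \(\Phi(P)\,F\subseteq Z+\ldots\), which must be checked via the isomorphism to \(M/Q\). The zero-set claim should come from genericity of \(\bw\): a nonzero \(\bs\) with all \(\ell^{H_a}(\bs)=0\) would give a direction in \(L\otimes\R\), and moving \(\bv\) along the lattice points \(\bu\) near that ray with \(\bw\cdot\bu<0\) should produce a support \(J\notin\cN_{\bv}\), with \(I\cup J\) avoiding the vanishing coordinates. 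I expect this to require care, and the paper may instead invoke the corresponding finiteness in \cite{OS25}, where \(P_B(\bv)\) has finite-dimensional inverse system.
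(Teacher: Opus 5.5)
Everything up to the Artinian claim is essentially the paper's argument. That includes the Taylor presentation of $M^0_{\cN}$ with lifts of $\bt^{H_a}$ adjoined, tensoring with $S_0$, identifying $\epsilon_{\cN}$ with the class of $e$, flat base change for colons, the graded vanishing criterion, the Hilbert series from the short exact sequence, and the last-coordinate projection of $\ker\Gamma_{\cN}$.

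\textbf{The gap is in the Artinian claim for $\cN_{\bv}$.} The stronger statement you propose, that $\Phi_0(P^0_{\cN_{\bv}}(\bt))=\langle\ell^{H_1},\ldots,\ell^{H_h}\rangle$ is $\fm$-primary, is false in general. Every generator of $P^0_{\cN}(\bt)$ is divisible by some $\bt^{G_i}$. Hence every common zero of $\ell^{G_1},\ldots,\ell^{G_g}$ is a common zero of all the $\ell^{H_a}$, and such zeros do occur for the distinguished collection:
\begin{itemize}
\item In Theorem~\ref{thm:lattice-counterexample}, the distinguished collection has $\cG_{\cN_{\bv}}=\{\{3\},\{4\}\}$ and $P=\langle t_2t_4,t_3t_5\rangle$. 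Since $\ell_3=-3\ell_4$, both $\ell_2\ell_4$ and $\ell_3\ell_5$ vanish on the line $s_1+s_2=0$.
\item In Theorem~\ref{thm:ordered-distinguished-counterexample}, $\ell_2\ell_3$ and $\ell_1\ell_5$ are both divisible by $s_1$.
\end{itemize}
So the heuristic of finding $J\notin\cN_{\bv}$ with $I\cup J$ avoiding the vanishing coordinates cannot work, and your route does not reach Artinianity of $S_0/J^{\amb}_{\cN_{\bv}}$.

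\textbf{The missing idea is to take the colon by $e$ first, using only the generators of $P$ attached to the Gr\"obner basis.}
\begin{enumerate}
\item Since $\bw\cdot\bg^{(i)}>0$, the point $-\bg^{(i)}$ has negative weight and lies in the fiber of $I_{-\bg^{(i)}}$. Hence $I_{-\bg^{(i)}}\notin\cN_{\bv}$.
\item The pair $(I_{\bzero},I_{-\bg^{(i)}})$ therefore contributes the generator $e\,\bt^{G^{(i)}(\bv)}$ to $P_{\cN_{\bv}}(\bt)$. So $\bt^{G^{(i)}(\bv)}\in P^0_{\cN_{\bv}}(\bt):e\subseteq Q^0_{\cN_{\bv}}(\bt):e$, and \eqref{eq:boundary-annihilator-colon} gives $P_B(\bv)\subseteq J^{\amb}_{\cN_{\bv}}$.
\item It remains to show that $P_B(\bv)$ is $\fm$-primary. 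Let $\bs_0$ be a common zero and put $\bu=B\bs_0$. For each $i$ some $j\in G^{(i)}(\bv)$ has $u_j=0$, which forces $v_j\in\{0,\ldots,(\bg^{(i)}_+)_j-1\}$. Then the distraction of $\bpartial^{\bg^{(i)}_+}$ vanishes on the whole line $\bv+\C\bu$.
\item Every point of that line has $A$-image $\bbeta$, so every point is a fake exponent. Fake exponents are finite in number, since each is determined by a standard pair whose columns indexed by $\sigma$ are independent. Hence $\bu=\bzero$ and $\bs_0=\bzero$.
\end{enumerate}

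Your closing remark points toward $P_B(\bv)$. But even if you import finite-dimensionality of $P_B(\bv)^\perp$ from \cite{OS25}, you still need the inclusion $P_B(\bv)\subseteq J^{\amb}_{\cN_{\bv}}$. That inclusion is exactly the step your plan replaces with a false claim.
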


\begin{proof}
	We first work over \(R_0\).
	Set
	\[
		\widetilde F_{\cN}
		=
		\bigoplus_{i=1}^gR_0(-|G_i|)\eta_i.
	\]
	Map the free graded \(R_0\)-module \(\widetilde F_{\cN}\) onto \(M_{\cN}^0\) by \(\eta_i\longmapsto\bt^{G_i}\).
	The kernel of this map is generated by the pairwise Taylor relations \(\bt^{G_j\setminus G_i}\eta_i - \bt^{G_i\setminus G_j}\eta_j\).
	This description of the kernel is the first-syzygy part of the Taylor resolution \cite[Chapter~4]{MS05}.
	For completeness, this description of the kernel also follows from the \(\Z^n\)-grading.
	In a fixed multidegree \(\balpha\), every nonzero term of a syzygy is a scalar multiple of \(\bt^{ \balpha-\bone_{G_i}}\eta_i\), and the sum of those scalars is zero; here \(\bone_Y\) denotes the indicator vector of a set \(Y\).
	Fixing one index \(i\) whose term is nonzero, we express the vanishing of that sum of scalars as a sum of pairwise differences.
	Multiplying the corresponding Taylor relations by \(\bt^{ \balpha-\bone_{G_i\cup G_j}}\) gives the original homogeneous syzygy.

	The ideal \(P_{\cN}^0(\bt)\) is generated by the monomials \(\bt^{H_a}\).
	Since \(G_{\iota(a)}\subseteq H_a\), the element \(\bt^{H_a\setminus G_{\iota(a)}}\eta_{\iota(a)}\) is a lift of \(\bt^{H_a}\).
	Consequently, adjoining these \(h\) relations to the Taylor relations yields a presentation of \(M_{\cN}^0/P_{\cN}^0(\bt)\).
	The lifts corresponding to two choices of \(\iota(a)\) differ by a Taylor relation.
	Hence the resulting relation module does not depend on the choice of \(\iota(a)\).

	Tensor this presentation with \(S_0=R_0/U_0\).
	Under the identification \(S_0\otimes_{R_0}\widetilde F_{\cN}=F_{\cN}\), the images of the Taylor relations and of the \(h\) additional relations generate \(Z_{\cN}\).
	Right exactness of the tensor product gives
	\begin{equation}
		\frac{F_{\cN}}{Z_{\cN}}
		\simeq
		S_0\otimes_{R_0}
		\frac{M_{\cN}^0}
		{P_{\cN}^0(\bt)}
		\simeq
		\frac{M_{\cN}^0}
		{U_0M_{\cN}^0+
		P_{\cN}^0(\bt)}.
		\label{eq:boundary-presented-module}
	\end{equation}
	Under the isomorphisms in \eqref{eq:boundary-presented-module}, the class in \eqref{eq:distinguished-boundary-class} is the class of \(e=\bt^E\).
	In particular, the class \(\epsilon_{\cN}\) is independent of the chosen \(G_i\subseteq E\).
	The annihilator of this class is therefore \(\left\{ \Phi_0(f)\ \middle|\ fe\in U_0M_{\cN}^0+ P_{\cN}^0(\bt) \right\} = \Phi_0\bigl( Q_{\cN}^0(\bt):e \bigr)\), which proves the equality in \eqref{eq:boundary-annihilator-colon}.

	On the other hand, \(\Phi_0(P_{\cN}^0(\bt))=\langle\ell^{H_1},\ldots,\ell^{H_h}\rangle\) and \(\Phi_0(e)=\ell^E\).
	Thus \(J_{\cN}^{\intr}\) is the counterpart of \(\Pi_{\cN}:m_{\bv,\cN}\) in \(S_0\).
	If \(\Phi_0(f)\in J_{\cN}^{\amb}\), applying \(\Phi_0\) to \(fe\in U_0M_{\cN}^0+ P_{\cN}^0(\bt)\) shows that \(\Phi_0(f)\ell^E\) belongs to \(\Phi_0(P_{\cN}^0(\bt))\).
	Hence \(J_{\cN}^{\amb}\subseteq J_{\cN}^{\intr}\).

	The completion maps \(R_{0,\langle t_1,\ldots,t_n\rangle}\to\Rhat\) and \(S_{0,\fm}\to\Shat\) are faithfully flat, and forming a colon ideal commutes with flat base change, because \((\fa:y)/\fa\) is the kernel of multiplication by \(y\) on the quotient by \(\fa\).
	Since \(U_0\subseteq Q_{\cN}^0(\bt):e\), it follows that \(J_{\cN}^{\amb}\Shat=\Phi\bigl(Q_{\cN}(\bt):e\bigr)\) and, by the same argument over \(S_0\), that \(J_{\cN}^{\intr}\Shat = \Pi_{\cN}:m_{\bv,\cN}\), so Theorem~\ref{thm:lattice-obstruction} gives \eqref{eq:boundary-classification-isomorphism}.
	The quotient \(J_{\cN}^{\intr}/J_{\cN}^{\amb}\) is a finitely generated graded \(S_0\)-module, and a nonzero homogeneous element of such a module survives localization at \(\fm\), so this quotient vanishes if and only if its localization does; faithful flatness of \(S_{0,\fm}\to\Shat\) then gives \eqref{eq:boundary-classification-equality}.
	The \(\fm\)-adic completion of a finitely generated graded module is the product of its homogeneous components, which are the components used in \eqref{eq:boundary-completed-Hilbert-definition}, and the exact sequence \(0\to J_{\cN}^{\intr}/J_{\cN}^{\amb}\to S_0/J_{\cN}^{\amb}\to S_0/J_{\cN}^{\intr}\to0\) gives \eqref{eq:boundary-classification-Hilbert}.

	It remains to bound \(\dim_{\C}(S_0/J_{\cN}^{\amb})\) for the distinguished collection at a fake exponent.
	Since \(\bw\cdot\bg^{(i)}>0\), the point \(-\bg^{(i)}\) has negative weight and lies in the fiber of \(I_{-\bg^{(i)}}\), so \(I_{-\bg^{(i)}}\in\sS(\bv)\setminus\cN_{\bv}\).
	With \(I=I_{\bzero}\) and \(J=I_{-\bg^{(i)}}\) in \eqref{eq:obstruction-P}, and with \(K_{\cN}\subseteq I_{\bzero}\) and \(G^{(i)}(\bv)\cap I_{\bzero}=\varnothing\), the resulting generator is \(e\,\bt^{G^{(i)}(\bv)}\); hence \(\bt^{G^{(i)}(\bv)}\in Q_{\cN}^0(\bt):e\), and \eqref{eq:boundary-annihilator-colon} gives \(P_B(\bv)\subseteq J_{\cN}^{\amb}\).

	We show that \(P_B(\bv)\) is primary to \(\fm\).
	Let \(\bs_0\in\C^r\) be a common zero of the generators of \(P_B(\bv)\), put \(\bu=B\bs_0\), and fix \(i\).
	Since \(\prod_{j\in G^{(i)}(\bv)}\ell_j(\bs_0)=0\), we may choose \(j\in G^{(i)}(\bv)\) with \(u_j=\ell_j(\bs_0)=0\); then \(v_j\) is not a negative integer but \(v_j-\bg^{(i)}_j\) is, so \(v_j\in\{0,1,\ldots,(\bg^{(i)}_+)_j-1\}\) and the distraction of \(\bpartial^{\bg^{(i)}_+}\) vanishes along the whole line \(\bv+\C\bu\).
	Since \(AB=0\), every point of \(\bv+\C\bu\) has \(A\)-image \(\bbeta\), and the description of the fake indicial ideal by distractions in \cite[Section~3.2]{SST00} shows that every such point is a fake exponent.
	There are only finitely many fake exponents for \(\bbeta\) and \(\bw\).
	For a standard pair \((\ba,\sigma)\) of \(\inw(I_A)\), the columns of \(A\) indexed by \(\sigma\) are linearly independent.
	Otherwise, a nonzero integral relation among these columns would give a lattice binomial supported in \(\sigma\).
	The initial monomial of this binomial, multiplied by \(\bpartial^{\ba}\), would belong to \(\inw(I_A)\).
	This membership contradicts the defining property that every monomial obtained by multiplying \(\bpartial^{\ba}\) by a monomial supported in \(\sigma\) is standard for \(\inw(I_A)\).
	The conditions \(A\bv=\bbeta\) and \(v_j=a_j\) for \(j\notin\sigma\) therefore determine \(\bv\), and there are finitely many standard pairs.
	Hence \(\bu=\bzero\), and \(\bs_0=\bzero\) because the columns of \(B\) are linearly independent.
	Hilbert's Nullstellensatz gives \(\sqrt{P_B(\bv)}=\fm\), so \(\dim_{\C}(S_0/J_{\cN}^{\amb}) \leq \dim_{\C}(S_0/P_B(\bv)) <\infty\) and \(S_0/J_{\cN}^{\amb}\) is Artinian; when it is nonzero, \(J_{\cN}^{\amb}\) is primary to \(\fm\).
	Since \(J_{\cN}^{\amb}\subseteq J_{\cN}^{\intr}\), the quotient \(S_0/J_{\cN}^{\intr}\) is Artinian as well, and it is zero if \(J_{\cN}^{\intr}=S_0\), so the Hilbert-series identity gives \eqref{eq:boundary-classification-length}.
	Finally, an element \(f\in S_0\) annihilates \(\epsilon_{\cN}\) exactly when \(\Gamma_{\cN}\tp{(\bq,-f)}=0\) for some vector \(\bq\) of relation coefficients, so the image of \(\ker\Gamma_{\cN}\) under the last-coordinate projection is the annihilator ideal, which proves \eqref{eq:boundary-matrix-kernel}.
\end{proof}

For a homogeneous subspace \(W\subseteq S_{0,i}\), let \(W^\perp\subseteq\C[\bpartial_{\bs}]_i\) denote its orthogonal complement under \(\langle\cdot,\cdot\rangle_{\bs}\).
Put \(V_{\cN,i}^{\amb} = (J_{\cN}^{\amb})_i^\perp\) and \(V_{\cN,i}^{\intr} = (J_{\cN}^{\intr})_i^\perp\).

\begin{proposition}
	\label{prop:defect-coefficient-duality}
	For every \(i\geq0\), there is an inclusion \(V_{\cN,i}^{\intr} \subseteq V_{\cN,i}^{\amb}\), and \(\langle\cdot,\cdot\rangle_{\bs}\) induces a perfect pairing
	\begin{equation}
		\left(
		\frac{J_{\cN}^{\intr}}
		{J_{\cN}^{\amb}}
		\right)_i
		\times
		\frac{V_{\cN,i}^{\amb}}
		{V_{\cN,i}^{\intr}}
		\longrightarrow\C.
		\label{eq:defect-coefficient-perfect-pairing}
	\end{equation}
	In particular,
	\[
		\left(
		\frac{J_{\cN}^{\intr}}
		{J_{\cN}^{\amb}}
		\right)_i
		\simeq
		\left(
		\frac{V_{\cN,i}^{\amb}}
		{V_{\cN,i}^{\intr}}
		\right)^\vee.
	\]
	If \(\cN\) is ordered, Theorem~5.5 and Proposition~6.2 of \cite{OS25} identify \(V_{\cN,i}^{\amb}\) and \(V_{\cN,i}^{\intr}\) with the degree-\(i\) ambient and intrinsic coefficient spaces at \(\bv\), respectively.
\end{proposition}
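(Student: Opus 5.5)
The argument is finite-dimensional linear algebra in a single degree \(i\); the only input from earlier in the paper is the inclusion \(J_{\cN}^{\amb}\subseteq J_{\cN}^{\intr}\) of \eqref{eq:boundary-annihilator-colon}.

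First I check that \(\langle\cdot,\cdot\rangle_{\bs}\) restricts to a perfect pairing \(S_{0,i}\times\C[\bpartial_{\bs}]_i\to\C\). This holds because \(\langle\bs^{\balpha},\bpartial_{\bs}^{\bgamma}\rangle_{\bs}=\balpha!\,\delta_{\balpha\bgamma}\) for \(|\balpha|=|\bgamma|=i\), so monomials form dual bases up to nonzero scalars. The pairing is zero between different degrees, so \(I^\perp\) is graded for a homogeneous ideal \(I\), and its degree-\(i\) part is exactly \(I_i^\perp\).

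Inclusion-reversal of orthogonal complements then gives \(V_{\cN,i}^{\intr}=(J_{\cN}^{\intr})_i^\perp\subseteq(J_{\cN}^{\amb})_i^\perp=V_{\cN,i}^{\amb}\).

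For the perfect pairing, write \(X=S_{0,i}\) and \(X^*=\C[\bpartial_{\bs}]_i\), and set \(W_1=(J_{\cN}^{\amb})_i\subseteq W_2=(J_{\cN}^{\intr})_i\). I would pair \(W_2/W_1\) with \(W_1^\perp/W_2^\perp\) by \((f,q)\mapsto\langle f,q\rangle_{\bs}\). This is well defined: if \(f\in W_1\) and \(q\in W_1^\perp\), the value is \(0\), and if \(f\in W_2\) and \(q\in W_2^\perp\), the value is also \(0\). For nondegeneracy I argue on each side.
\begin{itemize}
\item If \(f\in W_2\) pairs to zero with all of \(W_1^\perp\), then \(f\in W_1^{\perp\perp}=W_1\), using finite-dimensionality and perfectness.
\item If \(q\in W_1^\perp\) pairs to zero with all of \(W_2\), then \(q\in W_2^\perp\).
\end{itemize}
Both kernels are trivial, so the induced maps are injective. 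Since both spaces are finite-dimensional, they are isomorphisms, and this gives the stated dual isomorphism.

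The last sentence is an identification rather than a new computation. I would state that for ordered \(\cN\), \cite[Proposition~6.2]{OS25} describes the degree-\(i\) intrinsic coefficient space as the degree-\(i\) inverse system of \(\Pi_{\cN}:m_{\bv,\cN}\), and the ambient one, via \cite[Theorem~5.5]{OS25}, as that of \(\Phi(Q_{\cN}(\bt):e)\). Theorem~\ref{thm:finite-boundary-classification} says these ideals are the extensions of \(J_{\cN}^{\intr}\) and \(J_{\cN}^{\amb}\) to \(\Shat\).

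Extension to \(\Shat\) does not change degree-\(i\) components modulo \(\fm^{i+1}\), because completion is the product of graded pieces. Hence the inverse systems, which are annihilated by a power of \(\fm\) in each degree, coincide.

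The main obstacle is precisely this last identification: matching the conventions of \cite{OS25}, which phrases inverse systems over \(\Shat\) or in terms of coefficient polynomials, with the graded pieces \(V_{\cN,i}\) here. I would handle it by noting that for a homogeneous ideal \(I\), \((I\Shat)^\perp\) in degree \(i\) equals \(I_i^\perp\), since polynomial differential operators of order \(i\) see only the degree-\(i\) component of a series.
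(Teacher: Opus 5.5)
Your proposal is correct and follows the same route as the paper's proof. You obtain the reverse inclusion of orthogonal complements from $J_{\cN}^{\amb}\subseteq J_{\cN}^{\intr}$, use that the degree-$i$ pairing is perfect and apply finite-dimensional duality, and derive the identification with the coefficient spaces from the cited results of \cite{OS25} and the completion statement in Theorem~\ref{thm:finite-boundary-classification}. The only difference is that you spell out details the paper leaves implicit: the monomial dual bases, $W^{\perp\perp}=W$, and why passing to $\Shat$ does not change degree-$i$ inverse systems.
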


\begin{proof}
	The inclusion \(J_{\cN}^{\amb}\subseteq J_{\cN}^{\intr}\) gives the reverse inclusion of the orthogonal complements.
	The restriction of \(\langle\cdot,\cdot\rangle_{\bs}\) to \(S_{0,i}\times\C[\bpartial_{\bs}]_i\) is perfect.
	Finite-dimensional linear duality therefore identifies the dual of \((J_{\cN}^{\intr}/J_{\cN}^{\amb})_i\) with \(V_{\cN,i}^{\amb}/V_{\cN,i}^{\intr}\), and this identification proves \eqref{eq:defect-coefficient-perfect-pairing}.
	The last assertion follows from the two cited results and the description of \(J_{\cN}^{\intr}/J_{\cN}^{\amb}\) in Theorem~\ref{thm:finite-boundary-classification}.
\end{proof}

The second right-exact sequence \eqref{eq:distinguished-tor-cokernel} also gives the following description of the quotient of the ambient coefficient space by the intrinsic coefficient space.
This description does not involve the completion.

\begin{corollary}
	\label{cor:tor-coefficient-exact-sequence}
	Put
	\begin{equation}
		\begin{aligned}
			\fC_{\cN}
			=
			\coker\Bigl(&
			\Tor^{R_0}_1
			(R_0/\langle e\rangle,R_0/Q_{\cN}^0(\bt))\\
			&\longrightarrow
			\Tor^{R_0}_1
			(R_0/\langle e\rangle,
			R_0/(U_0+P_{\cN}^0(\bt)))
			\Bigr).
		\end{aligned}
		\label{eq:polynomial-tor-cokernel}
	\end{equation}
	There is a natural graded isomorphism
	\begin{equation}
		\fC_{\cN}(|E|)
		\simeq
		\frac{J_{\cN}^{\intr}}
		{J_{\cN}^{\amb}},
		\label{eq:polynomial-tor-obstruction-isomorphism}
	\end{equation}
	where the shift by \(|E|\) cancels the Koszul shift by \(\deg e=|E|\).
	For every \(i\geq0\), there is an exact sequence
	\begin{equation}
		0\longrightarrow
		V_{\cN,i}^{\intr}
		\longrightarrow
		V_{\cN,i}^{\amb}
		\longrightarrow
		\bigl(\fC_{\cN}(|E|)_i\bigr)^\vee
		\longrightarrow0.
		\label{eq:tor-coefficient-exact-sequence}
	\end{equation}
	If \(\cN\) is ordered, the first two terms are the intrinsic and ambient coefficient spaces at \(\bv\), respectively.
\end{corollary}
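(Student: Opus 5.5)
The plan is to redo the Koszul computation in the proof of Theorem~\ref{thm:koszul-cokernels} over the polynomial ring \(R_0\) instead of \(\Rhat\), and then compare the result with Theorem~\ref{thm:finite-boundary-classification}.

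Since \(e\) is a nonzero monomial, hence a nonzerodivisor in \(R_0\), the one-element Koszul complex \(0\to R_0(-|E|)\xrightarrow{e}R_0\to R_0/\langle e\rangle\to0\) is a graded free resolution. It gives graded identifications
\[
	\Tor^{R_0}_1(R_0/\langle e\rangle,R_0/\fa)\simeq\bigl((\fa:e)/\fa\bigr)(-|E|)
\]
for any homogeneous ideal \(\fa\). The shift records that the generator of the Koszul term sits in degree \(|E|\). We apply this with \(\fa=Q_{\cN}^0(\bt)\) and with \(\fa=U_0+P_{\cN}^0(\bt)\). Because \(eU_0\subseteq U_0M_{\cN}^0\) and \(eP_{\cN}^0(\bt)\subseteq P_{\cN}^0(\bt)\), we have \(U_0+P_{\cN}^0(\bt)\subseteq Q_{\cN}^0(\bt):e\), so the natural map between the two Tor modules has image \((Q^0_{\cN}(\bt):e)/(U_0+P^0_{\cN}(\bt))\). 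Its cokernel is then
\[
	\fC_{\cN}\simeq\Bigl(\frac{(U_0+P_{\cN}^0(\bt)):e}{Q_{\cN}^0(\bt):e}\Bigr)(-|E|).
\]
Naturality holds because the map is induced by the quotient \(R_0/Q^0_{\cN}\to R_0/(U_0+P^0_{\cN})\) and is computed on the same resolution.

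Next we apply \(\Phi_0\). Both colon ideals contain \(U_0=\ker\Phi_0\). By the third-isomorphism theorem, the quotient above is therefore isomorphic to \(\Phi_0((U_0+P^0_{\cN}):e)/\Phi_0(Q^0_{\cN}:e)\) as graded \(R_0\)-modules, and also as \(S_0\)-modules since \(\Phi_0\) is graded of degree zero. The denominator is \(J^{\amb}_{\cN}\) by \eqref{eq:boundary-annihilator-colon}. For the numerator we use surjectivity of \(\Phi_0\): we have \(f\in(U_0+P^0_{\cN}):e\) iff \(\Phi_0(f)\ell^E\in\Phi_0(P^0_{\cN})=\langle\ell^{H_1},\dots,\ell^{H_h}\rangle\), so the numerator is exactly \(J^{\intr}_{\cN}\). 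Shifting by \(|E|\) now gives \eqref{eq:polynomial-tor-obstruction-isomorphism}. The only care needed in this step is the bookkeeping of the shift direction. The Koszul shift places the class of \(f\) in degree \(\deg f+|E|\), so twisting by \(|E|\) returns it to degree \(\deg f\), matching the grading of \(J^{\intr}_{\cN}\subseteq S_0\).

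Finally, for the exact sequence, in each degree \(i\) we have the inclusion \(V^{\intr}_{\cN,i}\subseteq V^{\amb}_{\cN,i}\) and the perfect pairing of Proposition~\ref{prop:defect-coefficient-duality}. Together these give \(V^{\amb}_{\cN,i}/V^{\intr}_{\cN,i}\simeq((J^{\intr}_{\cN}/J^{\amb}_{\cN})_i)^\vee\), and the latter is \((\fC_{\cN}(|E|)_i)^\vee\) by the isomorphism just proved. This yields \eqref{eq:tor-coefficient-exact-sequence}. The interpretation for ordered \(\cN\) is the last assertion of Proposition~\ref{prop:defect-coefficient-duality}. No completion enters, since everything is graded and finite-dimensional degreewise.
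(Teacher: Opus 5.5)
Your proposal is correct and follows essentially the same route as the paper: the one-element Koszul resolution of \(R_0/\langle e\rangle\) identifies \(\fC_{\cN}(|E|)\) with \(((U_0+P_{\cN}^0(\bt)):e)/(Q_{\cN}^0(\bt):e)\). Then \(U_0\subseteq Q_{\cN}^0(\bt):e\), the surjectivity of \(\Phi_0\), and \eqref{eq:boundary-annihilator-colon} identify the numerator and denominator with \(J_{\cN}^{\intr}\) and \(J_{\cN}^{\amb}\), and Proposition~\ref{prop:defect-coefficient-duality} gives the exact sequence; your explicit tracking of the degree shift and of the image of the first \(\Tor\) map usefully spells out steps the paper only cites from the proof of Theorem~\ref{thm:koszul-cokernels}.
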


\begin{proof}
	Applying the one-element Koszul calculation in Theorem~\ref{thm:koszul-cokernels} over \(R_0\) gives \(\fC_{\cN}(|E|)\simeq((U_0+P_{\cN}^0(\bt)):e)/(Q_{\cN}^0(\bt):e)\).
	For \(f\in R_0\), the membership \(f\in(U_0+P_{\cN}^0(\bt)):e\) is equivalent to \(\Phi_0(f)\Phi_0(e)\in\Phi_0(P_{\cN}^0(\bt))\), hence to \(\Phi_0(f)\in J_{\cN}^{\intr}\).
	Equation \eqref{eq:boundary-annihilator-colon} identifies the image of the denominator with \(J_{\cN}^{\amb}\).
	Since \(U_0\subseteq Q_{\cN}^0(\bt):e\), taking the quotient proves \eqref{eq:polynomial-tor-obstruction-isomorphism}, and Proposition~\ref{prop:defect-coefficient-duality} gives \eqref{eq:tor-coefficient-exact-sequence} and the last assertion.
\end{proof}

\subsection{Formal series solutions}

We now allow the fake exponent and the ordered negative support family to vary.
When the fake exponent must be displayed in the notation, write \(J_{\bv,\cN}^{\amb}\), \(J_{\bv,\cN}^{\intr}\), \(V_{\bv,\cN,i}^{\amb}\), and \(V_{\bv,\cN,i}^{\intr}\) for the objects defined above.
Define \(V_{\bv,\cN}^{\amb}\) and \(V_{\bv,\cN}^{\intr}\) by
\[
	V_{\bv,\cN}^{\amb}
	=
	\bigoplus_{i\geq0}V_{\bv,\cN,i}^{\amb},
	\qquad
	V_{\bv,\cN}^{\intr}
	=
	\bigoplus_{i\geq0}V_{\bv,\cN,i}^{\intr}.
\]
We identify these spaces with the corresponding spaces of logarithmic coefficients by the injective substitution in lattice coordinates given in \cite[Sections~5 and~6]{OS25}.

For a fake exponent \(\bv\), consider the canonical series in the direction \(\bw\) based at \(\bv\).
Define \(\cE_{\bv}\) to be the graded space of the logarithmic coefficients of \(\bx^{\bv}\) in those series, written in lattice coordinates.
By \cite[Theorem~4.9]{OS25}, the space \(\cE_{\bv}\) is the finite-dimensional inverse system of the component supported at \(\bv\) of the indicial ideal \(\operatorname{ind}_{\bw}(H_A(\bbeta))\).
Here the inverse system is written in the lattice coordinates supplied by \(B\).
Let \(\cE_{\bv}^{\perp}\) be the orthogonal complement of \(\cE_{\bv}\) in \(S_0\) under \(\langle\cdot,\cdot\rangle_{\bs}\).
For each \(i\), the factors of the distraction of \(\bpartial^{\bg^{(i)}_+}\) that vanish at \(\bv\) are exactly the factors indexed by \(G^{(i)}(\bv)\).
The remaining factors are units in the local ring \(S_{0,\fm}\).
Thus \(P_B(\bv)\) is the corresponding shifted local component of the fake indicial ideal; see \cite[Section~3.2]{SST00}.
The defining inclusion of the fake indicial ideal in the indicial ideal gives
\begin{equation}
	P_B(\bv)
	\subseteq
	\cE_{\bv}^{\perp},
	\qquad
	\cE_{\bv}
	\subseteq
	P_B(\bv)^\perp.
	\label{eq:actual-fake-local-inclusions}
\end{equation}

We use the following fact about the construction of canonical series.

\begin{fact}
	\label{fact:canonical-series-construction}
	Let \(A\) be homogeneous, let \(\bbeta\in\C^d\), let \(\bw\) be generic, fix a monomial order refining the \(\bw\)-weight, and let \(N\) be the corresponding space of formal series in \cite[Section~2.5]{SST00}.
	The discussion following \cite[Algorithm~2.5.8]{SST00} states that the indicial ideal \(\operatorname{ind}_{\bw}(H_A(\bbeta))\) is a Frobenius ideal of finite rank equal to \(\rank(H_A(\bbeta))\) and that its zeros are the exponents of \(H_A(\bbeta)\) with respect to \(\bw\).
	The same discussion and the proof of \cite[Lemma~2.5.10]{SST00} show that this indicial ideal and \(\operatorname{in}_{(-\bw,\bw)}(H_A(\bbeta))\) have the same solutions in \(N\).
	By \cite[Theorem~2.3.11]{SST00}, these common solutions are finite sums of terms \(\bx^{\bv}q(\log\bx)\), where \(\bv\) is an exponent and \(q\) belongs to the local inverse system of the indicial ideal at \(\bv\).
	By \cite[Lemma~2.5.9, Corollary~2.5.11, and Theorem~2.5.12]{SST00}, the set of starting monomials with respect to the fixed order is finite, the number of starting monomials with a fixed exponent equals the multiplicity of that exponent, and each starting monomial \(\fs\) determines a unique canonical series \(\Xi_{\fs}\in N\) whose coefficient at \(\fs\) is one and in which no other starting monomial occurs.
	Consequently, the series \(\Xi_{\fs}\) form a basis of the solutions of \(H_A(\bbeta)\) in \(N\), and their initial series form a basis of the solutions of the initial system in \(N\).
\end{fact}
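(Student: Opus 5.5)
The statement collects results from \cite{SST00}. I would prove it by checking that these results apply under our standing hypotheses (homogeneous \(A\), generic \(\bw\), and a monomial order refining the \(\bw\)-weight) and then combining them in the following order.

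\emph{The indicial ideal.} First I would recall that \(\operatorname{ind}_{\bw}(H_A(\bbeta))\) is the torus-invariant part of \(\operatorname{in}_{(-\bw,\bw)}(H_A(\bbeta))\), rewritten as an ideal in the Euler operators \(\theta_j=x_j\partial_j\). An ideal in \(\C[\theta]\) is a Frobenius ideal by definition, so the only real content is the rank equality.
\begin{itemize}
\item For a homogeneous configuration, \(H_A(\bbeta)\) is regular holonomic.
\item For such systems, the holonomic rank is preserved when passing to the initial ideal with respect to a generic weight \((-\bw,\bw)\).
\item Since \(\bw\) lies in the interior of a full-dimensional cone of the small Gr\"obner fan, the initial ideal is torus-fixed up to the Euler relations, and its rank equals the rank of the indicial ideal.
\end{itemize}
Together these give the finite-rank assertion. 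The description of the zeros of the indicial ideal as exponents is then a definition.

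\emph{Same solutions in \(N\).} Next I would follow the proof of \cite[Lemma~2.5.10]{SST00}.
\begin{itemize}
\item The indicial ideal is contained in the initial ideal after the torus-invariant rewriting. Hence every solution of \(\operatorname{in}_{(-\bw,\bw)}(H_A(\bbeta))\) in \(N\) solves the indicial ideal.
\item Conversely, a solution of the indicial ideal is a finite sum of terms \(\bx^{\bv}q(\log\bx)\) by \cite[Theorem~2.3.11]{SST00}. Applying this theorem to the Frobenius ideal gives exactly the stated form, with \(q\) in the local inverse system at \(\bv\).
\item Each element of the initial ideal decomposes into torus-homogeneous pieces, and each piece acts on \(\bx^{\bv}q(\log\bx)\) through its indicial part. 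So these terms are annihilated by the initial ideal.
\end{itemize}

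\emph{Canonical series.} I would invoke \cite[Lemma~2.5.9]{SST00} for the finiteness of the starting monomials and \cite[Corollary~2.5.11]{SST00} for the count by multiplicity. Then \cite[Theorem~2.5.12]{SST00} gives, for each starting monomial \(\fs\), the unique series \(\Xi_{\fs}\) whose coefficient at \(\fs\) is one and which contains no other starting monomial.

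\emph{Basis.} It remains to show the basis statement.
\begin{itemize}
\item \emph{Linear independence} is triangular: each \(\Xi_{\fs}\) contains exactly one starting monomial, namely \(\fs\).
\item \emph{Spanning.} I would show that the initial series of any nonzero solution is a nonzero solution of the initial system, so its leading monomial is a starting monomial \(\fs\). Subtracting the appropriate multiple of \(\Xi_{\fs}\) removes that starting monomial without introducing any other. Since there are finitely many starting monomials, the process terminates after finitely many steps with a solution containing none. By the first observation, such a solution must be zero.
\item Applying the same argument to the initial system shows that the initial series form a basis of its solutions. The count is consistent: the sum of the multiplicities equals the rank.
\end{itemize}

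\emph{Main obstacle.} I expect the hardest step to be the claim that the leading monomial of every solution in \(N\) is a starting monomial. This needs the support of series in \(N\) to lie in a translate of a cone on which the fixed order is a well-order, so that leading terms exist. It also needs the compatibility of the order with the \(\bw\)-weight, so that taking initial series commutes with applying the operators. I would take both points from the setup of \cite[Section~2.5]{SST00}.
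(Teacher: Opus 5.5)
Your proposal is correct and follows essentially the paper's route. The paper justifies this Fact only by chaining the cited results of \cite{SST00} in the order you use, and your triangular subtraction argument for the basis assertion is the one the paper runs in the proof of Proposition~\ref{prop:canonical-nilsson-identification}, where \(\phi_0=\phi-\sum_{\fs}a_{\fs}(\phi)\Xi_{\fs}\) must vanish because otherwise its initial monomial would be a starting monomial.

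One step should be phrased more carefully: the indicial ideal lies in the extension of \(\operatorname{in}_{(-\bw,\bw)}(H_A(\bbeta))\) to operators with rational coefficients, not in the initial ideal itself. Torus-fixedness makes each of its elements a finite sum of Laurent-monomial multiples of elements of the initial ideal, which is all your comparison of solutions in \(N\) needs.
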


By the description of \(\cE_{\bv}\) as an inverse system and Fact~\ref{fact:canonical-series-construction}, a fake exponent \(\bv\) is an exponent of \(H_A(\bbeta)\) with respect to \(\bw\) in the sense of \cite{Sai20} if and only if \(\cE_{\bv}\ne0\).
Under the standing homogeneity assumption, these are also the exponents in the sense of \cite[Definition~2.9]{DMM12}.
Let \(\sE_{\bbeta,\bw}\) be this finite set of exponents.
For \(\bv\in\sE_{\bbeta,\bw}\), define \(\sO(\bv)\) to be the set of all ordered negative support families for \(\bv\).
The set \(\sO(\bv)\) is finite because the members of \(\sS(\bv)\) are subsets of \(\{1,\ldots,n\}\).
Define \(\cN_{\bv}^{\ord}\) by
\begin{equation}
	\cN_{\bv}^{\ord}
	=
	\bigcup_{\cN\in\sO(\bv)}\cN.
	\label{eq:largest-ordered-family}
\end{equation}

The following normalization is adapted from \cite[Lemmas~4.1 and~4.2, and Proposition~4.3]{OS25}.
We record the following extension: the coefficient transport applies to a canonical series without fixing an ordered negative support family.

\begin{lemma}
	\label{lem:coefficient-transport}
	Let \(\bv\) be a fake exponent, and let \(\by=(y_1,\ldots,y_n)\) denote variables representing \(\log\bx\).
	Let \(\phi(\bx)=\sum_{\bu\in L}\bx^{\bv+\bu}r_{\bu}(\log\bx)\) be a canonical series in the direction \(\bw\) based at \(\bv\), where \(r_{\bu}\in\C[\by]\) and \(r_{\bu}=0\) when the shift \(\bu\) does not occur.
	There are normalized coefficients \(c_{\bu}\in\C[\by]\), for \(\bu\in L\), such that \(c_{\bu}\) depends only on \(I_{\bu}\) and, writing \(c_I=c_{\bu}\) for \(I=I_{\bu}\), we have
	\begin{equation}
		\bpartial_{\by}^{J\setminus I}c_I
		-
		\bpartial_{\by}^{I\setminus J}c_J
		=0
		\qquad
		(I,J\in\sS(\bv)).
		\label{eq:coefficient-transport}
	\end{equation}
\end{lemma}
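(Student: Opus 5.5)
We follow the normalization of \cite[Lemmas~4.1 and~4.2, and Proposition~4.3]{OS25}, but apply it to an arbitrary canonical series rather than to a series built from an ordered family. First we write down the recursion that the coefficients \(r_{\bu}\) satisfy. The toric operators \(\bpartial^{\bu_+}-\bpartial^{\bu_-}\) for \(\bu\in L\) annihilate \(\phi\). Since \(\phi\) is a solution, the Euler operators force every term to have \(A\)-degree \(\bbeta\), so \(\phi\) is supported on \(\bv+L\). Applying \(\partial_j\) to \(\bx^{\bv+\bu}r(\log\bx)\) gives \(\bx^{\bv+\bu-e_j}(v_j+u_j+\partial_{y_j})r\). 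Hence \(\bpartial^{\bp}\) acts on logarithmic coefficients through the shifted falling factorial \([\bv+\bu+\bpartial_{\by}]_{\bp}\), read as a polynomial in the commuting operators \(\partial_{y_j}\). For every \(\bu,\bu'\in L\) this yields
\[
	[\bv+\bu'+\bpartial_{\by}]_{\bu'_+-\bu_+\,\text{part}}\,r_{\bu'}
	=
	[\bv+\bu+\bpartial_{\by}]_{\cdots}\,r_{\bu},
\]
one identity for each binomial \(\bpartial^{\bp}-\bpartial^{\bp'}\) with \(\bp-\bp'=\bu'-\bu\). We use binomials \(\bpartial^{\bp}\) with \(\bp\) large enough that no exponent lands on a boundary.

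Second, we factor each falling factorial. Write \([z+\partial]_p=\prod_{k=0}^{p-1}(z-k+\partial)\). The factor \(z-k+\partial_{y_j}\) is invertible on \(\C[\by]\), as a unit plus a nilpotent operator, unless \(z-k=0\). The zero factors occur exactly at the coordinates \(j\) where the interval crosses from nonnegative to negative integers, that is, where \(j\) lies in the symmetric difference of \(I_{\bu}\) and \(I_{\bu'}\). There the zero factor is just \(\partial_{y_j}\). We define \(c_{\bu}=D_{\bu}r_{\bu}\), where \(D_{\bu}\) is the product of the invertible factors \(\prod_j\Gamma\)-type normalizations. Concretely, \(D_{\bu}\) is the operator analogue of \(1/a_{\bu}\) with all vanishing factors removed, namely the regularized \(\Gamma(v_j+u_j+1+\partial_{y_j})\) or its reciprocal, according to the sign class of \(v_j+u_j\). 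With this choice the recursion becomes \(\bpartial_{\by}^{I_{\bu}\setminus I_{\bu'}}c_{\bu}=\bpartial_{\by}^{I_{\bu'}\setminus I_{\bu}}c_{\bu'}\), with the sides placed according to the directions of the crossings. This is exactly \eqref{eq:coefficient-transport} for the pair \((I_{\bu},I_{\bu'})\). The bookkeeping is identical to \cite[Lemma~4.1]{OS25}, and it uses no ordering hypothesis on a family.

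Third, we show that \(c_{\bu}\) depends only on \(I_{\bu}\). If \(I_{\bu}=I_{\bu'}\), the identity obtained above for the pair has no derivative factors, so \(c_{\bu}=c_{\bu'}\). We then define \(c_I\) for \(I\in\sS(\bv)\). When \(I\) is not realized by a shift occurring in \(\phi\), we set \(c_{\bu}=0\). This is consistent because \(r_{\bu}=0\) there, and the invertible normalization preserves zero.

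The main obstacle is to verify \eqref{eq:coefficient-transport} for pairs \(I,J\) where one side involves shifts that do not occur in \(\phi\), together with the fact that the recursion holds for all pairs and not only for adjacent ones. We handle this by noting that the identity from the binomial with \(\bp=\bu_-+\bu'_{+}\)-type exponents relates any two lattice points directly, so adjacency is never needed. Vanishing of \(r_{\bu}\) at non-occurring shifts is built into the definition of \(\phi\) as a formal solution, so the relations hold with zero entries automatically. One further check is that the normalization does not depend on the choice of \(\bp\). The extra factors from enlarging \(\bp\) cancel on both sides because they are invertible, since for large \(\bp\) the extra range avoids the boundary.
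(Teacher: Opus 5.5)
Your route is the paper's, which imports the recursion, the factorization and the transport argument from \cite[Lemmas~4.1 and~4.2, and Proposition~4.3]{OS25}. However, the step you use to relate two arbitrary shifts fails as written.

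Start from the minimal pair \(\bp=(\bu'-\bu)_+\), \(\bp'=(\bu'-\bu)_-\) and enlarge it by \(\bk\in\N^n\). Both sides of the relation are then multiplied by the same falling factorial \([\bv+\min(\bu,\bu')+\bpartial_{\by}]_{\bk}\), taken coordinatewise. This factor contains \(\partial_{y_j}\) whenever \(v_j+\min(u_j,u'_j)\) is a nonnegative integer smaller than \(k_j\), and large \(k_j\) produces exactly that. Since \(\partial_{y_j}\) has a kernel on \(\C[\by]\), such a factor cannot be cancelled, so the claim that the extra range avoids the boundary is backwards.

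Your choice \(\bp=\bu_-+\bu'_+\) already exhibits this. For \(v_j=-1\) and \(u_j=u'_j=1\), both sides acquire \(\partial_{y_j}\) although \(j\notin I_{\bu}\cup I_{\bu'}\). So for \(I_{\bu}=I_{\bu'}\) you obtain only \(\partial_{y_j}(c_{\bu}-c_{\bu'})=0\), and your third step breaks.

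The repair is to use only the minimal binomial \(\bpartial^{(\bu'-\bu)_+}-\bpartial^{(\bu'-\bu)_-}\), which lies in \(I_A\) because \(\bu'-\bu\in L\). Its falling factorials vanish only at \(I_{\bu'}\setminus I_{\bu}\) on the side of \(r_{\bu}\) and at \(I_{\bu}\setminus I_{\bu'}\) on the side of \(r_{\bu'}\). It relates any two shifts directly, and nothing depends on a choice of \(\bp\). This is the relation \(d_{\bu'\leftarrow\bu}r_{\bu}=d_{\bu\leftarrow\bu'}r_{\bu'}\) used in the paper.

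Second, the phrase \emph{with this choice the recursion becomes} hides the cocycle identity that the paper takes from \cite[Lemma~4.2]{OS25}. Your Gamma normalization does supply it, provided the same unit is used in every sign class. Put \(\epsilon(z)=1\) for \(z\in\Z_{<0}\), \(\epsilon(z)=0\) otherwise, and let \(\gamma(z;\partial)=\partial^{\epsilon(z)}\Gamma(z+1+\partial)\), which is a unit of \(\C[[\partial]]\). Then
\[
	[z+\partial]_p=\partial^{\epsilon(z-p)-\epsilon(z)}\,\gamma(z;\partial)\,\gamma(z-p;\partial)^{-1}\qquad(p\in\N).
\]
Multiplying the minimal relation by the unit \(\prod_j\gamma(v_j+\min(u_j,u'_j);\partial_{y_j})\) gives \(\bpartial_{\by}^{I_{\bu'}\setminus I_{\bu}}c_{\bu}=\bpartial_{\by}^{I_{\bu}\setminus I_{\bu'}}c_{\bu'}\) for \(c_{\bu}=\prod_j\gamma(v_j+u_j;\partial_{y_j})\,r_{\bu}\). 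Note that your display interchanges the two sets.

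This identity determines \(\gamma(z;\partial)\) up to a unit that is constant along \(z+\Z\). So you may not switch to a reciprocal according to the sign of \(v_j+u_j\), as your wording suggests. Dividing by \(\prod_j\gamma(v_j;\partial_{y_j})\) recovers the paper's normalization \((\widetilde d_{\bu\leftarrow\bzero})^{-1}\widetilde d_{\bzero\leftarrow\bu}r_{\bu}\), for which \(c_{I_{\bzero}}=r_{\bzero}\).

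With these two repairs, the rest of your argument matches the paper: zero extension, \(c_{\bu}=0\) if and only if \(r_{\bu}=0\), and no ordering hypothesis. The Gamma form has the advantage of making the cocycle identity automatic.
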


\begin{proof}
	For \(\bu,\bu'\in L\), define \(d_{\bu'\leftarrow\bu}\) by \(d_{\bu'\leftarrow\bu} = \prod_{\nu=1}^{n} \prod_{\mu=1}^{u_{\nu}-u'_{\nu}} \left(\partial_{y_{\nu}}+v_{\nu}+u_{\nu}-\mu+1\right)\), where the product over \(\mu\) is \(1\) when \(u_{\nu}-u'_{\nu}\leq0\).
	With the convention that the coefficient of an absent shift is zero, \cite[Lemma~4.1]{OS25} gives
	\begin{equation}
		d_{\bu'\leftarrow\bu}r_{\bu}
		-
		d_{\bu\leftarrow\bu'}r_{\bu'}
		=0.
		\label{eq:coefficient-transport-unnormalized}
	\end{equation}
	Coordinatewise factorization gives a unique polynomial differential operator \(\widetilde d_{\bu'\leftarrow\bu}\in\C[\bpartial_{\by}]\) with a nonzero constant term such that
	\begin{equation}
		d_{\bu'\leftarrow\bu}
		=
		\widetilde d_{\bu'\leftarrow\bu}
		\bpartial_{\by}^{I_{\bu'}\setminus I_{\bu}}.
		\label{eq:coefficient-transport-factorization}
	\end{equation}
	By \cite[Lemma~4.2]{OS25}, this factorization exists and is unique, and \(\widetilde d_{\bu'\leftarrow\bu}\) is an automorphism of \(\C[\by]\).
	The inverse of \(\widetilde d_{\bu'\leftarrow\bu}\) terminates on each polynomial; no inverse on a larger formal or analytic coefficient space is used here.
	Define the normalized coefficient \(c_{\bu}\) by
	\begin{equation}
		c_{\bu}
		=
		\left(\widetilde d_{\bu\leftarrow\bzero}\right)^{-1}
		\widetilde d_{\bzero\leftarrow\bu}r_{\bu}.
		\label{eq:coefficient-transport-normalization}
	\end{equation}
	Thus \(c_{\bu}=0\) if and only if \(r_{\bu}=0\); this equivalence holds for every shift \(\bu\), including the shifts added by zero extension.
	The cocycle identity for the operators \(\widetilde d\) is \cite[Lemma~4.2]{OS25}, and \cite[Proposition~4.3]{OS25} deduces \eqref{eq:coefficient-transport} from it.
	If \(I_{\bu}=I_{\bu'}\), both monomial differential operators are \(1\), and hence \(c_{\bu}=c_{\bu'}\).
	The normalized coefficient therefore depends only on the negative support.
	No assumption that a negative support family is ordered has been used in the coefficient comparison, the normalization, or the transport argument.
\end{proof}

\begin{lemma}
	\label{lem:largest-ordered-family}
	For every \(\bv\in\sE_{\bbeta,\bw}\), the set \(\sO(\bv)\) is nonempty, and \(\cN_{\bv}^{\ord}\) is the largest ordered negative support family for \(\bv\).
	Moreover,
	\begin{equation}
		V_{\bv,\cN_{\bv}^{\ord}}^{\amb}
		=
		\cE_{\bv},
		\qquad
		J_{\bv,\cN_{\bv}^{\ord}}^{\amb}
		=
		\cE_{\bv}^{\perp}.
		\label{eq:largest-ordered-family-full-inverse-system}
	\end{equation}
\end{lemma}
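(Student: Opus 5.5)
We prove the three claims in the order stated: nonemptiness of \(\sO(\bv)\), maximality of \(\cN_{\bv}^{\ord}\), and the identities \eqref{eq:largest-ordered-family-full-inverse-system}.

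\emph{Nonemptiness.} The candidate is \(\cN_{\min}=\{J\in\sS(\bv)\mid J\subseteq I_{\bzero}\}\). It contains \(I_{\bzero}\), and it is ordered: if \(J'\subseteq J\subseteq I_{\bzero}\), then \(J'\in\cN_{\min}\). The point to check is \(\cN_{\min}\subseteq\cN_{\bv}\). Suppose \(J\subseteq I_{\bzero}\) and \(\bu\in\cF_J\) has \(\bw\cdot\bu<0\). Then \(\nsupp(\bv+\bu)\subseteq\nsupp(\bv)\). Writing \(\bv\) through a standard pair \((\ba,\sigma)\), I expect the binomial \(\bpartial^{\bu_-}-\bpartial^{\bu_+}\) with initial term \(\bpartial^{\bu_-}\) to force a non-standard monomial dividing the standard monomial attached to \(\bv\). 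This is the argument preceding \cite[Definition~3.1]{OS25} that shows \(I_{\bzero}\in\cN_{\bv}\), run with \(\bu\) in place of the fiber of \(I_{\bzero}\). I would cite that argument, or reprove it via the distraction characterization of fake exponents, using \(\nsupp(\bv+\bu)\subseteq\nsupp(\bv)\) to keep the relevant coordinates nonnegative.

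\emph{Maximality.} A union of ordered negative support families is again a negative support family: it contains \(I_{\bzero}\) and lies in \(\cN_{\bv}\). It is also ordered, since the downward-closure condition holds memberwise. Since \(\sO(\bv)\) is finite and nonempty, \(\cN_{\bv}^{\ord}\in\sO(\bv)\), and it contains every member of \(\sO(\bv)\).

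\emph{The identities.} By Proposition~\ref{prop:defect-coefficient-duality} and Fact~\ref{fact:ambient-realization}, \(V^{\amb}_{\bv,\cN_{\bv}^{\ord}}\) is the full coefficient space at \(\bv\) for \(\cN_{\bv}^{\ord}\). This space is contained in \(\cE_{\bv}\), since every formal solution is a combination of canonical series by Fact~\ref{fact:canonical-series-construction}. For the reverse inclusion, take a canonical series \(\phi\) based at \(\bv\). We must show that every occurring shift \(\bu\) has \(I_{\bu}\in\cN_{\bv}^{\ord}\). Equivalently, the set \(\cN_\phi\) of all \(J\in\sS(\bv)\) with \(J\subseteq I_{\bu}\) for some occurring \(\bu\) must be an ordered negative support family. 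It is downward closed and contains \(I_{\bzero}\), so it remains to show \(\cN_\phi\subseteq\cN_{\bv}\).

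Let \(J\subseteq I_{\bu}\) with \(\bu\) occurring, and suppose \(\bu'\in\cF_J\) with \(\bw\cdot\bu'<0\). Lemma~\ref{lem:coefficient-transport} gives \(\bpartial_{\by}^{J\setminus I_{\bu}}c_{I_{\bu}}=\bpartial_{\by}^{I_{\bu}\setminus J}c_J\). Since \(J\setminus I_{\bu}=\varnothing\) and \(c_{I_{\bu}}\ne0\), this forces \(c_J\ne0\), so the shift \(\bu'\) occurs in \(\phi\). That contradicts the property that a canonical series in direction \(\bw\) based at \(\bv\) has all occurring shifts of nonnegative weight. Hence \(\cN_\phi\subseteq\cN_{\bv}\), \(\cN_\phi\subseteq\cN_{\bv}^{\ord}\), and so \(\cE_{\bv}\subseteq V^{\amb}\).

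Because \(\cE_{\bv}\) is the finite-dimensional inverse system of a homogeneous ideal, taking orthogonal complements degreewise gives \(J^{\amb}=\cE_{\bv}^{\perp}\).

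The main obstacle is the last step: I must verify that the transport identity really forces \(c_J\ne0\) and not merely a derivative relation. I would check this first by tracking the degree of \(\bpartial_{\by}^{I_{\bu}\setminus J}c_J\), and by treating \(J=I_{\bu}\) separately.
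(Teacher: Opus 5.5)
Your proof is correct. Its second half is essentially the paper's argument. Your \(\cN_\phi\), the downward closure of the supports occurring in a canonical series, coincides after the transport identity with the paper's \(\cN(p)=\{I\in\sS(\bv)\mid c_I\ne0\}\). Your exclusion of negative-weight fiber points via \(c_{\bu'}=c_J\ne0\) is the same support-condition argument.

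The genuine difference is the nonemptiness of \(\sO(\bv)\). The paper obtains it as a by-product of \(\cN(p)\), which needs a nonzero \(p\in\cE_{\bv}\) and hence the hypothesis \(\bv\in\sE_{\bbeta,\bw}\). You instead exhibit the explicit family \(\{J\in\sS(\bv)\mid J\subseteq I_{\bzero}\}\), and your sketch for it does close. If \(\nsupp(\bv+\bu)\subseteq\nsupp(\bv)\) and \(\bw\cdot\bu<0\), then \(\bpartial^{\bu_-}\in\inw(I_A)\) while \([\bv]_{\bu_-}\ne0\). Indeed, a vanishing factor would need \(0\le v_j<(\bu_-)_j\), which puts \(j\) in \(\nsupp(\bv+\bu)\setminus\nsupp(\bv)\). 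This contradicts \(\bv\) being a fake exponent. Your route therefore gives \(\sO(\bv)\ne\varnothing\) for every fake exponent and identifies the least ordered family, at the cost of a separate argument. The paper's route gets nonemptiness for free from the construction it needs anyway for the identities.

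Two small remarks. First, the inclusion of the full coefficient space in \(\cE_{\bv}\) needs one more sentence than ``every formal solution is a combination of canonical series.'' In such a solution every occurring shift has nonnegative weight. So the coefficient of a canonical series based at a lower-weight exponent of the coset equals the coefficient of its starting monomial, which is zero. Canonical series based at higher-weight exponents have no term at \(\bx^{\bv}\). Hence only series based at \(\bv\) contribute; alternatively, cite Theorem~5.5 of \cite{OS25} as the paper does. Second, your stated ``main obstacle'' is not one. For \(J\subseteq I_{\bu}\) the identity reads \(c_{I_{\bu}}=\bpartial_{\by}^{I_{\bu}\setminus J}c_J\), so \(c_J=0\) would force \(c_{I_{\bu}}=0\), and no degree tracking is needed.
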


\begin{proof}
	Take a nonzero element \(p\in\cE_{\bv}\).
	By the definition of \(\cE_{\bv}\), there is a canonical series in the direction \(\bw\) based at \(\bv\) whose coefficient at \(\bx^{\bv}\) is \(p\).
	Apply Lemma~\ref{lem:coefficient-transport} after extending the coefficients of this series by zero to all shifts in \(L\), and let \((c_I)_{I\in\sS(\bv)}\) be the resulting normalized coefficients.
	The coefficient \(c_{I_{\bzero}}\) is \(p\), so it is nonzero.
	Define \(\cN(p)\) by \(\cN(p)=\{I\in\sS(\bv)\mid c_I\ne0\}\).
	The normalizing operators in Lemma~\ref{lem:coefficient-transport} are invertible, so \(\cN(p)\) is exactly the set of negative supports that occur in the chosen series.
	We apply the support argument in \cite[Lemma~4.8]{OS25}, which treats an arbitrary series.
	If \(c_{I_{\bu}}\ne0\) and \(I_{\bu}\notin\cN_{\bv}\), there is a shift \(\bu'\in L\) such that \(I_{\bu'}=I_{\bu}\) and \(\bw\cdot\bu'<0\).
	The support condition for canonical series in the direction \(\bw\) excludes the shift \(\bu'\), so the normalized coefficient at \(\bu'\) is zero.
	By Lemma~\ref{lem:coefficient-transport}, the normalized coefficient depends only on the negative support, so \(c_{I_{\bu}}=0\), which is a contradiction.
	Thus \(\cN(p)\subseteq\cN_{\bv}\), and \(c_{I_{\bzero}}=p\ne0\) gives \(I_{\bzero}\in\cN(p)\).
	If \(J\subseteq I\) and \(c_I\ne0\), equation \eqref{eq:coefficient-transport} gives \(c_I=\bpartial_{\by}^{I\setminus J}c_J\), and hence \(c_J\ne0\).
	Thus \(\cN(p)\) is ordered.
	In particular, \(\sO(\bv)\) is nonempty.

	The union in \eqref{eq:largest-ordered-family} is also an ordered negative support family, so it is the largest one.
	The chosen series is supported on \(\cN(p)\subseteq\cN_{\bv}^{\ord}\).
	Theorem~5.5 of \cite{OS25} therefore gives \(p\in V_{\bv,\cN_{\bv}^{\ord}}^{\amb}\).
	This proves that the right-hand side of the first equality in \eqref{eq:largest-ordered-family-full-inverse-system} is contained in the left-hand side.
	Conversely, Theorem~5.5 of \cite{OS25} identifies every element of \(V_{\bv,\cN_{\bv}^{\ord}}^{\amb}\) with the coefficient at \(\bx^{\bv}\) of a canonical series based at \(\bv\), and hence with an element of \(\cE_{\bv}\).
	Because the two coefficient spaces are equal and \(\langle\cdot,\cdot\rangle_{\bs}\) is perfect in each degree, the second equality in \eqref{eq:largest-ordered-family-full-inverse-system} follows.
\end{proof}

Define \(\cI_{\bv}^{L}\), \(J_{\bv}^{L}\), and \(\delta_{\bv}^{L}\) by \(\cI_{\bv}^{L}=\sum_{\cN\in\sO(\bv)}V_{\bv,\cN}^{\intr}\), \(J_{\bv}^{L}=\bigcap_{\cN\in\sO(\bv)}J_{\bv,\cN}^{\intr}\), and \(\delta_{\bv}^{L}=\dim_{\C}\bigl(\cE_{\bv}/\cI_{\bv}^{L}\bigr)\).
The space \(\cI_{\bv}^{L}\) and the quotient \(\cE_{\bv}/\cI_{\bv}^{L}\) are finite-dimensional because \(\cE_{\bv}\) is finite-dimensional.

\begin{proposition}
	\label{prop:all-ordered-local-quotient}
	For every \(\bv\in\sE_{\bbeta,\bw}\), we have
	\begin{equation}
		P_B(\bv)
		\subseteq
		\cE_{\bv}^{\perp}
		=
		J_{\bv,\cN_{\bv}^{\ord}}^{\amb}
		\subseteq
		J_{\bv}^{L},
		\qquad
		\cI_{\bv}^{L}
		=
		(J_{\bv}^{L})^\perp,
		\label{eq:all-ordered-ideal-inclusion}
	\end{equation}
	and \(\langle\cdot,\cdot\rangle_{\bs}\) gives
	\begin{equation}
		\delta_{\bv}^{L}
		=
		\dim_{\C}
		\left(
		\frac{J_{\bv}^{L}}
		{\cE_{\bv}^{\perp}}
		\right).
		\label{eq:all-ordered-local-ideal-codimension}
	\end{equation}
\end{proposition}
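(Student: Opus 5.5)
The plan is to assemble \eqref{eq:all-ordered-ideal-inclusion} from earlier results and then get \eqref{eq:all-ordered-local-ideal-codimension} by finite-dimensional duality in each degree.

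\textbf{The chain of inclusions.} The first inclusion \(P_B(\bv)\subseteq\cE_{\bv}^{\perp}\) is \eqref{eq:actual-fake-local-inclusions}. The equality \(\cE_{\bv}^{\perp}=J_{\bv,\cN_{\bv}^{\ord}}^{\amb}\) is the second identity of Lemma~\ref{lem:largest-ordered-family}. For the last inclusion, fix \(\cN\in\sO(\bv)\); then \(\cN\subseteq\cN_{\bv}^{\ord}\).
\begin{itemize}
\item By Fact~\ref{fact:colon-criterion} and Proposition~\ref{prop:defect-coefficient-duality}, \(V_{\bv,\cN}^{\intr}\subseteq V_{\bv,\cN}^{\amb}\).
\item \(V_{\bv,\cN}^{\amb}\) is the full coefficient space for \(\cN\) (Fact~\ref{fact:ambient-realization}). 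A solution supported on \(\cN\) is also supported on \(\cN_{\bv}^{\ord}\), so \(V_{\bv,\cN}^{\amb}\subseteq V_{\bv,\cN_{\bv}^{\ord}}^{\amb}=\cE_{\bv}\), again using Lemma~\ref{lem:largest-ordered-family}.
\item Hence \(V_{\bv,\cN}^{\intr}\subseteq\cE_{\bv}\) in each degree. Taking orthogonal complements degreewise under the perfect pairing gives \(\cE_{\bv}^{\perp}\subseteq J_{\bv,\cN}^{\intr}\).
\end{itemize}
Intersecting over the finite set \(\sO(\bv)\) gives \(\cE_{\bv}^{\perp}\subseteq J_{\bv}^{L}\).

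\textbf{The equality \(\cI_{\bv}^{L}=(J_{\bv}^{L})^{\perp}\).} Work degree by degree, where everything is finite-dimensional and \(\langle\cdot,\cdot\rangle_{\bs}\) is perfect. By definition \(V_{\bv,\cN,i}^{\intr}=(J_{\bv,\cN}^{\intr})_i^{\perp}\). The orthogonal complement of a sum of subspaces is the intersection of their complements, and \((W^{\perp})^{\perp}=W\). Therefore
\[
\Bigl(\sum_{\cN}V_{\bv,\cN,i}^{\intr}\Bigr)^{\perp}=\bigcap_{\cN}(J_{\bv,\cN}^{\intr})_i=(J_{\bv}^{L})_i ,
\]
and taking complements once more gives \(\cI_{\bv,i}^{L}=(J_{\bv}^{L})_i^{\perp}\).

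Here \(J_{\bv}^{L}\) and \(\cE_{\bv}^{\perp}\) are regarded as subspaces of \(S_0\), and the orthogonal complement is taken inside \(\C[\bpartial_{\bs}]\) degreewise. One must check that \(\cE_{\bv}\) is graded, so that its complement is computed degreewise. This holds because \(\cE_{\bv}\) is the inverse system of a homogeneous ideal: its orthogonal complement \(\cE_{\bv}^{\perp}\) equals \(J^{\amb}_{\bv,\cN_{\bv}^{\ord}}\), which is homogeneous.

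\textbf{The codimension formula.} Degreewise,
\[
\cE_{\bv,i}\big/\cI_{\bv,i}^{L}=(\cE_{\bv}^{\perp})_i^{\perp}\big/(J_{\bv}^{L})_i^{\perp},
\]
using \(\cE_{\bv,i}=(\cE_{\bv}^{\perp})_i^{\perp}\). Since \(\cE_{\bv}^{\perp}\subseteq J_{\bv}^{L}\), the same duality argument as in Proposition~\ref{prop:defect-coefficient-duality} makes this quotient dual to \((J_{\bv}^{L}/\cE_{\bv}^{\perp})_i\). Summing dimensions over \(i\) gives \eqref{eq:all-ordered-local-ideal-codimension}.

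This sum is finite. \(\cE_{\bv}\) is finite-dimensional, so \(\cE_{\bv}^{\perp}\) contains all sufficiently high degrees. Then \(J_{\bv}^{L}/\cE_{\bv}^{\perp}\) vanishes in high degree, and both sides are finite.

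The main subtlety is the inclusion \(V_{\bv,\cN}^{\amb}\subseteq\cE_{\bv}\). It requires that every solution supported on \(\cN\) is also supported on the larger family \(\cN_{\bv}^{\ord}\), so that Theorem~5.5 of \cite{OS25} applies to both families in the same way. This is immediate from the definition of the full coefficient space, and I would state it explicitly.
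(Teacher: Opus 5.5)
Your proposal is correct and takes essentially the same route as the paper: both get \(V^{\intr}_{\bv,\cN}\subseteq V^{\amb}_{\bv,\cN}\subseteq V^{\amb}_{\bv,\cN^{\ord}_{\bv}}=\cE_{\bv}\) from Lemma~\ref{lem:largest-ordered-family} and Proposition~\ref{prop:defect-coefficient-duality}, then pass to annihilators and use degreewise finite-dimensional duality. Your version only makes explicit two points the paper leaves implicit, namely that \(\cE_{\bv}\) is graded and that the sum over degrees is finite.
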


\begin{proof}
	For every \(\cN\in\sO(\bv)\), a series supported on \(\cN\) is also supported on \(\cN_{\bv}^{\ord}\), so Lemma~\ref{lem:largest-ordered-family} gives \(V_{\bv,\cN}^{\amb}\subseteq\cE_{\bv}\).
	Proposition~\ref{prop:defect-coefficient-duality} then gives \(V_{\bv,\cN}^{\intr}\subseteq\cE_{\bv}\).
	Taking sums of coefficient spaces and annihilators gives \eqref{eq:all-ordered-ideal-inclusion}.
	Since the ideals are homogeneous, the pairing \(\langle\cdot,\cdot\rangle_{\bs}\) identifies the graded dual of the quotient in \eqref{eq:all-ordered-local-ideal-codimension} with \(\cE_{\bv}/\cI_{\bv}^{L}\).
	Both spaces are finite-dimensional, so their dimensions are equal.
\end{proof}

\begin{remark}
	If the local components of the indicial ideal and the fake indicial ideal agree at \(\bv\), then the inclusions in \eqref{eq:actual-fake-local-inclusions} are equalities and the denominator in \eqref{eq:all-ordered-local-ideal-codimension} can be replaced by \(P_B(\bv)\).
\end{remark}

For \(\bv,\bv'\in\sE_{\bbeta,\bw}\), define \(\bv\sim_{\bw}\bv'\) to mean that \(\bv'-\bv\in L\) and \(\bw\cdot(\bv'-\bv)=0\).
Let \(\sP_{\bbeta,\bw}\) be the set of equivalence classes for this relation.
Define \(P\prec_{\bw}P'\) to mean that \(\bw\cdot(\bv'-\bv)>0\) for \(\bv\in P\) and \(\bv'\in P'\).
This relation totally orders the classes contained in one \(L\)-coset.
Define \(\sP_{\bbeta,\bw}^{\min}\) to be the set of the least classes in the \(L\)-cosets.
For \(P\in\sP_{\bbeta,\bw}\), define \(\cE_P\) and \(\cI_P^{L}\) by
\begin{equation}
	\cE_P
	=
	\bigoplus_{\bv\in P}\cE_{\bv},
	\qquad
	\cI_P^{L}
	=
	\bigoplus_{\bv\in P}\cI_{\bv}^{L}.
	\label{eq:formal-weight-class-local-spaces}
\end{equation}

Let \(\cV_{\bw}\) denote the canonical formal solution space in the direction \(\bw\), that is, the space of formal series solutions obtained by the canonical-series construction.
Define \(\cV_{\bw}^{L}\) to be the span of the canonical series obtained by intrinsic perturbation, as the exponent \(\bv\in\sE_{\bbeta,\bw}\) and the ordered negative support family vary.

\begin{proposition}
	\label{prop:canonical-nilsson-identification}
	Under the standing homogeneity assumption on \(A\) and the standing genericity assumption on \(\bw\), there exist a positive weight vector \(\bw'\) and a strongly convex open rational polyhedral cone \(\cQ\subseteq\R_{>0}^n\) such that
	\begin{align*}
		\bw' & \in\cQ,
		\qquad
		\bone\in\overline{\cQ},
		\qquad
		\operatorname{in}_{\bw'}(I_A)=\inw(I_A), \\
		\operatorname{in}_{(-\bw',\bw')}\bigl(H_A(\bbeta)\bigr)
		     & =
		\operatorname{in}_{(-\bw,\bw)}\bigl(H_A(\bbeta)\bigr).
	\end{align*}
	For this choice of \(\bw'\), the canonical formal solution space \(\cV_{\bw}\) is the \(\C\)-span of the basic Nilsson solutions of \(H_A(\bbeta)\) in the direction \(\bw'\).
	For every starting monomial, the corresponding canonical series is the unique basic Nilsson solution whose coefficient at that starting monomial is one and in which no other starting monomial occurs.
	An arbitrary basic Nilsson solution is, in general, a unique finite linear combination of these canonical series and need not equal any one of them.
\end{proposition}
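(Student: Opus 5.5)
The plan is to take \(\bw'=\bw+\lambda\bone\) for a large positive \(\lambda\), and then obtain \(\cQ\) as a small open cone around the segment from \(\bone\) to \(\bw'\).

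First, the weight move. By the standing homogeneity assumption (Remark~\ref{rem:homogeneity-scope}) there is \(\bh\) with \(\bh A=\bone^{\mathsf T}\). Hence \(\bone\cdot\bu=0\) for every \(\bu\in L\), and \(\bw'\cdot\bu=\bw\cdot\bu\) for all \(\bu\in L\). For \(\lambda\) large, \(\bw'\in\R_{>0}^n\).

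Equality of the initial ideals comes next. Every binomial of \(I_A\) has the form \(\bpartial^{\bu_+}-\bpartial^{\bu_-}\) with \(\bu\in L\), so the \(\bw\)- and \(\bw'\)-orders of its two terms agree. The reduced Gr\"obner basis \(\cG_{\bw}\) is therefore also a Gr\"obner basis for \(\bw'\), and \(\operatorname{in}_{\bw'}(I_A)=\inw(I_A)\).

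For the Weyl algebra, the shift \((-\bw,\bw)\mapsto(-\bw',\bw')\) changes the weight of \(x^{\ba}\partial^{\bb}\) by \(\lambda\,\bone\cdot(\bb-\ba)\). This equals \(\lambda\) times the degree for the grading in which \(\deg x_j=-1\) and \(\deg\partial_j=1\). The toric generators are homogeneous for this grading because \(|\bu|=0\), and the Euler operators \(E_i-\beta_i\) have degree \(0\). So \(H_A(\bbeta)\) is homogeneous for the \(\bone\)-grading, the shift by \(\lambda(-\bone,\bone)\) does not change initial forms, and the two initial ideals in \(D\) coincide. I would state this via the \(\Z\)-grading \(\theta=\sum x_j\partial_j\)-eigenvalue and cite \cite[Section~2.1]{SST00} for compatibility of initial ideals with a homogeneous grading.

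Next, the cone \(\cQ\). Choose \(\cQ\) to be the interior of the cone spanned by \(\bone\) and a small rational full-dimensional simplicial cone around \(\bw'\) that stays inside \(\R_{>0}^n\). I would shrink it until every vector in it, modulo \(\R\bone\), lies in the same open Gr\"obner cone as \(\bw\). This is possible because adding multiples of \(\bone\) does not affect the Gr\"obner data, and \(\bw\) is interior to its cone. The cone is strongly convex since it lies in the positive orthant, and \(\bone\in\overline{\cQ}\) holds by construction. This \(\cQ\) is exactly the kind of cone used to define basic Nilsson solutions in direction \(\bw'\) (as in \cite{DMM12}), with convergence domains built from \(\cQ\).

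Finally, the identification. Basic Nilsson solutions in direction \(\bw'\) are series whose exponents lie in finitely many translates of the dual cone of \(\cQ\), and whose initial terms solve \(\operatorname{in}_{(-\bw',\bw')}(H_A(\bbeta))\). By \cite{DMM12} their span has dimension \(\rank H_A(\bbeta)\), and it is the solution space in the Nilsson ring. The main obstacle is matching this ring with the space \(N\) of Fact~\ref{fact:canonical-series-construction}. My plan is to show that the supports allowed in \(N\) (shifts \(\bu\in L\) with \(\bw\cdot\bu\) bounded below, together with the monomial-order refinement) are contained in the support cones allowed for direction \(\bw'\) by the choice of \(\cQ\). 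Every canonical series \(\Xi_{\fs}\) is then a basic Nilsson solution. Both spaces have dimension equal to the rank, by Fact~\ref{fact:canonical-series-construction} and \cite{DMM12}, so they coincide. Uniqueness of the basic Nilsson solution attached to \(\fs\) follows from the uniqueness in Fact~\ref{fact:canonical-series-construction}, since the normalization conditions there are linear conditions on this common space. The final sentence of the statement then holds automatically, because the \(\Xi_{\fs}\) form a basis.
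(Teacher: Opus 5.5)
Your proof is correct, and its first half coincides with the paper's. This covers the move \(\bw'=\bw+\lambda\bone\) and the equality of the toric and Weyl-algebra initial ideals by homogeneity; your grading \(\deg x_j=-1\), \(\deg\partial_j=1\) is the coarsening through \(\bh\) of the \(A\)-grading the paper uses. It also covers placing \(\cQ\) inside the common Gr\"obner cone \(\cQ_0\) of \(\bw\). The paper simply takes \(\cQ=\cQ_0\cap\R_{>0}^n\), which makes your simplicial construction unnecessary.

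The real difference is the reverse inclusion.
\begin{itemize}
\item You show that each \(\Xi_{\fs}\) is a basic Nilsson solution in the direction \(\bw'\) and conclude by comparing dimensions. Uniqueness and the final sentence then follow because the coordinates of an element of \(\cV_{\bw}\) in the basis \((\Xi_{\fs})\) are its coefficients at the starting monomials.
\item The paper takes an arbitrary basic Nilsson solution \(\phi\) and uses \cite[Lemma~2.10]{DMM12} to see that its starting monomials lie in the finite indexing set. It then shows that \(\phi-\sum_{\fs}a_{\fs}(\phi)\Xi_{\fs}\) vanishes, because a nonzero remainder would have a starting monomial as its initial monomial for the refined order.
\end{itemize}
The paper's route needs no dimension theorem for the Nilsson span and gives the coefficients of the decomposition directly. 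Yours is shorter but imports two results from \cite{DMM12}: the dimension of the span of basic Nilsson solutions, and the equality of \(\rank\operatorname{in}_{(-\bw',\bw')}(H_A(\bbeta))\) with \(\rank H_A(\bbeta)\). Both are stated for \(\Z A=\Z^d\), so you need the paper's preliminary replacement of \((A,\bbeta)\) by \((T^{-1}A,T^{-1}\bbeta)\), which your proposal omits.

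One description in your sketch needs correcting. The support condition on \(N\) is not that \(\bw\cdot\bu\) is bounded below on the shifts. That condition alone does not put the shifts in the dual \(\cC^*\) of a strongly convex open cone \(\cC\ni\bw'\), which is what \cite[Definition~2.6]{DMM12} requires; for \(\rank L\geq2\) a half-space never lies in such a dual. So the containment you plan to prove would not follow from it. What you need is the condition from \cite[Section~2.5]{SST00} that the lattice shifts of \(\Xi_{\fs}\) lie in \(\cQ_0^*\cap L\). Because your \(\cQ\) lies in \(\cQ_0\), you get \(\cQ_0^*\subseteq\cQ^*\). Together with the bounded logarithmic degree and the nonzero coefficient at the zero shift, this verifies the definition and makes each \(\Xi_{\fs}\) a basic Nilsson solution in the direction \(\bw'\).
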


\begin{proof}
	Put \(\Lambda=\Z A\), choose a \(\Z\)-basis of \(\Lambda\), and let \(T\in\Z^{d\times d}\) have these basis vectors as its columns.
	Put \(A'=T^{-1}A\) and \(\bbeta'=T^{-1}\bbeta\).
	Then \(A'\) is integral, \(\Z A'=\Z^d\), and \(\ker_{\Z}(A')=\ker_{\Z}(A)=L\).
	The identity \(A\tp{(x_1\partial_1,\ldots,x_n\partial_n)}-\bbeta = T\left( A'\tp{(x_1\partial_1,\ldots,x_n\partial_n)}-\bbeta' \right)\) and the equality of the integer kernels give \(H_A(\bbeta)=H_{A'}(\bbeta')\) as left ideals in the same Weyl algebra.
	This replacement changes neither the canonical series nor the basic Nilsson solutions, so we may assume \(\Z A=\Z^d\), as in \cite{DMM12}.

	Choose \(\bh\) such that \(\bh\ba_j=1\) for \(j=1,\ldots,n\).
	Then \(\bone=\bh A\) belongs to \(\operatorname{rowspan}_{\R}(A)\), and \(|\bu| = \bone\cdot\bu = \bh(A\bu) = 0\) for \(\bu\in L\).
	Let \(\cQ_0\) be the common full-dimensional open rational polyhedral Gr\"obner cone containing \(\bw\) on which the toric and Weyl algebra initial ideals are constant.
	If \(\br\in\operatorname{rowspan}_{\R}(A)\), then adding \(\br\) to a weight adds the same constant to the weights of all terms of an \(A\)-homogeneous polynomial or differential operator.
	Hence \(\cQ_0\) is invariant under addition by \(\operatorname{rowspan}_{\R}(A)\); the same calculation appears in \cite[Remark~2.5]{DMM12}.
	Choose \(\lambda>0\) sufficiently large and put \(\bw'=\bw+\lambda\bone\).
	Then \(\bw'>0\), the displayed initial ideals agree, and \(\bw'\cdot\bu=\bw\cdot\bu\) for \(\bu\in L\).
	Put \(\cQ=\cQ_0\cap\R_{>0}^n\).
	The cone \(\cQ\) is open, rational polyhedral, and strongly convex, and it contains \(\bw'\).
	For every sufficiently large positive real number \(\mu\), the vector \(\mu^{-1}(\bw+\mu\bone)=\bone+\mu^{-1}\bw\) belongs to \(\cQ\), and hence \(\bone\in\overline{\cQ}\).
	Thus \(\bw'\) is a perturbation of \(\bone\) in the sense of \cite[Definition~3.3]{DMM12}.

	Fix the monomial order refining the \(\bw\)-weight that is used in the canonical-series construction.
	The equality \(\bw'\cdot\bu=\bw\cdot\bu\) for \(\bu\in L\) shows that the same order refines the \(\bw'\)-weight on every support coset.
	By Fact~\ref{fact:canonical-series-construction}, the solutions of the common initial system are finite sums of terms \(\bx^{\bv}q(\log\bx)\), where \(\bv\) is an exponent and \(q\) belongs to the local inverse system at \(\bv\).
	Fact~\ref{fact:canonical-series-construction} also shows that the set of starting monomials is finite and that each starting monomial \(\fs\) determines a unique canonical series \(\Xi_{\fs}\) whose coefficient at \(\fs\) is one and in which no other starting monomial occurs.
	The series \(\Xi_{\fs}\), as \(\fs\) ranges over all starting monomials, form the canonical basis of \(\cV_{\bw}\).

	The support condition for canonical series in \cite[Section~2.5]{SST00} shows that the lattice shifts of \(\Xi_{\fs}\) lie in \(\cQ_0^*\cap L\), and hence in \(\cQ^*\cap L\).
	The logarithmic degrees in each \(\Xi_{\fs}\) have a common finite bound, and the coefficient of the zero shift is nonzero.
	Therefore every \(\Xi_{\fs}\) satisfies the three conditions in \cite[Definition~2.6]{DMM12} and is a basic Nilsson solution in the direction \(\bw'\).
	This proves
	\[
		\cV_{\bw}
		\subseteq
		\Span
		\left\{
		\text{basic Nilsson solutions in the direction \(\bw'\)}
		\right\}.
	\]

	Conversely, let \(\phi\) be a basic Nilsson solution in the direction \(\bw'\).
	By comparing the least \(\bw'\)-weight terms in the equations \(H_A(\bbeta)\phi=0\), we see that the initial series of \(\phi\) is annihilated by the common initial system.
	Write the initial series of \(\phi\) as \(\bx^{\bv}p(\log\bx)\).
	By \cite[Lemma~2.10]{DMM12}, the vector \(\bv\) is an exponent of \(H_A(\bbeta)\) with respect to \(\bw'\).
	Thus every starting monomial occurring in \(\phi\) belongs to the same finite set that indexes the canonical basis.
	For each starting monomial \(\fs\), let \(a_{\fs}(\phi)\) be its coefficient in \(\phi\), and put \(\phi_0 = \phi - \sum_{\fs} a_{\fs}(\phi)\Xi_{\fs}\).
	The defining coefficient conditions for the series \(\Xi_{\fs}\) show that no starting monomial occurs in \(\phi_0\).
	Suppose that \(\phi_0\ne0\).
	The common toric initial ideal is monomial, so the components of the initial series of \(\phi_0\) at distinct exponents are separately annihilated by the common initial system, and the initial monomial of \(\phi_0\) with respect to the refined weight order is therefore a starting monomial.
	This contradicts the construction of \(\phi_0\).
	Hence \(\phi_0=0\), and \(\phi\) is a finite linear combination of the series \(\Xi_{\fs}\).
	The same coefficient conditions make this linear combination unique.

	If the initial solutions contain logarithms, the starting monomials have the form \(\bx^{\bv}(\log\bx)^{\bk}\), and distinct multi-indices \(\bk\) give distinct starting monomials, and hence index distinct elements \(\Xi_{\fs}\) of the canonical basis.
	Thus the preceding correspondence preserves the complete logarithmic initial data rather than only the exponent \(\bv\).
	By \cite[Lemma~2.8]{DMM12}, independent initial series lift to independent Nilsson solutions, and any independent family of Nilsson solutions can be recombined so that its initial series are independent.
	A basic Nilsson solution equals one of the series \(\Xi_{\fs}\) only when exactly one of its coefficients at the starting monomials is nonzero and that coefficient is one.
\end{proof}

For exponents \(\balpha\) and \(\balpha'\) in one \(L\)-coset, the number \(\bw\cdot(\balpha'-\balpha)\) is the relative \(\bw\)-weight of \(\balpha'\) with respect to \(\balpha\).
This number depends only on the two exponents, so it orders each \(L\)-coset without reference to a base point.

\begin{lemma}
	\label{lem:least-term}
	Let \(0\ne\phi\in\cV_{\bw}\) be supported on one \(L\)-coset.
	The support of \(\phi\) is \(\bw\)-well-ordered: every nonempty subset of the support contains an element of least relative \(\bw\)-weight.
	Define \(\phi_{\min}\) to be the sum of the terms of least relative \(\bw\)-weight.
	Then
	\begin{equation}
		\operatorname{in}_{(-\bw,\bw)}\bigl(H_A(\bbeta)\bigr)\phi_{\min}=0.
		\label{eq:least-term-initial-system}
	\end{equation}
	Every exponent with a nonzero coefficient in \(\phi_{\min}\) is a fake exponent and belongs to \(\sE_{\bbeta,\bw}\).
\end{lemma}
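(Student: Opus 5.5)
We first reduce to a single canonical series. By Fact~\ref{fact:canonical-series-construction} and Proposition~\ref{prop:canonical-nilsson-identification}, \(\phi\) is a finite linear combination \(\sum_{\fs}a_{\fs}\Xi_{\fs}\) of canonical series. Since \(\phi\) is supported on one \(L\)-coset, only the \(\Xi_{\fs}\) whose starting exponents lie in that coset contribute, because the supports of distinct cosets are disjoint. The support of each \(\Xi_{\fs}\) has the form \(\bv_{\fs}+\bigl(\cQ_0^*\cap L\bigr)\), where \(\bv_{\fs}\) is the starting exponent and \(\cQ_0\) is the open Gr\"obner cone of \(\bw\). The dual cone \(\cQ_0^*\) is strongly convex, since \(\cQ_0\) is full-dimensional. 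The vector \(\bw\) lies in the interior of \(\cQ_0\), so it is strictly positive on \(\cQ_0^*\setminus\{0\}\). Hence, for every bound \(c\), the set \(\{\bu\in\cQ_0^*\cap L\mid \bw\cdot\bu\le c\}\) is finite. A finite union of translates of such sets therefore has only finitely many elements below any fixed relative weight. This gives the well-ordering: any nonempty subset \(X\) of the support has a point \(x_0\), only finitely many points of weight at most that of \(x_0\) exist, and one of them has least weight. In particular, \(\phi_{\min}\) is a finite sum of terms \(\bx^{\balpha}q_{\balpha}(\log\bx)\) sharing one relative weight.

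To prove \eqref{eq:least-term-initial-system}, we take a generator \(P\) of \(H_A(\bbeta)\) and compare the least \((-\bw,\bw)\)-weight parts of \(P\phi=0\). The same comparison was used in the converse direction of Proposition~\ref{prop:canonical-nilsson-identification}, and we repeat it here. Fix \(P\in H_A(\bbeta)\) and write \(P=\operatorname{in}_{(-\bw,\bw)}(P)+P'\), where every term of \(P'\) has strictly larger \((-\bw,\bw)\)-weight. A term \(\bx^{\ba}\bpartial^{\bb}\) shifts exponents by \(\ba-\bb\) and raises the \(\bw\)-weight by \(\bw\cdot(\ba-\bb)\). Applied to the terms of \(\phi\), the part \(\operatorname{in}_{(-\bw,\bw)}(P)\phi_{\min}\) therefore lives at relative weight \(m+\mu\), where \(m\) is the minimal weight of \(\phi\) and \(\mu\) is the weight of the initial form. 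Every other contribution lies strictly above \(m+\mu\). This includes \(\operatorname{in}_{(-\bw,\bw)}(P)(\phi-\phi_{\min})\) and \(P'\phi\). We must check that infinitely many contributions do not accumulate at a single monomial. This follows from the finiteness of each weight level, proved in the first step, together with the fact that \(P\) has finitely many terms. Since \(P\phi=0\), the weight-\((m+\mu)\) part vanishes, which gives \eqref{eq:least-term-initial-system}. The passage from generators to the whole ideal is unnecessary, because the argument applies to every element \(P\) directly.

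For the last assertion, we use that the toric initial ideal \(\inw(I_A)\) is monomial. As in the proof of Proposition~\ref{prop:canonical-nilsson-identification}, the initial system then preserves the \(\bx^{\balpha}\)-isotypic decomposition, that is, the decomposition by exponent modulo logarithmic polynomials. The Euler operators and the monomial generators act diagonally on exponents up to logarithmic derivatives. Consequently each component \(\bx^{\balpha}q_{\balpha}(\log\bx)\) of \(\phi_{\min}\) with \(q_{\balpha}\ne0\) is separately annihilated by \(\operatorname{in}_{(-\bw,\bw)}(H_A(\bbeta))\). We then invoke Fact~\ref{fact:canonical-series-construction}: the solutions of the initial system in \(N\) are sums of \(\bx^{\bv}q(\log\bx)\) with \(\bv\) an exponent and \(q\) in the local inverse system of the indicial ideal. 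It follows that \(\balpha\) is an exponent, and so \(\balpha\in\sE_{\bbeta,\bw}\). It is also a fake exponent, since the fake indicial ideal is contained in the indicial ideal.

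The step I expect to be the main obstacle is the separation by exponent, that is, justifying that a logarithmic solution of the initial system decomposes exponent by exponent. I would first try to argue through the torus action of the Euler operators: the \(\theta_j=x_j\partial_j\) act on \(\bx^{\balpha}q(\log\bx)\) triangularly with eigenvalue \(\alpha_j\), and generalized eigenspace decomposition splits the solution. If that is messy, I would cite the characterization in \cite[Theorem~2.3.11]{SST00} directly.
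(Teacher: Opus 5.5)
Your proof is correct. The well-ordering step is the paper's argument: \(\bw\) is strictly positive on a pointed dual cone containing every lattice shift, so only finitely many support points lie below any weight. The only difference there is that the paper phrases it through basic Nilsson solutions rather than the support condition for canonical series. The derivation of \eqref{eq:least-term-initial-system} from the least-weight part of \(P\phi=0\) is also the paper's.

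For the last assertion your route differs.
\begin{itemize}
\item The paper proves the fake-exponent property by hand. For \(f\) in the fake indicial ideal, \eqref{eq:least-term-initial-system} and comparison of distinct exponents give \(f(\balpha+\bpartial_{\by})p_{\balpha}=0\), and the top logarithmic degree then gives \(f(\balpha)=0\). It gets \(\balpha\in\sE_{\bbeta,\bw}\) by expanding each component in the basis of initial solutions from Fact~\ref{fact:canonical-series-construction} and lifting to canonical series, so \(\cE_{\balpha}\ne0\) follows directly from the definition of \(\cE_{\balpha}\).
\item You instead read off from the structure statement in Fact~\ref{fact:canonical-series-construction} that \(\balpha\) is a zero of the indicial ideal. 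You then get the fake-exponent property from the containment of the fake indicial ideal in the indicial ideal. You get \(\cE_{\balpha}\ne0\) from the identification of \(\cE_{\balpha}\) with the local inverse system of the indicial ideal, which underlies the definition of \(\sE_{\bbeta,\bw}\).
\end{itemize}
Your route is shorter. The paper's fake-exponent argument is more elementary and does not need the structure theorem for solutions of a Frobenius ideal.

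A few smaller remarks.
\begin{itemize}
\item On your route, the separation by exponent that you single out as the main obstacle is not needed. The Fact applied to \(\phi_{\min}\) itself writes it as a sum of terms \(\bx^{\bv}q_{\bv}(\log\bx)\) over exponents \(\bv\). Expansions in the monomials \(\bx^{\balpha}(\log\bx)^{\bk}\) are unique, so every \(\balpha\) occurring in \(\phi_{\min}\) is already an exponent.
\item Your suggested justification for the separation is not right as stated. First, \(\operatorname{in}_{(-\bw,\bw)}(H_A(\bbeta))\) can be strictly larger than the ideal generated by \(\inw(I_A)\) and the Euler operators; this is why the fake indicial ideal can be strictly smaller than the indicial ideal. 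Second, \(\bpartial^{\bb}\) shifts exponents rather than acting diagonally. The correct reason is that for generic \(\bw\) this initial ideal is torus-fixed. Its elements then split into \(\Z^n\)-homogeneous pieces lying in the ideal, so its solution space is stable under every \(x_j\partial_j\), and that is what makes your generalized-eigenspace idea work.
\item Under the convention that the initial form keeps the terms of maximal \((-\bw,\bw)\)-weight, the terms of \(P'\) have strictly smaller, not larger, \((-\bw,\bw)\)-weight. What you actually use, correctly, is that they raise the \(\bw\)-weight of exponents by strictly more than the initial form does.
\end{itemize}
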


\begin{proof}
	By \cite[Definition~2.6]{DMM12}, the shifts in the support of a basic Nilsson solution are lattice points in the dual of a strongly convex open cone containing \(\bw\).
	A set of such lattice points is finite if the \(\bw\)-weight is bounded on that set.
	An element of \(\cV_{\bw}\) is a finite sum of basic Nilsson solutions.
	Thus the support is \(\bw\)-well-ordered, and \(\phi_{\min}\) is a nonzero finite sum.
	The assertion about \(\phi_{\min}\) also follows from \cite[Proposition~2.5.2]{SST00}.
	For an operator \(P\in H_A(\bbeta)\), the terms of least \(\bw\)-weight in \(P\phi\) are obtained by applying \(\operatorname{in}_{(-\bw,\bw)}(P)\) to \(\phi_{\min}\).
	Since \(P\phi=0\), equation \eqref{eq:least-term-initial-system} follows.
	This equation is the assertion about the initial series in \cite[Theorem~2.5.5]{SST00}.
	Write \(\phi_{\min}\) as a finite sum of terms \(\bx^{\balpha}p_{\balpha}(\log\bx)\) with distinct exponents and nonzero polynomials \(p_{\balpha}\).
	For every polynomial \(f\) in the fake indicial ideal, equation \eqref{eq:least-term-initial-system} and comparison of the distinct exponents give \(f(\balpha+\bpartial_{\by})p_{\balpha}=0\).
	Comparison of the terms of the highest logarithmic degree gives \(f(\balpha)=0\), so every such \(\balpha\) is a fake exponent.
	This argument treats the whole sum \(\phi_{\min}\), including interactions among terms of the same weight, and extends the conclusion of \cite[Lemma~2.10]{DMM12}, which is stated for a basic Nilsson solution.
	The initial ideal \(\inw(I_A)\) is monomial by genericity, so the components of \(\phi_{\min}\) at the distinct fake exponents are separately annihilated by the initial system.
	Decompose \(\phi_{\min}\) into these components, express each component in the basis of solutions of the initial system given in Fact~\ref{fact:canonical-series-construction}, and replace each basis element by its uniquely determined canonical series.
	Every nonzero component then occurs in the canonical formal solution space with its displayed exponent.
	Thus \(\cE_{\balpha}\ne0\), and hence \(\balpha\in\sE_{\bbeta,\bw}\).
\end{proof}

Decompose \(\cV_{\bw}\) by the \(L\)-cosets of its exponents, and filter each summand by the relative \(\bw\)-weights of the initial exponents of that summand.
These filtrations induce filtrations on the corresponding summands of \(\cV_{\bw}^{L}\).
Write \(\gr\) for the direct sum of the resulting associated graded spaces.
For \(P\in\sP_{\bbeta,\bw}\), define \(\cW_P\) to be the image in \(\cE_P\) of the component indexed by \(P\) in \(\gr\cV_{\bw}^{L}\).

\begin{theorem}
	\label{thm:formal-solution-codimension-bounds}
	The spaces \(\cW_P\) satisfy
	\begin{equation}
		\cI_P^{L}
		\subseteq
		\cW_P,
		\qquad
		\cW_P=\cI_P^{L}
		\quad
		(P\in\sP_{\bbeta,\bw}^{\min}).
		\label{eq:formal-filtration-local-spaces}
	\end{equation}
	The associated graded spaces satisfy
	\begin{equation}
		\gr\cV_{\bw}\simeq\bigoplus_{P\in\sP_{\bbeta,\bw}}\cE_P, \quad \gr\cV_{\bw}^{L}\simeq\bigoplus_{P\in\sP_{\bbeta,\bw}}\cW_P, \quad \gr\bigl(\cV_{\bw}/\cV_{\bw}^{L}\bigr)\simeq\bigoplus_{P\in\sP_{\bbeta,\bw}}\cE_P/\cW_P.
		\label{eq:formal-filtration-graded}
	\end{equation}
	The same filtration gives
	\begin{equation}
		\dim_{\C}\cV_{\bw}=\sum_{P\in\sP_{\bbeta,\bw}}\dim_{\C}\cE_P, \qquad \dim_{\C}\cV_{\bw}^{L}=\sum_{P\in\sP_{\bbeta,\bw}}\dim_{\C}\cW_P.
		\label{eq:formal-filtration-dimensions}
	\end{equation}
	In particular,
	\begin{equation}
		\dim_{\C}
		\left(
		\cV_{\bw}/\cV_{\bw}^{L}
		\right)
		=
		\sum_{P\in\sP_{\bbeta,\bw}}
		\dim_{\C}
		\left(
		\cE_P/\cW_P
		\right).
		\label{eq:formal-filtration-exact-codimension}
	\end{equation}
	Consequently,
	\begin{equation}
		\sum_{P\in\sP_{\bbeta,\bw}^{\min}}
		\sum_{\bv\in P}\delta_{\bv}^{L}
		\leq
		\dim_{\C}
		\left(
		\cV_{\bw}/\cV_{\bw}^{L}
		\right)
		\leq
		\sum_{\bv\in\sE_{\bbeta,\bw}}
		\delta_{\bv}^{L}.
		\label{eq:formal-solution-codimension-bounds}
	\end{equation}
	Equality holds in the upper bound if and only if \(\cW_P=\cI_P^{L}\) for every \(P\in\sP_{\bbeta,\bw}\), and equality holds in the lower bound if and only if \(\cW_P=\cE_P\) for every \(P\notin\sP_{\bbeta,\bw}^{\min}\).
\end{theorem}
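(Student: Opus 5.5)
We will build everything from a filtration of \(\cV_{\bw}\) by initial exponents and identify its graded pieces. First decompose \(\cV_{\bw}\) and \(\cV_{\bw}^{L}\) according to the \(L\)-cosets of the exponents; both decompose, since the canonical series and the intrinsic perturbation series are supported on a single coset. Fix a coset and order the classes \(P\in\sP_{\bbeta,\bw}\) inside it by \(\prec_{\bw}\). Let \(F_{\succeq P}\) be the subspace of series whose least relative \(\bw\)-weight (as in Lemma~\ref{lem:least-term}) is at least that of \(P\). Define the map \(F_{\succeq P}\to\cE_P\) by sending \(\phi\) to the tuple of logarithmic coefficients of \(\phi_{\min}\) at the exponents \(\bv\in P\). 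By Lemma~\ref{lem:least-term} these coefficients lie in the local inverse systems, so they lie in \(\cE_{\bv}\) once we use Fact~\ref{fact:canonical-series-construction}. The kernel of this map is the next filtration step.

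The first key step is surjectivity onto \(\cE_P\). Every element of \(\cE_{\bv}\) is the \(\bx^{\bv}\)-coefficient of a canonical series based at \(\bv\), so the map is onto. This gives \(\gr\cV_{\bw}\simeq\bigoplus_P\cE_P\) and the first dimension formula. The corresponding pieces for \(\cV_{\bw}^{L}\) are \(\cW_P\) by definition. The quotient statement in \eqref{eq:formal-filtration-graded} follows because filtrations of finite-dimensional spaces are strict on subspaces: the induced filtration on \(\cV_{\bw}/\cV_{\bw}^{L}\) has graded pieces \(\cE_P/\cW_P\). Summing dimensions then gives \eqref{eq:formal-filtration-dimensions} and \eqref{eq:formal-filtration-exact-codimension}.

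The second step is the inclusion \(\cI_P^{L}\subseteq\cW_P\). Take an intrinsic perturbation series for \(\bv\in P\) and an ordered \(\cN\), and produce the canonical series whose coefficient at \(\bx^{\bv}\) is a prescribed element of \(V_{\bv,\cN}^{\intr}\). Its least-weight part sits at \(\bv\), possibly together with other exponents of the same class, and we must check that its image in \(\cE_P\) is that element in the \(\bv\)-slot. Here the main obstacle appears: the series may have nonzero least-weight components at other \(\bv'\in P\). Since \(\cI_P^{L}\) is a direct sum, we would need to isolate the \(\bv\)-component. I would try first to show that the intrinsic series built at \(\bv\) has only lattice shifts \(\bu\) with \(\bw\cdot\bu>0\) apart from shifts with \(I_{\bu}\) in the zero-weight part of fibers. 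If that fails, I would instead argue that the shifts of weight zero are themselves other exponents \(\bv'\in P\), and show that their coefficients are of intrinsic type there as well, using that \(\cN\) transported to \(\bv'\) is again ordered.

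For a minimal class \(P\), the reverse inclusion \(\cW_P\subseteq\cI_P^{L}\) holds for the following reason. An element of \(\cV_{\bw}^{L}\) with least weight at \(P\) is a combination of intrinsic series based at exponents of weight at least that of \(P\), and none can have smaller weight by minimality. Only series based in \(P\) contribute to the leading part, so its image lies in \(\sum\) of the intrinsic spaces, namely \(\cI_P^{L}\). For nonminimal \(P\), series based at lower classes can contribute after their leading parts cancel, which is why only the inclusion holds there. Finally, \(\dim\cE_P/\cI_P^{L}=\sum_{\bv\in P}\delta_{\bv}^{L}\). The chain \(\cI_P^{L}\subseteq\cW_P\subseteq\cE_P\) then gives \(0\le\dim\cE_P/\cW_P\le\sum_{\bv\in P}\delta^{L}_{\bv}\), with equality at minimal classes. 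Summing these yields \eqref{eq:formal-solution-codimension-bounds}, and both equality characterizations follow termwise because each summand is squeezed between its bounds.
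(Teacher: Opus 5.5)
Your architecture is the paper's. You split by \(L\)-cosets and filter each coset by the relative \(\bw\)-weights of its classes. You identify each graded piece with \(\cE_P\) through the coefficients at the exponents of \(P\), with surjectivity from canonical series and the kernel from Lemma~\ref{lem:least-term}. Then you squeeze \(\cI_P^{L}\subseteq\cW_P\subseteq\cE_P\).

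The gap is the step you flag yourself and leave open: you never establish \(\cI_P^{L}\subseteq\cW_P\). Your argument for \(\cW_P=\cI_P^{L}\) at a least class silently depends on the same missing fact. That fact is that a series obtained by intrinsic perturbation and based at \(\bv\in P\) has zero coefficient at \(\bx^{\bv'}\) for every \(\bv'\in P\setminus\{\bv\}\). Without it, the sentence ``only series based in \(P\) contribute to the leading part, so its image lies in \(\cI_P^{L}\)'' fails. Such a series would put its \(\bx^{\bv'}\)-coefficient, which need not lie in \(\cI_{\bv'}^{L}\), into the \(\bv'\)-slot. Your surjectivity claim onto \(\bigoplus_{\bv\in P}\cE_{\bv}\) also uses this fact for the canonical series \(\Xi_{\fs}\).

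The paper closes this with the support condition for canonical series in \cite[Section~2.5]{SST00}. Because \(\bw\) lies in the interior of a full-dimensional Gr\"obner cone, every nonzero shift in a canonical series based at \(\bv\) has strictly positive \(\bw\)-weight. This applies in particular to the canonical series produced by intrinsic perturbation. So a term at \(\bv+\bu\) with \(\bu\ne\bzero\) and \(\bw\cdot\bu=0\) cannot occur, and the image of such a series in \(\cE_P\) is concentrated in the \(\bv\)-slot. That gives \(\cI_P^{L}\subseteq\cW_P\) at once. At a least class \(P\), it also shows that the coefficient tuple at \(P\) of every element of \(\cV_{\bw}^{L}\) supported on that coset lies in \(\bigoplus_{\bv\in P}\cI_{\bv}^{L}\).

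Your first fallback is this fact, but you state it with an exception (``apart from shifts with \(I_{\bu}\) in the zero-weight part of fibers'') that is precisely what must be excluded. Note that \(I_{\bu}\in\cN\subseteq\cN_{\bv}\) only yields \(\bw\cdot\bu\geq0\), so strictness has to come from the support condition, not from the definition of \(\cN_{\bv}\).

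Your second fallback would not close the gap. Even if the \(\bv'\)-coefficients were intrinsic at \(\bv'\), cancelling them with series based at \(\bv'\) can reintroduce terms in the \(\bv\)-slot. Inside one class there is no weight ordering that makes this correction triangular, so the summands of \(\bigoplus_{\bv\in P}\cI_{\bv}^{L}\) are never separated.
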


\begin{proof}
	Partition the members of \(\sE_{\bbeta,\bw}\) into their \(L\)-cosets and then into the classes in \(\sP_{\bbeta,\bw}\).
	Series supported on distinct cosets have no monomial in common, so both \(\cV_{\bw}\) and \(\cV_{\bw}^{L}\) are direct sums over these cosets.
	On these summands we use the direct sum of the filtrations defined below.
	Fix one coset \(C\), and list its classes as \(P_1\prec_{\bw}\cdots\prec_{\bw}P_m\).
	Choose an exponent \(\bv_C\) in \(C\), and define \(a_k\) by \(a_k=\bw\cdot(\bv-\bv_C)\) for \(\bv\in P_k\).
	The number \(a_k\) is well-defined, and \(a_1<\cdots<a_m\).
	Among the canonical series supported on \(C\), let \(F_{C,k}\) be the subspace of those whose nonzero terms have relative weight at least \(a_k\), and put \(F_{C,m+1}=0\).

	By the support condition for canonical series in \cite[Section~2.5]{SST00}, every nonzero shift that occurs in such a series has positive \(\bw\)-weight.
	Fact~\ref{fact:canonical-series-construction} describes the logarithmic coefficients at the exponents in \(P_k\) and shows that each prescribed starting monomial determines a unique canonical series.
	These results identify the quotient \(F_{C,k}/F_{C,k+1}\) by means of the tuple of coefficients at the exponents in \(P_k\):
	\begin{equation}
		F_{C,k}/F_{C,k+1}
		\simeq
		\bigoplus_{\bv\in P_k}\cE_{\bv}
		=
		\cE_{P_k}.
		\label{eq:canonical-series-filtration-quotient}
	\end{equation}
	Every tuple on the right lifts to the sum of the canonical series with the prescribed initial terms.
	Conversely, suppose that the coefficients of a series in \(F_{C,k}\) vanish at every exponent in \(P_k\).
	If this series is nonzero, Lemma~\ref{lem:least-term} shows that every exponent bearing a term of least relative weight in the series belongs to \(\sE_{\bbeta,\bw}\).
	For \(k<m\), every nonzero series in the kernel therefore has least relative weight at least \(a_{k+1}\); hence the kernel of the coefficient map is \(F_{C,k+1}\).
	For \(k=m\), the same argument shows that the kernel is zero.

	Define \(W_C\) by \(W_C=\cV_{\bw}^{L}\cap F_{C,1}\), and define \(\cW_{P_k}\) to be the image of \(F_{C,k}\cap W_C\) in the quotient \eqref{eq:canonical-series-filtration-quotient}.
	Summing over the cosets gives the three isomorphisms in \eqref{eq:formal-filtration-graded}, and the filtrations of \(F_{C,1}\) and \(W_C\) give \(\dim_{\C}F_{C,1} = \sum_k\dim_{\C}\cE_{P_k}\) and \(\dim_{\C}W_C = \sum_k\dim_{\C}\cW_{P_k}\), whence the two identities in \eqref{eq:formal-filtration-dimensions} and, on subtracting, \eqref{eq:formal-filtration-exact-codimension}.

	For every \(\bv\in P_k\), the series whose initial coefficients belong to \(\cI_{\bv}^{L}\) lie in \(F_{C,k}\cap W_C\) and have images supported in the \(\bv\)-component, so \(\cI_{P_k}^{L}\subseteq\cW_{P_k}\).
	For \(k=1\), a series based at an exponent in \(P_j\) with \(j>1\) contains no term indexed by \(P_1\), and a series based at \(\bv\in P_1\) contains no \(\bx^{\bv'}\) with \(\bv'\in P_1\) distinct from \(\bv\), since such a term would be a nonzero shift of weight zero; hence the coefficient tuple of every element of \(W_C\) lies in \(\cI_{P_1}^{L}\) and \(\cW_{P_1}=\cI_{P_1}^{L}\), which proves \eqref{eq:formal-filtration-local-spaces}.
	The inclusion there gives \(\dim_{\C}(\cE_P/\cW_P) \leq \dim_{\C}(\cE_P/\cI_P^{L}) = \sum_{\bv\in P}\delta_{\bv}^{L}\) for every \(P\).
	Together with the equality for the least classes, this bound proves the two inequalities in \eqref{eq:formal-solution-codimension-bounds}.
	The descriptions of equality follow term by term from the same inclusions.
\end{proof}

We next describe the spaces \(\cW_P\) in terms of linear maps between finite-dimensional spaces.
Put \(\cP_{\bv,\cN}=\Phi_0\bigl(P_{\cN}^0(\bt)\bigr) \subseteq S_0\).
Thus \(\cP_{\bv,\cN}\) is the polynomial ideal whose completion is \(\Pi_{\cN}\) before the colon by \(m_{\bv,\cN}\) is taken.
The intrinsic perturbation construction in \cite[Theorem~3.2]{OS25} gives a linear map
\begin{equation}
	\sF_{\bv,\cN}:
	\cP_{\bv,\cN}^{\perp}
	\longrightarrow
	\cV_{\bw},
	\qquad
	q(\bpartial_{\bs})
	\longmapsto
	\left.
	q(\bpartial_{\bs})
	\Psi_{\cN}(\bx,\bs)
	\right|_{\bs=\bzero}.
	\label{eq:full-intrinsic-series-map}
\end{equation}
The image of \(\sF_{\bv,\cN}\) is finite-dimensional because it is contained in \(\cV_{\bw}\).
For a formal series \(\phi\), write \(\operatorname{coeff}_{\bv'}(\phi)\) for the polynomial in \(\log\bx\) that multiplies \(\bx^{\bv'}\).
If \(\bv'=\bv+\bu\) and \(I_{\bu}\in\cN\), the definition of the perturbation series gives
\begin{equation}
	\operatorname{coeff}_{\bv'}
	\bigl(\sF_{\bv,\cN}(q)\bigr)
	=
	\left.
	q(\bpartial_{\bs})
	\left(
	m_{\bv,\cN}(\bs)
	a_{\bu}(\bs)
	\exp\bigl((\log\bx)B\bs\bigr)
	\right)
	\right|_{\bs=\bzero}.
	\label{eq:full-intrinsic-transition-coefficient}
\end{equation}
The coefficient \(\operatorname{coeff}_{\bv'}\bigl(\sF_{\bv,\cN}(q)\bigr)\) is zero when \(\bv'=\bv+\bu\) with \(I_{\bu}\notin\cN\).

Fix an \(L\)-coset \(C\) that meets \(\sE_{\bbeta,\bw}\), write \(m(C)\) for the number of classes contained in \(C\), put \(m=m(C)\), and write these classes as \(P_1\prec_{\bw}\cdots\prec_{\bw}P_m\).
Choose \(\bv_C\in C\), and put \(a_k=\bw\cdot(\bv-\bv_C)\) for \(\bv\in P_k\).
Let \(F_{C,k}\) be the filtration of the canonical series supported on \(C\) that is constructed in the proof of Theorem~\ref{thm:formal-solution-codimension-bounds}.
That proof gives surjective coefficient maps
\begin{equation}
	\pi_{C,k}:F_{C,k}\longrightarrow
	\cE_{P_k},
	\qquad
	\ker\pi_{C,k}=F_{C,k+1}.
	\label{eq:formal-transition-quotient-map}
\end{equation}
Choose a linear section \(\sigma_{C,k}:\cE_{P_k}\to F_{C,k}\) of \(\pi_{C,k}\), and put
\begin{equation}
	\Lambda_C:
	\bigoplus_{k=1}^m\cE_{P_k}
	\longrightarrow F_{C,1},
	\qquad
	(p_1,\ldots,p_m)
	\longmapsto
	\sum_{k=1}^m\sigma_{C,k}(p_k).
	\label{eq:formal-transition-lifting-map}
\end{equation}
For \(1\leq k\leq m\), put
\begin{equation}
	\cD_{C,k}
	=
	\bigoplus_{\bv\in P_k}
	\bigoplus_{\cN\in\sO(\bv)}
	\im\sF_{\bv,\cN},
	\qquad
	\cD_C
	=
	\bigoplus_{k=1}^m\cD_{C,k},
	\label{eq:formal-transition-source}
\end{equation}
where the sums in \(\cD_{C,k}\) are external direct sums.
Let \(\Sigma_C:\cD_C\to F_{C,1}\) be the map that sends a tuple of components to the sum of the series that the components represent.
By \eqref{eq:full-intrinsic-transition-coefficient}, the blocks of \(\Lambda_C^{-1}\Sigma_C\) are computed from the coefficients at the finitely many exponents in \(C\) by successive subtraction of the images of the sections \(\sigma_{C,k}\).

Let \(F_1\supseteq F_2\supseteq\cdots\supseteq F_{m+1}=0\) be a finite filtration of a \(\C\)-vector space.
For \(1\leq k\leq m\), define \(\overline F_k\) by \(\overline F_k=F_k/F_{k+1}\), and let \(\pi_k:F_k\to\overline F_k\) be the quotient map.
Choose a linear section \(\sigma_k:\overline F_k\to F_k\) of \(\pi_k\) for every \(k\).
Define \(\Lambda:\bigoplus_{k=1}^m\overline F_k\to F_1\) by \(\Lambda(\overline f_1,\ldots,\overline f_m)=\sum_k\sigma_k(\overline f_k)\).
Let \(\mathsf X_1,\ldots,\mathsf X_m\) be \(\C\)-vector spaces, and define \(\mathsf X\) by \(\mathsf X=\bigoplus_{i=1}^m\mathsf X_i\).
Let \(\Sigma:\mathsf X\to F_1\) be a linear map with \(\Sigma(\mathsf X_i)\subseteq F_i\) for every \(i\).
Define \(\mathsf T\) by \(\mathsf T=\Lambda^{-1}\Sigma\) and \(\mathsf T_{ji}:\mathsf X_i\to\overline F_j\) by \(\mathsf T_{ji}=\pr_j\mathsf T|_{\mathsf X_i}\), where \(\pr_j\) denotes the projection onto \(\overline F_j\).
For \(k\geq2\), define \(\mathsf T_{<k}\) by \(\mathsf T_{<k}=(\pr_1,\ldots,\pr_{k-1})\mathsf T|_{\bigoplus_{i<k}\mathsf X_i}\) and \(\mathsf T_{k,<k}\) by \(\mathsf T_{k,<k}=\pr_k\mathsf T|_{\bigoplus_{i<k}\mathsf X_i}\).
For \(1\leq k\leq m\), define \(W_k\) to be the image of \(F_k\cap\im\Sigma\) under \(\pi_k\), so that \(\pi_k\) identifies \((F_k\cap\im\Sigma)/(F_{k+1}\cap\im\Sigma)\) with \(W_k\).

The following lemma describes the induced filtration on the image of a filtered map.
\begin{lemma}
	\label{lem:filtered-associated-graded-image}
	The map \(\Lambda\) is an isomorphism.
	Moreover, \(\mathsf T_{ji}=0\) for \(j<i\), and \(W_1=\im\mathsf T_{11}\) and \(W_k=\im\mathsf T_{kk}+\mathsf T_{k,<k}(\ker\mathsf T_{<k})\) for \(k\geq2\).
\end{lemma}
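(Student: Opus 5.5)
First I show that \(\Lambda\) is an isomorphism, by induction along the filtration. For surjectivity, take \(f\in F_1\). Put \(f_1=f\), and for \(k=1,\ldots,m\) set \(f_{k+1}=f_k-\sigma_k(\pi_k(f_k))\). Since \(\pi_k\sigma_k=\mathrm{id}\), the element \(f_{k+1}\) lies in \(\ker\pi_k=F_{k+1}\). Hence \(f_{m+1}\in F_{m+1}=0\), and \(f=\Lambda(\pi_1(f_1),\ldots,\pi_m(f_m))\). For injectivity, suppose \(\sum_k\sigma_k(\overline f_k)=0\) and not every \(\overline f_k\) is zero; let \(k_0\) be the least index with \(\overline f_{k_0}\ne0\). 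Every term with \(k>k_0\) lies in \(F_{k_0+1}\), so applying \(\pi_{k_0}\) to the relation gives \(\overline f_{k_0}=0\), a contradiction. The same computation gives a formula I will reuse: for \(f\in F_k\), the coordinates of \(\Lambda^{-1}f\) vanish in every index \(j<k\), and the \(k\)-th coordinate is \(\pi_k(f)\). Indeed, the recursion above starts at \(f\in F_k\), so the earlier steps subtract \(\sigma_j(0)=0\).

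Triangularity follows directly from this. For \(x\in\mathsf X_i\) we have \(\Sigma x\in F_i\), so \(\pr_j\Lambda^{-1}\Sigma x=0\) for \(j<i\), which is \(\mathsf T_{ji}=0\). In addition, \(\pr_i\mathsf Tx=\pi_i(\Sigma x)\).

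To describe \(W_k\), I first characterize membership in \(F_k\). An element \(\Sigma x\) lies in \(F_k\) if and only if \(\pr_j\mathsf Tx=0\) for all \(j<k\). The forward direction is the formula above. For the converse, \(\Lambda\) carries \(\bigoplus_{j\ge k}\overline F_j\) into \(F_k\) because \(\sigma_j(\overline F_j)\subseteq F_j\subseteq F_k\) for \(j\ge k\).

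Now write \(x=y+z\) with \(y\in\bigoplus_{i<k}\mathsf X_i\) and \(z\in\bigoplus_{i\ge k}\mathsf X_i\). Triangularity gives \(\pr_j\mathsf Tz=0\) for \(j<k\). So the condition \(\Sigma x\in F_k\) becomes \(\mathsf T_{<k}y=0\), that is, \(y\in\ker\mathsf T_{<k}\).

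For such \(x\), we have \(\pi_k(\Sigma x)=\pr_k\mathsf Tx\). This equals \(\mathsf T_{k,<k}y+\mathsf T_{kk}z_k+\sum_{i>k}\mathsf T_{ki}z_i\), where \(z_i\) denotes the component of \(z\) in \(\mathsf X_i\). The terms with \(i>k\) vanish by triangularity. Conversely, any \(y\in\ker\mathsf T_{<k}\) and any \(z_k\in\mathsf X_k\) produce an element \(\Sigma(y+z_k)\in F_k\cap\im\Sigma\). Hence \(W_k=\im\mathsf T_{kk}+\mathsf T_{k,<k}(\ker\mathsf T_{<k})\) for \(k\ge2\). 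For \(k=1\) there is no condition, and \(W_1=\im\mathsf T_{11}\) because \(\pr_1\mathsf T\) vanishes on \(\mathsf X_i\) for \(i>1\).

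The only delicate step is the bookkeeping in this last paragraph. The point is that membership \(\Sigma x\in F_k\) depends only on the lower components \(y\), and the components \(z_i\) with \(i>k\) drop out of \(\pi_k\). Both facts reduce to the formula for \(\Lambda^{-1}\) on \(F_k\) established in the first paragraph.
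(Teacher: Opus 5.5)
Your proof is correct and follows essentially the same route as the paper's: surjectivity by the recursive subtraction of \(\sigma_k\pi_k\), injectivity by reducing at the least nonzero index, and the identification \(\Lambda^{-1}(F_k)=\bigoplus_{j\geq k}\overline F_j\) with \(k\)-th coordinate \(\pi_k\). From this identification you obtain triangularity and the description of \(W_k\) by splitting \(x\) into components of index \(<k\) and \(\geq k\). The only difference is presentational: you phrase the key step via the formula for \(\Lambda^{-1}\) on \(F_k\), whereas the paper phrases it as \(\pi_k\Lambda\) being the projection onto \(\overline F_k\) on \(\bigoplus_{j\geq k}\overline F_j\).
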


\begin{proof}
	Take \(\widetilde f\in F_1\), define \(\widetilde f_1\) by \(\widetilde f_1=\widetilde f\), and for \(1\leq k\leq m\) define \(\overline f_k\) by \(\overline f_k=\pi_k(\widetilde f_k)\) and \(\widetilde f_{k+1}\) by \(\widetilde f_{k+1}=\widetilde f_k-\sigma_k(\overline f_k)\).
	Then \(\widetilde f_{k+1}\in F_{k+1}\) and \(\widetilde f_{m+1}=0\), so \(\widetilde f=\sum_k\sigma_k(\overline f_k)\) and \(\Lambda\) is surjective.
	If \(\Lambda(\overline f_1,\ldots,\overline f_m)=0\), reduction modulo \(F_2\) gives \(\overline f_1=0\), and after \(\overline f_1,\ldots,\overline f_{k-1}\) have been shown to be zero, reduction modulo \(F_{k+1}\) gives \(\overline f_k=0\); thus \(\Lambda\) is injective.
	Since \(\sigma_j(\overline F_j)\subseteq F_j\), the map \(\Lambda\) sends \(\bigoplus_{j\geq k}\overline F_j\) into \(F_k\), and successive reduction modulo \(F_2,\ldots,F_k\) gives the reverse inclusion, so \(\Lambda^{-1}(F_k)=\bigoplus_{j\geq k}\overline F_j\).
	The inclusion \(\Sigma(\mathsf X_i)\subseteq F_i\) now gives \(\mathsf T(\mathsf X_i)\subseteq\bigoplus_{j\geq i}\overline F_j\), so \(\mathsf T_{ji}=0\) for \(j<i\).
	For an element of \(\bigoplus_{j\geq k}\overline F_j\), the map \(\pi_k\Lambda\) is the projection onto \(\overline F_k\), so \(W_k=\pr_k(\im\mathsf T\cap\bigoplus_{j\geq k}\overline F_j)\).
	Choose \(\xi_i\in\mathsf X_i\) for \(1\leq i\leq m\).
	The first \(k-1\) coordinates of \(\mathsf T(\xi_1,\ldots,\xi_m)\) are \(\mathsf T_{<k}(\xi_1,\ldots,\xi_{k-1})\), because the components \(\xi_k,\ldots,\xi_m\) have zero coordinates in \(\overline F_1,\ldots,\overline F_{k-1}\).
	Hence \(\mathsf T(\xi_1,\ldots,\xi_m)\in\bigoplus_{j\geq k}\overline F_j\) if and only if \((\xi_1,\ldots,\xi_{k-1})\in\ker\mathsf T_{<k}\), and under this condition the \(k\)-th coordinate of \(\mathsf T(\xi_1,\ldots,\xi_m)\) is \(\mathsf T_{k,<k}(\xi_1,\ldots,\xi_{k-1})+\mathsf T_{kk}(\xi_k)\), because the components \(\xi_{k+1},\ldots,\xi_m\) have zero coordinate in \(\overline F_k\).
	This proves the formula for \(W_k\) when \(k\geq2\), and for \(k=1\) lower triangularity gives \(\pr_1\mathsf T(\xi_1,\ldots,\xi_m)=\mathsf T_{11}(\xi_1)\).
\end{proof}

Each component of \(\cD_{C,i}\) represents a canonical series obtained by intrinsic perturbation and based at an exponent in \(P_i\); this series belongs to \(F_{C,i}\) and every nonzero shift in it has positive \(\bw\)-weight, so \(\Sigma_C(\cD_{C,i})\subseteq F_{C,i}\).
The inclusions \(\Sigma_C(\cD_{C,i})\subseteq F_{C,i}\) allow us to apply Lemma~\ref{lem:filtered-associated-graded-image} with \(F_k=F_{C,k}\), \(\overline F_k=\cE_{P_k}\), \(\mathsf X_i=\cD_{C,i}\), and \(\Sigma=\Sigma_C\).
In this application, the map \(\Lambda\) is \(\Lambda_C\), so \(\Lambda_C\) is an isomorphism.
Define
\begin{equation}
	\mathsf T_C
	=
	\Lambda_C^{-1}\Sigma_C:
	\cD_C
	\longrightarrow
	\bigoplus_{k=1}^m\cE_{P_k}.
	\label{eq:formal-transition-map}
\end{equation}
For the application of Lemma~\ref{lem:filtered-associated-graded-image} above, the map \(\mathsf T\) is \(\mathsf T_C\).
Let \(\mathsf T_{C,ji}:\cD_{C,i}\to\cE_{P_j}\) be the \((j,i)\)-block of \(\mathsf T_C\).
For \(k\geq2\), define
\begin{align}
	\mathsf T_{C,<k}
	&=
	(\pr_1,\ldots,\pr_{k-1})
	\mathsf T_C
	\big|_{\bigoplus_{i<k}\cD_{C,i}},
	\label{eq:formal-transition-preceding-map}\\
	\mathsf T_{C,k,<k}
	&=
	\pr_k
	\mathsf T_C
	\big|_{\bigoplus_{i<k}\cD_{C,i}}.
	\label{eq:formal-transition-current-map}
\end{align}

The following theorem gives the cokernel of \(\mathsf T_C\) and the images in the associated graded spaces.

\begin{theorem}
	\label{thm:formal-solution-exact-cokernel}
	There is an exact sequence
	\begin{equation}
		\cD_C
		\xrightarrow{\mathsf T_C}
		\bigoplus_{k=1}^m\cE_{P_k}
		\longrightarrow
		\frac{F_{C,1}}
		{F_{C,1}\cap\cV_{\bw}^{L}}
		\longrightarrow0.
		\label{eq:formal-transition-cokernel}
	\end{equation}
	The maps \(\mathsf T_{C,ji}\) satisfy
	\begin{equation}
		\mathsf T_{C,ji}=0\quad(j<i),
		\qquad
		\im\mathsf T_{C,ii}
		=
		\cI_{P_i}^{L}.
		\label{eq:formal-transition-diagonal}
	\end{equation}
	The images in the associated graded spaces satisfy
	\begin{equation}
		\cW_{P_k}
		=
		\cI_{P_k}^{L}
		+
		\mathsf T_{C,k,<k}
		\bigl(\ker\mathsf T_{C,<k}\bigr)
		\qquad(k\geq2),
		\label{eq:formal-transition-associated-graded-image}
	\end{equation}
	and \(\cW_{P_1}=\cI_{P_1}^{L}\).
\end{theorem}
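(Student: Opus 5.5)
The plan is to obtain all three assertions from Lemma~\ref{lem:filtered-associated-graded-image}, which applies with \(F_k=F_{C,k}\), \(\overline F_k=\cE_{P_k}\), \(\mathsf X_i=\cD_{C,i}\), and \(\Sigma=\Sigma_C\), as recorded just before \eqref{eq:formal-transition-map}. The only genuinely new inputs are two identifications. The first is that the image of \(\Sigma_C\) equals \(F_{C,1}\cap\cV_{\bw}^{L}\). The second is that the diagonal block \(\mathsf T_{C,ii}\) has image exactly \(\cI_{P_i}^{L}\).

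For the exact sequence \eqref{eq:formal-transition-cokernel}, I would compose \(\Lambda_C\), which is an isomorphism by the lemma, with the quotient map \(F_{C,1}\to F_{C,1}/(F_{C,1}\cap\cV_{\bw}^{L})\). This composite is surjective, and its kernel is \(\Lambda_C^{-1}(F_{C,1}\cap\cV_{\bw}^{L})\). So it suffices to show \(\im\Sigma_C=F_{C,1}\cap\cV_{\bw}^{L}\).

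Consider a canonical series obtained by intrinsic perturbation at \(\bv\in C\) for some \(\cN\in\sO(\bv)\). It is supported on the coset of \(\bv\), so it lies in \(F_{C,1}\); this gives \(\im\Sigma_C\subseteq F_{C,1}\cap\cV_{\bw}^{L}\).

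For the reverse inclusion, \(\cV_{\bw}^{L}\) is spanned by the images \(\im\sF_{\bv,\cN}\) over all exponents and families. The decomposition by \(L\)-cosets used in the proof of Theorem~\ref{thm:formal-solution-codimension-bounds} splits \(\cV_{\bw}^{L}\) as a direct sum over cosets. Projecting an element of \(F_{C,1}\cap\cV_{\bw}^{L}\) to the \(C\)-summand kills the contributions from other cosets. The result is therefore a sum of elements of \(\im\sF_{\bv,\cN}\) with \(\bv\in C\), hence an element of \(\im\Sigma_C\). Here I must check that every exponent of \(C\) lies in some \(P_k\), which holds by the definition of the classes.

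The vanishing \(\mathsf T_{C,ji}=0\) for \(j<i\) is then exactly the lemma. For the diagonal block, take \(\xi\in\im\sF_{\bv,\cN}\) with \(\bv\in P_i\). I will use the observation from the proof of Theorem~\ref{thm:formal-solution-codimension-bounds} that such a series has no term \(\bx^{\bv'}\) with \(\bv'\in P_i\), \(\bv'\ne\bv\), since that would be a nonzero shift of weight zero. Hence \(\pr_i\mathsf T_C(\xi)=\pi_{C,i}(\Sigma_C\xi)\) is the tuple supported at \(\bv\) whose entry is \(\operatorname{coeff}_{\bv}(\xi)\). This uses \(\pi_{C,i}\Lambda_C=\pr_i\) on \(\bigoplus_{j\geq i}\cE_{P_j}\).

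By \eqref{eq:full-intrinsic-transition-coefficient} with \(\bu=\bzero\), the coefficient \(\operatorname{coeff}_{\bv}(\sF_{\bv,\cN}(q))\) ranges exactly over the intrinsic coefficient space \(V_{\bv,\cN}^{\intr}\), by the definition of that space through \cite[Theorem~3.2]{OS25} and the lattice-coordinate identification. Summing over \(\cN\in\sO(\bv)\) and \(\bv\in P_i\) gives \(\im\mathsf T_{C,ii}=\bigoplus_{\bv\in P_i}\cI_{\bv}^{L}=\cI_{P_i}^{L}\).

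The main obstacle is this identification. It requires that the coefficient map applied to \(\im\sF_{\bv,\cN}\) yields precisely \(V^{\intr}_{\bv,\cN}\), not just a subspace of it, including the logarithmic degree bookkeeping. I would settle it by appeal to Proposition~\ref{prop:defect-coefficient-duality} and the cited \cite[Proposition~6.2]{OS25}.

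Finally, the lemma gives \(W_k=\im\mathsf T_{C,kk}+\mathsf T_{C,k,<k}(\ker\mathsf T_{C,<k})\) for \(k\geq2\) and \(W_1=\im\mathsf T_{C,11}\). It remains to see that \(W_k=\cW_{P_k}\). Both are the image under \(\pi_{C,k}\) of \(F_{C,k}\cap\cV_{\bw}^{L}\), since \(\im\Sigma_C=W_C\) in the notation of that proof. Substituting the diagonal identification then yields \eqref{eq:formal-transition-associated-graded-image} and \(\cW_{P_1}=\cI_{P_1}^{L}\).
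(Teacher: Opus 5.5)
Your proposal is correct and follows the paper's proof. Both apply Lemma~\ref{lem:filtered-associated-graded-image} with $\Sigma=\Sigma_C$, and both obtain the cokernel from $\im\Sigma_C=F_{C,1}\cap\cV_{\bw}^{L}$ together with the fact that $\Lambda_C$ is an isomorphism. Both then read off $\mathsf T_{C,ii}$ as the coefficient at $\bv$ and identify $W_k$ with $\cW_{P_k}$.

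The step you flag as the main obstacle is settled in the paper exactly as you propose. The intrinsic coefficient space is by definition the space of $\bx^{\bv}$-coefficients of the series $\sF_{\bv,\cN}(q)$, and Proposition~\ref{prop:defect-coefficient-duality}, via \cite{OS25}, identifies this space with $V_{\bv,\cN}^{\intr}$.
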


\begin{proof}
	Since \(\im\Sigma_C=F_{C,1}\cap\cV_{\bw}^{L}\) and \(\Lambda_C\) is an isomorphism, the cokernel of \(\mathsf T_C\) is \(F_{C,1}/(F_{C,1}\cap\cV_{\bw}^{L})\), which gives \eqref{eq:formal-transition-cokernel}.
	Lemma~\ref{lem:filtered-associated-graded-image} gives \(\mathsf T_{C,ji}=0\) for \(j<i\).
	For a component represented by a series based at \(\bv\in P_i\), the \(i\)-th coordinate under \(\Lambda_C^{-1}\) is its coefficient at \(\bv\), and by the definition of the intrinsic coefficient spaces these coefficients span \(\cI_{P_i}^{L}\) as \(\bv\) ranges over \(P_i\) and \(\cN\) over the ordered families, so \(\im\mathsf T_{C,ii} = \cI_{P_i}^{L}\), which proves \eqref{eq:formal-transition-diagonal}.
	The subspace \(W_k\) of the lemma is \(\cW_{P_k}\) and its maps \(\mathsf T_{<k}\) and \(\mathsf T_{k,<k}\) are \(\mathsf T_{C,<k}\) and \(\mathsf T_{C,k,<k}\), so the lemma and \eqref{eq:formal-transition-diagonal} give \eqref{eq:formal-transition-associated-graded-image} for \(k\geq2\) and \(\cW_{P_1}=\im\mathsf T_{C,11}=\cI_{P_1}^{L}\) for \(k=1\).
\end{proof}

Put \(\delta_{P_k}^{L} = \sum_{\bv\in P_k}\delta_{\bv}^{L}\), and define \(\tau_{C,1}=0\) and
\begin{equation}
	\tau_{C,k}
	=
	\dim_{\C}
	\frac{
		\cI_{P_k}^{L}
		+
		\mathsf T_{C,k,<k}(\ker\mathsf T_{C,<k})}
	{\cI_{P_k}^{L}}
	\qquad(k\geq2).
	\label{eq:formal-transition-correction}
\end{equation}

The preceding theorem gives a codimension formula.

\begin{corollary}
	\label{cor:formal-solution-exact-codimension}
	We have \(0\leq\tau_{C,k}\leq\delta_{P_k}^{L}\), the number \(\tau_{C,k}\) is independent of the sections in \eqref{eq:formal-transition-lifting-map}, and
	\begin{equation}
		\dim_{\C}
		\left(
		\frac{\cV_{\bw}}
		{\cV_{\bw}^{L}}
		\right)
		=
		\sum_{\bv\in\sE_{\bbeta,\bw}}
		\delta_{\bv}^{L}
		-
		\sum_C\sum_{k=2}^{m(C)}\tau_{C,k}.
		\label{eq:formal-solution-exact-transition-codimension}
	\end{equation}
	Equivalently, the contribution of \(C\) to the codimension in \eqref{eq:formal-solution-exact-transition-codimension} is
	\begin{equation}
		\sum_{k=1}^{m(C)}\dim_{\C}\cE_{P_k}
		-
		\rank\mathsf T_C.
		\label{eq:formal-solution-cokernel-rank}
	\end{equation}
	The canonical series obtained by intrinsic perturbation span \(\cV_{\bw}\) if and only if, for every \(L\)-coset \(C\), we have \(\delta_{P_1}^{L}=0\) and \(\tau_{C,k}=\delta_{P_k}^{L}\) for every \(k\geq2\).
	Equality holds in the upper bound in \eqref{eq:formal-solution-codimension-bounds} if and only if \(\tau_{C,k}=0\) for every \(C\) and \(k\geq2\).
	Equality holds in the lower bound if and only if \(\tau_{C,k}=\delta_{P_k}^{L}\) for every \(C\) and \(k\geq2\).
\end{corollary}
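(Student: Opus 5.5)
The plan is to read everything off Theorem~\ref{thm:formal-solution-exact-cokernel} and Theorem~\ref{thm:formal-solution-codimension-bounds}, working one \(L\)-coset \(C\) at a time and then summing over cosets.

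\emph{Bounds on \(\tau_{C,k}\) and independence of sections.} By \eqref{eq:formal-transition-associated-graded-image}, the numerator in \eqref{eq:formal-transition-correction} is exactly \(\cW_{P_k}\). Hence \(\tau_{C,k}=\dim_{\C}(\cW_{P_k}/\cI_{P_k}^{L})\) for \(k\geq2\). Since \(\cI_{P_k}^{L}\subseteq\cW_{P_k}\subseteq\cE_{P_k}\), this gives \(0\leq\tau_{C,k}\leq\dim_{\C}(\cE_{P_k}/\cI_{P_k}^{L})=\delta_{P_k}^{L}\). Here \(\delta_{P_k}^{L}\) is the sum of the \(\delta_{\bv}^{L}\) over \(\bv\in P_k\), because \(\cI_P^{L}\) and \(\cE_P\) are direct sums over \(\bv\in P\) by \eqref{eq:formal-weight-class-local-spaces}. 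The space \(\cW_{P_k}\) is defined intrinsically from the filtration and \(\cV_{\bw}^{L}\), with no reference to the sections \(\sigma_{C,k}\). Therefore \(\tau_{C,k}\) is independent of the sections. This is the one point needing care: the expression in \eqref{eq:formal-transition-correction} involves \(\mathsf T_C\), which does depend on the sections, so the independence must be deduced from the identification with \(\cW_{P_k}\), not from the formula itself.

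\emph{The codimension formula.} Equation \eqref{eq:formal-filtration-exact-codimension} gives \(\dim(\cV_{\bw}/\cV_{\bw}^{L})=\sum_P\dim(\cE_P/\cW_P)\). For each class we write
\[
\dim(\cE_P/\cW_P)=\dim(\cE_P/\cI_P^{L})-\dim(\cW_P/\cI_P^{L}).
\]
The subtracted term is \(\tau_{C,k}\) for \(P=P_k\) with \(k\geq2\), and it is \(0=\tau_{C,1}\) for \(k=1\) because \(\cW_{P_1}=\cI_{P_1}^{L}\). Summing over all classes gives \eqref{eq:formal-solution-exact-transition-codimension}.

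For the per-coset form \eqref{eq:formal-solution-cokernel-rank}, I would use the exact sequence \eqref{eq:formal-transition-cokernel}. It shows that \(\dim F_{C,1}/(F_{C,1}\cap\cV_{\bw}^{L})=\sum_k\dim\cE_{P_k}-\rank\mathsf T_C\). Since \(\cV_{\bw}\) and \(\cV_{\bw}^{L}\) split as direct sums over cosets, and \(F_{C,1}\) is the \(C\)-summand of \(\cV_{\bw}\), this quotient is precisely the contribution of \(C\).

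\emph{The equality criteria.} The codimension is zero if and only if every summand \(\dim(\cE_P/\cW_P)\) vanishes, that is, \(\cW_P=\cE_P\) for every class. For \(k=1\) this means \(\delta_{P_1}^{L}=0\). For \(k\geq2\) it means \(\tau_{C,k}=\delta_{P_k}^{L}\), using the difference formula above.

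The bound criteria are read off from the equality statements in Theorem~\ref{thm:formal-solution-codimension-bounds}. First, \(\cW_P=\cI_P^{L}\) is equivalent to \(\tau_{C,k}=0\). Second, for nonminimal \(P\), \(\cW_P=\cE_P\) is equivalent to \(\tau_{C,k}=\delta_{P_k}^{L}\). The nonminimal classes are exactly the \(P_k\) with \(k\geq2\), and for minimal classes the condition \(k=1\) is automatic.
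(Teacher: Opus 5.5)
Your proposal is correct and follows the paper's proof. You identify \(\tau_{C,k}\) with \(\dim_{\C}(\cW_{P_k}/\cI_{P_k}^{L})\) via Theorem~\ref{thm:formal-solution-exact-cokernel}, which gives both the bounds and the independence from the sections, since \(\cW_{P_k}\) is defined without them. You then sum \(\dim_{\C}(\cE_{P_k}/\cW_{P_k})=\delta_{P_k}^{L}-\tau_{C,k}\) using \eqref{eq:formal-filtration-exact-codimension}, read off the per-coset form \eqref{eq:formal-solution-cokernel-rank} from \eqref{eq:formal-transition-cokernel}, and check the three equivalences term by term; this is exactly the paper's argument, written out in more detail.
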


\begin{proof}
	Theorem~\ref{thm:formal-solution-exact-cokernel} gives \(\tau_{C,k}=\dim_{\C}(\cW_{P_k}/\cI_{P_k}^{L})\), which proves the independence assertion and yields \(\dim_{\C}(\cE_{P_k}/\cW_{P_k})=\delta_{P_k}^{L}-\tau_{C,k}\).
	Summing over the classes and using \eqref{eq:formal-filtration-exact-codimension} proves \eqref{eq:formal-solution-exact-transition-codimension}, the exact sequence \eqref{eq:formal-transition-cokernel} gives \eqref{eq:formal-solution-cokernel-rank}, and the three equivalences follow term by term.
\end{proof}

\begin{remark}
	For \(k\geq2\), the number \(\tau_{C,k}\) is the dimension of the image of \(\mathsf T_{C,k,<k}(\ker\mathsf T_{C,<k})\) in \(\cE_{P_k}/\cI_{P_k}^{L}\).
	Equivalently,
	\[
		\tau_{C,k}
		=
		\dim_{\C}
		\frac{
			\mathsf T_{C,k,<k}(\ker\mathsf T_{C,<k})}
		{
			\mathsf T_{C,k,<k}(\ker\mathsf T_{C,<k})
			\cap
			\cI_{P_k}^{L}}.
	\]
	The number \(\tau_{C,k}\) is the dimension, modulo \(\cI_{P_k}^{L}\), of the space of coefficient tuples indexed by \(P_k\) that come from certain linear combinations of canonical series obtained by intrinsic perturbation.
	These combinations are based at exponents in the classes \(P_i\) with \(i<k\), and their coordinates indexed by \(P_1,\ldots,P_{k-1}\) vanish.
	Theorem~\ref{thm:formal-solution-exact-cokernel} identifies the displayed quotient with \(\cW_{P_k}/\cI_{P_k}^{L}\), so \(\tau_{C,k}\) does not depend on the sections \(\sigma_{C,i}\).
\end{remark}

\begin{remark}
	The formula in \eqref{eq:formal-solution-cokernel-rank} is exact, but it is not a closed combinatorial formula.
	To evaluate the formula, we compute the finitely many coefficients at exponents that determine the blocks of \(\mathsf T_C=\Lambda_C^{-1}\Sigma_C\) and then compute \(\rank\mathsf T_C\) for each \(L\)-coset \(C\).
	Thus the formula gives a finite computation in linear algebra.
\end{remark}

Thus, under the standing homogeneity assumption, the finite-dimensional maps \(\mathsf T_C\) determine both the exact codimension and whether the canonical series obtained by intrinsic perturbation span \(\cV_{\bw}\).

\begin{corollary}
	\label{cor:formal-solution-completeness}
	If \(\delta_{\bv}^{L}=0\) for every \(\bv\in\sE_{\bbeta,\bw}\), then \(\cV_{\bw}^{L}=\cV_{\bw}\).
	If \(\delta_{\bv}^{L}>0\) for some \(P\in\sP_{\bbeta,\bw}^{\min}\) and some \(\bv\in P\), then the inclusion \(\cV_{\bw}^{L}\subseteq\cV_{\bw}\) is strict.
	If \(\delta_{\bv}^{L}=0\) whenever \(\bv\in P\) and \(P\notin\sP_{\bbeta,\bw}^{\min}\), then
	\[
		\dim_{\C}
		\left(
		\cV_{\bw}/\cV_{\bw}^{L}
		\right)
		=
		\sum_{P\in\sP_{\bbeta,\bw}^{\min}}
		\sum_{\bv\in P}
		\dim_{\C}
		\left(
		\frac{J_{\bv}^{L}}
		{\cE_{\bv}^{\perp}}
		\right).
	\]
\end{corollary}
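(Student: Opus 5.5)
The three assertions of Corollary~\ref{cor:formal-solution-completeness} follow from the bounds and equality criteria in Theorem~\ref{thm:formal-solution-codimension-bounds}, with Proposition~\ref{prop:all-ordered-local-quotient} used for the final formula.

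\emph{First assertion.} Assume \(\delta_{\bv}^{L}=0\) for every \(\bv\in\sE_{\bbeta,\bw}\). Then the upper bound in \eqref{eq:formal-solution-codimension-bounds} is zero, so \(\dim_{\C}(\cV_{\bw}/\cV_{\bw}^{L})=0\). Since \(\cV_{\bw}\) is finite-dimensional, this gives \(\cV_{\bw}^{L}=\cV_{\bw}\). No input beyond the upper bound is needed.

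\emph{Second assertion.} Suppose some \(P\in\sP_{\bbeta,\bw}^{\min}\) contains a \(\bv\) with \(\delta_{\bv}^{L}>0\). All terms \(\delta_{\bv'}^{L}\) are nonnegative, so the lower bound in \eqref{eq:formal-solution-codimension-bounds} is at least \(\delta_{\bv}^{L}>0\). Hence the quotient is nonzero and the inclusion \(\cV_{\bw}^{L}\subseteq\cV_{\bw}\) is strict. The one point to check is that the lower bound really rests on the equality \(\cW_P=\cI_P^{L}\) for least classes in \eqref{eq:formal-filtration-local-spaces}. This is what prevents corrections coming from smaller-weight classes, and no such classes exist for \(P\in\sP_{\bbeta,\bw}^{\min}\).

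\emph{Third assertion.} Assume \(\delta_{\bv}^{L}=0\) whenever \(\bv\) lies in a non-minimal class. Then the upper bound reduces to the sum over minimal classes only, and so it coincides with the lower bound. Therefore
\[
\dim_{\C}(\cV_{\bw}/\cV_{\bw}^{L})=\sum_{P\in\sP_{\bbeta,\bw}^{\min}}\sum_{\bv\in P}\delta_{\bv}^{L}.
\]
Substituting \eqref{eq:all-ordered-local-ideal-codimension} from Proposition~\ref{prop:all-ordered-local-quotient}, namely \(\delta_{\bv}^{L}=\dim_{\C}(J_{\bv}^{L}/\cE_{\bv}^{\perp})\), yields the displayed formula.

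There is no substantive obstacle. The only care needed is to keep the bookkeeping consistent: the sums are finite because \(\sE_{\bbeta,\bw}\) is finite, and each \(\delta_{\bv}^{L}\) is finite.
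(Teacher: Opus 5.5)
Your proposal is correct and matches the paper's argument. The paper likewise derives all three assertions from the upper and lower bounds of Theorem~\ref{thm:formal-solution-codimension-bounds}, and uses the identity \(\delta_{\bv}^{L}=\dim_{\C}(J_{\bv}^{L}/\cE_{\bv}^{\perp})\) from Proposition~\ref{prop:all-ordered-local-quotient} for the displayed formula, with the two bounds coinciding under the third hypothesis exactly as you describe.
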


\begin{proof}
	All three assertions follow from Theorem~\ref{thm:formal-solution-codimension-bounds} and Proposition~\ref{prop:all-ordered-local-quotient}.
\end{proof}

We first give a definition used in the lemma on finite linear combinations of basic Nilsson solutions.

\begin{definition}
	\label{def:coefficientwise-absolute-convergence}
	Let \(\cU\subseteq\C^n\) be an open set in logarithmic coordinates.
	Suppose that, on a fixed branch of the logarithms, a formal series has the form \(\phi(e^{\by}) = \sum_{\balpha\in S}e^{\balpha\cdot\by}p_{\balpha}(\by)\) and \(p_{\balpha}(\by) = \sum_{|\bk|\leq M}c_{\balpha,\bk}\by^{\bk}\), where the exponents in \(S\) are distinct and \(M\) is independent of \(\balpha\).
	For a series in this form, write \(\supp(\phi)=\{\balpha\in S\mid p_{\balpha}\ne0\}\).
	We say that the series is coefficientwise absolutely convergent on \(\cU\) if \(\sum_{\balpha\in S} \sum_{|\bk|\leq M} \bigl|c_{\balpha,\bk}e^{\balpha\cdot\by}\bigr| < \infty\) for \(\by\in\cU\).
	This condition is pointwise in \(\by\), and it does not require uniform convergence on compact subsets of \(\cU\).
\end{definition}

\begin{lemma}
	\label{lem:nilsson-asymptotic-injectivity}
	Let \(\phi\) be a finite linear combination of basic Nilsson solutions on a fixed branch of the logarithms, and let \(M\) be a nonnegative integer bounding the degrees of their logarithmic coefficients.
	Let \(\bw''\) be an integral vector in the interior of a strongly convex open cone whose dual contains the supports of these solutions.
	Let \(\cU\subseteq\C^n\) be an open logarithmic coordinate domain on which the coordinatewise exponential map is injective.
	Suppose that \(\phi\) is coefficientwise absolutely convergent on \(\cU\).
	Let \(\cB\subseteq\C^n\) be a nonempty open set, and suppose that a positive number \(t_0\) satisfies \(\bzeta+\bw''\log t\in\cU\) for \(\bzeta\in\cB,\ 0<t<t_0\).
	For \(\bzeta\in\cB\), put \(\bz=e^{\bzeta}\) and define the curve \(\bx(t)\) by \(x_j(t)=z_jt^{w_j''}\) for \(0<t<t_0\).
	Let \(\mu\) be the least real \(\bw''\)-weight of an exponent of \(\phi\), define \(\psi\) to be the finite sum of the terms whose exponents have real \(\bw''\)-weight \(\mu\), and suppose that every other exponent has real \(\bw''\)-weight at least \(\mu+\varepsilon\) for a positive number \(\varepsilon\).
	Then, for every \(\bzeta\in\cB\) and every \(t_*\in(0,t_0)\), \(t^{-\mu}\bigl(\phi(\bx(t))-\psi(\bx(t))\bigr) = O\bigl(t^{\varepsilon}(1+|\log t|)^M\bigr)\) as \(t\to0^+\) with \(0<t\leq t_*\).
	If \(\psi\ne0\), there is a choice of \(\bzeta\in\cB\), and hence of \(\bz=e^{\bzeta}\), for which the sum of \(\phi\) is not identically zero on the curve \(\bx(t)\).
\end{lemma}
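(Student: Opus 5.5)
The asymptotic estimate comes first; the nonvanishing claim for a suitable \(\bzeta\) is then deduced from it. Fix \(\bzeta\in\cB\) and \(t\in(0,t_*]\), and put \(\by(t)=\bzeta+\bw''\log t\in\cU\). On the fixed branch, each term of \(\phi\) evaluated on the curve is
\[
	e^{\balpha\cdot\by(t)}p_{\balpha}(\by(t))
	=
	e^{\balpha\cdot\bzeta}\,t^{\balpha\cdot\bw''}\,p_{\balpha}(\bzeta+\bw''\log t).
\]
Its modulus is \(|e^{\balpha\cdot\bzeta}|\,t^{\operatorname{Re}(\balpha)\cdot\bw''}\,|p_{\balpha}(\bzeta+\bw''\log t)|\).

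For an exponent \(\balpha\) with real \(\bw''\)-weight at least \(\mu+\varepsilon\), compare \(t\) with \(t_*\). Write \(t^{\operatorname{Re}(\balpha)\cdot\bw''}=t_*^{\operatorname{Re}(\balpha)\cdot\bw''}(t/t_*)^{\operatorname{Re}(\balpha)\cdot\bw''}\). Since \(t/t_*\leq1\) and the exponent is at least \(\mu+\varepsilon\), this gives
\[
	t^{\operatorname{Re}(\balpha)\cdot\bw''}
	\leq
	t_*^{\operatorname{Re}(\balpha)\cdot\bw''}(t/t_*)^{\mu+\varepsilon}.
\]
Expand \(p_{\balpha}\) in monomials \(\by^{\bk}\) with \(|\bk|\leq M\). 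On the curve, \(|\by(t)^{\bk}|\leq C(1+|\log t|)^{|\bk|}\) with \(C\) depending only on \(\bzeta\) and \(\bw''\), and likewise \(|\by(t_*)^{\bk}|\geq c>0\) for each \(\bk\) with \(\by(t_*)^{\bk}\ne0\).

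The key point is how to dominate the tail by a convergent series that does not depend on \(t\). Coefficientwise absolute convergence at the point \(\by(t_*)\in\cU\) controls \(\sum|c_{\balpha,\bk}e^{\balpha\cdot\by(t_*)}|\). I would therefore bound
\[
	|c_{\balpha,\bk}\,e^{\balpha\cdot\by(t)}\,\by(t)^{\bk}|
	\leq
	|c_{\balpha,\bk}\,e^{\balpha\cdot\by(t_*)}|\,(t/t_*)^{\mu+\varepsilon}\,C(1+|\log t|)^M,
\]
using \(|e^{\balpha\cdot\by(t)}|=|e^{\balpha\cdot\by(t_*)}|(t/t_*)^{\operatorname{Re}(\balpha)\cdot\bw''}\). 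This requires care with the possibly imaginary part of \(\bw''\cdot\balpha\); it is harmless because \(\bw''\) and \(t\) are real. Summing over the tail gives \(|\phi-\psi|\leq C' t^{\mu+\varepsilon}(1+|\log t|)^M\), which is the asserted \(O\)-bound. The constant \(C'\) depends on \(t_*\) and \(\bzeta\), which is allowed. This domination step is the main obstacle. The rest of the argument is bookkeeping about the polynomial factor.

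For nonvanishing, suppose \(\psi\ne0\). Then \(t^{-\mu}\psi(\bx(t))\) is a finite sum
\[
	\sum_{\balpha} e^{\balpha\cdot\bzeta}\,t^{i\operatorname{Im}(\balpha\cdot\bw'')}\,p_{\balpha}(\bzeta+\bw''\log t).
\]
This is a finite exponential polynomial in \(s=\log t\) of the form \(\sum_\gamma e^{i\gamma s}q_\gamma(s)\), with distinct real frequencies \(\gamma\) and polynomial coefficients \(q_\gamma\) depending on \(\bzeta\).

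First I would choose \(\bzeta\in\cB\) so that some \(q_\gamma\) is not identically zero in \(s\). Since \(\psi\ne0\) as a function of \(\by\), the exponents in \(\psi\) are distinct and the map is injective on \(\cU\), and the \(\bzeta\)-dependence \(e^{\balpha\cdot\bzeta}p_{\balpha}(\bzeta+\bw''s)\) is analytic. Distinct exponents give linearly independent functions of \(\bzeta\) on the open set \(\cB\), so some \(\bzeta\) works. Next, a nonzero function of the form \(\sum e^{i\gamma s}q_\gamma(s)\) with distinct real \(\gamma\) does not tend to zero as \(s\to-\infty\). To see this, take the highest degree \(D\) among the \(q_\gamma\) and divide by \(|s|^D\); the result is asymptotically a nonzero almost periodic trigonometric polynomial. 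Its mean-square average is positive, so it does not tend to zero.

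Combining this with the first part, \(t^{-\mu}\phi(\bx(t))\) equals \(t^{-\mu}\psi(\bx(t))+o(1)\), since \(t^\varepsilon(1+|\log t|)^M\to0\). Because \(t^{-\mu}\psi(\bx(t))\) does not tend to zero as \(t\to0^+\), the sum of \(\phi\) cannot vanish identically on the curve \(\bx(t)\).
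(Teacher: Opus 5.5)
Your proposal is correct and follows essentially the same route as the paper. You dominate the tail using coefficientwise absolute convergence at the single point \(\bzeta+\bw''\log t_*\), choose \(\bzeta\) by the non-vanishing of the finite exponential polynomial \(\psi\) on an open set, and show that the leading trigonometric polynomial in \(\log t\) has positive mean square; the one step you only assert, the linear independence over \(\C[\by]\) of the functions \(e^{\balpha\cdot\by}\) with distinct \(\balpha\) on an open set, is proved in the paper by restricting to a generic line \(\by_0+\upsilon\bxi\) and applying the operators \((d/d\upsilon-\balpha'\cdot\bxi)^{M+1}\).
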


\begin{proof}
	Write \(\phi(\bx) = \sum_{\balpha\in S}\bx^{\balpha}p_{\balpha}(\log\bx)\) and \(p_{\balpha}(\by) = \sum_{|\bk|\leq M}c_{\balpha,\bk}\by^{\bk}\).
	Fix \(\bzeta\in\cB\) and \(t_*\in(0,t_0)\), and use \(\bz^{\balpha}=e^{\balpha\cdot\bzeta}\) on the fixed branch.
	Define \(S_*\) by \(S_*=\sum_{\balpha,\bk}|c_{\balpha,\bk}\bz^{\balpha}|t_*^{\operatorname{Re}(\bw''\cdot\balpha)-\mu}\).
	Since \(\phi\) is coefficientwise absolutely convergent at the single point \(\bzeta+\bw''\log t_*\), the number \(S_*\) is finite.
	For an exponent \(\balpha\notin\supp(\psi)\), put \(\delta_{\balpha} = \operatorname{Re}(\bw''\cdot\balpha)-\mu\), so that \(\delta_{\balpha}\geq\varepsilon\) and \(t^{\delta_{\balpha}} = t^\varepsilon t^{\delta_{\balpha}-\varepsilon} \leq t^\varepsilon t_*^{\delta_{\balpha}-\varepsilon}\) for \(0<t\leq t_*\).
	There is a constant \(c_0\), depending only on \(\bzeta\), \(\bw''\), and \(M\), with \(|(\bzeta+\bw''\log t)^{\bk}| \leq c_0(1+|\log t|)^M\) for \(|\bk|\leq M\) and \(0<t\leq t_*\).
	Consequently,
	\[
		\begin{aligned}
			\left|
			t^{-\mu}\bigl(\phi(\bx(t))-\psi(\bx(t))\bigr)
			\right|
			&\leq
			c_0(1+|\log t|)^M
			\sum_{\balpha\notin\supp(\psi)}
			\sum_{|\bk|\leq M}
			\bigl|c_{\balpha,\bk}\bz^{\balpha}\bigr|
			t^{\delta_{\balpha}}\\
			&\leq
			c_0t_*^{-\varepsilon}S_*
			t^\varepsilon(1+|\log t|)^M,
		\end{aligned}
	\]
	which proves the asserted estimate.

	Suppose that \(\psi\ne0\), and fix \(t_*\in(0,t_0)\).
	The set \(\cX_* = \exp(\cB+\bw''\log t_*) = t_*^{\bw''}\exp(\cB)\) is a nonempty open set on which the fixed branch of the logarithms is defined, and the finite exponential polynomial \(F(\by) = \psi(e^{\by}) = \sum_{\balpha}e^{\balpha\cdot\by}p_{\balpha}(\by)\) does not vanish identically on any nonempty open subset of \(\C^n\).
	Suppose, on the contrary, that \(F\) vanishes identically on some nonempty open set.
	Choose \(\bxi\in\C^n\) so that the numbers \(\balpha\cdot\bxi\) are pairwise distinct, and restrict \(F\) to the lines \(\by_0+\upsilon\bxi\).
	The functions \(e^{(\balpha\cdot\bxi)\upsilon}\) are linearly independent over \(\C[\upsilon]\), as we see by applying the products of the operators \((d/d\upsilon-\balpha'\cdot\bxi)^{M+1}\) corresponding to the other exponents, so every polynomial \(p_{\balpha}(\by_0+\upsilon\bxi)\) vanishes.
	Varying \(\by_0\) in an open set would give \(p_{\balpha}=0\) for every \(\balpha\), contrary to \(\psi\ne0\).
	Thus there is a point \(\bx_*\in\cX_*\) with \(\psi(\bx_*)\ne0\).
	Every point of \(\cX_*\) has the form \(\bz t_*^{\bw''}\) with \(\bz\in\exp(\cB)\), so put \(\bz=\bx_*t_*^{-\bw''}\).
	The restriction of \(\psi\) to this curve is nonzero at \(t=t_*\) and is therefore not identically zero.

	Group the terms of \(t^{-\mu}\psi(\bx(t))\) with equal imaginary \(\bw''\)-weights and put \(\varrho=\log t\), so that \(t^{-\mu}\psi(\bx(t)) = \sum_{j=1}^{N}e^{i\tau_j\varrho}f_j(\varrho)\), where the real numbers \(\tau_j\) are distinct and the polynomials \(f_j\) are not all zero.
	Let \(d_{\psi}\) be the largest degree of the polynomials \(f_j\) and let \(c_j\) be the coefficient of \(\varrho^{d_{\psi}}\) in \(f_j\), so that \(\varrho^{-d_{\psi}}\sum_je^{i\tau_j\varrho}f_j(\varrho) = \sum_jc_je^{i\tau_j\varrho}+o(1)\) as \(\varrho\to-\infty\).
	On the other hand,
	\[
		\lim_{R\to\infty}
		\frac{1}{R}
		\int_{-2R}^{-R}
		\left|
		\sum_{j=1}^{N}c_je^{i\tau_j\varrho}
		\right|^2d\varrho
		=
		\sum_{j=1}^{N}|c_j|^2
		>
		0.
	\]
	It follows that \(t^{-\mu}\psi(\bx(t))\) does not tend to zero as \(t\to0^+\).
	If the sum of \(\phi\) vanished identically on the curve, the first part of the proof would give \(t^{-\mu}\psi(\bx(t))=-t^{-\mu}(\phi(\bx(t))-\psi(\bx(t)))\to0\), which is a contradiction.
\end{proof}

Write \(\Sing(H_A(\bbeta))\) for the singular locus of \(H_A(\bbeta)\) in \(\C^n\), that is, for the projection to \(\C^n\) of the characteristic variety of \(M_A(\bbeta)\) outside the zero section.
For a connected open set \(\Omega\) in the nonsingular locus of \(H_A(\bbeta)\), let \(\Sol_{\Omega}(H_A(\bbeta))\) denote the space of holomorphic solutions on \(\Omega\).
For a connected open set \(\Omega\) in the nonsingular locus on which every element of \(\cV_{\bw}\) converges after branches of \(\log x_1,\ldots,\log x_n\) have been fixed, let \(\Sigma_{\Omega}:\cV_{\bw}\to\Sol_{\Omega}(H_A(\bbeta))\) denote the summation map, and put \(\cS_{\bw}^{L}(\Omega)=\Sigma_{\Omega}(\cV_{\bw}^{L})\).

\begin{theorem}
	\label{thm:analytic-realization}
	There is a nonempty simply connected open set \(\Omega \subseteq (\C^*)^n\setminus\Sing(H_A(\bbeta))\) and a choice of the branches of \(\log x_1,\ldots,\log x_n\) on \(\Omega\) such that every element of \(\cV_{\bw}\) converges on \(\Omega\) and summation gives an isomorphism
	\begin{equation}
		\Sigma_{\Omega}:
		\cV_{\bw}
		\xrightarrow{\ \sim\ }
		\Sol_{\Omega}(H_A(\bbeta)).
		\label{eq:analytic-summation-isomorphism}
	\end{equation}
	Then \(\Sigma_{\Omega}\) induces an isomorphism
	\begin{equation}
		\frac{\cV_{\bw}}
		{\cV_{\bw}^{L}}
		\xrightarrow{\ \sim\ }
		\frac{\Sol_{\Omega}(H_A(\bbeta))}
		{\cS_{\bw}^{L}(\Omega)}.
		\label{eq:analytic-intrinsic-quotient}
	\end{equation}
	In particular, the formula in Corollary~\ref{cor:formal-solution-exact-codimension} determines the codimension of \(\cS_{\bw}^{L}(\Omega)\) in the holomorphic solution space, and Theorem~\ref{thm:formal-solution-codimension-bounds} gives upper and lower bounds for this codimension.
\end{theorem}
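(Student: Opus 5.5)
The plan is to combine three ingredients: the identification in Proposition~\ref{prop:canonical-nilsson-identification} of \(\cV_{\bw}\) with the span of the basic Nilsson solutions in the direction \(\bw'\); the convergence theorem for Nilsson series of \cite{DMM12}; and a dimension count against the holonomic rank. The second isomorphism \eqref{eq:analytic-intrinsic-quotient} is then formal, since a linear isomorphism carries a subspace onto its image and induces an isomorphism of the quotients. The final sentence then follows from Corollary~\ref{cor:formal-solution-exact-codimension} and Theorem~\ref{thm:formal-solution-codimension-bounds}.

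\emph{Step 1: choose the domain.} By Proposition~\ref{prop:canonical-nilsson-identification}, the weight \(\bw'\) is a perturbation of \(\bone\) and lies in a strongly convex open cone \(\cQ\) that contains the supports of all canonical series. The convergence result of \cite{DMM12} (their Theorem~2.? on basic Nilsson solutions in the direction of a perturbation of \(\bone\)) gives a nonempty open set on which every basic Nilsson solution converges. This set has the form \(\{\bx : |x^{\bu}| < \varepsilon \text{ for generators } \bu \text{ of } \cQ^{*}\cap L\}\), intersected with a region bounded away from the coordinate hyperplanes. Because \(\cV_{\bw}\) is finite-dimensional and spanned by finitely many canonical series \(\Xi_{\fs}\), one common domain suffices. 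Inside this region I take a small polydisc-like simply connected open set \(\Omega\) that avoids \(\Sing(H_A(\bbeta))\); this is possible because the singular locus is a proper analytic subset. On \(\Omega\) I fix branches of \(\log x_j\). Convergence there is absolute and locally uniform, so the sums are holomorphic. Since each \(\Xi_{\fs}\) is a formal solution and the operators act termwise, the sums are genuine solutions, and \(\Sigma_{\Omega}\) is well defined.

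\emph{Step 2: injectivity.} Let \(\phi\in\cV_{\bw}\) be nonzero with \(\Sigma_{\Omega}\phi=0\). Decompose \(\phi\) by \(L\)-cosets. I would first treat one coset, using Lemma~\ref{lem:least-term} to extract \(\phi_{\min}\neq 0\). To separate the cosets and the distinct weights, I then apply Lemma~\ref{lem:nilsson-asymptotic-injectivity} with an integral \(\bw''\) close to \(\bw'\) in \(\cQ\). The vector \(\bw''\) is chosen generic so that the exponents of least real \(\bw''\)-weight across all cosets have a strict gap \(\varepsilon\); this is possible since the exponents lie in finitely many translates of a well-ordered set. Absolute convergence on \(\Omega\) supplies coefficientwise absolute convergence. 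To use the curves \(\bzeta+\bw''\log t\), I need \(\Omega\) in log coordinates to contain such curves for \(\bzeta\) in an open set \(\cB\). I therefore arrange \(\Omega\) so that it is invariant under the flow \(\bx\mapsto t^{\bw''}\bx\) for \(t\le 1\) while still avoiding the singular locus. This is the main obstacle: for the \(x^{\bu}\), \(\bu\in\cQ^{*}\), this flow shrinks them, so it stays in the convergence region. Avoiding \(\Sing\) is handled by the fact that in the direction \(\bw'\) near \(\bone\) the singular locus (the discriminant) does not meet a small enough region, since the leading forms of the principal \(A\)-determinant are monomial for a generic weight. Lemma~\ref{lem:nilsson-asymptotic-injectivity} then produces a point where the sum is nonzero, which is a contradiction.

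\emph{Step 3: surjectivity.} Since \(\Omega\) is simply connected and nonsingular, \(\dim\Sol_{\Omega}(H_A(\bbeta))=\rank(H_A(\bbeta))\). By Fact~\ref{fact:canonical-series-construction}, the number of starting monomials, and hence \(\dim\cV_{\bw}\), equals the rank of the indicial ideal, which is \(\rank(H_A(\bbeta))\). An injective map between spaces of equal finite dimension is an isomorphism, which gives \eqref{eq:analytic-summation-isomorphism}.
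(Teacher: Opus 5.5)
Your architecture matches the paper's: a common convergence domain from \cite{DMM12}, injectivity of summation via Lemma~\ref{lem:nilsson-asymptotic-injectivity} along the curves $x_j(t)=z_jt^{w''_j}$, and a dimension count against $\rank(H_A(\bbeta))$. There is, however, a genuine gap at the sentence ``Absolute convergence on $\Omega$ supplies coefficientwise absolute convergence.''

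The lemma you invoke needs the bound of Definition~\ref{def:coefficientwise-absolute-convergence}, namely $\sum_{\balpha}\sum_{|\bk|\leq M}|c_{\balpha,\bk}e^{\balpha\cdot\by}|<\infty$. Its tail estimate uses data at the single point $t_*$ to control $p_{\balpha}(\bzeta+\bw''\log t)$ for all small $t$. Absolute convergence of $\sum_{\balpha}|e^{\balpha\cdot\by}p_{\balpha}(\by)|$ at a point does not give this bound, because $p_{\balpha}(\by)$ can be small through cancellation among its monomials while its coefficients are large.

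The paper closes the gap by evaluating at the $N=\binom{n+M}{n}$ points $\by+2\pi i\bk$ with $|\bk|\leq M$:
\begin{itemize}
\item These points represent the same $\bx$, so they stay in the convergence domain, which is cut out by conditions on moduli.
\item The branch-shifted series are again basic Nilsson solutions in the same direction.
\item Evaluation at these points is an isomorphism on polynomials of degree at most $M$. This bounds $\sum_{\bk}|c_{\balpha,\bk}|$ by a constant times $\max_j|p_{\balpha}(\by+2\pi i\bk^{(j)})|$.
\item One has $|e^{\balpha\cdot\by}|\leq D\,|e^{\balpha\cdot(\by+2\pi i\bk)}|$ with $D<\infty$, because $\operatorname{Im}\balpha$ takes only finitely many values (the shifts lie in $L\subseteq\Z^n$).
\end{itemize}
You must supply some argument of this kind; small purely imaginary shifts inside your tube would also work.

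Your handling of $\Sing(H_A(\bbeta))$ is a different and heavier route. To make the flow-invariant tube itself miss the singular locus, you need two facts from outside the paper. First, $\Sing(H_A(\bbeta))\cap(\C^*)^n$ lies in the zero set of the principal $A$-determinant. Second, its $\bw''$-initial form is a monomial, because the Gr\"obner fan of $I_A$ refines the secondary fan. This can be made to work, but the paper avoids it. It proves that the sum of $\phi$ is not identically zero on the connected tube image $\Omega'$, which may meet $\Sing(H_A(\bbeta))$. By the identity theorem, the sum is then not identically zero on any nonempty open subset of $\Omega'$. The paper then takes any simply connected $\Omega\subseteq\Omega'\setminus\Sing(H_A(\bbeta))$, using only that the singular locus is a proper algebraic subset. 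This also reconciles your Step~1 choice of a small $\Omega$ with the flow invariance you need in Step~2.

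Finally, the coset decomposition, Lemma~\ref{lem:least-term}, and a generic choice of $\bw''$ are unnecessary. Since $\bw''$ is integral and lies in the interior of the cone, every nonzero shift has positive integral $\bw''$-weight. So for the whole series $\phi$ the gap $\varepsilon$ exists and $\psi\neq0$ holds automatically.
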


\begin{proof}
	As in the proof of Proposition~\ref{prop:canonical-nilsson-identification}, we may replace \((A,\bbeta)\) by \((T^{-1}A,T^{-1}\bbeta)\), where the columns of \(T\in\Z^{d\times d}\) form a \(\Z\)-basis of \(\Z A\), retain the notation \((A,\bbeta)\), and assume \(\Z A=\Z^d\), as required in \cite{DMM12}.
	This replacement changes neither \(L\), nor the canonical series, nor the solution spaces.

	Choose a linear functional \(\bh\) such that \(\bh\ba_j=1\) for \(j=1,\ldots,n\), as permitted by the standing homogeneity assumption.
	Then \(\bone=\bh A\) belongs to \(\operatorname{rowspan}_{\R}(A)\), and the cone spanned by the columns of \(A\) is strongly convex.
	For \(\bu\in L\), we have \(|\bu| = \bh(A\bu) =0\).

	Choose \(\bw'\) as constructed in the proof of Proposition~\ref{prop:canonical-nilsson-identification}.
	The construction shows that \(\bw'\) is a perturbation of \(\bone\) in the sense of \cite[Definition~3.3]{DMM12}.
	Proposition~\ref{prop:canonical-nilsson-identification} identifies \(\cV_{\bw}\) with the span of the basic Nilsson solutions in the direction \(\bw'\), and \cite[Lemma~2.8]{DMM12} supplies bases with linearly independent initial series.
	Every support shift belongs to \(L\) and has coordinate sum zero, so the convergence criterion in \cite[Theorem~6.2]{DMM12} applies to every basic Nilsson solution in \(\cV_{\bw}\).
	By \cite[Theorem~6.4]{DMM12}, there is a common nonempty open domain of convergence on which every element of \(\cV_{\bw}\) converges.
	Choose a positive integral vector \(\bw''\) in the open cone that defines the direction \(\bw'\).
	By \cite[Notation~6.1]{DMM12}, the common domain of convergence contains a set defined by finitely many inequalities of the form \(|\bx^{\bgamma_i}|<\varepsilon_i\), where each exponent \(\bgamma_i\) satisfies \(\bgamma_i\cdot\bw''>0\).
	In the coordinates \(\log\bx\), these inequalities cut out open half-spaces.
	Choose a convex box \(\cB\) with imaginary width less than \(2\pi\) and a sufficiently negative real number \(\upsilon_0\); the tube \(\cB+\{\upsilon\bw''\mid \upsilon<\upsilon_0\}\) then lies inside the set defined by these inequalities, and hence inside the common domain of convergence.
	Define \(\Omega'\) to be the image of this tube under the coordinatewise exponential map.
	The exponential map is injective on the tube, so \(\Omega'\) is a nonempty simply connected open subset of the common domain of convergence, the branches of the logarithms are fixed on \(\Omega'\), and \(\Omega'\) contains the tail of the curve defined by \(x_j(t)=z_jt^{w_j''}\) whenever \(\log\bz\in\cB\).
	By \cite[Corollary~2.4.16]{SST00}, equivalently by the dimension assertion in \cite[Theorem~6.4]{DMM12}, we have \(\dim_{\C}\cV_{\bw}=\rank(H_A(\bbeta))\).

	We next prove that summation on \(\Omega'\) is injective.
	Take \(0\ne\phi\in\cV_{\bw}\).
	By \cite[Proposition~2.5.2]{SST00}, the series \(\phi\) has a nonzero finite initial series with respect to \(\bw''\); define \(\psi\) to be this initial series, and let \(\mu\) be the common real part of the \(\bw''\)-weights of the exponents of \(\psi\).
	Since \(\phi\) is a finite linear combination of basic Nilsson solutions whose supports lie in the dual of a strongly convex open cone having \(\bw''\) in its interior, there is a positive number \(\varepsilon\) such that every term whose exponent lies outside \(\supp(\psi)\) has real \(\bw''\)-weight at least \(\mu+\varepsilon\).
	The domain in \cite[Notation~6.1]{DMM12} is defined by inequalities in the moduli of the coordinates, so absolute convergence at a point of \(\Omega'\) implies absolute convergence at every point obtained by shifting the logarithms by an element of \((2\pi i\Z)^n\).
	Let \(M\) be a common bound for the degrees of the logarithmic coefficients in \(\phi\), and put \(\cT = \cB+\{\upsilon\bw''\mid \upsilon<\upsilon_0\}\).
	Fix \(\by\in\cT\), and write \(\phi(e^{\by}) = \sum_{\balpha\in S} e^{\balpha\cdot\by}p_{\balpha}(\by)\) and \(p_{\balpha}(\by) = \sum_{|\bk|\leq M}c_{\balpha,\bk}\by^{\bk}\).
	Put \(K_M = \{\bk\in\N^n\mid |\bk|\leq M\}\) and \(N = |K_M| = \binom{n+M}{n}\), and enumerate \(K_M\) as \(\bk^{(1)},\ldots,\bk^{(N)}\).
	Let \(\by'=(y'_1,\ldots,y'_n)\) be an auxiliary vector of variables.
	Define \(\operatorname{ev}_{\by,M}:\C[\by']_{\leq M}\to\C^N\) by \(\operatorname{ev}_{\by,M}(p)=\bigl(p(\by+2\pi i\bk^{(1)}),\ldots,p(\by+2\pi i\bk^{(N)})\bigr)\).
	The map \(\operatorname{ev}_{\by,M}\) is injective: after the invertible change of variables \(\widetilde p(\by')=p(\by+2\pi i\by')\), the polynomials \(\prod_{\nu=1}^{n}\binom{y'_\nu}{k_\nu}\) with \(\bk\in K_M\) form a basis whose evaluation matrix on \(K_M\), ordered by total degree, is triangular with diagonal entries equal to one.
	Since the domain and codomain of \(\operatorname{ev}_{\by,M}\) both have dimension \(N\), this map is an isomorphism.
	Equip \(\C[\by']_{\leq M}\) with the coefficient \(\ell^1\)-norm.
	Every polynomial \(p(\by')=\sum_{|\bk|\leq M}c_{\bk}(\by')^{\bk}\) then satisfies \(\sum_{|\bk|\leq M}|c_{\bk}|\leq\|\operatorname{ev}_{\by,M}^{-1}\|_{\ell^\infty\to\ell^1}\max_{1\leq j\leq N}|p(\by+2\pi i\bk^{(j)})|\).

	Replacing \(\log\bx\) by \(\log\bx+2\pi i\bk^{(j)}\) preserves the supports and the logarithmic degree bounds and sends every basic Nilsson solution to another basic Nilsson solution in the same direction.
	The absolute-convergence argument in \cite[Theorem~6.2]{DMM12}, together with the common domain in \cite[Theorem~6.4]{DMM12}, therefore gives \(\sum_{\balpha\in S} \left| e^{\balpha\cdot(\by+2\pi i\bk^{(j)})} p_{\balpha}(\by+2\pi i\bk^{(j)}) \right| < \infty\) for \(j=1,\ldots,N\).
	The set of imaginary parts of the exponents in \(S\) is finite because \(\phi\) is a finite linear combination of basic Nilsson solutions and every support shift belongs to \(L\subseteq\Z^n\), so the number \(D_{\phi,M}=\max_{\balpha\in S,\ \bk\in K_M}\exp\bigl(2\pi\operatorname{Im}(\balpha)\cdot\bk\bigr)\) is finite.
	For \(\balpha\in S\) and \(\bk\in K_M\), we have \(|e^{\balpha\cdot\by}|=\exp(2\pi\operatorname{Im}(\balpha)\cdot\bk)|e^{\balpha\cdot(\by+2\pi i\bk)}|\leq D_{\phi,M}|e^{\balpha\cdot(\by+2\pi i\bk)}|\).
	Applying the inequality for \(\operatorname{ev}_{\by,M}^{-1}\) to each \(p_{\balpha}\) and then summing over \(\balpha\) gives
	\[
		\sum_{\balpha\in S}
		\sum_{|\bk|\leq M}
		\left|
		c_{\balpha,\bk}e^{\balpha\cdot\by}
		\right|
		\leq
		\|\operatorname{ev}_{\by,M}^{-1}\|_{\ell^\infty\to\ell^1}D_{\phi,M}
		\sum_{j=1}^{N}
		\sum_{\balpha\in S}
		\left|
		e^{\balpha\cdot(\by+2\pi i\bk^{(j)})}
		p_{\balpha}(\by+2\pi i\bk^{(j)})
		\right|
		<
		\infty.
	\]
	Since \(\by\in\cT\) was arbitrary, \(\phi\) is coefficientwise absolutely convergent on \(\cT\) in the sense of Definition~\ref{def:coefficientwise-absolute-convergence}.
	Lemma~\ref{lem:nilsson-asymptotic-injectivity} shows that the sum of \(\phi\) is not identically zero on \(\Omega'\).
	Thus summation on \(\Omega'\), and hence on every nonempty open subset of \(\Omega'\), is injective.

	The domain \(\Omega'\) is nonempty and open in the usual topology, and \(\Sing(H_A(\bbeta))\) is a proper algebraic subset, so \(\Omega'\cap\bigl((\C^*)^n\setminus\Sing(H_A(\bbeta))\bigr)\) is nonempty.
	Choose a simply connected open set \(\Omega\) in this intersection.
	The holomorphic solution space on \(\Omega\) has dimension \(\rank(H_A(\bbeta))\), so the summation map is an injective map between two spaces of the same finite dimension and is therefore surjective, which gives \eqref{eq:analytic-summation-isomorphism}.
	Restricting the isomorphism to \(\cV_{\bw}^{L}\) and passing to quotients proves \eqref{eq:analytic-intrinsic-quotient} and the last assertion.
\end{proof}

\subsection{Two elements in the antichain \texorpdfstring{\(\cG_{\cN}\)}{G(N)}}

In this subsection, we compute \(J_{\cN}^{\amb}\) and \(J_{\cN}^{\intr}\) when \(\cG_{\cN}\) has two elements.
Throughout this subsection, we assume that \(\cG_{\cN}=\{G_1,G_2\}\), and we label these sets so that \(G_1\subseteq E\).
Define \(\alpha\), \(\beta\), and \(\chi\) by \(\alpha=\ell^{G_1\setminus G_2}\), \(\beta=\ell^{G_2\setminus G_1}\), and \(\chi=\ell^{E\setminus G_1}\).
For \(i\in\{1,2\}\), define the homogeneous ideal \(\fa_i\) of \(S_0\) by
\begin{equation}
	\fa_i
	=
	\left\langle
	\ell^{H_a\setminus G_i}
	\ \middle|\
	1\leq a\leq h,\ \iota(a)=i
	\right\rangle,
	\label{eq:two-support-multiplier-ideals}
\end{equation}
where the ideal generated by the empty set is zero.

The following theorem gives \(J_{\cN}^{\amb}\) and \(J_{\cN}^{\intr}\).

\begin{theorem}
	\label{thm:two-minimal-support-formula}
	We have
	\begin{equation}
		J_{\cN}^{\amb}=\bigl(\fa_1+\beta(\fa_2:\alpha)\bigr):\chi, \qquad J_{\cN}^{\intr}=\bigl(\fa_1+(\beta\fa_2:\alpha)\bigr):\chi.
		\label{eq:two-support-formulas}
	\end{equation}
\end{theorem}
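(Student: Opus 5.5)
Both formulas will be computed directly from the finite presentation in Theorem~\ref{thm:finite-boundary-classification}, with $g=2$. For $J_{\cN}^{\amb}=\Ann_{S_0}(\epsilon_{\cN})$, note that $F_{\cN}=S_0\eta_1\oplus S_0\eta_2$. The submodule $Z_{\cN}$ is generated by the single Taylor relation $\tau_{12}=\beta\eta_1-\alpha\eta_2$ together with $\fa_1\eta_1$ and $\fa_2\eta_2$; indeed, $\rho_a=\ell^{H_a\setminus G_{\iota(a)}}\eta_{\iota(a)}$, so grouping the $\rho_a$ by $\iota(a)$ yields exactly $\fa_1\eta_1+\fa_2\eta_2$. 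Since $G_1\subseteq E$, we may take $\epsilon_{\cN}=\chi\overline{\eta_1}$. Thus $f\in J_{\cN}^{\amb}$ if and only if $f\chi\in\{x\in S_0\mid x\eta_1\in Z_{\cN}\}$. The main step is to compute the ideal $\fb=\{x\mid x\eta_1\in Z_{\cN}\}$. I will show that $\fb=\fa_1+\beta(\fa_2:\alpha)$, which gives $J_{\cN}^{\amb}=\fb:\chi$.

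To compute $\fb$, write $x\eta_1=c\,\tau_{12}+y_1\eta_1+y_2\eta_2$ with $c\in S_0$, $y_1\in\fa_1$, $y_2\in\fa_2$. The $\eta_2$-coordinate gives $c\alpha=y_2\in\fa_2$, so $c\in\fa_2:\alpha$. The $\eta_1$-coordinate gives $x=c\beta+y_1\in\beta(\fa_2:\alpha)+\fa_1$. Conversely, given $c\in\fa_2:\alpha$ and $y_1\in\fa_1$, the choice $y_2=c\alpha$ realizes $x=c\beta+y_1$. This proves the first formula. The only delicate point is that $\alpha$ and $\beta$ are genuinely nonzero products of linear forms. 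This holds because the $\ell_j$ indexed by $E\supseteq G_1$ are nonzero, and the $\ell_j$ indexed by $G_2$ are nonzero by the same argument used for $\ell^E\neq0$, since $G_2\setminus K_{\cN}$ comes from members of $\cN$ and $j\notin K_{\cN}$. In any case the displayed computation needs no nonvanishing.

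For the intrinsic ideal, $J_{\cN}^{\intr}=\langle\ell^{H_a}\rangle:\ell^E$. Since $\ell^E=\chi\,\ell^{G_1}$, this equals $\bigl(\langle\ell^{H_a}\rangle:\ell^{G_1}\bigr):\chi$. It therefore suffices to show that
\[
\langle\ell^{H_1},\ldots,\ell^{H_h}\rangle:\ell^{G_1}=\fa_1+(\beta\fa_2:\alpha).
\]
Split the generators by $\iota(a)$: we have $\ell^{H_a}=\ell^{G_1}\ell^{H_a\setminus G_1}$ when $\iota(a)=1$, and $\ell^{H_a}=\ell^{G_2}\ell^{H_a\setminus G_2}$ when $\iota(a)=2$. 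Hence $\langle\ell^{H_a}\rangle=\ell^{G_1}\fa_1+\ell^{G_2}\fa_2$. Put $\gamma=\ell^{G_1\cap G_2}$, so that $\ell^{G_1}=\gamma\alpha$ and $\ell^{G_2}=\gamma\beta$. Because $S_0$ is a domain and $\gamma\neq0$, cancelling $\gamma$ gives $(\gamma\alpha\fa_1+\gamma\beta\fa_2):\gamma\alpha=(\alpha\fa_1+\beta\fa_2):\alpha$. It remains to check $(\alpha\fa_1+\beta\fa_2):\alpha=\fa_1+(\beta\fa_2:\alpha)$. The inclusion $\supseteq$ is clear. For $\subseteq$, suppose $f\alpha=\alpha y_1+z$ with $y_1\in\fa_1$ and $z\in\beta\fa_2$. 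Then $(f-y_1)\alpha=z\in\beta\fa_2$, so $f-y_1\in\beta\fa_2:\alpha$.

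I expect no serious obstacle; the argument is bookkeeping. The points needing care are the identification of $Z_{\cN}$ with $\langle\tau_{12}\rangle+\fa_1\eta_1+\fa_2\eta_2$, the independence of the choice of $\iota(a)$ (already settled in Theorem~\ref{thm:finite-boundary-classification}), and the nonvanishing of $\gamma$ and $\alpha$ needed for cancellation in the domain $S_0$. For $\gamma$ and $\alpha$, I would use that $G_1\cup G_2\subseteq\bigcup_{I\in\cN}(I\setminus K_{\cN})$, so each index $j$ involved lies in some $I\in\cN$ and outside some other member of $\cN$. The corresponding row of $B$ is then nonzero, by the argument given for $\ell^E\neq0$.
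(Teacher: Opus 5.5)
Your proposal is correct and follows the paper's proof essentially step by step. You compute \(J_{\cN}^{\amb}\) as \(\Ann_{S_0}(\overline{\eta_1}):\chi\) from the presentation with the single Taylor relation \(\beta\eta_1-\alpha\eta_2\) and the relations \(\fa_1\eta_1\), \(\fa_2\eta_2\). You obtain \(J_{\cN}^{\intr}\) by writing \(\langle\ell^{H_1},\ldots,\ell^{H_h}\rangle=\ell^{G_1\cap G_2}(\alpha\fa_1+\beta\fa_2)\), cancelling the nonzero common factor in the domain \(S_0\), and checking \((\alpha\fa_1+\beta\fa_2):\alpha=\fa_1+(\beta\fa_2:\alpha)\). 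Your justification that each \(\ell_j\) with \(j\in G_1\cup G_2\) is nonzero is the same one the paper uses.
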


\begin{proof}
	The Taylor relation between \(\eta_1\) and \(\eta_2\) is \(\beta\eta_1-\alpha\eta_2\), so the presentation in Theorem~\ref{thm:finite-boundary-classification} gives
	\begin{equation}
		\frac{F_{\cN}}{Z_{\cN}}
		\simeq
		\frac{S_0\eta_1\oplus S_0\eta_2}
		{\langle
			\beta\eta_1-\alpha\eta_2,
			\fa_1\eta_1,
			\fa_2\eta_2
			\rangle}.
		\label{eq:two-support-module-presentation}
	\end{equation}
	An element \(f\in S_0\) annihilates \(\overline{\eta_1}\) if and only if there are \(c\in S_0\), \(a\in\fa_1\), and \(b\in\fa_2\) with \(f=c\beta+a\) and \(c\alpha=b\), so \(\Ann_{S_0}(\overline{\eta_1})=\fa_1+\beta(\fa_2:\alpha)\).
	Since \(\epsilon_{\cN}=\chi\overline{\eta_1}\), taking the colon by \(\chi\) proves the formula for \(J_{\cN}^{\amb}\) in \eqref{eq:two-support-formulas}.

	For every \(j\in G_1\cup G_2\), there are \(I,I'\in\cN\) with \(j\in I\) and \(j\notin I'\), so the \(j\)-th row of \(B\) and the linear form \(\ell_j\) are nonzero.
	Define \(\kappa\) by \(\kappa=\ell^{G_1\cap G_2}\).
	The images under \(\Phi_0\) of the generators lifted from \(G_1\) generate \(\kappa\alpha\fa_1\) and those lifted from \(G_2\) generate \(\kappa\beta\fa_2\), so \(\langle\ell^{H_1},\ldots,\ell^{H_h}\rangle=\kappa(\alpha\fa_1+\beta\fa_2)\) and \(\ell^E=\kappa\alpha\chi\).
	Canceling \(\kappa\) in the domain \(S_0\) gives \(J_{\cN}^{\intr}=(\alpha\fa_1+\beta\fa_2):\alpha\chi\).
	If \(f\alpha=a\alpha+\beta b\) with \(a\in\fa_1\) and \(b\in\fa_2\), then \(f-a\in(\beta\fa_2:\alpha)\), and the reverse inclusion follows from the same equality, so \((\alpha\fa_1+\beta\fa_2):\alpha=\fa_1+(\beta\fa_2:\alpha)\).
	Taking the colon by \(\chi\) proves the formula for \(J_{\cN}^{\intr}\) in \eqref{eq:two-support-formulas}.
\end{proof}

When \(E=G_1\) and \(\fa_2\) is principal, the quotient \(J_{\cN}^{\intr}/J_{\cN}^{\amb}\) of the ideals in Theorem~\ref{thm:two-minimal-support-formula} is cyclic.

\begin{corollary}
	\label{cor:two-support-principal-formula}
	Suppose that \(E=G_1\) and that there is a nonzero homogeneous polynomial \(\gamma\) such that \(\fa_2=\langle\gamma\rangle\).
	Choose representatives of the greatest common divisors, which are defined up to nonzero constants, and define \(g_0\), \(h_0\), and \(\omega\) by \(g_0=\gcd(\alpha,\gamma)\), \(h_0=\gcd(\alpha/g_0,\beta)\), and \(\omega=(\beta/h_0)(\gamma/g_0)\).
	Then
	\begin{equation}
		J_{\cN}^{\amb}=\fa_1+\langle h_0\omega\rangle, \qquad J_{\cN}^{\intr}=\fa_1+\langle\omega\rangle,
		\label{eq:two-support-principal-formulas}
	\end{equation}
	and multiplication by \(\omega\) gives a graded isomorphism
	\begin{equation}
		\frac{S_0}{\langle h_0\rangle+(\fa_1:\omega)}
		(-\deg\omega)
		\longrightarrow
		\frac{J_{\cN}^{\intr}}
		{J_{\cN}^{\amb}}.
		\label{eq:two-support-cyclic-model}
	\end{equation}
	Suppose further that there is a nonzero homogeneous polynomial \(\delta\) such that \(\fa_1=\langle\delta\rangle\), and define \(\delta_{\omega}\), \(d_h\), \(d_{\delta}\), and \(d_{h,\delta}\) by \(\delta_{\omega}=\delta/\gcd(\delta,\omega)\), \(d_h=\deg h_0\), \(d_{\delta}=\deg\delta_{\omega}\), and \(d_{h,\delta}=\deg\gcd(h_0,\delta_{\omega})\).
	Then
	\begin{equation}
		\frac{J_{\cN}^{\intr}}
		{J_{\cN}^{\amb}}
		\simeq
		\frac{S_0}{\langle h_0,\delta_{\omega}\rangle}
		(-\deg\omega),
		\label{eq:two-support-gcd-model}
	\end{equation}
	and
	\begin{equation}
		\Hilb\bigl(\fD_{\cN}(e);u\bigr)
		=
		u^{\deg\omega}
		\frac{1-u^{d_h}-u^{d_{\delta}}+u^{d_h+d_{\delta}-d_{h,\delta}}}{(1-u)^r}.
		\label{eq:two-support-gcd-Hilbert}
	\end{equation}
	The obstruction vanishes if and only if \(h_0\) or \(\delta_{\omega}\) is a nonzero constant.
\end{corollary}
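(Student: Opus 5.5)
Everything reduces to Theorem~\ref{thm:two-minimal-support-formula} with \(\chi=1\), since \(E=G_1\). Then
\[
J_{\cN}^{\amb}=\fa_1+\beta(\langle\gamma\rangle:\alpha),\qquad J_{\cN}^{\intr}=\fa_1+(\beta\gamma:\alpha).
\]
The colons can be computed in the UFD \(S_0\). Writing \(\alpha=g_0\alpha'\) and \(\gamma=g_0\gamma'\) with \(\gcd(\alpha',\gamma')=1\), we get \(\langle\gamma\rangle:\alpha=\langle\gamma'\rangle\). Hence \(\beta(\langle\gamma\rangle:\alpha)=\langle\beta\gamma'\rangle=\langle h_0\omega\rangle\), because \(h_0\omega=\beta\gamma'\).

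For the intrinsic ideal, \(\beta\gamma:\alpha=\beta\gamma':\alpha'\). Write \(\alpha'=h_0\alpha''\) and \(\beta=h_0\beta'\) with \(\gcd(\alpha'',\beta')=1\). Since \(\alpha''\) divides \(\alpha'\), we also have \(\gcd(\alpha'',\gamma')=1\), so \(\alpha''\) is coprime to \(\beta'\gamma'=\omega\). Therefore \(\langle\beta\gamma'\rangle:\alpha'=\langle h_0\beta'\gamma'\rangle:h_0\alpha''=\langle\omega\rangle\). This proves \eqref{eq:two-support-principal-formulas}.

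For the cyclic model, consider \(S_0\to J_{\cN}^{\intr}/J_{\cN}^{\amb}\), \(f\mapsto f\omega\). It is surjective because \(\fa_1\subseteq J_{\cN}^{\amb}\). Its kernel is \(\{f\mid f\omega\in\fa_1+\langle h_0\omega\rangle\}\). If \(f\omega=a+ch_0\omega\), then \((f-ch_0)\omega\in\fa_1\), so the kernel equals \(\langle h_0\rangle+(\fa_1:\omega)\); the reverse inclusion is immediate. Grading by \(\deg\omega\) gives \eqref{eq:two-support-cyclic-model}. When \(\fa_1=\langle\delta\rangle\), we have \(\fa_1:\omega=\langle\delta_\omega\rangle\), which gives \eqref{eq:two-support-gcd-model}.

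The main step that needs care is the Hilbert series of \(S_0/\langle h_0,\delta_\omega\rangle\). If either polynomial is zero or a unit the computation is trivial. Note that \(h_0\) and \(\delta\) are nonzero, as products of nonzero linear forms and by hypothesis. Otherwise put \(c=\gcd(h_0,\delta_\omega)\), of degree \(d_{h,\delta}\). By unique factorization the relations between \(h_0\) and \(\delta_\omega\) form the free module generated by \((\delta_\omega/c,-h_0/c)\), of degree \(d_h+d_\delta-d_{h,\delta}\). This yields the graded free resolution
\[
0\to S_0(-(d_h+d_\delta-d_{h,\delta}))\to S_0(-d_h)\oplus S_0(-d_\delta)\to S_0\to S_0/\langle h_0,\delta_\omega\rangle\to0.
\]
Taking the alternating sum and multiplying by \(u^{\deg\omega}\) gives \eqref{eq:two-support-gcd-Hilbert}, with \eqref{eq:boundary-completed-Hilbert-definition} as the definition of the left-hand side.

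The vanishing criterion then follows from \eqref{eq:boundary-classification-equality} together with \eqref{eq:two-support-gcd-model}: the graded quotient \(S_0/\langle h_0,\delta_\omega\rangle\) is zero exactly when the homogeneous ideal \(\langle h_0,\delta_\omega\rangle\) contains \(1\). That happens exactly when one of the two generators is a nonzero constant.
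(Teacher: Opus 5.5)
Your proof is correct and follows the paper's argument essentially step by step: you specialize Theorem~\ref{thm:two-minimal-support-formula} to \(\chi=1\), compute the two principal colons by Euclid's lemma in the UFD \(S_0\), identify the kernel of \(f\mapsto f\omega\) as \(\langle h_0\rangle+(\fa_1:\omega)\), and resolve \(S_0/\langle h_0,\delta_\omega\rangle\) using the single syzygy obtained by dividing both generators by their gcd. The only difference is cosmetic: you split \(\alpha'=h_0\alpha''\) explicitly, whereas the paper records \(\gcd(\alpha,\beta\gamma)=g_0h_0\) and uses \(\langle\beta\gamma\rangle:\alpha=\langle\beta\gamma/\gcd(\alpha,\beta\gamma)\rangle\).
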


\begin{proof}
	Write \(\alpha=g_0\alpha'\) and \(\gamma=g_0\gamma'\), where \(\alpha'\) and \(\gamma'\) are relatively prime, so that \(\gcd(\alpha,\beta\gamma)=g_0\gcd(\alpha',\beta)=g_0h_0\).
	Theorem~\ref{thm:two-minimal-support-formula} and Euclid's lemma give \(\beta(\langle\gamma\rangle:\alpha)=\langle\beta\gamma/g_0\rangle=\langle h_0\omega\rangle\) and \((\beta\langle\gamma\rangle:\alpha)=\langle\beta\gamma/\gcd(\alpha,\beta\gamma)\rangle=\langle\omega\rangle\), which proves \eqref{eq:two-support-principal-formulas}.
	The quotient of these two ideals is generated by the class of \(\omega\), and an element \(f\in S_0\) annihilates this class if and only if \(f\omega\in\fa_1+\langle h_0\omega\rangle\), which is equivalent to \(f\in(\fa_1:\omega)+\langle h_0\rangle\); this proves \eqref{eq:two-support-cyclic-model}.

	If \(\fa_1=\langle\delta\rangle\), then Euclid's lemma gives \((\fa_1:\omega)=\langle\delta_{\omega}\rangle\), which proves \eqref{eq:two-support-gcd-model}.
	The first syzygy module of \(\langle h_0,\delta_{\omega}\rangle\) is generated by the syzygy obtained by dividing the two generators by their greatest common divisor, so \(S_0/\langle h_0,\delta_{\omega}\rangle\) has the graded free resolution
	\[
		0
		\longrightarrow
		S_0(-d_h-d_{\delta}+d_{h,\delta})
		\longrightarrow
		S_0(-d_h)\oplus S_0(-d_{\delta})
		\longrightarrow
		S_0
		\longrightarrow
		\frac{S_0}{\langle h_0,\delta_{\omega}\rangle}
		\longrightarrow0.
	\]
	Taking the Hilbert series and applying the shift in \eqref{eq:two-support-gcd-model} proves \eqref{eq:two-support-gcd-Hilbert}.
	The polynomials \(h_0\) and \(\delta_{\omega}\) generate the unit ideal if and only if one of them is a nonzero constant, which proves the last assertion.
\end{proof}

Corollary~\ref{cor:two-support-principal-formula} gives the following condition on the rows of \(B\).

\begin{corollary}
	\label{cor:two-support-proportional-rows}
	Suppose that \(E=G_1\), that there is a nonzero homogeneous polynomial \(\gamma\) such that \(\fa_2=\langle\gamma\rangle\), and that \(\fD_{\cN}(e)\ne0\).
	Then there are indices \(i\in G_1\setminus G_2\) and \(j\in G_2\setminus G_1\) whose nonzero rows of \(B\) are proportional.
\end{corollary}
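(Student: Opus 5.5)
We start from Corollary~\ref{cor:two-support-principal-formula}. Under the hypotheses \(E=G_1\) and \(\fa_2=\langle\gamma\rangle\), equation \eqref{eq:two-support-principal-formulas} together with the cyclic model \eqref{eq:two-support-cyclic-model} shows that \(J_{\cN}^{\intr}/J_{\cN}^{\amb}\) is a quotient of \(S_0/\langle h_0\rangle\) up to a shift. If \(h_0\) were a nonzero constant, this quotient would vanish, and then \eqref{eq:boundary-classification-equality} of Theorem~\ref{thm:finite-boundary-classification} would give \(\fD_{\cN}(e)=0\). Hence the hypothesis \(\fD_{\cN}(e)\ne0\) forces \(h_0=\gcd(\alpha/g_0,\beta)\) to be nonconstant. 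In particular, \(\gcd(\alpha,\beta)\) is nonconstant, because \(h_0\) divides both \(\alpha\) and \(\beta\).

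Next we use the factorizations \(\alpha=\prod_{i\in G_1\setminus G_2}\ell_i\) and \(\beta=\prod_{j\in G_2\setminus G_1}\ell_j\). Each \(\ell_j\) with \(j\in G_1\cup G_2\) is a nonzero linear form. This was noted in the proof of Theorem~\ref{thm:two-minimal-support-formula}: every such index lies in some member of \(\cN\) and is missing from another, so the \(j\)-th row of \(B\) is nonzero. Nonzero linear forms are irreducible in the UFD \(S_0\). A nonconstant common divisor of \(\alpha\) and \(\beta\) therefore has an irreducible factor \(\lambda\), and \(\lambda\) divides some \(\ell_i\) with \(i\in G_1\setminus G_2\) and some \(\ell_j\) with \(j\in G_2\setminus G_1\).

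Since \(\ell_i\) and \(\ell_j\) are irreducible and both are divisible by \(\lambda\), each is an associate of \(\lambda\), so \(\ell_i=c\,\ell_j\) for some \(c\in\C^{\times}\). The linear form \(\ell_k=(B\bs)_k\) is given by the \(k\)-th row of \(B\), so the \(i\)-th and \(j\)-th rows of \(B\) are proportional, and they are nonzero.

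No step is a real obstacle. The one point to get right is that \(\fD_{\cN}(e)=0\) is equivalent to the vanishing of \(J_{\cN}^{\intr}/J_{\cN}^{\amb}\); this is exactly \eqref{eq:boundary-classification-equality}, so we may argue with the graded quotient instead of the completed module. We also need that \(h_0\) constant kills the cyclic model, which holds because \(\langle h_0\rangle=S_0\) in the denominator of \eqref{eq:two-support-cyclic-model}.
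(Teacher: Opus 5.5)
Your proposal is correct and follows the paper's proof: the cyclic model \eqref{eq:two-support-cyclic-model} forces \(h_0\) to be nonconstant, and then a linear irreducible factor of \(h_0\) divides some \(\ell_i\) with \(i\in G_1\setminus G_2\) and some \(\ell_j\) with \(j\in G_2\setminus G_1\), which makes the two rows proportional. You also spell out two steps that the paper leaves implicit: the passage from \(\fD_{\cN}(e)\ne0\) to the nonvanishing of the graded quotient via \eqref{eq:boundary-classification-equality}, and the nonvanishing of the rows indexed by \(G_1\cup G_2\).
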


\begin{proof}
	The isomorphism \eqref{eq:two-support-cyclic-model} shows that \(h_0\) is not a nonzero constant.
	Every irreducible factor of \(h_0=\gcd(\alpha/g_0,\beta)\) is a linear form that divides one factor of \(\alpha\) and one factor of \(\beta\).
	The corresponding indices lie in the two asserted sets, and the corresponding rows of \(B\) are proportional.
\end{proof}

\subsection{A Cohen--Macaulay vanishing criterion}

In this subsection, we give a condition under which the module \(\fT_{\cN}\) vanishes.
Retain \(R_0\), \(U_0\), and \(M_{\cN}^0\) from Section~\ref{subsec:finite-boundary-presentation}.
If \(M_{\cN}^0=R_0\), then \(M_{\cN}=\Rhat\) and every conclusion of Theorem~\ref{thm:CM-regular-sequence} holds directly; henceforth assume that \(M_{\cN}^0\) is proper.
Since the monomial generators of \(M_{\cN}^0\) are squarefree, there is a simplicial complex \(\Delta_{\cN}\) whose Stanley--Reisner ideal \(I_{\Delta_{\cN}}\) equals \(M_{\cN}^0\).
For a face \(F\) of \(\Delta_{\cN}\), denote by \(A_F\) the submatrix of \(A\) whose columns are indexed by \(F\).
For the standard facts about Cohen--Macaulay Stanley--Reisner rings, systems of parameters, and regular sequences, we refer to \cite[Chapters~1--2]{BH98} and \cite[Chapter~13]{MS05}.

\begin{theorem}
	\label{thm:CM-regular-sequence}
	Let \(\bw\) be generic, let \(\bv\) be a fake exponent, and let \(\cN\) be a negative support family.
	Suppose that the Stanley--Reisner ring \(\C[\Delta_{\cN}]=R_0/M_{\cN}^0\) is Cohen--Macaulay and that every facet \(F\) of \(\Delta_{\cN}\) satisfies \(\rank(A_F)=d\).
	Then \(\Tor^{\Rhat}_i \bigl(\Rhat/U,\Rhat/M_{\cN}\bigr)=0\) for \(i>0\).
	In particular, \(U\cap M_{\cN}=UM_{\cN}\), \(\fT_{\cN}=0\), and \(\fD_{\cN}(e)=0\).
	If \(\cN\) is ordered, the intrinsic coefficient space equals the ambient coefficient space.
\end{theorem}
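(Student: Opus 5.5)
We will show that \(\theta_1,\ldots,\theta_d\), the entries of \(A\bt\), form a regular sequence on \(\C[\Delta_{\cN}]=R_0/M_{\cN}^0\). Since \(U_0\) is generated by these \(d\) linear forms, the Koszul complex on \(\theta_1,\ldots,\theta_d\) resolves \(R_0/U_0\). Hence \(\Tor^{R_0}_i(R_0/U_0,R_0/M_{\cN}^0)\) is the Koszul homology \(H_i(\theta;R_0/M_{\cN}^0)\). This homology vanishes for \(i>0\) exactly when the sequence is regular on the graded module \(R_0/M_{\cN}^0\), because all elements are homogeneous of degree one.

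The regularity follows from the Cohen--Macaulay hypothesis once we know the sequence is part of a homogeneous system of parameters. For a Cohen--Macaulay graded ring, homogeneous elements of positive degree form a regular sequence if and only if they are part of a system of parameters, that is, if and only if \(\dim\C[\Delta_{\cN}]/\langle\theta\rangle=\dim\C[\Delta_{\cN}]-d\). We verify this dimension count through the standard criterion for linear forms on a Stanley--Reisner ring (\cite[Chapter~5]{BH98}). The ring \(\C[\Delta_{\cN}]/\langle\theta\rangle\) has dimension equal to the maximum over faces \(F\) of \(\dim\ker\) of the restriction of \(\theta\) to the coordinate subspace \(\C^F\), that is, \(\max_F(|F|-\rank A_F)\). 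This holds because \(\operatorname{Spec}\C[\Delta_{\cN}]\) is the union of the coordinate subspaces \(\C^F\) over facets, and \(\theta\) restricted to \(\C^F\) is the linear map \(A_F\).

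Cohen--Macaulay complexes are pure, so every facet has \(|F|=\dim\C[\Delta_{\cN}]\). By hypothesis \(\rank A_F=d\) for every facet, so the maximum equals \(\dim\C[\Delta_{\cN}]-d\), as needed. Note that faces that are not facets are contained in facets, so they do not enlarge the maximum. This dimension count is the step requiring most care, since it is the only place where both hypotheses enter.

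To pass to \(\Rhat\), we use that the completion \(R_{0,\langle\bt\rangle}\to\Rhat\) is flat and commutes with \(\Tor\) of finitely generated modules, and that localization preserves the vanishing. This gives \(\Tor^{\Rhat}_i(\Rhat/U,\Rhat/M_{\cN})=0\) for \(i>0\). Theorem~\ref{thm:lattice-obstruction} then identifies \(\Tor_1\) with \((U\cap M_{\cN})/UM_{\cN}\), so \(U\cap M_{\cN}=UM_{\cN}\). Hence \(\fT_{\cN}=0\), and the displayed implication in that theorem gives \(\fD_{\cN}(e)=0\). For ordered \(\cN\), the same theorem (via Fact~\ref{fact:colon-criterion}) yields the equality of the intrinsic and ambient coefficient spaces.
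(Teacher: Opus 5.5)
Your proposal is correct and follows essentially the paper's proof. Purity of the Cohen--Macaulay complex together with \(\rank(A_F)=d\) on facets gives \(\dim R_0/(M_{\cN}^0+U_0)=\dim\C[\Delta_{\cN}]-d\) (the paper via the minimal primes \(\fp_F\), you via the union of coordinate subspaces), so the \(\theta_i\) form a regular sequence, Koszul homology gives the \(\Tor\) vanishing, and flat completion plus Theorem~\ref{thm:lattice-obstruction} finish; you only leave implicit the trivial case \(M_{\cN}^0=R_0\) and the fact that \(\rank(A)=d\) makes the \(\theta_i\) linearly independent, which is what lets the Koszul complex resolve \(R_0/U_0\).
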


\begin{proof}
	The case \(M_{\cN}^0=R_0\) is settled before the statement, so assume that \(M_{\cN}^0\) is proper.
	Since \(\C[\Delta_{\cN}]\) is Cohen--Macaulay, the simplicial complex \(\Delta_{\cN}\) is pure.
	Put \(q=\dim\C[\Delta_{\cN}]\).
	Every facet of \(\Delta_{\cN}\) has cardinality \(q\).
	For a facet \(F\), let \(\fp_F=\langle t_j\mid j\notin F\rangle\).
	The restrictions of the linear forms \(\theta_1,\ldots,\theta_d\) to \(R_0/\fp_F\) have coefficient matrix \(A_F\).
	Hence the rank assumption gives
	\[
		\dim\frac{R_0}{\fp_F+U_0}
		=|F|-\rank(A_F)
		=q-d.
	\]
	The minimal primes of \(I_{\Delta_{\cN}}\) are the ideals \(\fp_F\).
	Therefore,
	\[
		\dim\frac{R_0}{M_{\cN}^0+U_0}=q-d.
	\]
	Thus \(\theta_1,\ldots,\theta_d\) form a partial homogeneous system of parameters for \(\C[\Delta_{\cN}]\).
	Since \(\C[\Delta_{\cN}]\) is Cohen--Macaulay, these linear forms form a regular sequence on \(\C[\Delta_{\cN}]\).

	Since \(\rank(A)=d\), the linear forms \(\theta_1,\ldots,\theta_d\) are linearly independent, hence a regular sequence in \(R_0\).
	The Koszul complex on these forms is therefore a free resolution of \(R_0/U_0\).
	Tensoring this complex with \(R_0/M_{\cN}^0\) gives a complex whose homology vanishes in positive degrees.
	Consequently, \(\Tor^{R_0}_i \bigl(R_0/U_0,R_0/M_{\cN}^0\bigr)=0\) for \(i>0\).
	Completion at \(\langle t_1,\ldots,t_n\rangle\) is flat, so the same vanishing holds over \(\Rhat\).
	The case \(i=1\) gives
	\[
		\frac{U\cap M_{\cN}}{UM_{\cN}}=0.
	\]
	The remaining assertions follow from Theorem~\ref{thm:lattice-obstruction}.
\end{proof}

\begin{remark}
	Theorem~\ref{thm:CM-regular-sequence} proves the equality \(U\cap M_{\cN}=UM_{\cN}\) by showing that the linear forms \(\theta_1,\ldots,\theta_d\) form a regular sequence on \(R_0/M_{\cN}^0\).
	The hypotheses are sufficient but not necessary: failure of either hypothesis does not by itself imply a nonzero obstruction.
\end{remark}

\subsection{A criterion applied at each factor}

The preceding criterion yields the vanishing of the entire module \(\fT_{\cN}\).
For intrinsic perturbation, this conclusion is stronger than necessary: only the classes of \(\fT_{\cN}\) represented by multiples of \(e\) matter.
We now give a criterion that applies at each factor of \(e\) and isolates these classes.

Retain \(S_0\), \(\Phi_0\), and \(P_{\cN}^0(\bt)\) from Section~\ref{subsec:finite-boundary-presentation}.
Write \(E=I_{\bzero}\setminus K_{\cN} =\{j_1,\ldots,j_c\}\) and \(e=\prod_{a=1}^c t_{j_a}\), where the displayed ordering of \(E\) is arbitrary.
Define recursively
\begin{equation}
	P^{(0)}=P_{\cN}^0(\bt),
	\qquad
	P^{(a)}=P^{(a-1)}:t_{j_a},
	\qquad
	\fj_a=P^{(a-1)}+\langle t_{j_a}\rangle.
	\label{eq:factorwise-ideals}
\end{equation}
All the ideals in \eqref{eq:factorwise-ideals} are squarefree monomial ideals.

\begin{theorem}
	\label{thm:factorwise-regularity}
	For every \(a=1,\ldots,c\), multiplication by \(t_{j_a}\) induces an exact sequence
	\begin{equation}
		\Tor^{R_0}_1(S_0,R_0/\fj_a)
		\longrightarrow
		\frac{S_0}{\Phi_0(P^{(a)})}(-1)
		\xrightarrow{\ \cdot\ell_{j_a}\ }
		\frac{S_0}{\Phi_0(P^{(a-1)})}.
		\label{eq:factorwise-Tor-segment}
	\end{equation}
	In particular, the image of the first map is
	\begin{equation}
		\frac{
			\Phi_0(P^{(a-1)}):\ell_{j_a}
		}{
			\Phi_0(P^{(a)})
		}.
		\label{eq:factorwise-colon-error}
	\end{equation}

	Let \(\bw\) be generic, let \(\bv\) be a fake exponent, and let \(\cN\) be a negative support family.
	Suppose that, for some ordering of \(E\), for every index \(a\), either \(\fj_a=R_0\), or else the Stanley--Reisner ring \(R_0/\fj_a\) is Cohen--Macaulay and every facet \(F\) of the Stanley--Reisner complex of \(\fj_a\) satisfies \(\rank(A_F)=d\).
	Then \(\Pi_{\cN}:m_{\bv,\cN} = \Phi\bigl(P_{\cN}(\bt):e\bigr) = \Phi\bigl(Q_{\cN}(\bt):e\bigr)\).
	Consequently, \(\fD_{\cN}(e)=0\).
\end{theorem}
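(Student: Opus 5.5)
The plan is to derive the exact sequence \eqref{eq:factorwise-Tor-segment} from a short exact sequence of \(R_0\)-modules and then iterate it over the factors of \(e\).

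Set \(y=t_{j_a}\) and \(P'=P^{(a-1)}\). Since \(P'\) is a monomial ideal and \(y\) a variable, there is the standard exact sequence
\[
	0\longrightarrow \frac{R_0}{P':y}(-1)\xrightarrow{\ \cdot y\ }\frac{R_0}{P'}\longrightarrow\frac{R_0}{P'+\langle y\rangle}\longrightarrow0 .
\]
Its right-hand term is \(R_0/\fj_a\) and its left-hand term is \(R_0/P^{(a)}(-1)\). Apply \(S_0\otimes_{R_0}-\) and use \(S_0\otimes_{R_0}R_0/I\simeq S_0/\Phi_0(I)\) (because \(\Phi_0\) is surjective with kernel \(U_0\)). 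The long exact \(\Tor\) sequence then contains the segment
\[
	\Tor_1^{R_0}(S_0,R_0/\fj_a)\to S_0/\Phi_0(P^{(a)})(-1)\xrightarrow{\cdot\ell_{j_a}}S_0/\Phi_0(P^{(a-1)}),
\]
which is \eqref{eq:factorwise-Tor-segment}. By exactness, the image of the connecting map equals the kernel of multiplication by \(\ell_{j_a}\). That kernel is \((\Phi_0(P^{(a-1)}):\ell_{j_a})/\Phi_0(P^{(a)})\), and the inclusion \(\Phi_0(P^{(a)})\subseteq\Phi_0(P^{(a-1)}):\ell_{j_a}\) is automatic. This gives \eqref{eq:factorwise-colon-error}.

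Next, under the hypotheses I show \(\Tor_1^{R_0}(S_0,R_0/\fj_a)=0\) for each \(a\). If \(\fj_a=R_0\) this is trivial. Otherwise I reuse the argument of Theorem~\ref{thm:CM-regular-sequence} with \(M_{\cN}^0\) replaced by the squarefree monomial ideal \(\fj_a\):
\begin{itemize}
\item Cohen--Macaulayness together with the facet rank condition shows that \(\theta_1,\ldots,\theta_d\) form a regular sequence on \(R_0/\fj_a\).
\item Since \(S_0=R_0/U_0\) is resolved by the Koszul complex on the \(\theta_i\), all higher \(\Tor\) vanish, in particular \(\Tor_1^{R_0}(S_0,R_0/\fj_a)=0\).
\end{itemize}
Hence \(\Phi_0(P^{(a-1)}):\ell_{j_a}=\Phi_0(P^{(a)})\) for every \(a\).

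To compose these equalities, note \(\ell^E=\prod_a\ell_{j_a}\) and that colon by a product is the iterated colon. By induction on \(a\),
\[
	\Phi_0(P^0_{\cN}(\bt)):\prod_{b\le a}\ell_{j_b}=\Phi_0(P^{(a)}),
\]
and \(P^{(c)}=P_{\cN}^0(\bt):e\). So \(J_{\cN}^{\intr}=\Phi_0(P_{\cN}^0(\bt):e)\). I also need the chain
\[
	\Phi_0(P^0_{\cN}(\bt):e)\subseteq\Phi_0(Q^0_{\cN}(\bt):e)=J_{\cN}^{\amb}\subseteq J_{\cN}^{\intr},
\]
where the middle equality and the last inclusion come from \eqref{eq:boundary-annihilator-colon}. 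The chain therefore collapses to equalities. Extending to \(\Shat\) by the flatness and colon-commutation argument in the proof of Theorem~\ref{thm:finite-boundary-classification} gives \(\Pi_{\cN}:m_{\bv,\cN}=\Phi(P_{\cN}(\bt):e)=\Phi(Q_{\cN}(\bt):e)\). Then \eqref{eq:boundary-classification-equality}, or directly \eqref{eq:defect-image}, gives \(\fD_{\cN}(e)=0\).

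The main obstacle is the completion step. Colon ideals commute with flat base change, but \(\Phi(P_{\cN}(\bt):e)\) is the image of a colon, not a colon itself. I would first establish the equality over \(S_0\), then extend to \(S_{0,\fm}\) and \(\Shat\): extension commutes with images of ideals, and \(R_0\to\Rhat\) is flat, so the colon \(P:e\) extends correctly. A secondary point is checking that the regular-sequence argument of Theorem~\ref{thm:CM-regular-sequence} needs nothing about \(\cN\), only the squarefree ideal \(\fj_a\); I expect this to be clear.
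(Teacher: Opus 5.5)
Your proposal is correct and follows essentially the same route as the paper's proof: multiplication by \(t_{j_a}\) gives the short exact sequence, you tensor with \(S_0\), you get \(\Tor\) vanishing from the regular-sequence argument of Theorem~\ref{thm:CM-regular-sequence} applied to \(\fj_a\), you iterate the colons, and you extend flatly to \(\Shat\). The only difference is cosmetic. You close the sandwich \(\Phi_0(P^0_{\cN}(\bt):e)\subseteq J^{\amb}_{\cN}\subseteq J^{\intr}_{\cN}\) over \(S_0\) before completing, whereas the paper completes first and then uses the inclusion \(\Phi(Q_{\cN}(\bt):e)\subseteq\Pi_{\cN}:m_{\bv,\cN}\) from Theorem~\ref{thm:lattice-obstruction}.
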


\begin{proof}
	Multiplication by \(t_{j_a}\) gives a short exact sequence
	\[
		0\longrightarrow
		\frac{R_0}{P^{(a)}}(-1)
		\xrightarrow{\ \cdot t_{j_a}\ }
		\frac{R_0}{P^{(a-1)}}
		\longrightarrow
		\frac{R_0}{\fj_a}
		\longrightarrow0,
	\]
	whose first map is injective precisely because \(P^{(a)}=P^{(a-1)}:t_{j_a}\).
	Tensoring with \(S_0=R_0/U_0\) gives \eqref{eq:factorwise-Tor-segment}, and the kernel of the multiplication map there is the quotient in \eqref{eq:factorwise-colon-error}.

	Assume the stated combinatorial hypotheses.
	If \(\fj_a\ne R_0\), the Cohen--Macaulay hypothesis and the rank assumption imply that the linear forms \(\theta_1,\ldots,\theta_d\) form a partial homogeneous system of parameters for \(R_0/\fj_a\), so the proof of Theorem~\ref{thm:CM-regular-sequence}, applied to \(\fj_a\), shows that these linear forms form a regular sequence on \(R_0/\fj_a\).
	Hence \(\Tor^{R_0}_i(S_0,R_0/\fj_a)=0\) for \(i>0\), and the same vanishing is immediate when \(\fj_a=R_0\).
	Formula \eqref{eq:factorwise-colon-error} therefore gives \(\Phi_0(P^{(a-1)}):\ell_{j_a}=\Phi_0(P^{(a)})\) for \(1\leq a\leq c\), and iterating these identities with \((J:f):g=J:(fg)\) gives \(\Phi_0(P^{(0)}):\prod_{a=1}^c\ell_{j_a}=\Phi_0(P^{(c)})=\Phi_0(P^{(0)}:e)\).

	All the ideals above are homogeneous, localization of \(S_0\) at its homogeneous maximal ideal followed by completion is flat, and taking the colon by a fixed element commutes with this base change, so extending the last identity to \(\Shat\) gives \(\Pi_{\cN}:m_{\bv,\cN}=\Phi(P_{\cN}(\bt):e)\).
	Since \(P_{\cN}(\bt):e\subseteq Q_{\cN}(\bt):e\), and since the inclusion proved in Theorem~\ref{thm:lattice-obstruction} gives \(\Phi(Q_{\cN}(\bt):e)\subseteq\Pi_{\cN}:m_{\bv,\cN}\), the three ideals in the display of the statement are equal.
	Theorem~\ref{thm:lattice-obstruction} now gives \(\fD_{\cN}(e)=0\).
\end{proof}

The exact sequences in Theorem~\ref{thm:factorwise-regularity} also give bounds whose proofs use neither the Cohen--Macaulay hypothesis nor the rank assumption of that theorem.

\begin{proposition}
	\label{prop:factorwise-obstruction-filtration}
	Retain the notation of Theorem~\ref{thm:factorwise-regularity}.
	If \(c=0\), then \(J_{\cN}^{\amb}=J_{\cN}^{\intr}\).
	Suppose that \(c\geq1\).
	For \(0\leq a\leq c\), define homogeneous ideals \(\fb_a\) and \(\fc_a\) of \(S_0\) by \(\fb_a=\Phi_0(P^{(a)})\) and \(\fc_a=\Phi_0(P^{(0)}):\prod_{b=1}^a\ell_{j_b}\), where the product for \(a=0\) is \(1\).
	For \(1\leq a\leq c\), define the graded module \(\fE_a\) by
	\begin{equation}
		\fE_a
		=
		\frac{\fb_{a-1}:\ell_{j_a}}{\fb_a}.
		\label{eq:factorwise-error-modules}
	\end{equation}
	Then \(\fb_a\subseteq\fc_a\), and multiplication by \(\ell_{j_a}\) gives an exact sequence
	\begin{equation}
		0
		\longrightarrow
		\fE_a(-1)
		\longrightarrow
		\left(\frac{\fc_a}{\fb_a}\right)(-1)
		\xrightarrow{\ \cdot\ell_{j_a}\ }
		\frac{\fc_{a-1}}{\fb_{a-1}}
		\longrightarrow
		\frac{\fc_{a-1}}
		{\fb_{a-1}+\ell_{j_a}\fc_a}
		\longrightarrow0.
		\label{eq:factorwise-obstruction-exact-sequence}
	\end{equation}
	Moreover, \(\fb_c=\Phi_0(P_{\cN}^0(\bt):e)\) and \(\fc_c=J_{\cN}^{\intr}\), and there is an exact sequence
	\begin{equation}
		0
		\longrightarrow
		\frac{J_{\cN}^{\amb}}{\fb_c}
		\longrightarrow
		\frac{\fc_c}{\fb_c}
		\longrightarrow
		\frac{J_{\cN}^{\intr}}
		{J_{\cN}^{\amb}}
		\longrightarrow0.
		\label{eq:factorwise-obstruction-final-quotient}
	\end{equation}
	The quotient \(J_{\cN}^{\intr}/J_{\cN}^{\amb}\) admits a filtration
	\begin{equation}
		0=\fH_0
		\subseteq\fH_1
		\subseteq\cdots
		\subseteq\fH_c
		=
		\frac{J_{\cN}^{\intr}}
		{J_{\cN}^{\amb}}
		\label{eq:factorwise-obstruction-filtration}
	\end{equation}
	such that \(\fH_k/\fH_{k-1}\) is a subquotient of \(\fE_{c-k+1}(k-1)\) for \(1\leq k\leq c\).
	Consequently,
	\begin{equation}
		\Supp_{S_0}
		\left(
		\frac{J_{\cN}^{\intr}}
		{J_{\cN}^{\amb}}
		\right)
		\subseteq
		\bigcup_{a=1}^c\Supp_{S_0}(\fE_a)
		\subseteq
		\bigcup_{a=1}^c
		\Supp_{S_0}
		\bigl(
		\Tor^{R_0}_1(S_0,R_0/\fj_a)
		\bigr),
		\label{eq:factorwise-obstruction-support}
	\end{equation}
	and for every \(i\geq0\) we have the degree bound
	\begin{equation}
		\dim_{\C}
		\left(
		\frac{J_{\cN}^{\intr}}
		{J_{\cN}^{\amb}}
		\right)_i
		\leq
		\sum_{a=1}^c
		\dim_{\C}(\fE_a)_{i+c-a}.
		\label{eq:factorwise-obstruction-degree-bound}
	\end{equation}
	If all the modules \(\fE_a\) have finite length, then
	\begin{equation}
		\dim_{\C}\fD_{\cN}(e)
		\leq
		\sum_{a=1}^c\dim_{\C}\fE_a.
		\label{eq:factorwise-obstruction-length-bound}
	\end{equation}
	If \(\cN\) is ordered, the left-hand side of \eqref{eq:factorwise-obstruction-degree-bound} is the dimension of the degree-\(i\) ambient coefficient space modulo the degree-\(i\) intrinsic coefficient space.
\end{proposition}

\begin{proof}
	If \(c=0\), then \(e=1\) and \(M_{\cN}^0=R_0\), so \(Q_{\cN}^0(\bt)=U_0+P_{\cN}^0(\bt)\) and \(J_{\cN}^{\amb}=J_{\cN}^{\intr}\); suppose that \(c\geq1\).
	Since \((\prod_{b\leq a}t_{j_b})P^{(a)}\subseteq P^{(0)}\), applying \(\Phi_0\) gives \(\fb_a\subseteq\fc_a\), and associativity of the colon operation gives \(\fc_a=\fc_{a-1}:\ell_{j_a}\).
	The inclusion \(t_{j_a}P^{(a)}\subseteq P^{(a-1)}\) makes multiplication by \(\ell_{j_a}\) the middle map in \eqref{eq:factorwise-obstruction-exact-sequence}, and \(\fb_{a-1}\subseteq\fc_{a-1}\) gives \(\fb_{a-1}:\ell_{j_a}\subseteq\fc_a\), so the kernel of that map is \(\fE_a(-1)\) and its cokernel is the last quotient in \eqref{eq:factorwise-obstruction-exact-sequence}.
	Successive colons give \(P^{(c)}=P_{\cN}^0(\bt):e\), hence \(\fc_c=J_{\cN}^{\intr}\), and \(P_{\cN}^0(\bt)\subseteq Q_{\cN}^0(\bt)\) with Theorem~\ref{thm:finite-boundary-classification} give \(\fb_c\subseteq J_{\cN}^{\amb}\subseteq J_{\cN}^{\intr}\) and hence \eqref{eq:factorwise-obstruction-final-quotient}.

	Put \(\fF_a=\fc_a/\fb_a\) and let \(\fK_a\subseteq\fF_{a-1}\) be the image of the middle map, so that shifting the kernel sequence gives \(0\to\fE_a\to\fF_a\to\fK_a(1)\to0\).
	We start from \(\fF_0=0\) and, for each \(a\), intersect a filtration of \(\fF_{a-1}\) with \(\fK_a\), shift, and adjoin \(\fE_a\); this construction gives inductively a filtration of \(\fF_a\) whose \(k\)-th factor is a subquotient of \(\fE_{a-k+1}(k-1)\).
	Passing to the quotient in \eqref{eq:factorwise-obstruction-final-quotient} gives \eqref{eq:factorwise-obstruction-filtration}.
	Shifts do not change the support, so the filtration gives the first inclusion in \eqref{eq:factorwise-obstruction-support}, and \eqref{eq:factorwise-colon-error} identifies \(\fE_a\) with the image of the first map in \eqref{eq:factorwise-Tor-segment}, which gives the second.
	Taking degree-\(i\) components gives \eqref{eq:factorwise-obstruction-degree-bound}, taking lengths in the finite-length case gives \eqref{eq:factorwise-obstruction-length-bound} because completion preserves the length of \(J_{\cN}^{\intr}/J_{\cN}^{\amb}\), and the last assertion follows from Proposition~\ref{prop:defect-coefficient-duality}.
\end{proof}

For matroid connectivity, we follow the conventions of \cite[Chapter~8]{Oxl11}.
The criterion of Theorem~\ref{thm:factorwise-regularity} takes the following form when every ideal \(\fj_a\) is either \(R_0\) or generated by two monomials.

\begin{corollary}
	\label{cor:principal-boundary-flag}
	Retain the notation of Theorem~\ref{thm:factorwise-regularity}.
	Suppose that \(E\) has an ordering for which, for every \(a\), either \(\fj_a=R_0\) or
	\begin{equation}
		\fj_a
		=
		\left\langle
		t_{j_a},\ \prod_{k\in W_a}t_k
		\right\rangle
		\label{eq:principal-boundary-slice}
	\end{equation}
	for a nonempty set \(W_a\subseteq\{1,\ldots,n\}\setminus\{j_a\}\).
	If \(\rank(A_{\{1,\ldots,n\}\setminus\{j_a,k\}})=d\) for \(k\in W_a\), then \(\fD_{\cN}(e)=0\).

	When \(\rank L=2\) and the column matroid of \(A\) is three-connected, the rank condition is automatic.
	Thus, in that case, condition \eqref{eq:principal-boundary-slice} alone implies \(\fD_{\cN}(e)=0\).
\end{corollary}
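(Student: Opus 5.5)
We reduce to Theorem~\ref{thm:factorwise-regularity}: it suffices to check, for each \(a\) with \(\fj_a\ne R_0\), that \(R_0/\fj_a\) is Cohen--Macaulay and that every facet \(F\) of the Stanley--Reisner complex of \(\fj_a\) satisfies \(\rank(A_F)=d\).

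First we describe this complex. Write \(m_a=\prod_{k\in W_a}t_k\). Since \(j_a\notin W_a\), the two generators \(t_{j_a}\) and \(m_a\) of \(\fj_a\) are coprime squarefree monomials. Hence they form a regular sequence in \(R_0\), so \(\fj_a\) is a complete intersection and \(R_0/\fj_a\) is Cohen--Macaulay. The minimal primes of \(\fj_a\) are \(\langle t_{j_a},t_k\rangle\) for \(k\in W_a\). The facets of the Stanley--Reisner complex are therefore the sets \(F_k=\{1,\ldots,n\}\setminus\{j_a,k\}\) for \(k\in W_a\), and the rank condition in the statement is exactly \(\rank(A_{F_k})=d\). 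Theorem~\ref{thm:factorwise-regularity} then gives \(\fD_{\cN}(e)=0\).

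For the second assertion, suppose \(\rank L=2\), so that \(n=d+2\), and suppose the column matroid \(\cM\) of \(A\), which has rank \(d\), is three-connected. We must show that deleting any two distinct elements \(j\ne k\) leaves a set of rank \(d\). Equivalently, \(\{j,k\}\) must contain no cocircuit, that is, \(\cM\) has no cocircuit of size at most \(2\).

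If \(|E(\cM)|\geq4\), three-connectivity excludes loops, coloops, and parallel and series pairs, by the conventions of \cite[Chapter~8]{Oxl11}. A coloop is a cocircuit of size \(1\), and a series pair is a cocircuit of size \(2\), so neither occurs. When \(n\leq3\), we have \(d\leq1\). In that case, with all columns nonzero (homogeneity gives \(\bh\ba_j=1\)), removing two columns leaves \(n-2=d\) columns, and these have rank \(d\) unless \(d=0\), where the condition is trivial. For \(d=1\) and \(n=3\), any single remaining nonzero column has rank \(1\).

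The only real care needed is this small-case bookkeeping and the matroid dictionary; the main step is the facet description of the two-generator ideal.
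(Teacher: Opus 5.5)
Your proof is correct and follows the paper's argument. The ideal \(\fj_a\) is a two-generator complete-intersection (hypersurface) ideal whose facets are \(\{1,\ldots,n\}\setminus\{j_a,k\}\) for \(k\in W_a\), and your matroid step is the dual form of the paper's observation that the rank-2 dual matroid is simple and hence uniform. Your cocircuit phrasing has two small advantages: it shows that cosimplicity of the column matroid already gives the rank condition in any lattice rank, and it handles the case \(n\le 3\) explicitly.
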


\begin{proof}
	If the ideal \eqref{eq:principal-boundary-slice} is proper, then the quotient \(R_0/\fj_a\) is a hypersurface ring in the variables other than \(t_{j_a}\).
	The quotient \(R_0/\fj_a\) is therefore Cohen--Macaulay.
	The facets of the Stanley--Reisner complex of \(\fj_a\) are the sets \(\{1,\ldots,n\}\setminus\{j_a,k\}\) for \(k\in W_a\).
	The first assertion follows from Theorem~\ref{thm:factorwise-regularity}.

	For the last assertion, recall that duality preserves three-connectivity.
	A three-connected matroid of rank 2 is simple, and hence is the uniform matroid of rank 2 on \(n\) elements.
	The dual of the column matroid of \(A\) is therefore the uniform matroid of rank 2 on \(n\) elements, so the column matroid of \(A\) is the uniform matroid of rank \(n-2\) on \(n\) elements.
	Since \(d=n-2\), every submatrix \(A_{\{1,\ldots,n\}\setminus\{j_a,k\}}\) has rank \(d\).
\end{proof}

\begin{remark}
	The hypotheses of Theorem~\ref{thm:CM-regular-sequence} make \(\theta_1,\ldots,\theta_d\) a regular sequence on \(R_0/M_{\cN}^0\).
	The hypotheses of Theorem~\ref{thm:factorwise-regularity} make these linear forms a regular sequence only on the successive quotients \(R_0/\fj_a\).
	Theorem~\ref{thm:factorwise-regularity} does not assume that the module \(\fT_{\cN}\) vanishes; the criterion tests only the classes represented by multiples of \(e\).
	Formula \eqref{eq:factorwise-colon-error} identifies, at each factor, the image of the map out of the \(\Tor\) term, and this image is what can give rise to such a class.
\end{remark}

\section{Low lattice rank}
\label{sec:low-lattice-rank}

In low lattice rank, we distinguish ordered negative support families from the distinguished collection.

\subsection{Lattice rank 1}

A proper ordered family can already have a nonzero obstruction in lattice rank 1, but the distinguished collection cannot.

\begin{theorem}
	\label{thm:rank-one-ordered-counterexample}
	There exist a homogeneous \(A\)-hypergeometric system with \(\rank L=1\), a generic direction, a fake exponent \(\bv\), and a proper ordered negative support family \(\cN\subsetneq\cN_{\bv}\) such that \(J_{\cN}^{\amb}=\langle s\rangle\) and \(J_{\cN}^{\intr}=\C[s]\), and hence \(\fD_{\cN}(e)\simeq\C[s]/\langle s\rangle\).
	In particular, the obstruction has dimension one, the intrinsic coefficient space is zero, and the ambient coefficient space has dimension one.
\end{theorem}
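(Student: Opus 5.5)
The plan is to exhibit an explicit small configuration and compute everything by hand with Theorem~\ref{thm:finite-boundary-classification}. In lattice rank~1 every linear form satisfies \(\ell_j=b_js\), with \(b_j\) the \(j\)-th entry of the single Gale column. Hence every ideal in sight is monomial in \(s\) up to units:
\[
J_{\cN}^{\intr}=\langle s^{\min_a|H_a|}\rangle:s^{|E|},
\]
and this is \(\C[s]\) as soon as some \(H_a\) satisfies \(|H_a|\le|E|\). The target is therefore a pair \((\bv,\cN)\) with two properties. First, some \(H_a\in\cH_{\cN}\) is no larger than \(E=I_{\bzero}\setminus K_{\cN}\). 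Second, the class \(\epsilon_{\cN}\) of \(e\) in \(M_{\cN}^0/(U_0M_{\cN}^0+P_{\cN}^0)\) is nonzero and is killed by \(s\). Since \(\fD_{\cN}(e)\) is then a quotient of \(\C[s]\) by a proper ideal containing \(s\), the claimed isomorphism \(\fD_{\cN}(e)\simeq\C[s]/\langle s\rangle\) follows from \eqref{eq:boundary-classification-isomorphism}. The dimension statements follow from Proposition~\ref{prop:defect-coefficient-duality}: we get \(V^{\intr}=J_{\cN}^{\intr\perp}=0\) and \(V^{\amb}=\langle s\rangle^\perp=\C\cdot 1\).

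First, I will choose \(A\) with three or four columns lying on a line at height one, for instance \(A=\bigl(\begin{smallmatrix}1&1&1&1\\0&1&2&3\end{smallmatrix}\bigr)\) truncated or padded so that \(L=\Z\bu\) has a primitive generator with entries of mixed sign. I will fix a generic \(\bw\) with \(\bw\cdot\bu>0\), so that \(\cG_{\bw}=\{\bu\}\) up to sign. Next I will pick a standard pair \((\ba,\sigma)\) of \(\inw(I_A)\) and a vector \(\bv\) with \(v_j=a_j\) off \(\sigma\), and with several prescribed negative integer entries on \(\sigma\). The requirement is that, as \(k\in\Z\) varies, the negative supports \(I_{k\bu}=\nsupp(\bv+k\bu)\) pass through at least three distinct sets. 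Since \(\bv+k\bu\) is a line, \(\sS(\bv)\) is easy to list explicitly, and so is the sign of \(\bw\cdot(k\bu)\) on each fiber. From this list I read off \(\cN_{\bv}\), which consists of the fibers contained in \(k\ge0\). I then take \(\cN\) to be a down-closed proper subfamily containing \(I_{\bzero}\), for example \(I_{\bzero}\) together with its subsets in \(\sS(\bv)\), omitting a set \(I_{k\bu}\) with \(k>0\).

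Second, I will compute \(K_{\cN}\), \(E\), and the antichains \(\cG_{\cN}\) and \(\cH_{\cN}\). The aim is a configuration in which \(\cG_{\cN}=\{G_1,G_2\}\) with \(G_1=E\) and \(\fa_2\) principal, so that Corollary~\ref{cor:two-support-principal-formula} applies directly. In rank~1 all the polynomials \(\alpha,\beta,\gamma,\delta\) are powers of \(s\), so \(g_0\), \(h_0\), \(\omega\) and \(\delta_\omega\) are read off from cardinalities. I need \(h_0=s\), which requires \(G_1\setminus G_2\) and \(G_2\setminus G_1\) both nonempty. This is consistent with Corollary~\ref{cor:two-support-proportional-rows}, whose proportionality condition is automatic in rank~1. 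I also need \(\omega\) to be a unit and \(\delta_\omega\in\langle s\rangle\) (or \(\fa_1=0\)). Then \eqref{eq:two-support-principal-formulas} gives \(J_{\cN}^{\amb}=\langle s\rangle\) and \(J_{\cN}^{\intr}=\C[s]\). If the two-element shape cannot be arranged, I will instead compute \(\Ann(\epsilon_{\cN})\) through the kernel of \(\Gamma_{\cN}\) as in \eqref{eq:boundary-matrix-kernel}, which in one variable is a short computation with a monomial matrix.

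Finally, I will check that the chosen \(\cN\) is ordered, that it is proper, and that \(\bv\) really is a fake exponent via the standard-pair criterion; all three are finite checks on the list \(\sS(\bv)\). The main obstacle is the search in the first step. The degree conditions force \(|H_a|\le|E|\) while also forcing the Taylor relation \(\beta\eta_1-\alpha\eta_2\) to make \(s\eta_1\) vanish modulo \(Z_{\cN}\) without \(\eta_1\) itself vanishing. In addition, the fiber pattern of a single lattice line must produce two incomparable minimal members of \(\{I\setminus K_{\cN}\}\) together with a non-member \(J\) whose union with one of them is small. I would first try \(\bv\) with three negative entries arranged so that moving along \(+\bu\) makes one coordinate nonnegative while another becomes negative. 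This produces incomparable supports such as \(\{1,2\}\) and \(\{2,3\}\), and I would adjust the entries until the cardinality conditions hold.
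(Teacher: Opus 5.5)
\textbf{There is a genuine gap: the proposal never produces the required example.} Your verification machinery is sound. In lattice rank~1 every \(\ell_j\) equals \(b_js\), so \(J_{\cN}^{\intr}=\C[s]\) exactly when some \(|H_a|\le|E|\). The targets you set for Corollary~\ref{cor:two-support-principal-formula} (\(h_0=s\), \(\omega\) a unit, \(\delta_\omega\in\langle s\rangle\)) do force \(J_{\cN}^{\amb}=\langle s\rangle\). However, the theorem is an existence statement, and its whole content is the witness. ``Adjust the entries until the cardinality conditions hold'' describes a search, with no argument that it succeeds.

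Worse, both concrete leads you give are dead ends.
\begin{itemize}
\item Three collinear columns cannot work (and four collinear columns already give \(\rank L=2\)). The fake-exponent condition supplies an index \(j_0\) with \(b_{j_0}>0\) that lies in \(I_k\) exactly for \(k\le-1\). So at most two indices move for \(k\ge0\), and they must move in opposite directions to produce incomparable supports. The only proper ordered family with incomparable members is then \(\{\{a\},\{b\}\}\). For it, \(\cH_{\cN}\) consists of two-element sets while \(|E|=1\), and \(J_{\cN}^{\amb}=J_{\cN}^{\intr}=\langle s\rangle\).
\item The family ``\(I_{\bzero}\) together with its subsets in \(\sS(\bv)\)'' is, in rank~1, a decreasing chain \(I_0\supsetneq I_1\supsetneq\cdots\). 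An index with \(b_j<0\) never leaves once it enters, and \(j_0\in I_k\setminus I_0\) for every \(k<0\). Hence \(K_{\cN}\) is the last member of the chain and \(\varnothing\in\cG_{\cN}\). Then \(M_{\cN}\) is the unit ideal, \(Q_{\cN}=U+P_{\cN}(\bt)\), and \(\fD_{\cN}(e)=0\). Every chain fails in this way; you need two incomparable members, as your last paragraph recognizes.
\end{itemize}

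\textbf{The missing idea is to let the ordered family skip an intermediate support.} The paper takes five columns with \(L=\Z\tp{(1,-2,-2,1,2)}\), \(\bw=(1,0,0,0,0)\), and \(\bv=\tp{(-2,-2,0,-1,0)}\). The supports are \(\{1,4,5\}\) for \(k\le-1\), then \(\{1,2,4\}\) at \(k=0\), \(\{1,2,3\}\) at \(k=1\), and \(\{2,3\}\) for \(k\ge2\). The family \(\cN=\{\{1,2,4\},\{2,3\}\}\) omits the middle support \(\{1,2,3\}\) and is still ordered.

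With \(K_{\cN}=\{2\}\), the skipped support gives \(H=\{1,3\}\), of the same size as \(E=\{1,4\}\), so \(J_{\cN}^{\intr}=\C[s]\). Next, \(et_5\in P_{\cN}(\bt)\) puts \(s\) in \(J_{\cN}^{\amb}\). The degree-two part of \(Q_{\cN}\) lies in \(\langle t_3\rangle\), so \(e=t_1t_4\notin Q_{\cN}\), and therefore \(J_{\cN}^{\amb}=\langle s\rangle\).

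In your terms this is \(G_1=E=\{1,4\}\), \(G_2=\{3\}\), \(\fa_1=\langle\ell_5\rangle\), \(\fa_2=\langle\ell_1\rangle\). This gives \(g_0=h_0=s\), \(\omega\) a unit, and \(\delta_\omega\) a nonzero multiple of \(s\), which are exactly your targets. Your corollary-based check would therefore go through once such a configuration is written down, but without it the proposal does not prove the theorem.
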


\begin{proof}
	Set
	\[
		A=
		\begin{bmatrix}
			1  & 1 & 1 & 1 & 1 \\
			2  & 1 & 0 & 0 & 0 \\
			2  & 0 & 1 & 0 & 0 \\
			-1 & 0 & 0 & 1 & 0
		\end{bmatrix},
		\qquad
		\bb=\tp{(1,-2,-2,1,2)},
	\]
	and take \(\bw=(1,0,0,0,0)\), \(\bv=\tp{(-2,-2,0,-1,0)}\), and \(\bbeta=A\bv=\tp{(-5,-6,-4,1)}\).
	The equality \(A\bb=0\) and the unimodular minor on columns \(2,3,4,5\) show that \(L=\Z\bb\), and the first row of \(A\) shows that \(A\) is homogeneous.
	Since \(\bb\) is primitive, \(I_A=\langle\underline{\partial_1\partial_4\partial_5^2}-\partial_2^2\partial_3^2\rangle\), where the underlined monomial is the initial monomial with respect to \(\bw\).
	The distraction of that monomial vanishes at \(\bv\) because \(v_5=0\), so \(\bv\) is a fake exponent.
	For \(k\in\Z\), direct substitution gives \(\nsupp(\bv+k\bb)=\{1,4,5\}\) for \(k\leq-1\), \(\{1,2,4\}\) for \(k=0\), \(\{1,2,3\}\) for \(k=1\), and \(\{2,3\}\) for \(k\geq2\).
	Since the weight of \(k\bb\) is \(k\), the distinguished collection is \(\cN_{\bv}=\{\{1,2,4\},\{1,2,3\},\{2,3\}\}\), and the same classification shows that the proper subfamily \(\cN=\{\{1,2,4\},\{2,3\}\}\) is ordered.
	The definitions give \(K_{\cN}=\{2\}\), \(e=t_1t_4\), \(M_{\cN}=\langle t_1t_4,t_3\rangle\), and \(P_{\cN}(\bt)=\langle t_1t_3,t_1t_4t_5\rangle\).
	Since \(\Phi\) sends \(t_1,t_2,t_3,t_4,t_5\) to \(s,-2s,-2s,s,2s\), respectively, we obtain \(\Pi_{\cN}=\langle s^2\rangle\), \(m_{\bv,\cN}=s^2\), and \(J_{\cN}^{\intr}=\C[s]\).
	Put \(U=\langle A\bt\rangle=\ker\Phi\) and \(Q_{\cN}=UM_{\cN}+P_{\cN}(\bt)\).
	Because \(e\in M_{\cN}\) and \(et_5\in P_{\cN}(\bt)\), the ideal \(\Phi(Q_{\cN}:e)\) contains \(s\), and because the degree-two part of \(Q_{\cN}\) is contained in \(\langle t_3\rangle\), we have \(e\notin Q_{\cN}\) and \(\Phi(Q_{\cN}:e)\) is proper.
	This ideal is homogeneous, so \(J_{\cN}^{\amb}=\Phi(Q_{\cN}:e)=\langle s\rangle\).
	The assertions now follow from Theorem~\ref{thm:finite-boundary-classification} and Proposition~\ref{prop:defect-coefficient-duality}.
\end{proof}

For the distinguished collection, let \(L=\Z\bb\), where \(\bb\) is primitive and has its sign chosen so that \(\bw\cdot\bb>0\).
Put \(I_k=\nsupp(\bv+k\bb)\) for \(k\in\Z\); in particular, \(I_0=I_{\bzero}\).
Since \(L=\Z\bb\), every binomial \(\bpartial^{(k\bb)_+}-\bpartial^{(k\bb)_-}\) is divisible by \(\bpartial^{\bb_+}-\bpartial^{\bb_-}\), so \(I_A\) is generated by the single binomial \(\bpartial^{\bb_+}-\bpartial^{\bb_-}\), whose initial monomial is \(\bpartial^{\bb_+}\) because \(\bw\cdot\bb>0\).
Thus \(\cC(\bw)=\N\bb\).
The distinguished collection \(\cN_{\bv}\) consists of the negative supports whose support fibers have nonnegative weight.

The following lemma determines these support fibers.

\begin{lemma}
	\label{lem:rank-one-sign-cells}
	Suppose that \(L=\Z\bb\), that \(\bw\cdot\bb>0\), and that \(\bv\) is a fake exponent.
	For every realized negative support \(I\), the set \(\{k\in\Z\mid I_k=I\}\) is an interval in \(\Z\).
	The support fiber of \(I_{\bzero}\) has minimum \(0\), the distinguished collection is \(\cN_{\bv}=\{I_k\mid k\geq0\}\), and \(\cN_{\bv}\) is ordered.
\end{lemma}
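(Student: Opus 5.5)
The plan is to reduce everything to how each coordinate \(j\) of \(\bv+k\bb\) varies with \(k\in\Z\).
Fix \(j\).
If \(v_j\notin\Z\), then \(v_j+kb_j\notin\Z\) for every \(k\), since \(b_j\in\Z\), so \(j\) never lies in \(I_k\).
If \(b_j=0\), membership of \(j\) in \(I_k\) does not depend on \(k\).
If \(v_j\in\Z\) and \(b_j>0\), then \(j\in I_k\) exactly when \(k<-v_j/b_j\), so the set of such \(k\) is downward closed in \(\Z\).
If \(v_j\in\Z\) and \(b_j<0\), the corresponding set of \(k\) is upward closed.
Writing \(I_k=C\cup I_k^+\cup I_k^-\), with \(C\) the constant part and \(I_k^{\pm}=I_k\cap\{j\mid \pm b_j>0\}\), we get that \(I_k^+\) is weakly decreasing and \(I_k^-\) is weakly increasing in \(k\).

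First, each support fiber is an interval.
Suppose \(k<k''<k'\) and \(I_k=I_{k'}\).
Then \(I_k^+\supseteq I_{k''}^+\supseteq I_{k'}^+=I_k^+\), and the same sandwich holds for the minus parts with the inclusions reversed.
Hence \(I_{k''}=I_k\).

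Second, the fiber of \(I_{\bzero}\) has minimum \(0\).
Since \(\bv\) is a fake exponent and \(I_A\) is generated by \(\bpartial^{\bb_+}-\bpartial^{\bb_-}\) with initial monomial \(\bpartial^{\bb_+}\), the distraction of \(\bpartial^{\bb_+}\) vanishes at \(\bv\).
This is the same use of the fake indicial ideal as in the proof of Theorem~\ref{thm:finite-boundary-classification}.
So there is \(j_0\) with \(b_{j_0}>0\) and \(v_{j_0}\in\{0,\ldots,b_{j_0}-1\}\).
Then \(j_0\notin I_0\), while \(v_{j_0}+kb_{j_0}\) is a negative integer for every \(k\leq-1\).
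Thus \(j_0\in I_k\) exactly for \(k\leq-1\), so \(I_{-1}\neq I_0\).
By the interval property, the fiber of \(I_0\) is contained in \(\Z_{\geq0}\) and contains \(0\).

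Third, \(\cN_{\bv}=\{I_k\mid k\geq0\}\).
The weight of \(k\bb\) has the sign of \(k\), so \(I\in\cN_{\bv}\) exactly when its fiber lies in \(\Z_{\geq0}\).
If \(I=I_k\) with \(k\geq0\), suppose its fiber contained some negative integer.
By the interval property the fiber would then contain \(0\), so \(I=I_0\), contradicting the second step.
The reverse inclusion is immediate, since every member of \(\cN_{\bv}\) equals \(I_k\) for some \(k\) in its fiber, and that \(k\) is nonnegative.

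Finally, orderedness.
Let \(I_k\in\cN_{\bv}\) with \(k\geq0\), and let \(J\in\sS(\bv)\) satisfy \(J\subseteq I_k\).
Write \(J=I_m\).
If \(m\geq0\), then \(J\in\cN_{\bv}\) and we are done.
Otherwise \(m\leq-1\), so \(j_0\in J\), but \(j_0\notin I_k\) because \(k\geq0\); this contradicts \(J\subseteq I_k\).

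The only nontrivial input is the existence of \(j_0\), extracted from the fake exponent condition.
I expect that to be the main point to justify carefully, by quoting the distraction description of the fake indicial ideal from \cite[Section~3.2]{SST00} for the principal toric ideal \(I_A\).
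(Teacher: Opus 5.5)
Your proof is correct and follows essentially the same route as the paper. Both arguments use a coordinatewise half-line analysis for the interval property, extract an index \(j_0\) with \(b_{j_0}>0\) and \(v_{j_0}\in\{0,\ldots,b_{j_0}-1\}\) from the distraction of \(\bpartial^{\bb_+}\), and observe that \(j_0\) lies exactly in the supports \(I_k\) with \(k<0\), which yields both the description of \(\cN_{\bv}\) and its orderedness (your sandwich argument is a rephrasing of the paper's description of each fiber as an intersection of integer half-lines).
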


\begin{proof}
	If \(v_j\notin\Z\), then the index \(j\) belongs to none of the sets \(I_k\).
	Suppose that \(v_j\in\Z\).
	If \(b_j>0\), the condition \(j\in I_k\) defines a left half-line in \(\Z\); if \(b_j<0\), the condition defines a right half-line; and if \(b_j=0\), the condition holds either for every \(k\) or for no \(k\).
	Hence the fiber of every negative support is an intersection of integer half-lines, and such an intersection is an interval.

	In lattice rank 1, the initial monomial of the toric ideal is \(\bpartial^{\bb_+}\).
	Since \(\bv\) is a fake exponent, there is an index \(j\) such that \(b_j>0\) and \(v_j\in\{0,\ldots,b_j-1\}\).
	Thus \(j\notin I_{\bzero}\) and \(j\in I_{-1}\).
	The support fiber of \(I_{\bzero}\) contains \(0\) but does not contain \(-1\), so its minimum is \(0\).

	If a support fiber met both \(\Z_{<0}\) and \(\Z_{\geq0}\), it would contain \(0\).
	It would then be the fiber of \(I_{\bzero}\), which contradicts the fact that this fiber has minimum \(0\).
	Hence every fiber that meets \(\Z_{\geq0}\) is contained in \(\Z_{\geq0}\).
	Since \(\bw\cdot\bb>0\), the definition of \(\cN_{\bv}\) gives the asserted description of the distinguished collection.
	The index found above belongs to every support \(I_k\) with \(k<0\) and to no support \(I_k\) with \(k\geq0\).
	Thus no support outside \(\cN_{\bv}\) is contained in a member of \(\cN_{\bv}\), which proves that \(\cN_{\bv}\) is ordered.
\end{proof}

\begin{theorem}
	\label{thm:rank-one-full-support}
	Suppose that \(\rank L=1\), let \(\bv\) be a fake exponent, and take \(\cN=\cN_{\bv}\).
	Then \(\Phi(Q_{\cN}(\bt):e)=\Pi_{\cN}:m_{\bv,\cN}\).
	Equivalently, \(\fD_{\cN}(e)=0\).
	The distinguished collection is ordered, and its intrinsic coefficient space equals its ambient coefficient space.
\end{theorem}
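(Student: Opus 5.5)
The plan is to compute both ideals of Theorem~\ref{thm:finite-boundary-classification} explicitly in \(S_0=\C[s]\) and check that they coincide. Write \(L=\Z\bb\) with \(\bw\cdot\bb>0\), so that \(\ell_j=b_js\). Every monomial \(\ell^Y\) with all \(b_j\ne0\) for \(j\in Y\) is then a nonzero multiple of \(s^{|Y|}\). By Lemma~\ref{lem:rank-one-sign-cells}, \(\cN_{\bv}=\{I_k\mid k\geq0\}\), and this family is ordered. Its complement in \(\sS(\bv)\) is \(\{I_k\mid k<0\}\).

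The key combinatorial step is to show that, among the generators \(\bt^{(I\cup J)\setminus K_{\cN}}\) of \(P_{\cN}(\bt)\), the one with \(I=I_0\) and \(J=I_{-1}\) divides all the others. Take \(I=I_k\) with \(k\geq0\) and \(J=I_{k'}\) with \(k'<0\), and use the half-line description from the proof of Lemma~\ref{lem:rank-one-sign-cells}.
\begin{itemize}
\item An index \(j\in I_0\cup I_{-1}\) with \(b_j<0\) belongs to \(I_m\) for all \(m\geq-1\) at least, hence to \(I_k\).
\item An index with \(b_j>0\) belongs to \(I_m\) for all \(m\leq0\) at least, hence to \(I_{k'}\).
\item An index with \(b_j=0\) lies in every \(I_m\) or in none.
\end{itemize}
Hence \(I_0\cup I_{-1}\subseteq I_k\cup I_{k'}\). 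Therefore \(P_{\cN}(\bt)\) is principal, generated by \(\bt^{H}\) with \(H=(I_0\cup I_{-1})\setminus K_{\cN}=E\sqcup D\), where \(D=I_{-1}\setminus I_0\). Here \(D\) is disjoint from \(K_{\cN}\subseteq I_0\).

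Every \(j\in D\) has membership in \(I_m\) changing between \(m=-1\) and \(m=0\), so \(b_j\ne0\). Likewise \(\ell_j\ne0\) for \(j\in E\), as recorded after \eqref{eq:boundary-comparison-ideals}. Consequently \(\langle\ell^{H}\rangle=\langle s^{|E|+|D|}\rangle\), and
\[
J_{\cN}^{\intr}=\langle s^{|E|+|D|}\rangle:s^{|E|}=\langle s^{|D|}\rangle .
\]
For the ambient side, \(e\,\bt^{D}=\bt^{H}\in P_{\cN}^0(\bt)\subseteq Q_{\cN}^0(\bt)\), so \(\bt^D\in Q_{\cN}^0(\bt):e\). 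By \eqref{eq:boundary-annihilator-colon} this gives \(s^{|D|}\in J_{\cN}^{\amb}\), since \(\Phi_0(\bt^D)\) is a nonzero constant times \(s^{|D|}\). Combined with \(J_{\cN}^{\amb}\subseteq J_{\cN}^{\intr}\), we obtain \(J_{\cN}^{\amb}=J_{\cN}^{\intr}\). Then \eqref{eq:boundary-classification-equality} yields \(\fD_{\cN}(e)=0\), and Theorem~\ref{thm:lattice-obstruction} gives \(\Phi(Q_{\cN}(\bt):e)=\Pi_{\cN}:m_{\bv,\cN}\). Since \(\cN_{\bv}\) is ordered by Lemma~\ref{lem:rank-one-sign-cells}, Fact~\ref{fact:colon-criterion} gives equality of the intrinsic and ambient coefficient spaces.

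The main obstacle is the containment \(I_0\cup I_{-1}\subseteq I_k\cup I_{k'}\), which needs care with the half-line thresholds. One must ensure that the case analysis by the sign of \(b_j\) covers indices with \(v_j\notin\Z\) (these lie in no \(I_m\)) and indices with \(b_j=0\). The rest is bookkeeping with powers of \(s\).
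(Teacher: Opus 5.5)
Your proposal is correct and is essentially the paper's proof. Both arguments show that \(P_{\cN}(\bt)\) is principal, generated by the monomial from the pair \((I_0,I_{-1})\). Both compute \(\Pi_{\cN}:m_{\bv,\cN}\) as \(\langle s^{|I_{-1}\setminus I_0|}\rangle\) and exhibit \(\bt^{I_{-1}\setminus I_0}\in Q_{\cN}(\bt):e\) for the reverse inclusion. The paper works directly in \(\Shat\), while you pass through \(J_{\cN}^{\amb}\) and \(J_{\cN}^{\intr}\) in \(S_0\); this makes no difference.

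There is one small slip in your first two bullets: the thresholds are off by one. An index in \(I_0\cup I_{-1}\) with \(b_j<0\) is only guaranteed to lie in \(I_m\) for \(m\geq0\), and one with \(b_j>0\) only for \(m\leq-1\). These are exactly the ranges of \(k\) and \(k'\) you need, so the containment \(I_0\cup I_{-1}\subseteq I_k\cup I_{k'}\) still holds.
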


\begin{proof}
	By Lemma~\ref{lem:rank-one-sign-cells}, we have \(\cN_{\bv}=\{I_k\mid k\geq0\}\).

	Set \(K=\bigcap_{k\geq0}I_k\), \(E=I_0\setminus K\), and \(H=I_{-1}\setminus I_0\).
	An index \(j\in E\) has \(b_j>0\): if an index \(j\) with \(b_j<0\) belongs to \(I_0\), then \(j\) belongs to \(I_k\) for every \(k\geq0\), and an index \(j\) with \(b_j=0\) belongs either to every \(I_k\) or to none.
	The same argument gives \(b_j>0\) for every \(j\in H\).
	In particular, none of the linear forms \(\Phi(t_j)=b_js\), with \(j\in E\cup H\), is zero.

	For \(k<0\), every index \(j\in I_{-1}\) with \(b_j>0\) belongs to \(I_k\).
	If an index \(j\) with \(b_j<0\) belongs to \(I_{-1}\), then \(j\) belongs to \(I_k\) for every \(k\geq-1\), hence belongs to \(K\), and is therefore absent from \(I_{-1}\setminus K\).
	Thus \((I_{-1}\setminus K)=E\sqcup H \subseteq I_k\setminus K\) for \(k<0\).
	For \(k\geq0\), the set of indices \(j\in I_k\setminus K\) with \(b_j>0\) is contained in \(E\).
	Therefore every generator of \(P_{\cN}(\bt)\) is divisible by \(\bt^{E\sqcup H}\), and the pair \((I_0,I_{-1})\) contributes exactly this monomial.
	Hence \(P_{\cN}(\bt)=\langle\bt^{E\sqcup H}\rangle\) and \(e=\bt^E\).

	Since \(\Phi(t_j)=b_js\), there are nonzero constants \(c_E,c_H\) such that \(m_{\bv,\cN}=c_Es^{|E|}\) and \(\Pi_{\cN}=\langle c_Ec_Hs^{|E|+|H|}\rangle\).
	It follows that \(\Pi_{\cN}:m_{\bv,\cN} =\langle s^{|H|}\rangle\).
	On the other hand, \(\bt^H \in P_{\cN}(\bt):e \subseteq Q_{\cN}(\bt):e\), and \(\Phi(\bt^H)=c_Hs^{|H|}\).
	Since \(\Phi(UM_{\cN})=0\), the inclusion \(\Phi(Q_{\cN}(\bt):e)\subseteq \Pi_{\cN}:m_{\bv,\cN}\) holds in general; the computation above gives the reverse inclusion and hence the equality.
\end{proof}

The following lemma extends this argument from a fake exponent to an arbitrary vector \(\ba\in\C^n\).

\begin{lemma}
	\label{lem:rank-one-tail}
	Let \(L=\Z\bb\), choose the sign of \(\bb\) so that \(\bw\cdot\bb>0\), and, for a vector \(\ba\in\C^n\), put \(J_k=\nsupp(\ba+k\bb)\) for \(k\in\Z\).
	Suppose that the fiber of \(J_0\) has minimum \(0\).
	For the collection \(\cW=\{J_k\mid k\geq0\}\), the obstruction vanishes.
	Here the obstruction is defined with the set \(\{J_k\mid k\in\Z\}\) of all realized negative supports in place of \(\sS(\bv)\).
\end{lemma}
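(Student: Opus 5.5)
The plan is to repeat the proof of Theorem~\ref{thm:rank-one-full-support}, replacing \(\bv\) by \(\ba\) and \(\cN_{\bv}\) by \(\cW\), and checking that only the hypothesis ``the fiber of \(J_0\) has minimum \(0\)'' was used there.

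\emph{Step 1: interval structure of the fibers.} The first part of the proof of Lemma~\ref{lem:rank-one-sign-cells} never used that \(\bv\) is a fake exponent. For each \(j\) with \(a_j\in\Z\), membership \(j\in J_k\) is a left half-line if \(b_j>0\), a right half-line if \(b_j<0\), and constant if \(b_j=0\). Hence every fiber is an interval.

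Since the fiber of \(J_0\) has minimum \(0\), we have \(J_{-1}\ne J_0\). Because \(J_{-1}\) and \(J_0\) differ only by indices with \(b_j\ne0\), the change from \(J_0\) to \(J_{-1}\) must come from some index. Suppose \(j\in J_0\setminus J_{-1}\). Then \(b_j<0\) is impossible, since such a \(j\) lies in \(J_k\) for all \(k\le0\) once it lies in \(J_0\)? No: for \(b_j<0\), membership is a right half-line, so \(j\in J_0\setminus J_{-1}\) is possible.

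I will therefore not attempt to produce a single separating index, which is where the fake-exponent hypothesis was used before. Instead I will use the intervals directly. A fiber meeting both \(\Z_{<0}\) and \(\Z_{\ge0}\) would contain \(0\) and \(-1\), which is impossible. So \(\cW\) consists exactly of the supports whose fibers lie in \(\Z_{\ge0}\), and every \(J_k\) with \(k<0\) lies outside \(\cW\).

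\emph{Step 2: computing \(P\).} Set \(K=\bigcap_{k\ge0}J_k\), \(E=J_0\setminus K\), and \(H=J_{-1}\setminus J_0\). As in Theorem~\ref{thm:rank-one-full-support}, every \(j\in E\) has \(b_j>0\): indices with \(b_j\le0\) that lie in \(J_0\) lie in every \(J_k\) with \(k\ge0\). Similarly every \(j\in H\) has \(b_j>0\). Indices with \(b_j<0\) in \(J_{-1}\setminus K\) cannot lie in \(J_0\), so they can lie in \(J_{-1}\) but not in \(J_0\) only if \(b_j>0\), a contradiction.

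For \(k<0\), we get \(J_{-1}\setminus K\subseteq J_k\setminus K\) up to indices with \(b_j<0\). Such an index lies in \(J_{-1}\setminus K\) only if it is missing from some \(J_k\) with \(k\ge0\), which contradicts the right half-line property. For \(k\ge0\), the positive-\(b\) part of \(J_k\setminus K\) is contained in \(E\).

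Consequently \(P_{\cW}(\bt)=\langle\bt^{E\sqcup H}\rangle\), realized by the pair \((J_0,J_{-1})\), and \(e=\bt^E\).

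\emph{Step 3: conclusion and main obstacle.} Since \(\Phi(t_j)=b_js\) with \(b_j\ne0\) on \(E\cup H\), we have \(\Pi:m=\langle s^{|H|}\rangle=\Phi(\bt^H)\Shat\), and \(\bt^H\in P:e\subseteq Q:e\). The general inclusion \(\Phi(Q:e)\subseteq\Pi:m\) from Theorem~\ref{thm:lattice-obstruction} then gives equality, and hence vanishing by \eqref{eq:defect-image}.

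The main obstacle is \(H\ne\varnothing\), which guarantees that \(\Pi\) is not the zero ideal and that the colon computation is meaningful. This follows because \(J_{-1}\ne J_0\), and by Step 2 every index in the symmetric difference has \(b_j>0\), so it lies in \(J_{-1}\setminus J_0\). The degenerate case \(H=\varnothing\) cannot occur. If \(E=\varnothing\), then \(e=1\) and the argument of Proposition~\ref{prop:factorwise-obstruction-filtration} for \(c=0\) applies directly.
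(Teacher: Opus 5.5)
Your argument is the same as the paper's proof. The paper also notes that the fibers are intervals and that the hypothesis on the fiber of \(J_0\) keeps any fiber from meeting both \(\Z_{<0}\) and \(\Z_{\geq0}\). It then reruns the proof of Theorem~\ref{thm:rank-one-full-support} to get \(P_{\cW}(\bt)=\langle\bt^{E\sqcup H}\rangle\), \(e=\bt^E\), and \(\Pi_{\cW}:m_{\ba,\cW}=\langle s^{|H|}\rangle=\Phi(Q_{\cW}(\bt):e)\). Your Steps~1--3 carry this out correctly.

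Your final paragraph, however, is false and should be deleted.

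\emph{The claim about the symmetric difference.} You assert that every index of \(J_0\mathbin{\triangle}J_{-1}\) has \(b_j>0\). This contradicts your own observation in Step~1 that an index with \(b_j<0\) can lie in \(J_0\setminus J_{-1}\).

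\emph{A counterexample with \(H\) empty.} Take \(A=[1\ 1]\), \(\bb=\tp{(1,-1)}\), \(\bw=(1,0)\), and \(\ba=\tp{(-1,-1)}\). Then \(J_k=\{1\}\) for \(k\leq-1\), \(J_0=\{1,2\}\), and \(J_k=\{2\}\) for \(k\geq1\). The fiber of \(J_0\) is \(\{0\}\), so the hypothesis holds. Yet \(H=J_{-1}\setminus J_0=\varnothing\), and the index \(2\in J_0\setminus J_{-1}\) has \(b_2=-1\).

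\emph{Nothing depends on \(H\ne\varnothing\).}
\begin{itemize}
\item \(\Pi_{\cW}\) is generated by \(\Phi(\bt^{E\sqcup H})\), which is nonzero because \(b_j>0\) on \(E\sqcup H\).
\item When \(H=\varnothing\), Step~3 still works with \(s^0=1\): then \(P_{\cW}(\bt)=\langle e\rangle\) and both colon ideals are the unit ideal. In the example, \(K=\{2\}\), \(E=\{1\}\), and \(P_{\cW}(\bt)=\langle t_1\rangle=\langle e\rangle\).
\item Your separate treatment of \(E=\varnothing\) is also unnecessary, since Step~3 covers that case.
\end{itemize}
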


\begin{proof}
	By the first paragraph of the proof of Lemma~\ref{lem:rank-one-sign-cells}, every support fiber is an interval.
	If a fiber met both \(\Z_{<0}\) and \(\Z_{\geq0}\), it would contain \(0\), and hence would be the fiber of \(J_0\).
	This contradicts the assumption that the fiber of \(J_0\) has minimum \(0\).
	Thus \(\cW\) is precisely the collection of supports whose fibers are contained in the nonnegative half-line.

	Put \(K=\bigcap_{k\geq0}J_k\), \(E=J_0\setminus K\), and \(H=J_{-1}\setminus J_0\).
	The proof of Theorem~\ref{thm:rank-one-full-support} uses only this interval description.
	That proof gives \(P_{\cW}(\bt)=\langle\bt^{E\sqcup H}\rangle\) and \(e=\bt^E\), and therefore \(\Pi_{\cW}:m_{\ba,\cW}=\langle s^{|H|}\rangle=\Phi(Q_{\cW}(\bt):e)\).
	The obstruction vanishes by Theorem~\ref{thm:lattice-obstruction}.
\end{proof}

These results yield the following corollary.

\begin{corollary}
	\label{cor:minimal-lattice-codimension}
	Lattice rank 1 is the least rank in which a nonzero obstruction for a negative support family can occur, and the ordered family in Theorem~\ref{thm:rank-one-ordered-counterexample} has a nonzero obstruction in that rank.
	For the distinguished collection, the obstruction vanishes whenever the lattice rank is at most 1.
	The full distinguished collection in the five-column example of Theorem~\ref{thm:lattice-counterexample} has a nonzero obstruction in lattice rank 2, but that collection is not ordered.
	The distinguished collection in Theorem~\ref{thm:ordered-distinguished-counterexample} is ordered and also has a nonzero obstruction in lattice rank 2.
\end{corollary}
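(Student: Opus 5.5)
The corollary collects four assertions; I would prove them in turn, each by pointing to results already established.

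For minimality, I would handle lattice rank \(0\) directly. If \(\rank L=0\), then \(L=\{\bzero\}\) and \(\sS(\bv)=\{I_{\bzero}\}\). Every negative support family is then \(\cN=\{I_{\bzero}\}\), so \(K_{\cN}=I_{\bzero}\) and \(E=\varnothing\), which gives \(c=0\) and \(e=1\). Proposition~\ref{prop:factorwise-obstruction-filtration} then gives \(J_{\cN}^{\amb}=J_{\cN}^{\intr}\). Theorem~\ref{thm:finite-boundary-classification}, via \eqref{eq:boundary-classification-equality}, then gives \(\fD_{\cN}(e)=0\). The same conclusion can be read off Theorem~\ref{thm:lattice-obstruction}: with \(e=1\) we have \(M_{\cN}=\Rhat\), so \(UM_{\cN}=U\) and the two colon ideals agree. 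Since \(\sS(\bv)=\cN\), the ideal \(P_{\cN}(\bt)\) is zero, which is harmless. Combining this with Theorem~\ref{thm:rank-one-ordered-counterexample}, which gives a nonzero obstruction for an ordered family with \(\rank L=1\), shows that rank \(1\) is the least rank in which a nonzero obstruction occurs and that it is attained.

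For the distinguished collection in lattice rank at most \(1\), rank \(0\) is covered by the argument above, because \(\cN_{\bv}=\{I_{\bzero}\}\) there. Rank \(1\) is exactly Theorem~\ref{thm:rank-one-full-support}, which applies at any fake exponent.

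The two remaining assertions concern the lattice-rank-2 examples and are forward references. For the five-column configuration I would cite Theorem~\ref{thm:lattice-counterexample}, which gives the nonzero obstruction for the full distinguished collection. The claim that this collection is not ordered needs a witness. I would exhibit a pair \(J\subseteq I\) with \(I\in\cN_{\bv}\) and \(J\in\sS(\bv)\setminus\cN_{\bv}\), taken from the negative-support table computed in that proof. Concretely, \(J\) is a support whose fiber contains a lattice vector of negative \(\bw\)-weight. For Theorem~\ref{thm:ordered-distinguished-counterexample} I would simply cite its statement, which asserts both that the collection is ordered and that the obstruction is nonzero.

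The main obstacle is the non-orderedness claim for the five-column example. It is not among the results stated so far, so the explicit witness must be extracted from the later computation. Once that data is available, checking it is routine.
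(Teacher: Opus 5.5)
Your proposal is correct and follows the paper's proof. Rank \(0\) is handled directly (\(\sS(\bv)=\{I_{\bzero}\}\), \(e=1\), \(P_{\cN}(\bt)=0\), so both colon ideals coincide), and the remaining assertions are citations of Theorems~\ref{thm:rank-one-ordered-counterexample}, \ref{thm:rank-one-full-support}, \ref{thm:lattice-counterexample}, and \ref{thm:ordered-distinguished-counterexample}. The step you flag as the main obstacle is not one: the statement of Theorem~\ref{thm:lattice-counterexample} already asserts that \(\cN_{\bv}\) is not ordered, and its proof gives the witness \(\{3,5\}\subsetneq\{1,3,5\}\).
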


\begin{proof}
	If \(\rank L=0\), then \(I_{\bzero}\) is the only realized negative support, \(K_{\cN}=I_{\bzero}\), \(e=1\), and \(P_{\cN}(\bt)=0\); both ideals in \eqref{eq:defect-image} are zero.
	Theorem~\ref{thm:rank-one-ordered-counterexample} gives the example with an ordered negative support family in lattice rank 1, and Theorem~\ref{thm:rank-one-full-support} proves the assertion for the distinguished collection in lattice rank 1.
	The matrix in \eqref{eq:lattice-counterexample-AB} has lattice rank 2.
\end{proof}

In the next corollary, we use the notation \(\alpha\), \(\beta\), \(g_0\), \(h_0\), \(\omega\), \(\delta_{\omega}\), \(d_h\), \(d_{\delta}\), and \(d_{h,\delta}\) from Theorem~\ref{thm:two-minimal-support-formula} and Corollary~\ref{cor:two-support-principal-formula}.
Combining the preceding corollary with Corollary~\ref{cor:two-support-principal-formula}, we obtain the following.

\begin{corollary}
	\label{cor:distinguished-two-support-complete-intersection}
	Let \(\bv\) be a fake exponent, and take \(\cN=\cN_{\bv}\).
	Suppose that \(\cG_{\cN}=\{G_1,G_2\}\) and \(E=G_1\), and suppose that there are nonzero homogeneous polynomials \(\delta\) and \(\gamma\) such that, for some choice of \(\iota\) in \eqref{eq:boundary-generator-relations}, the ideals in \eqref{eq:two-support-multiplier-ideals} satisfy \(\fa_1=\langle\delta\rangle\) and \(\fa_2=\langle\gamma\rangle\).
	If \(\fD_{\cN}(e)\ne0\), then \(\rank L=2\), \(\gcd(\delta,\omega)=1\), and \(\gcd(h_0,\delta)=1\).
	In that case, \(\delta_{\omega}=\delta\) and \(d_{h,\delta}=0\).
	Moreover, \(d_h,d_{\delta}\geq1\) and \(\dim_{\C}\fD_{\cN}(e)=d_hd_{\delta}\), and
	\begin{align}
		\frac{J_{\cN}^{\intr}}{J_{\cN}^{\amb}} &\simeq \frac{S_0}{\langle h_0,\delta\rangle}(-\deg\omega),
		\label{eq:distinguished-two-support-model}\\
		\Hilb\bigl(\fD_{\cN}(e);u\bigr) &= u^{\deg\omega}(1+u+\cdots+u^{d_h-1})(1+u+\cdots+u^{d_{\delta}-1}).
		\label{eq:distinguished-two-support-Hilbert}
	\end{align}
\end{corollary}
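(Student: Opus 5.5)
The argument combines Corollary~\ref{cor:minimal-lattice-codimension}, the Artinian conclusion of Theorem~\ref{thm:finite-boundary-classification} for the distinguished collection, and the explicit model \eqref{eq:two-support-gcd-model} of Corollary~\ref{cor:two-support-principal-formula}.

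First, the rank. Since \(\fD_{\cN}(e)\ne0\), Corollary~\ref{cor:minimal-lattice-codimension} rules out \(\rank L\leq1\) for the distinguished collection. For \(\rank L\geq3\) I would use dimension. By Theorem~\ref{thm:finite-boundary-classification}, \(S_0/J_{\cN}^{\amb}\) is Artinian, and by \eqref{eq:two-support-principal-formulas} we have \(J_{\cN}^{\amb}=\langle\delta,h_0\omega\rangle\). An ideal generated by two homogeneous elements has height at most \(2\) by Krull's height theorem. So \(\fm\)-primarity forces \(r\leq2\) whenever \(J_{\cN}^{\amb}\ne S_0\). If \(J_{\cN}^{\amb}=S_0\), then \(J_{\cN}^{\intr}=S_0\) as well, and the quotient vanishes, contrary to hypothesis. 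Hence \(r=2\).

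Next, the gcd conditions. Since \(r=2\), height \(2\) of \(\langle\delta,h_0\omega\rangle\) means \(\delta\) and \(h_0\omega\) share no common factor. In the two-variable UFD \(S_0\) this gives \(\gcd(\delta,\omega)=1\) and \(\gcd(h_0,\delta)=1\). Consequently \(\delta_{\omega}=\delta\) and \(d_{h,\delta}=0\).

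Then the degrees and the dimension. Corollary~\ref{cor:two-support-principal-formula} says the obstruction vanishes if and only if \(h_0\) or \(\delta_{\omega}\) is a nonzero constant. Since \(\fD_{\cN}(e)\ne0\), neither is constant, so \(d_h,d_{\delta}\geq1\). Substituting \(\delta_\omega=\delta\) and \(d_{h,\delta}=0\) into \eqref{eq:two-support-gcd-model} gives \eqref{eq:distinguished-two-support-model}. Substituting them into \eqref{eq:two-support-gcd-Hilbert} with \(r=2\) turns the numerator into \((1-u^{d_h})(1-u^{d_\delta})\). Dividing by \((1-u)^2\) yields \eqref{eq:distinguished-two-support-Hilbert}. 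Evaluating at \(u=1\) gives \(\dim_{\C}\fD_{\cN}(e)=d_hd_{\delta}\), which is the complete-intersection count for coprime forms of degrees \(d_h\) and \(d_\delta\). Finite length is consistent with the Artinian statement \eqref{eq:boundary-classification-length}.

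The main obstacle is the height argument. We need \(J_{\cN}^{\amb}\) to be exactly the two-generator ideal \(\langle\delta,h_0\omega\rangle\), which holds because \(E=G_1\) and both \(\fa_i\) are principal. We also need the Artinian property to apply to \(J_{\cN}^{\amb}\) rather than merely to some ideal containing \(P_B(\bv)\). Both are supplied by Theorem~\ref{thm:finite-boundary-classification}, so I expect the rest to be bookkeeping.
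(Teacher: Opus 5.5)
Your proposal is correct and follows the paper's proof essentially step for step. The Artinian property of \(S_0/J_{\cN}^{\amb}\) combined with Krull's height theorem for the two-generator ideal \(\langle\delta,h_0\omega\rangle\) gives \(r\leq2\), and Corollary~\ref{cor:minimal-lattice-codimension} gives \(r\geq2\). Height two in \(\C[s_1,s_2]\) then forces coprimality, and the vanishing criterion together with \eqref{eq:two-support-gcd-model} finishes the argument. The only cosmetic difference is that you obtain \eqref{eq:distinguished-two-support-Hilbert} by specializing \eqref{eq:two-support-gcd-Hilbert} with \(r=2\) and \(d_{h,\delta}=0\), whereas the paper computes it directly from the regular sequence \(h_0,\delta\); this is the same calculation.
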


\begin{proof}
	Theorem~\ref{thm:finite-boundary-classification} shows that \(S_0/J_{\cN}^{\amb}\) is Artinian.
	Since the obstruction is nonzero, the ideal \(J_{\cN}^{\amb}\) is proper.
	Corollary~\ref{cor:two-support-principal-formula} gives \(J_{\cN}^{\amb}=\langle\delta,h_0\omega\rangle\).
	Krull's height theorem gives \(r\leq2\).
	Corollary~\ref{cor:minimal-lattice-codimension} gives \(r\geq2\), and hence \(r=2\).
	The proper Artinian ideal \(\langle\delta,h_0\omega\rangle\) has height 2, so its two generators are relatively prime.
	Thus \(\gcd(\delta,\omega)=1\) and \(\gcd(h_0,\delta)=1\).
	The vanishing statement in Corollary~\ref{cor:two-support-principal-formula} gives \(d_h,d_{\delta}\geq1\).
	Equation \eqref{eq:two-support-gcd-model} now gives \eqref{eq:distinguished-two-support-model}.
	The polynomials \(h_0\) and \(\delta\) form a homogeneous regular sequence in \(S_0\).
	The quotient of \(S_0\) by \(\langle h_0,\delta\rangle\) has Hilbert series
	\[
		\frac{(1-u^{d_h})(1-u^{d_{\delta}})}{(1-u)^2}
		=
		(1+u+\cdots+u^{d_h-1})(1+u+\cdots+u^{d_{\delta}-1}).
	\]
	Applying the degree shift by \(-\deg\omega\) proves \eqref{eq:distinguished-two-support-Hilbert}, and evaluating the Hilbert series at \(u=1\) gives \(\dim_{\C}\fD_{\cN}(e)=d_hd_{\delta}\).
\end{proof}

\subsection{Proportional rows of a Gale dual in lattice rank 2}

The rank 2 counterexample in Theorem~\ref{thm:lattice-counterexample} has two proportional rows in its Gale dual.
We now give a nonproportionality condition on pairs of rows of \(B\) under which the obstruction vanishes.

Let \(\bb_j\) denote the \(j\)-th row of an \(n\times2\) Gale dual \(B\).
For \(\bv\in\C^n\), put \(J_{\Z}(\bv)=\{j\mid v_j\in\Z\}\), and define \(J_{\mov}(\bv)\) by
\begin{equation}
	J_{\mov}(\bv)
	=
	\{j\in J_{\Z}(\bv)\mid \bb_j\ne\bzero\}.
	\label{eq:moving-integral-set}
\end{equation}
Also put \(J_{\mathrm{const}}^-(\bv)=\{j\in J_{\Z}(\bv)\mid \bb_j=\bzero,\ v_j<0\}\).
The set \(J_{\mov}(\bv)\) does not depend on the choice of the lattice basis \(B\).
No index outside \(J_{\Z}(\bv)\) belongs to a negative support.
An index \(j\) with \(\bb_j=\bzero\) either belongs to every negative support or belongs to none.

\begin{lemma}
	\label{lem:rank-two-affine-support-bound}
	Suppose that \(\rank L=2\), and put \(p=\#J_{\mov}(\bv)\).
	Then
	\begin{equation}
		\#\sS(\bv)
		\leq
		\cR_p=1+\frac{p(p+1)}2.
		\label{eq:rank-two-support-bound}
	\end{equation}
	More precisely, consider the affine line arrangement
	\begin{equation}
		\bb_j\bz+v_j+\frac12=0
		\qquad
		(j\in J_{\mov}(\bv)),
		\label{eq:rank-two-affine-arrangement}
	\end{equation}
	and label each region by adjoining \(J_{\mathrm{const}}^-(\bv)\) to the set of indices \(j\in J_{\mov}(\bv)\) whose lines have the region on their negative side.
	The realized negative supports are exactly the labels of the regions that contain an integral point.
\end{lemma}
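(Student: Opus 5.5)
We identify negative supports with regions of the arrangement and then count regions. For \(\bu=B\bz\in L\) with \(\bz\in\Z^2\), an index \(j\) lies in \(I_{\bu}=\nsupp(\bv+B\bz)\) exactly when \(v_j+\bb_j\bz\) is a negative integer. Indices outside \(J_{\Z}(\bv)\) never occur. An index with \(\bb_j=\bzero\) occurs exactly when \(j\in J_{\mathrm{const}}^-(\bv)\), independently of \(\bz\). For \(j\in J_{\mov}(\bv)\) the number \(v_j+\bb_j\bz\) is an integer at every integral \(\bz\), so it is negative precisely when \(v_j+\bb_j\bz<0\), equivalently \(\bb_j\bz+v_j+\frac12<0\). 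The shift by \(\frac12\) guarantees that no integral point lies on any line of \eqref{eq:rank-two-affine-arrangement}. Hence every integral \(\bz\) lies in the interior of a unique region, and \(I_{B\bz}\) is exactly the label of that region: \(J_{\mathrm{const}}^-(\bv)\) together with the indices whose lines have the region on their negative side. Since \(\bz\mapsto B\bz\) is a bijection \(\Z^2\to L\), the realized supports are exactly the labels of regions containing an integral point. This proves the second assertion.

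For the bound, \(\#\sS(\bv)\) is at most the number of regions containing an integral point, which is at most the total number of regions of an arrangement of \(p\) affine lines in \(\R^2\). We bound that number by induction on \(p\), adding one line at a time. A new line meets the previous lines in at most \(p-1\) points, so it is cut into at most \(p\) pieces, and each piece splits at most one existing region into two. This gives at most \(1+\sum_{k=1}^{p}k=1+p(p+1)/2=\cR_p\) regions. Coinciding lines cause no difficulty: a repeated line adds no new region, and parallel lines only reduce the number of intersection points. Both cases fit the inequality, and the bound is the classical one for general position.

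One subtlety is that distinct regions may carry the same label, but that only lowers \(\#\sS(\bv)\), so the inequality survives. The real point needing care is that the map from regions to labels is well defined, meaning a region lies on a definite side of each line. This holds because regions are the connected components of the complement of the union of the lines. Lines with identical equations, coming from equal rows and equal \(v_j\), are counted once as geometric lines but contribute possibly several indices to a label. I expect no serious obstacle here. The main thing to verify carefully is the integrality argument in the first step, namely that for \(j\in J_{\mov}(\bv)\) the sign condition is equivalent to negative integrality at integral points.
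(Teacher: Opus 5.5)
Your proposal is correct and follows the paper's proof essentially step for step: the reduction of \(v_j+\bb_j\bz\in\Z_{<0}\), via integrality, to the strict inequality \(\bb_j\bz+v_j+\frac12<0\); the observation that no integral point lies on a line, so each lattice point sits in a region labeled by its negative support; and the incremental count in which the \(k\)-th line adds at most \(k\) regions, while coincident or parallel lines only lower the total. Your caution that distinct regions might share a label is unnecessary, since two distinct regions are separated by some line and so their labels differ, but it does not affect the argument.
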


\begin{proof}
	If \(j\notin J_{\Z}(\bv)\), then \(v_j+\bb_j\bz\) is nonintegral for every \(\bz\in\Z^2\), so \(j\) belongs to no negative support, and if \(\bb_j=\bzero\), then \(j\) belongs to every negative support precisely when \(j\in J_{\mathrm{const}}^-(\bv)\).
	For every remaining index, both \(v_j\) and \(\bb_j\bz\) are integral, so
	\begin{equation}
		j\in\nsupp(\bv+B\bz)
		\quad\Longleftrightarrow\quad
		\bb_j\bz+v_j+\frac12<0.
		\label{eq:rank-two-affine-sign-test}
	\end{equation}
	No integral point lies on a line in \eqref{eq:rank-two-affine-arrangement}, so every integral point lies in a region whose label is its negative support, and a region label is realized precisely when the region contains an integral point.
	Adding the \(k\)-th line to an arrangement of \(k-1\) lines creates at most \(k\) new regions, so starting from one region gives at most \(1+\sum_{k\leq p}k=1+p(p+1)/2\) regions, and coincident or parallel lines only decrease this number.
\end{proof}

Suppose that \(v_j\in\{-1,0\}\) for \(j\in J_{\mov}(\bv)\).
Put
\[
	I_0=\nsupp(\bv),
	\qquad
	\widehat{\bb}_j=
	\begin{cases}
		-\bb_j, & j\in I_0,    \\
		\bb_j,  & j\notin I_0,
	\end{cases}
	\qquad
	(j\in J_{\mov}(\bv)).
\]
Put
\begin{equation}
	\cK_0(\bv,B)
	=
	\left\{
	\bz\in\R^2
	\ \middle|\
	\widehat{\bb}_j\bz\geq0
	\text{ for }j\in J_{\mov}(\bv)
	\right\}.
	\label{eq:base-sign-cone}
\end{equation}

\begin{remark}
	Put \(\bc=\bw B\).
	The condition used below is
	\begin{equation}
		\bc\not\parallel\bb_j
		\qquad
		(j\in J_{\mov}(\bv)).
		\label{eq:moving-direction-genericity}
	\end{equation}
	Here \(\not\parallel\) means that the two row vectors are linearly independent.
	After \(J_{\mov}(\bv)\) is fixed, this condition holds outside a finite union of proper hyperplanes in every full-dimensional Gr\"obner cone.
	This condition is weaker than requiring \(\bc\bz\ne0\) for every nonzero \(\bz\in\Z^2\).
	In fact, a nonzero rational row vector can avoid the finitely many directions of the rows of \(B\) and still vanish on a nonzero vector in \(\Z^2\).
\end{remark}

\begin{figure}[ht]
	\centering
	\begin{tikzpicture}[scale=1.7]
		\fill[gray!12] (0,0) -- (0:1) arc (0:180:1) -- cycle;
		\draw (0,0) circle (1);
		\draw[densely dotted] (-1.25,0) -- (1.25,0);
		\draw (200:1.1) -- (20:1.1);
		\draw[dashed] (250:1.1) -- (70:1.1);
		\draw (300:1.1) -- (120:1.1);
		\fill (70:1) circle (0.9pt);
		\fill (250:1) circle (0.9pt);
		\node at (45:0.72) {\(T_1\)};
		\node at (95:0.72) {\(T_2\)};
		\node at (340:0.72) {\(T_0\)};
		\node at (160:0.72) {\(T_3\)};
		\node[right] at (1.28,0) {\(\bc\bz=0\)};
		\node at (90:1.34) {\(\bc\bz>0\)};
		\node[above right] at (70:1.08) {\(j\)};
	\end{tikzpicture}
	\caption{The central line arrangement with three lines in the proofs of Lemmas~\ref{lem:central-endpoint-union} and \ref{lem:planar-principal-flag}: the regions \(T_1\) and \(T_2\) lie in the open semicircle of directions with \(\bc\bz>0\), the regions \(T_0\) and \(T_3\) are adjacent across the two ends of that semicircle, and the two antipodal directions on the dashed \(j\)-th line are marked.}
	\label{fig:central-arrangement}
\end{figure}

We prepare a lemma on central line arrangements; see Figure~\ref{fig:central-arrangement}.

\begin{lemma}
	\label{lem:central-endpoint-union}
	Let distinct central lines in \(\R^2\) be indexed by a finite set \(J\), for each \(j\in J\) choose an open half-plane bounded by the \(j\)-th line, and label each region by the set of indices whose chosen half-planes contain that region.
	Let \(T_1,\ldots,T_m\) be the consecutive regions contained in an open half-plane whose boundary contains no ray of the arrangement.
	Denote by \(T_0\) and \(T_{m+1}\) the regions adjacent across the initial and terminal boundary rays of that half-plane, respectively.
	The label of each region \(T_i\) is also denoted by \(T_i\).
	Distinct regions have distinct labels.

	Fix \(j\in J\).
	Suppose that the regions among \(T_1,\ldots,T_m\) whose labels omit \(j\) form a nonempty initial segment and that \(T_0\) also omits \(j\).
	Let \(I\) be the label of one of \(T_1,\ldots,T_m\), and let \(J'\) be the label of a region not among \(T_1,\ldots,T_m\).
	If both labels omit \(j\), then
	\begin{equation}
		T_0\cup T_1\subseteq I\cup J'.
		\label{eq:initial-endpoint-union}
	\end{equation}
	The analogous statement holds at the terminal end: if the regions among \(T_1,\ldots,T_m\) whose labels omit \(j\) form a nonempty terminal segment and \(T_{m+1}\) omits \(j\), then
	\begin{equation}
		T_m\cup T_{m+1}\subseteq I\cup J'.
		\label{eq:terminal-endpoint-union}
	\end{equation}
	Both conclusions remain valid after the same fixed set is adjoined to every region label.
\end{lemma}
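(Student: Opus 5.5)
The plan is to work on the circle of directions, where the arrangement becomes a cyclic sequence of arcs (the regions), and each chosen open half-plane becomes an open semicircle. First I will dispose of the two easy assertions. \emph{Distinct labels:} two distinct regions are separated by at least one line of the arrangement, and the chosen half-plane of that line contains exactly one of them, so their labels differ at that index. \emph{Adjoining a fixed set:} adjoining the same fixed set to every label adjoins it to both sides of \eqref{eq:initial-endpoint-union} and \eqref{eq:terminal-endpoint-union}, so it preserves both inclusions. I will prove \eqref{eq:initial-endpoint-union} in detail; \eqref{eq:terminal-endpoint-union} follows by reversing the orientation of the circle, which exchanges the initial and terminal ends.

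\emph{The semicircle \(\Omega\).} Let \(S_k\) be the open semicircle of directions in the chosen half-plane of line \(k\). The regions whose labels omit \(j\) are exactly those in the complementary open semicircle \(\Omega\), bounded by the two rays of line \(j\). By hypothesis, the regions among \(T_1,\ldots,T_m\) omitting \(j\) are \(T_1,\ldots,T_i\) for some \(i\geq1\), and \(T_0\) also omits \(j\). Hence one ray of line \(j\) separates \(T_i\) from \(T_{i+1}\) inside the half-plane, or bounds \(T_m\) when \(i=m\). The opposite ray \(\rho\) lies outside the half-plane. Traversing \(\Omega\) from \(\rho\) to the first ray, one meets the regions outside the half-plane that omit \(j\), ending with \(T_0\), which contains the initial boundary ray; then \(T_1,\ldots,T_i\). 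Since \(J'\) omits \(j\) and is not among \(T_1,\ldots,T_m\), it lies in the first portion, so \(J'\) weakly precedes \(T_0\). Likewise \(I\) is one of \(T_1,\ldots,T_i\), so \(I\) weakly follows \(T_1\).

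\emph{The key geometric fact.} If two open semicircles \(S_k\ne\Omega\) intersect, their intersection is an arc sharing an endpoint with \(\Omega\). In other words, \(S_k\cap\Omega\) is either an initial segment of \(\Omega\), starting at \(\rho\), or a terminal segment, ending at the \(j\)-ray. Because the endpoints of \(S_k\) are rays of the arrangement, this segment is a union of whole regions.

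\emph{Conclusion.} Take \(k\in T_0\cup T_1\), so \(S_k\) contains \(T_0\) or \(T_1\); both lie in \(\Omega\), so \(S_k\cap\Omega\ne\varnothing\). If \(S_k=\Omega\), then \(k\) lies in both \(I\) and \(J'\). If \(S_k\cap\Omega\) is an initial segment containing \(T_0\) or \(T_1\), it contains every region weakly preceding \(T_0\), hence \(J'\), so \(k\in J'\). If it is a terminal segment containing \(T_0\) or \(T_1\), it contains every region weakly following \(T_1\), hence \(I\), so \(k\in I\). This gives \eqref{eq:initial-endpoint-union}.

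The main obstacle is the bookkeeping of the order along \(\Omega\). Specifically, one must verify that \(T_0\) is the last region of the outside portion of \(\Omega\) and is immediately followed by \(T_1\). This uses that the boundary line of the half-plane contains no ray of the arrangement, so the initial boundary ray lies in the interior of the region \(T_0\), which is adjacent to \(T_1\).
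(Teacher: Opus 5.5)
Your proof is correct and takes essentially the same route as the paper. Both arguments work in the open semicircle of directions whose regions omit \(j\), order those regions so that \(J'\) weakly precedes \(T_0\) and \(I\) weakly follows \(T_1\), and use that the chosen half-plane of any other index \(k\) meets that semicircle in an arc abutting one of its ends. The only difference is phrasing: the paper says ``the membership of \(k\ne j\) changes at most once along any arc inside \(C_j\)'' and concludes by contradiction, whereas you say that \(S_k\cap\Omega\) is an initial or terminal segment and conclude directly, and these two facts are equivalent.
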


\begin{proof}
	Two distinct regions are separated by at least one of the lines, so their labels differ in the index of that line.
	The regions whose labels omit \(j\) are exactly the regions contained in the open semicircle \(C_j\) of directions outside the half-plane chosen for \(j\).
	Every central line other than the \(j\)-th line has exactly one of its two rays inside \(C_j\).
	Hence, along any arc inside \(C_j\), the membership of an index \(k\ne j\) in the region labels changes at most once.

	The regions \(T_0\) and \(T_1\) omit \(j\), and they are adjacent across the initial boundary ray of the open half-plane containing \(T_1,\ldots,T_m\).
	That ray is not contained in the \(j\)-th line, because the boundary of that half-plane contains no ray of the arrangement; hence that ray lies in \(C_j\), and the antipodal terminal boundary ray does not.
	The regions omitting \(j\) therefore form a block of consecutive regions that crosses the initial boundary ray and not the terminal one: inside that half-plane they are the regions of the given initial segment, and outside that half-plane they begin with \(T_0\).
	Order this block linearly, starting outside that half-plane; then the region labeled \(J'\) precedes or equals \(T_0\), and \(T_1\) precedes or equals the region labeled \(I\).
	Choose an arc inside \(C_j\) from the region labeled \(J'\) to the region labeled \(I\); this arc passes through \(T_0\) and \(T_1\).

	Let \(k\in T_0\cup T_1\), and note that \(k\ne j\) because \(T_0\) and \(T_1\) omit \(j\).
	If \(k\) belonged to neither \(I\) nor \(J'\), the membership of \(k\) would change at least twice along the chosen arc: once between the region labeled \(J'\) and the first of \(T_0,T_1\) containing \(k\), and once between the last of \(T_0,T_1\) containing \(k\) and the region labeled \(I\).
	This contradicts the bound established in the first paragraph of this proof.
	Hence \(k\in I\cup J'\), which proves \eqref{eq:initial-endpoint-union}.
	Reversing the circular order proves \eqref{eq:terminal-endpoint-union}.
	Adjoining the same fixed set to every label adds that set to both sides of \eqref{eq:initial-endpoint-union} and \eqref{eq:terminal-endpoint-union}.
\end{proof}

By this lemma, we obtain the following.

\begin{lemma}
	\label{lem:planar-principal-flag}
	Suppose that \(\rank L=2\), that \(\bv\) is a fake exponent, that \(v_j\in\{-1,0\}\) for \(j\in J_{\mov}(\bv)\), that the rows of \(B\) indexed by \(J_{\mov}(\bv)\) are pairwise nonproportional, and that \(\bc=\bw B\) satisfies \eqref{eq:moving-direction-genericity}.
	Put \(p=\#J_{\mov}(\bv)\).
	Then \(p\geq2\), and, for the distinguished collection \(\cN_{\bv}\), we have
	\[
		\#\cN_{\bv}
		=
		\begin{cases}
			p-1, & \cK_0(\bv,B)\text{ has nonempty interior}, \\
			p,   & \cK_0(\bv,B)=\{\bzero\}.
		\end{cases}
	\]
	Moreover, the set \(E=I_0\setminus K_{\cN_{\bv}}\) admits an ordering satisfying condition \eqref{eq:principal-boundary-slice}.
\end{lemma}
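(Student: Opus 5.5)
The plan is to translate negative supports into the sign patterns of a \emph{central} line arrangement, count the regions in the half-plane \(\bc\bz>0\), and then peel off the elements of \(E\) one at a time, using Lemma~\ref{lem:central-endpoint-union} to control the ideals \(\fj_a\).

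\emph{Step 1: negative supports become cell labels.} By Lemma~\ref{lem:rank-two-affine-support-bound}, for \(j\in J_{\mov}(\bv)\) and \(\bz\in\Z^2\) we have \(j\in\nsupp(\bv+B\bz)\) if and only if \(\bb_j\bz+v_j+\tfrac12<0\). Since \(v_j\in\{-1,0\}\), this is a sign condition with a tie convention:
\begin{itemize}
\item for \(j\notin I_0\), the condition reads \(\widehat{\bb}_j\bz<0\);
\item for \(j\in I_0\), it reads \(\widehat{\bb}_j\bz\geq0\).
\end{itemize}
Hence \(I_{B\bz}\) depends only on the cell (open region, open ray, or origin) of the central arrangement \(\{\bb_j\bz=0\}_{j\in J_{\mov}(\bv)}\) that contains \(\bz\). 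Because the lines are rational, every open region and every ray contains integral points. The origin gets the label \(I_0\). A ray on the \(j\)-th line has the same signs as both neighbouring regions for every \(k\ne j\), and the tie convention gives it the label of one of those two neighbours. Thus
\[
\sS(\bv)=\{\text{region labels}\}\cup\{I_0\}.
\]
Comparing with \eqref{eq:base-sign-cone}, \(I_0\) is itself a region label exactly when \(\cK_0(\bv,B)\) has nonempty interior; in that case it is the label of that interior region. Since the rows are pairwise nonproportional, there are \(2p\) regions, and their labels are pairwise distinct.

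\emph{Step 2: counting \(\cN_{\bv}\) and showing \(p\geq2\).} By \eqref{eq:moving-direction-genericity}, the line \(\bc\bz=0\) is not an arrangement line, so the open half-plane \(\bc\bz>0\) has no arrangement ray on its boundary.
\begin{itemize}
\item The boundary line cuts exactly two of the \(2p\) regions, so the half-plane contains exactly \(p-1\) regions \(T_1,\ldots,T_{p-1}\).
\item A region not contained in the half-plane contains integral points of negative weight, so its label is not in \(\cN_{\bv}\).
\item Ray labels duplicate neighbouring region labels. So the fiber of a region label lying in the half-plane consists of points of positive weight, possibly together with the origin when that label is \(I_0\).
\end{itemize}
Since \(\bv\) is fake, \(I_0\in\cN_{\bv}\). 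If \(\cK_0\) has nonempty interior, its fiber is the whole region \(\operatorname{int}\cK_0\) together with the origin. So \(\operatorname{int}\cK_0\) must lie in the half-plane, and \(\#\cN_{\bv}=p-1\). If \(\cK_0=\{\bzero\}\), then \(I_0\) is an extra label with fiber \(\{\bzero\}\), and \(\#\cN_{\bv}=p\).

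For \(p\leq1\) there are at most two regions. Each is a half-plane or the whole plane, so each contains points of negative weight. Moreover \(\cK_0\) is then a half-plane or the whole plane and has nonempty interior, so the fiber of \(I_0\) contains that region and has points of negative weight. This contradicts \(I_0\in\cN_{\bv}\), so \(p\geq2\).

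\emph{Step 3: the ordering of \(E\).} I expect this to be the main obstacle. Write \(\cN_{\bv}=\{T_1,\ldots,T_{p-1}\}\), adding \(I_0\) in the case \(\cK_0=\{\bzero\}\), and let \(T_0,T_p\) be the outside regions adjacent across the two boundary rays.

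For \(j\in E\subseteq I_0\), some member of \(\cN_{\bv}\) omits \(j\). The members omitting \(j\) form an arc of the circular order, so they are an initial or a terminal segment of \(T_1,\ldots,T_{p-1}\). I would order \(E\) by peeling indices alternately from the two ends, taking first those \(j\) for which the adjacent outside region (\(T_0\), respectively \(T_p\)) also omits \(j\).

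For such a \(j\), Lemma~\ref{lem:central-endpoint-union}, with the fixed set \(J_{\mathrm{const}}^-(\bv)\) adjoined to every label, shows the following. Every generator \(\bt^{(I\cup J')\setminus K}\) of \(P^{(a-1)}\) not divisible by \(t_{j_a}\) is divisible by \(\bt^{W_a}\), where
\[
W_a=(T_0\cup T_1)\setminus(K\cup\{j_1,\ldots,j_a\})
\]
(or the analogue built from \(T_{p-1}\cup T_p\)). This monomial is itself realized by the pair \((T_1,T_0)\). Hence \(\fj_a=\langle t_{j_a},\bt^{W_a}\rangle\), or \(\fj_a=R_0\) when \(W_a\) becomes empty.

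Two points need checking:
\begin{itemize}
\item The colon steps \(P^{(a)}=P^{(a-1)}:t_{j_a}\) preserve the shape of the generators, namely unions of labels minus the already-removed indices. This lets the lemma be reapplied at the next step.
\item Each \(j\in E\) is eventually peelable, i.e.\ its omitting segment reaches an end whose adjacent outside region also omits \(j\). For this I would use the fake-exponent condition (a Gr\"obner vector \(-\bg^{(i)}\) pointing into the negative half-plane) together with the case split on \(\cK_0\). In the case \(\cK_0=\{\bzero\}\), the extra member \(I_0\) has to be treated separately, using that \(I_0\) is the union of the labels of the regions adjacent to the origin's tie convention.
\end{itemize}
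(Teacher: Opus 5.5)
Your Steps~1 and~2 are the paper's argument: the central arrangement with the tie convention, \(p-1\) regions inside \(\bc\bz>0\), the extra member \(I_0\) with fiber \(\{\bzero\}\) when \(\cK_0(\bv,B)=\{\bzero\}\), and the half-plane argument for \(p\geq2\). In Step~3 you also have the right tool and the right shape of \(\fj_a\).

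The gap is the second point you leave open. You never prove that every \(j\in E\) satisfies the hypothesis of Lemma~\ref{lem:central-endpoint-union} at one end. The route you propose, via a Gr\"obner vector \(-\bg^{(i)}\) and a special analysis of \(I_0\), is not what settles it.

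Your ``alternately peel, eventually peelable'' scheme also rests on a misconception. Passing to \(P^{(a)}=P^{(a-1)}:t_{j_a}\) only deletes \(j_a\) from generators. Whether a later \(j_b\) is omitted by \(T_0,T_1\) (or by \(T_{p-1},T_p\)) is a property of the region labels and is unaffected by these colons. So an index failing the hypothesis at the start would never become peelable.

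The missing argument is purely combinatorial. Walk along the closed semicircle from one boundary ray of \(\{\bc\bz>0\}\) to the opposite one. You meet \(T_0,T_1,\ldots,T_{p-1},T_p\) in order, with \(T_p=-T_0\). No arrangement line equals \(\{\bc\bz=0\}\), so each line has exactly one ray inside \(\{\bc\bz>0\}\) and is crossed exactly once. Hence each \(j\in J_{\mov}(\bv)\) changes membership exactly once along this sequence, and \(\{i\mid j\notin T_i\}\) is a nonempty proper prefix or suffix of \(\{0,\ldots,p\}\).

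For \(j\in E\) this set meets \(\{1,\ldots,p-1\}\): some member of \(\cN_{\bv}\) omits \(j\), and the possible extra member \(I_0\) contains \(j\). So from the outset every \(j\in E\) falls into one of two cases:
\begin{itemize}
\item it is omitted by \(T_0\) and \(T_1\), with a nonempty initial omitting segment;
\item it is omitted by \(T_{p-1}\) and \(T_p\), with a nonempty terminal omitting segment.
\end{itemize}
The fake-exponent condition is used only to get \(I_0\in\cN_{\bv}\).

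Finally, \(t_{j_a}\) divides \(\bt^{(I\cup J')\setminus(K_{\cN_{\bv}}\cup\{j_1,\ldots,j_{a-1}\})}\) exactly when \(j_a\in I\cup J'\). Therefore any ordering of \(E\) works; the paper takes the initial indices first. The extra member \(I_0\) needs no special treatment: it contains \(j_a\), so its pairs only contribute generators already in \(\langle t_{j_a}\rangle\).
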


\begin{proof}
	Write \(\bc=\bw B\in\R^2\), and put \(J=J_{\mov}(\bv)\) and \(F=I_0\setminus J\).
	Every index in \(F\) has a zero row of \(B\) and belongs to every realized negative support.
	Every index outside \(J_{\Z}(\bv)\) belongs to no realized negative support.
	Hence, for \(\bz\in\Z^2\), integrality gives
	\begin{equation}
		\nsupp(\bv+B\bz)
		=
		F\cup
		\left(
		(I_0\cap J)\mathbin{\triangle}
		\left\{
		j\in J\ \middle|\ \widehat{\bb}_j\bz<0
		\right\}
		\right),
		\label{eq:binary-central-signs}
	\end{equation}
	where \(\triangle\) denotes symmetric difference.
	Label each region of the central line arrangement \(\widehat{\bb}_j\bz=0\) for \(j\in J\) by the value that the right-hand side of \eqref{eq:binary-central-signs} takes on that region.
	The hypotheses on the rows make these lines distinct, so consecutive regions have labels differing in exactly one index, distinct regions have distinct labels by convexity, and each index in \(J\) changes its membership in the region labels exactly once along any open semicircle.
	A nonzero point on the line \(\widehat{\bb}_j\bz=0\) carries the label of the adjacent region on the side \(\widehat{\bb}_j\bz>0\), and every region is a nonempty rational open cone and hence contains a lattice point; the supports realized at nonzero lattice points are therefore exactly the region labels.

	Because \(\bv\) is a fake exponent, every lattice point in the support fiber of \(I_0\) has nonnegative \(\bc\)-weight.
	For \(p=0\) that fiber is all of \(\Z^2\), and for \(p=1\) it consists of the lattice points in a rational closed half-plane; genericity and \eqref{eq:moving-direction-genericity} then supply a lattice point of negative \(\bc\)-weight in that fiber, and hence \(p\geq2\).
	A region label belongs to \(\cN_{\bv}\) precisely when the region is contained in the open half-plane \(\bc\bz>0\): the boundary rays of such a region also have positive \(\bc\)-weight, and scaling a rational point of a region meeting the open half-plane \(\bc\bz<0\) gives a lattice point of negative \(\bc\)-weight.
	The region labels in \(\cN_{\bv}\) therefore form a consecutive block \(T_1,\ldots,T_m\) in the open semicircle of directions with \(\bc\bz>0\); this semicircle crosses each of the \(p\) central lines once, its two end regions are cut by the line \(\bc\bz=0\), and a central ray contributes no additional support, so \(m=p-1\).

	A nonzero point of \(\cK_0(\bv,B)\) lies on at most one line of the arrangement, and perturbing the point off that line when necessary gives an interior point; hence \(\cK_0(\bv,B)\) is either \(\{\bzero\}\) or a cone with nonempty interior.
	If \(\cK_0(\bv,B)\) has nonempty interior, then \(\cK_0(\bv,B)\) is contained in the closed half-plane \(\bc\bz\geq0\), because a closed convex cone with nonempty interior is the closure of that interior, so \(I_0\) is one of \(T_1,\ldots,T_m\).
	If \(\cK_0(\bv,B)=\{\bzero\}\), then \(I_0\) is realized only at the origin and is an additional member of \(\cN_{\bv}\).
	This proves the asserted formula for \(\#\cN_{\bv}\).

	Let \(T_0\) and \(T_{m+1}\) be the region labels adjacent to the two ends of this block; both lie outside \(\cN_{\bv}\).
	Every index in \(F\) belongs to \(K_{\cN_{\bv}}\), so \(E\subseteq J\), and each index in \(J\) changes its membership in the region labels at most once as one moves from \(T_0\) to \(T_{m+1}\).
	Hence, for \(j\in E\), the labels among \(T_1,\ldots,T_m\) omitting \(j\) form a nonempty initial segment, a nonempty terminal segment, or all of \(T_1,\ldots,T_m\), and in the last case exactly one of \(T_0\) and \(T_{m+1}\) omits \(j\) because the two corresponding regions are antipodal.
	Call \(j\) initial if the labels omitting \(j\) form a proper initial segment, or if all of \(T_1,\ldots,T_m\) omit \(j\) and \(T_0\) omits \(j\), and terminal otherwise; then an initial index is omitted by both \(T_0\) and \(T_1\), and a terminal index is omitted by both \(T_m\) and \(T_{m+1}\).
	Ordering the initial indices first and the terminal indices after them gives an ordering \(j_1,\ldots,j_c\) of \(E\).

	Lemma~\ref{lem:central-endpoint-union} shows that \(T_0\cup T_1\) is contained in every union \(I\cup J'\) in which \(I\in\cN_{\bv}\) and \(J'\in\sS(\bv)\setminus\cN_{\bv}\) both omit an initial index \(j_a\), and that \(T_m\cup T_{m+1}\) has the same property for a terminal index.
	The pairs \((T_1,T_0)\) and \((T_m,T_{m+1})\) realize these two unions, and, when \(I_0\) is an additional member of \(\cN_{\bv}\), it is realized only at the origin, contains \(j_a\), and hence does not occur among these pairs.
	Put \(Y_{a-1}=\{j_1,\ldots,j_{a-1}\}\), and note that taking the colon by the product of the variables indexed by \(Y_{a-1}\) deletes those indices from every squarefree generator and preserves the inclusions supplied by Lemma~\ref{lem:central-endpoint-union}.
	Therefore \(P^{(a-1)}+\langle t_{j_a}\rangle = \left\langle t_{j_a},\ \bt^{(T_0\cup T_1)\setminus(K_{\cN_{\bv}}\cup Y_{a-1})} \right\rangle\) for an initial index, and the same formula with \(T_0\cup T_1\) replaced by \(T_m\cup T_{m+1}\) holds for a terminal index.
	If the second monomial is \(1\), this ideal is the unit ideal; otherwise the second monomial is the unique minimal generator not divisible by \(t_{j_a}\), so the ideal has the form \eqref{eq:principal-boundary-slice}.
\end{proof}

We call a fake exponent \(\bv\) binary if \(v_j\in\{-1,0\}\) for every \(j\).
We call \(\bv\) moving-binary if \(v_j\in\{-1,0\}\) for \(j\in J_{\mov}(\bv)\).

\begin{theorem}
	\label{thm:rank-two-moving-binary-vanishing}
	Suppose that \(\rank L=2\), that \(\bv\) is a moving-binary fake exponent for a generic direction satisfying \eqref{eq:moving-direction-genericity}, and that the rows of \(B\) indexed by \(J_{\mov}(\bv)\) are pairwise nonproportional.
	Then, for the distinguished collection, \(\fD_{\cN_{\bv}}(e)=0\).
	Moreover, if \(p=\#J_{\mov}(\bv)\), then
	\[
		\#\cN_{\bv}
		=
		\begin{cases}
			p-1, & \cK_0(\bv,B)\text{ has nonempty interior}, \\
			p,   & \cK_0(\bv,B)=\{\bzero\}.
		\end{cases}
	\]
	If the distinguished collection is ordered, then its ambient and intrinsic coefficient spaces are equal.
\end{theorem}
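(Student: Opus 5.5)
The plan is to assemble this theorem from Lemma~\ref{lem:planar-principal-flag} and Corollary~\ref{cor:principal-boundary-flag}. The hypotheses of the theorem are those of Lemma~\ref{lem:planar-principal-flag}: rank two, a moving-binary fake exponent, pairwise nonproportional rows of \(B\) on \(J_{\mov}(\bv)\), and condition \eqref{eq:moving-direction-genericity}. That lemma gives the count of \(\#\cN_{\bv}\) directly, which settles the second assertion. It also gives an ordering \(j_1,\ldots,j_c\) of \(E=I_0\setminus K_{\cN_{\bv}}\) for which every ideal \(\fj_a=P^{(a-1)}+\langle t_{j_a}\rangle\) is either \(R_0\) or of the form \eqref{eq:principal-boundary-slice}, with \(W_a=(T_0\cup T_1)\setminus(K_{\cN_{\bv}}\cup Y_{a-1})\) for an initial index and the analogous set built from \(T_m\cup T_{m+1}\) for a terminal index.

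To conclude \(\fD_{\cN_{\bv}}(e)=0\), I would apply Corollary~\ref{cor:principal-boundary-flag}. This requires \(\rank(A_{\{1,\ldots,n\}\setminus\{j_a,k\}})=d\) for every \(k\in W_a\). Without three-connectivity this is not automatic, and I expect it to be the main obstacle. By Gale duality, deleting columns \(j_a,k\) leaves a submatrix of full rank \(d\) if and only if the rows \(\bb_{j_a}\) and \(\bb_k\) of \(B\) are linearly independent.

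For \(k\in W_a\cap J_{\mov}(\bv)\) with \(k\ne j_a\), independence holds because the rows indexed by \(J_{\mov}(\bv)\) are pairwise nonproportional and nonzero. It therefore suffices to show \(W_a\subseteq J_{\mov}(\bv)\). The sets \(T_0\), \(T_1\), \(T_m\), \(T_{m+1}\) are region labels given by \eqref{eq:binary-central-signs}. Any index of such a label outside \(J\) lies in \(F\), and \(F\) consists of zero-row indices that belong to every realized support. Hence \(F\subseteq K_{\cN_{\bv}}\), so these indices are removed from \(W_a\). Consequently \(W_a\subseteq J\setminus\{j_a\}\), using that \(j_a\) is omitted by both regions in the relevant pair. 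Since \(\bb_{j_a}\neq\bzero\) with \(j_a\in E\subseteq J\), each required rank condition holds. Corollary~\ref{cor:principal-boundary-flag}, through Theorem~\ref{thm:factorwise-regularity}, then gives \(\fD_{\cN_{\bv}}(e)=0\).

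If \(\cN_{\bv}\) is ordered, Theorem~\ref{thm:lattice-obstruction} converts this vanishing into equality of the intrinsic and ambient coefficient spaces. If \(E=\varnothing\), then \(c=0\) and the vanishing is immediate from Proposition~\ref{prop:factorwise-obstruction-filtration}.
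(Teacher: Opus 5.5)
Your proposal is correct and follows the paper's proof. You take the count of \(\#\cN_{\bv}\) and the ordering of \(E\) from Lemma~\ref{lem:planar-principal-flag}, then verify the rank hypothesis of Corollary~\ref{cor:principal-boundary-flag} by Gale duality and the pairwise nonproportionality of the rows of \(B\) on \(J_{\mov}(\bv)\). Your check that \(W_a\subseteq J_{\mov}(\bv)\setminus\{j_a\}\), using \(F\subseteq K_{\cN_{\bv}}\), spells out a step the paper asserts in one line.
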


\begin{proof}
	Lemma~\ref{lem:planar-principal-flag} supplies an ordering satisfying condition \eqref{eq:principal-boundary-slice}.
	Every index occurring in a generator of one of the corresponding ideals \(\fj_a\) belongs to \(J_{\mov}(\bv)\).
	If \(j\) and \(k\) are two distinct such indices, the rows \(\bb_j\) and \(\bb_k\) are linearly independent.
	Gale duality gives \(\rank A_{\{1,\ldots,n\}\setminus\{j,k\}}=d\).
	Corollary~\ref{cor:principal-boundary-flag} therefore gives \(\fD_{\cN_{\bv}}(e)=0\).
\end{proof}

\begin{remark}
	The row matroid of a Gale dual is the dual of the column matroid of \(A\); see \cite[Chapter~2]{Oxl11}.
	The cosimplicity of the column matroid implies the hypothesis on the rows in Theorem~\ref{thm:rank-two-moving-binary-vanishing}.
	The theorem requires this hypothesis only for the rows indexed by \(J_{\mov}(\bv)\).
	Rows of \(B\) whose indices lie outside \(J_{\Z}(\bv)\) can be proportional, and an index with a zero row of \(B\) either belongs to every member of \(\sS(\bv)\) or belongs to none.
\end{remark}

\begin{remark}
	\label{rem:rank-two-decision}
	In lattice rank 2, the finite presentation is effective for one fake exponent \(\bv\), under an exact input model in which \(A\), \(B\), \(J_{\Z}(\bv)\), and the integers \(v_j\), \(j\in J_{\Z}(\bv)\), are given exactly and each entry of \(\bc=\bw B\) is a real algebraic number represented by a defining polynomial in \(\Z[x]\) and a rational isolating interval.
	The regions of the arrangement in \eqref{eq:rank-two-affine-arrangement} are enumerated by incremental insertion of the lines, and Lemma~\ref{lem:rank-two-affine-support-bound} bounds their number, so the subsets of \(J_{\mov}(\bv)\) need not be enumerated.
	A region label is realized exactly when a polyhedron contains an integral point, namely the polyhedron cut out by \(\bb_j\bz\leq-v_j-1\) for \(j\in I\) and \(\bb_j\bz\geq-v_j\) for \(j\in J_{\Z}(\bv)\setminus I\).
	Whether a polyhedron contains an integral point is decidable \cite[Chapters~16 and~18]{Sch86}, and each fiber is Presburger-definable, so Theorem~1.3 of \cite{GS66} gives an effectively computable semilinear description of the fiber.
	Proposition~3 of \cite{CH16} gives explicit bounds for a semilinear decomposition of the integral solutions of a system of linear inequalities.
	Membership in \(\cN_{\bv}\) is then a finite list of sign conditions on the generators of the semilinear pieces, decided by exact real-algebraic arithmetic \cite[Chapter~2]{BPR06}, so \(\Gamma_{\cN_{\bv}}\) is computable with entries in \(\Q[s_1,s_2]\).
	Gr\"obner bases compute \(\ker\Gamma_{\cN_{\bv}}\) and the two Hilbert series \cite[Chapter~5, Sections~2--3]{CLO05}, \cite[Chapters~2 and~9]{CLO15}, and Theorem~\ref{thm:finite-boundary-classification} turns their difference into the Hilbert series and the dimension of \(\fD_{\cN_{\bv}}(e)\).
	This procedure treats one fake exponent; it does not enumerate isomorphism classes of rank 2 obstruction modules and gives no complexity bound.
\end{remark}

The negative support of a fake exponent does not determine the obstruction, as the following proposition shows.

\begin{proposition}
	\label{prop:equal-support-different-obstruction}
	The negative support of a fake exponent does not determine the rank 2 obstruction, even when a proportional pair of integral rows of \(B\) is fixed.
	More precisely, let
	\begin{equation}
		A=
		\begin{bmatrix}
			1  & 1  & 1 & 0 & 0 \\
			-1 & -2 & 0 & 1 & 0 \\
			1  & 2  & 0 & 0 & 1
		\end{bmatrix},
		\qquad
		B=
		\begin{bmatrix}
			1  & 2  \\
			0  & -1 \\
			-1 & -1 \\
			1  & 0  \\
			-1 & 0
		\end{bmatrix}.
		\label{eq:equal-support-AB}
	\end{equation}
	Put
	\[
		\varepsilon=\frac{\sqrt2}{100},
		\qquad
		\bw=(5,3-\varepsilon,0,0,0),
	\]
	and take \(\bv=\tp{(0,0,-3,0,-2)}\) and \(\bv^\flat=\tp{(0,0,-1,0,-1)}\).
	Then both vectors are fake exponents for the same generic direction, and \(\nsupp(\bv^\flat)=\nsupp(\bv)\).
	For \(\widetilde{\bv}\in\{\bv,\bv^\flat\}\), put \(e_{\widetilde{\bv}} = \bt^{ \nsupp(\widetilde{\bv}) \setminus K_{\cN_{\widetilde{\bv}}} }\).
	Then
	\begin{equation}
		\fD_{\cN_{\bv}}(e_{\bv})=0, \qquad \Hilb\bigl(\fD_{\cN_{\bv^\flat}}(e_{\bv^\flat});u\bigr)=u.
		\label{eq:depth-sensitive-obstructions}
	\end{equation}
\end{proposition}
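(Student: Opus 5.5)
The plan is a direct verification, organized so that each step reduces to a result already proved. First I check the data. The identity \(AB=0\) is a short computation. A \(2\times 2\) minor of \(B\) equal to \(\pm1\), for instance on rows \(2,4\), shows that the columns of \(B\) form a \(\Z\)-basis of \(L\). The functional \(\bh=(1,1,1)\) satisfies \(\bh\ba_j=1\) for every \(j\), so \(A\) is homogeneous. Next, \(\bc=\bw B=(5,7+\varepsilon)\) has irrational slope, so \(\bw\cdot\bu\neq0\) for every nonzero \(\bu\in L\), and \(\bw\) is generic in the sense of Section~\ref{sec:fake-exponents}. I would then compute the reduced Gr\"obner basis \(\cG_{\bw}\) of \(I_A\). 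The plan is to start from the circuits and primitive binomials of the rank-2 lattice, orient each by the sign of \(\bc\) on its lattice coordinates, and run Buchberger. From the standard pairs of \(\inw(I_A)\) I then verify via \cite[Corollary~3.2.3]{SST00} that \(\bv\) and \(\bv^\flat\) are fake exponents. Equivalently, for each generator \(\bg^{(i)}\) I check that the distraction of \(\bpartial^{\bg^{(i)}_+}\) vanishes at the vector, which is easy because \(v_1=v_2=v_4=0\). The equality \(\nsupp(\bv)=\nsupp(\bv^\flat)=\{3,5\}\) is immediate.

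Second, for each \(\widetilde{\bv}\) I determine \(\sS(\widetilde{\bv})\) and \(\cN_{\widetilde{\bv}}\) using Lemma~\ref{lem:rank-two-affine-support-bound}. Here \(J_{\mov}=\{1,\ldots,5\}\), so there are at most \(16\) regions of the affine arrangement \(\bb_j\bz+v_j+\tfrac12=0\). Rows \(4\) and \(5\) are proportional, so the lines for \(j=4,5\) are parallel. Their offsets differ between \(\bv\) and \(\bv^\flat\) because \(v_5=-2\) versus \(-1\), and the same holds for the line of \(j=3\). This is exactly the depth effect the proposition exhibits. For each region containing a lattice point, I decide membership in \(\cN_{\widetilde{\bv}}\) as follows. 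A bounded fiber requires checking \(\bc\)-weights at finitely many lattice points. An unbounded fiber requires checking the sign of \(\bc\) on its recession cone, together with the weights of its finitely many extreme lattice points. This step I expect to be the main obstacle: the fibers are intersections of integral half-planes, and a single lattice point of negative weight in a fiber can eject a support from \(\cN\). I would therefore list, for each region, the recession cone and a lattice point minimizing \(\bc\bz\), and verify each one explicitly.

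Third, from \(\cN_{\widetilde{\bv}}\) I form \(K\), \(E\), the antichains \(\cG_{\cN}\) and \(\cH_{\cN}\) of \eqref{eq:boundary-G-antichain}--\eqref{eq:boundary-H-antichain}, and the linear forms \(\ell_j=(B\bs)_j\), namely \(s_1+2s_2,\,-s_2,\,-s_1-s_2,\,s_1,\,-s_1\). I anticipate that \(\cG_{\cN}\) has one or two elements. If it has two, Theorem~\ref{thm:two-minimal-support-formula} gives \(J^{\amb}\) and \(J^{\intr}\) by explicit colon computations in \(\C[s_1,s_2]\). When \(E=G_1\) and \(\fa_2\) is principal, Corollary~\ref{cor:two-support-principal-formula} reduces the comparison to \(h_0=\gcd(\alpha/g_0,\beta)\). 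For \(\bv^\flat\) I expect the common factor to be \(s_1\), coming from the proportional pair \(\ell_4=-\ell_5\), so that \(\deg h_0=1\), \(\deg\delta_\omega=1\), and \(\deg\omega=1\). Then \eqref{eq:two-support-gcd-Hilbert} yields \(u\), consistent with Corollary~\ref{cor:two-support-proportional-rows}. For \(\bv\) I expect the deeper offsets to either separate the proportional indices into the same side of \(\cG_{\cN}\), making \(h_0\) constant, or to make \(\delta_\omega\) constant; either gives \(\fD=0\) by Theorem~\ref{thm:finite-boundary-classification}. If the antichain shape does not fit that corollary, the fallback is to compute \(\ker\Gamma_{\cN}\) via \eqref{eq:boundary-matrix-kernel}, or to apply the factorwise criterion of Corollary~\ref{cor:principal-boundary-flag} for the vanishing case. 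Finally, \eqref{eq:boundary-classification-Hilbert} turns these computations into the two Hilbert series stated in \eqref{eq:depth-sensitive-obstructions}.
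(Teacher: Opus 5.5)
Your plan is correct and is essentially the paper's proof. It covers the data checks, fakeness from $v_1=v_2=v_4=0$, and the sign inequalities giving $\cN_{\bv}=\{\{3,5\},\{2,3,5\},\{2,3,4,5\}\}$ and $\cN_{\bv^\flat}=\{\{3,5\},\{2,3,5\},\{2,3,4\}\}$. For $\bv^\flat$ your route through Corollary~\ref{cor:two-support-principal-formula} ($G_1=E=\{5\}$, $G_2=\{2,4\}$, $\fa_1=\langle s_1+2s_2\rangle$, $\fa_2=\langle 1\rangle$, $h_0\sim s_1$, $\omega\sim s_2$) gives exactly the paper's $\langle s_1+2s_2,s_2^2\rangle\subsetneq\langle s_1,s_2\rangle$ and the series $u$.

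Only your guess for $\bv$ is off. There $K_{\cN_{\bv}}=\{3,5\}$, so $E=\varnothing$ and $\cG_{\cN_{\bv}}=\{\varnothing\}$. The vanishing is therefore the trivial case $e=1$, with both ideals equal to $\langle \ell_1,\ell_2\ell_4\rangle=\langle s_1+2s_2,s_2^2\rangle$. Your $\Gamma_{\cN}$ fallback handles this; it does not come from $h_0$ or $\delta_\omega$ being constant.
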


\begin{proof}
	We have \(AB=0\) and \((1,1,1)A=(1,1,1,1,1)\), and the minor of \(B\) on rows two and four is \(1\).
	Thus the columns of \(B\) form a \(\Z\)-basis of \(L\), and \(A\) is homogeneous.
	Moreover, \(\bw B=(5,7+\varepsilon)\), so the direction is generic and \(\bw B\) is not proportional to any row of \(B\).
	A Buchberger calculation gives the reduced toric Gr\"obner basis with initial monomials \(\partial_1^2\), \(\partial_1\partial_4\), \(\partial_1\partial_5\), and \(\partial_2\partial_4^2\); see \cite[Chapter~2]{CLO15} for the criteria used.
	Each of these monomials contains a variable indexed by \(1\), \(2\), or \(4\), and the corresponding coordinates of \(\bv\) and \(\bv^\flat\) vanish, so \(\bv\) and \(\bv^\flat\) are fake exponents.
	For compactness, write \(i_1\cdots i_k\) for \(\{i_1,\ldots,i_k\}\).
	By solving the five sign inequalities in \eqref{eq:rank-two-affine-sign-test} over the integers and comparing the weights of the resulting support fibers, we obtain \(\cN_{\bv}=\{35,235,2345\}\) and \(\cN_{\bv^\flat}=\{234,35,235\}\).
	The linear forms are \((\ell_1,\ldots,\ell_5)=(s_1+2s_2,-s_2,-s_1-s_2,s_1,-s_1)\).
	For \(\bv\), the two antichains and the set \(E_{\bv}\) are \(\cG_{\cN_{\bv}}=\{\varnothing\}\), \(\cH_{\cN_{\bv}}=\{24,1\}\), and \(E_{\bv}=\varnothing\), so Theorem~\ref{thm:finite-boundary-classification} gives \(J_{\cN_{\bv}}^{\amb}=J_{\cN_{\bv}}^{\intr}=\langle s_2^2,s_1+2s_2\rangle\) and hence \(\fD_{\cN_{\bv}}(e_{\bv})=0\).
	For \(\bv^\flat\), the corresponding data are \(\cG_{\cN_{\bv^\flat}}=\{24,5\}\), \(\cH_{\cN_{\bv^\flat}}=\{24,15\}\), and \(E_{\bv^\flat}=5\), and the matrix \(\Gamma_{\cN_{\bv^\flat}}\) in Theorem~\ref{thm:finite-boundary-classification} gives \(J_{\cN_{\bv^\flat}}^{\amb}=\langle s_1+2s_2,s_2^2\rangle\) and \(J_{\cN_{\bv^\flat}}^{\intr}=\langle s_1,s_2\rangle\).
	The quotient of these two ideals is generated in degree one by the class of \(s_2\) and has square zero, so the second identity in \eqref{eq:depth-sensitive-obstructions} follows.
	Finally, the fourth and fifth rows of \(B\) are nonzero and proportional.
\end{proof}

Theorem~\ref{thm:rank-two-moving-binary-vanishing} yields the following corollary.

\begin{corollary}
	\label{cor:cosimple-binary-rank-two}
	Suppose that \(\rank L=2\), that \(\bv\) is a fake exponent for a generic direction \(\bw\), that the column matroid of \(A\) is cosimple, that \(\bv\) is moving-binary, and that \(\bw B\) is not proportional to any row of \(B\).
	Then the obstruction for the distinguished collection vanishes.
\end{corollary}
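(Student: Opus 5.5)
The plan is to reduce Corollary~\ref{cor:cosimple-binary-rank-two} to Theorem~\ref{thm:rank-two-moving-binary-vanishing}. That theorem requires four things: \(\rank L=2\); \(\bv\) moving-binary; the generic direction satisfies \eqref{eq:moving-direction-genericity}; and the rows of \(B\) indexed by \(J_{\mov}(\bv)\) are pairwise nonproportional.

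The first two hypotheses are assumed directly. For the third, the assumption that \(\bw B\) is proportional to no row of \(B\) is stronger than what is needed. Condition \eqref{eq:moving-direction-genericity} only asks this for the rows indexed by \(J_{\mov}(\bv)\), and those rows are nonzero by the definition \eqref{eq:moving-integral-set}. So linear independence of \(\bc\) and \(\bb_j\) is exactly the stated nonproportionality. (When \(\bc=\bzero\) it would fail, but \(\bc\ne\bzero\) by genericity, since \(\bw\) is nonzero on the nonzero elements of \(L\).)

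The only real step is deducing pairwise nonproportionality of the rows from cosimplicity. By the remark after Theorem~\ref{thm:rank-two-moving-binary-vanishing}, the row matroid of \(B\) is the dual of the column matroid of \(A\). Cosimplicity of the column matroid means this dual is simple: it has no loops and no parallel pairs. A loop of the row matroid is a zero row. A parallel pair is two nonzero rows \(\bb_j,\bb_k\) that are linearly dependent, that is, proportional.

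Hence all rows of \(B\) are nonzero and pairwise nonproportional, in particular those indexed by \(J_{\mov}(\bv)\). Moreover, since no row is zero, \(J_{\mov}(\bv)=J_{\Z}(\bv)\), though this is not needed.

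With all four hypotheses verified, Theorem~\ref{thm:rank-two-moving-binary-vanishing} gives \(\fD_{\cN_{\bv}}(e)=0\). The one point I expect to need care with is the convention that the dual of the column matroid is represented by the rows of any Gale dual \(B\), independent of the choice of \(B\). Changing \(B\) to \(BT\) with \(T\in\operatorname{GL}_2(\Z)\) preserves proportionality of rows, so this is harmless.
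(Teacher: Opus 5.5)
Your proposal is correct and is the paper's argument: cosimplicity of the column matroid means the rows of the Gale dual \(B\) are nonzero and pairwise nonproportional, so the hypotheses of Theorem~\ref{thm:rank-two-moving-binary-vanishing} hold and that theorem applies directly. One small slip in your parenthetical: genericity in the paper's sense does not mean that \(\bw\) is nonzero on every nonzero element of \(L\), which is only a sufficient condition. The conclusion \(\bc\ne\bzero\) still holds, because \(\bc=\bzero\) would give \(\inw(I_A)=I_A\), which is not monomial when \(\rank L=2\); in any case, ``not proportional'' is used in the paper to mean linear independence, and that already excludes \(\bc=\bzero\).
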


\begin{proof}
	Since the column matroid of \(A\) is cosimple, the rows of a rank 2 Gale dual are nonzero and pairwise nonproportional.
	Theorem~\ref{thm:rank-two-moving-binary-vanishing} applies.
\end{proof}

Corollary~\ref{cor:cosimple-binary-rank-two} yields the following lower bound on the lattice rank.

\begin{corollary}
	\label{cor:cosimple-binary-rank-threshold}
	Suppose that \(\bv\) is a moving-binary fake exponent for a generic direction \(\bw\), that the direction is nonzero on every nonzero element of \(L\), and that the column matroid of \(A\) is cosimple.
	If the intrinsic-perturbation obstruction module for the distinguished collection is nonzero, then \(\rank L\geq3\).
	The three-connected example in Theorem~\ref{thm:three-connected-obstruction} attains this bound.
\end{corollary}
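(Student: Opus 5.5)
The plan is to argue by contraposition from Corollary~\ref{cor:cosimple-binary-rank-two}, after disposing of lattice ranks 0 and 1 through Corollary~\ref{cor:minimal-lattice-codimension}. So assume that the obstruction \(\fD_{\cN_{\bv}}(e)\) for the distinguished collection is nonzero, and show that \(\rank L\) cannot be \(0\), \(1\), or \(2\).

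\emph{Ranks 0 and 1.} Corollary~\ref{cor:minimal-lattice-codimension} states that the obstruction for the distinguished collection vanishes whenever \(\rank L\leq1\). For rank 1 this is Theorem~\ref{thm:rank-one-full-support}; for rank 0 the ideals in \eqref{eq:defect-image} are both zero. Hence \(\rank L\geq2\).

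\emph{Rank 2.} Suppose \(\rank L=2\). The hypotheses already give three of the assumptions of Corollary~\ref{cor:cosimple-binary-rank-two}: \(\bv\) is a moving-binary fake exponent, \(\bw\) is generic, and the column matroid is cosimple. The remaining assumption is that \(\bw B\) is not proportional to any row \(\bb_j\) of \(B\). This is the one step needing an argument, and I would derive it from the extra assumption that \(\bw\cdot\bu\ne0\) for every nonzero \(\bu\in L\).

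\begin{itemize}
\item Each row \(\bb_j\) is an integral vector in \(\Z^2\). Cosimplicity, as in the proof of Corollary~\ref{cor:cosimple-binary-rank-two}, makes it nonzero.
\item Suppose \(\bw B=\lambda\bb_j\) for some real \(\lambda\). Since \(\bw B\ne\bzero\) (its pairing with any nonzero lattice vector is nonzero), \(\lambda\ne0\).
\item Choose a nonzero integral \(\bz\in\Z^2\) with \(\bb_j\bz=0\); for instance, with \(\bb_j=(b_1,b_2)\), take \(\bz=\tp{(-b_2,b_1)}\).
\item Then \(\bu=B\bz\) is a nonzero element of \(L\), because the columns of \(B\) are linearly independent. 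Yet \(\bw\cdot\bu=(\bw B)\bz=\lambda\,\bb_j\bz=0\), contradicting the assumption on \(\bw\).
\end{itemize}

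So condition \eqref{eq:moving-direction-genericity} holds, in fact for all rows of \(B\). Corollary~\ref{cor:cosimple-binary-rank-two} then gives \(\fD_{\cN_{\bv}}(e)=0\), a contradiction, and therefore \(\rank L\geq3\).

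\emph{Sharpness.} The attainment claim is a pointer to Theorem~\ref{thm:three-connected-obstruction}. There I would check that the six-column configuration has lattice rank 3 and a three-connected (hence cosimple) column matroid. I would also check that its fake exponent is moving-binary, that its weight is nonzero on \(L\setminus\{\bzero\}\), and that the distinguished-collection obstruction is nonzero. Since that theorem appears later, this part is only a forward reference and needs no proof here.
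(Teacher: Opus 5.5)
Your proof is correct and follows the paper's argument. Ranks 0 and 1 are handled by Corollary~\ref{cor:minimal-lattice-codimension}, and in rank 2 the nonvanishing of \(\bw\) on \(L\setminus\{\bzero\}\) rules out proportionality of \(\bw B\) with a row of \(B\), via an integral vector in the kernel of that row, so Corollary~\ref{cor:cosimple-binary-rank-two} applies. For the sharpness claim, the paper actually carries out the checks you list rather than deferring them: the column matroid is three-connected with six elements, hence cosimple; the fake exponent is binary; and item (6) gives the condition on the direction. You should state these checks too.
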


\begin{proof}
	If \(\bw B\) were proportional to a row of \(B\), then \(\bw B\) would vanish on a nonzero integral vector in the kernel of that row, contrary to the hypothesis on the direction.
	Hence the rank 2 case follows from Corollary~\ref{cor:cosimple-binary-rank-two}, and the rank 0 and rank 1 cases follow from Corollary~\ref{cor:minimal-lattice-codimension}.
	The column matroid in Theorem~\ref{thm:three-connected-obstruction} is three-connected and has six elements, and hence it is cosimple.
	The fake exponent in Theorem~\ref{thm:three-connected-obstruction} is binary.
	Item (6) of Theorem~\ref{thm:three-connected-obstruction} states that the direction there is nonzero on every nonzero element of \(L\).
\end{proof}

The hypotheses of Theorem~\ref{thm:rank-two-moving-binary-vanishing} cannot be replaced by the assumption that the distinguished collection is ordered.

\begin{theorem}
	\label{thm:ordered-distinguished-counterexample}
	An ordered distinguished collection can have a nonzero obstruction.
	More precisely, let
	\begin{equation}
		A=
		\begin{bmatrix}
			20  & -5 & 3  & 0 & 0 \\
			-3  & 2  & -1 & 1 & 0 \\
			-16 & 4  & -1 & 0 & 1
		\end{bmatrix},
		\qquad
		B=
		\begin{bmatrix}
			0  & -1 \\
			3  & -4 \\
			5  & 0  \\
			-1 & 5  \\
			-7 & 0
		\end{bmatrix}.
		\label{eq:ordered-distinguished-AB}
	\end{equation}
	Put
	\[
		\vartheta=15+\frac{\sqrt2}{100},
		\qquad
		\bw=(\vartheta-4,1,0,0,0),
	\]
	so that \(\bw B=(3,-\vartheta)\), and take \(\bv=\tp{(0,1,0,-2,-3)}\).
	Then \(A\) is homogeneous, the direction is generic, and \(\bv\) is a fake exponent for \(\bbeta=A\bv=\tp{(-5,0,1)}\).
	The distinguished collection is \(\cN_{\bv}=\bigl\{\{3,4\},\{4,5\}\bigr\}\), this collection is ordered, and
	\begin{equation}
		J_{\cN_{\bv}}^{\amb}=\langle s_2,s_1^2\rangle
		\subsetneq
		\langle s_1,s_2\rangle=J_{\cN_{\bv}}^{\intr}.
		\label{eq:ordered-distinguished-ideals}
	\end{equation}
	Hence \(\dim_{\C}\fD_{\cN_{\bv}}(e)=1\).
\end{theorem}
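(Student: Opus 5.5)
The plan is to verify the example by direct computation, organized around the two-generator formula of Theorem~\ref{thm:two-minimal-support-formula}. First, the standing data. The equality \(AB=0\) is checked column by column. The functional \(\bh=(1,1,1)\) satisfies \(\bh A=(1,1,1,1,1)\), so \(A\) is homogeneous. The minor of \(B\) on rows one and four equals \(-1\), so the columns of \(B\) form a \(\Z\)-basis of \(L\). For genericity, \(\bw\cdot B\bz=3z_1-\vartheta z_2\) vanishes on no nonzero integral \(\bz\), because \(\vartheta\) is irrational; by the discussion in Section~\ref{sec:fake-exponents}, this makes \(\bw\) generic.

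Next comes the fake-exponent property. I would run Buchberger's algorithm for \(I_A\) with respect to \(\bw\), which is refined by a tie-break if necessary, to get the reduced Gr\"obner basis \(\cG_{\bw}\). I would then exhibit a standard pair \((\ba,\sigma)\) with \(v_j=a_j\) for \(j\notin\sigma\). Equivalently, I would check that every initial monomial \(\bpartial^{\bg^{(i)}_+}\) has a factor whose distraction vanishes at \(\bv\), using \(v_1=v_3=0\) and \(v_2=1\), and I would use the corresponding standard pair to confirm the criterion \cite[Corollary~3.2.3]{SST00}. I expect these binomials to be sparse enough to do by hand, with the ancillary script as a cross-check.

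Then I would compute the negative supports. We have \(\bv+B\bz=\tp{(-z_2,\ 1+3z_1-4z_2,\ 5z_1,\ -2-z_1+5z_2,\ -3-7z_1)}\). Lemma~\ref{lem:rank-two-affine-support-bound} reduces \(\sS(\bv)\) to the labels of the regions of the affine arrangement \eqref{eq:rank-two-affine-arrangement} that contain a lattice point. For each realized label, I would decide whether its fiber has nonnegative \(\bc\)-weight, with \(\bc=(3,-\vartheta)\). For bounded fibers this is a finite check. For unbounded fibers it suffices to look at the recession cone and at the extreme lattice points.

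The aim of this step is \(\cN_{\bv}=\{\{3,4\},\{4,5\}\}\). Orderedness then amounts to checking that no realized support is a proper subset of \(\{3,4\}\) or \(\{4,5\}\) other than members of \(\cN_{\bv}\); I expect \(\varnothing\), \(\{3\}\), \(\{4\}\) and \(\{5\}\) to be unrealized. I expect this step to be the main obstacle. Deciding nonnegativity of the \(\bw\)-weight on whole fibers requires care near the irrational slope \(\vartheta\approx15\). In particular, the fibers of \(\{3,4\}\) and \(\{4,5\}\) are presumably thin strips along lines whose direction nearly balances \(\bc\), since rows three and five of \(B\) are proportional and this proportionality is what defeats Theorem~\ref{thm:rank-two-moving-binary-vanishing}. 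For this I would parametrize each strip explicitly and compare the weights with \(15\) and \(\vartheta\).

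Finally the algebra. We have \(K_{\cN_{\bv}}=\{4\}\), \(E=\{5\}\) and \(e=t_5\), and the antichain is \(\cG=\{G_1,G_2\}\) with \(G_1=\{5\}=E\) and \(G_2=\{3\}\). Thus \(\chi=1\), \(\alpha=\ell_5=-7s_1\) and \(\beta=\ell_3=5s_1\). From the non-distinguished supports found above, I would read off \(\cH\) and the ideals \(\fa_1\) and \(\fa_2\) of \eqref{eq:two-support-multiplier-ideals}. I expect \(\fa_1\) to contain a multiple of \(s_2\), for instance via \(\ell_1=-s_2\), and \(\fa_2\) to be principal, say \(\langle\gamma\rangle\) with \(\gamma\) not divisible by \(s_1\). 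Corollary~\ref{cor:two-support-principal-formula} then gives \(g_0=1\), \(h_0\sim s_1\) and \(\omega\sim\gamma\), hence \(J^{\amb}=\fa_1+\langle s_1\gamma\rangle\) and \(J^{\intr}=\fa_1+\langle\gamma\rangle\). Matching with \eqref{eq:ordered-distinguished-ideals} then follows by direct simplification; the alternative is to compute \(\ker\Gamma_{\cN_{\bv}}\) as in \eqref{eq:boundary-matrix-kernel}. The dimension count \(\dim_{\C}\fD_{\cN_{\bv}}(e)=1\) follows from \eqref{eq:boundary-classification-length}, and Theorem~\ref{thm:lattice-obstruction} interprets it via orderedness.
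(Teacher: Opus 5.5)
Your proposal is correct and follows the paper's proof. The common steps are: the Buchberger and distraction check for the fake exponent; solving the five sign inequalities over the integers to get \(\sS(\bv)\), \(\cN_{\bv}\) and orderedness; and the antichains \(\cG_{\cN_{\bv}}=\{\{3\},\{5\}\}\), \(\cH_{\cN_{\bv}}=\{\{2,3\},\{1,5\}\}\). The one variation is how you get the two ideals: you read both off Corollary~\ref{cor:two-support-principal-formula}, with \(\fa_1=\langle s_2\rangle\), \(\fa_2=\langle 3s_1-4s_2\rangle\), \(g_0=1\) and \(h_0=s_1\). The paper instead computes \(J_{\cN_{\bv}}^{\intr}\) directly as a colon ideal and \(J_{\cN_{\bv}}^{\amb}\) from \(\Gamma_{\cN_{\bv}}\), with the same result.

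Your guess that the distinguished fibers are thin strips is wrong, though nothing depends on it. The fiber of \(\{4,5\}\) is the quadrant \(z_1\geq0,\ z_2\leq0\), and the fiber of \(\{3,4\}\) is the wedge \(z_2\leq-1,\ (4z_2-1)/3\leq z_1\leq-1\). On both, the weight \(3z_1-\vartheta z_2\) is plainly nonnegative, so this step is not delicate.
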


\begin{proof}
	We have \(AB=0\), and the minor of \(B\) on rows one and four is \(-1\), so the columns of \(B\) form a \(\Z\)-basis of \(L\).
	Since \((1,1,1)A=(1,1,1,1,1)\), the matrix \(A\) is homogeneous.
	A Buchberger calculation gives the reduced toric Gr\"obner basis with initial monomials \(\partial_1\partial_2^4\), \(\partial_2^3\partial_3^5\), \(\partial_1\partial_2\partial_5^7\), and \(\partial_1\partial_5^{14}\); the corresponding exponent differences have \(\bw\)-weights \(\vartheta\), \(3\), \(\vartheta-3\), and \(\vartheta-6\).
	These weights are positive.
	The irrationality of \(\vartheta\) makes this direction generic, because \(3z_1-\vartheta z_2\) vanishes for \(\bz\in\Z^2\) only at the origin, so no nonzero element of \(L\) has zero \(\bw\)-weight.
	Each initial monomial is divisible by \(\partial_1\) or by \(\partial_2^3\), and \(\bv\) has \(v_1=0\) and \(v_2=1\), so \(\bv\) annihilates the distraction of \(\inw(I_A)\); together with \(A\bv=\bbeta\) this shows that \(\bv\) is a fake exponent.
	Writing \(\bz=\tp{(p_{\mathrm{lat}},q_{\mathrm{lat}})}\), we have \(\bv+B\bz=\tp{(-q_{\mathrm{lat}},\ 1+3p_{\mathrm{lat}}-4q_{\mathrm{lat}},\ 5p_{\mathrm{lat}},\ -2-p_{\mathrm{lat}}+5q_{\mathrm{lat}},\ -3-7p_{\mathrm{lat}})}\) and \(\bw B\bz=3p_{\mathrm{lat}}-\vartheta q_{\mathrm{lat}}\), so the five sign conditions read \(q_{\mathrm{lat}}\geq1\), \(3p_{\mathrm{lat}}-4q_{\mathrm{lat}}\leq-2\), \(p_{\mathrm{lat}}\leq-1\), \(p_{\mathrm{lat}}-5q_{\mathrm{lat}}\geq-1\), and \(p_{\mathrm{lat}}\geq0\).
	Solving them over the integers gives \(\sS(\bv)=\bigl\{ \{2,3\},\{1,2,3\},\{3,4\},\{2,3,4\},\{1,5\},\{1,2,5\},\{4,5\},\{1,4,5\} \bigr\}\), and comparing the weights of the resulting support fibers leaves \(\cN_{\bv}=\bigl\{\{3,4\},\{4,5\}\bigr\}\).
	No member of \(\sS(\bv)\) other than \(\{3,4\}\) is contained in \(\{3,4\}\), and none other than \(\{4,5\}\) is contained in \(\{4,5\}\), so \(\cN_{\bv}\) is ordered.
	Here \(K_{\cN_{\bv}}=\{4\}\) and \(e=t_5\), the two antichains are \(\cG_{\cN_{\bv}}=\bigl\{\{3\},\{5\}\bigr\}\) and \(\cH_{\cN_{\bv}}=\bigl\{\{2,3\},\{1,5\}\bigr\}\), and \(E=\{5\}\).
	The linear forms are \((\ell_1,\ldots,\ell_5)=(-s_2,3s_1-4s_2,5s_1,-s_1+5s_2,-7s_1)\), so \(J_{\cN_{\bv}}^{\intr} =\bigl\langle 5s_1(3s_1-4s_2),\ 7s_1s_2\bigr\rangle:(-7s_1) =\langle s_1,s_2\rangle\), and the matrix \(\Gamma_{\cN_{\bv}}\) in Theorem~\ref{thm:finite-boundary-classification} gives \(J_{\cN_{\bv}}^{\amb}=\langle s_2,s_1^2\rangle\).
	The quotient in \eqref{eq:ordered-distinguished-ideals} is spanned by the class of \(s_1\), so the obstruction has dimension one.
\end{proof}

\begin{remark}
	\label{rem:ordered-distinguished-scope}
	The example in Theorem~\ref{thm:ordered-distinguished-counterexample} fails two of the three hypotheses of Theorem~\ref{thm:rank-two-moving-binary-vanishing}.
	The fake exponent is not moving-binary, because \(v_4=-2\) and \(v_5=-3\).
	The rows \(\bb_3=(5,0)\) and \(\bb_5=(-7,0)\) of \(B\) are proportional, so the column matroid is not cosimple.
	The remaining hypothesis holds: since \(\vartheta\) is irrational, \(\bw B\) is not proportional to any row of \(B\), so \eqref{eq:moving-direction-genericity} is satisfied.
	The example therefore shows that the assumption that the collection is ordered does not by itself force the obstruction to vanish, and it does not show that either of the two failing hypotheses is individually necessary.
\end{remark}

Since the distinguished collection here is ordered, its intrinsic and ambient coefficient spaces are defined and differ.
This settles the case of an ordered distinguished collection in Question~7.3 of \cite{OS25}.

\section{Normality of \texorpdfstring{\(\cC(\bw)\)}{C(w)} does not eliminate the obstruction}
\label{sec:normal-counterexamples}

In this section, we give two counterexamples for which \(\cC(\bw)\) is normal.

\subsection{A rank 2 counterexample}

The following theorem gives a rank 2 counterexample.

\begin{theorem}
	\label{thm:lattice-counterexample}
	There exist a homogeneous \(A\)-hypergeometric system, a generic direction, a fake exponent \(\bv\), and an ordered negative support family \(\cN^{\circ}\subseteq\cN_{\bv}\) for which \(\Phi(Q_{\cN^{\circ}}(\bt):e) \subsetneq \Pi_{\cN^{\circ}}:m_{\bv,\cN^{\circ}}\).
	The affine semigroup \(\cC(\bw)\) is normal, and \(\dim_{\C}\fD_{\cN^{\circ}}(e)=1\).
	The distinguished collection \(\cN_{\bv}\) is not ordered, but it gives the same two monomial ideals, the same monomial \(e\), and hence the same obstruction module as the ordered family \(\cN^{\circ}\).
	For the ordered family \(\cN^{\circ}\), ambient perturbation supplies a logarithmic coefficient that cannot be obtained by intrinsic perturbation.
\end{theorem}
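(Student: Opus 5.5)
This is an existence statement, so the proof will be an explicit construction followed by verification. The introduction promises a five-column configuration of lattice rank \(2\), and equation \eqref{eq:lattice-counterexample-AB} is referenced in Corollary~\ref{cor:minimal-lattice-codimension}. I will therefore write down \(A\in\Z^{3\times5}\) with \((1,1,1)A=\bone\), a Gale dual \(B\) whose columns form a \(\Z\)-basis of \(L\), an irrational-perturbed weight \(\bw\), and a fake exponent \(\bv\).

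Guided by Corollary~\ref{cor:two-support-proportional-rows} and by the remark that the example has two proportional rows of \(B\), I will aim for the following shape:
\begin{itemize}
\item \(\cG_{\cN^{\circ}}=\{G_1,G_2\}\) with \(E=G_1\);
\item \(\fa_1\) and \(\fa_2\) principal;
\item an index \(i\in G_1\setminus G_2\) and an index \(j\in G_2\setminus G_1\) whose rows of \(B\) are proportional, so that \(h_0\) is a linear form.
\end{itemize}
Then Corollary~\ref{cor:two-support-principal-formula} gives \(J^{\intr}/J^{\amb}\simeq S_0/\langle h_0,\delta_\omega\rangle(-\deg\omega)\). Dimension \(1\) requires \(\delta_\omega\) to be a linear form not proportional to \(h_0\).

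The verification proceeds in order.
\begin{enumerate}
\item Check \(AB=0\), a unimodular \(2\times2\) minor of \(B\), and homogeneity of \(A\).
\item Compute the reduced Gr\"obner basis of \(I_A\) for \(\bw\) by Buchberger, and check genericity as in the proof of Theorem~\ref{thm:ordered-distinguished-counterexample}: \(\bw B\) is not proportional to any rational vector.
\item Check that \(\bv\) kills the distraction of every initial monomial, so \(\bv\) is a fake exponent.
\item Verify normality of \(\cC(\bw)=\sum\N\bg^{(i)}\subseteq L\simeq\Z^2\). In rank \(2\) this is a planar cone check: the real cone is spanned by two extreme generators, and the generators must include the Hilbert basis of that cone. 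The Hilbert basis can be computed by continued fractions or by checking that consecutive generators in angular order span unimodular triangles.
\item Compute \(\sS(\bv)\) via Lemma~\ref{lem:rank-two-affine-support-bound}, solving the integer sign inequalities \(\bb_j\bz+v_j+\tfrac12<0\) region by region.
\item Determine \(\cN_{\bv}\) by checking the sign of the \(\bw\)-weight on each fiber, including unbounded fibers through their recession directions.
\end{enumerate}

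Next I extract \(\cN^{\circ}\subseteq\cN_{\bv}\), closed under taking realized subsets, such that \(\cN^{\circ}\) and \(\cN_{\bv}\) produce the same \(K\), the same \(e\), and the same ideals \(M\) and \(P\). Two situations make this work.
\begin{itemize}
\item Removed members are supersets of retained ones, so they do not change the minimal generators of \(M\).
\item Their unions with outside supports are absorbed by existing \(\cH\)-generators.
\end{itemize}
I will verify these equalities directly on the antichains \(\cG\) and \(\cH\). I will also exhibit a realized \(J\subseteq I\in\cN_{\bv}\) with \(J\notin\cN_{\bv}\), which shows that \(\cN_{\bv}\) is not ordered.

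With the antichains fixed, I will compute:
\begin{itemize}
\item \(J^{\intr}=\langle\ell^{H_a}\rangle:\ell^E\) by direct factor cancellation;
\item \(J^{\amb}\) through Theorem~\ref{thm:two-minimal-support-formula} or by the kernel of \(\Gamma_{\cN^{\circ}}\) as in \eqref{eq:boundary-matrix-kernel}.
\end{itemize}
The expected outcome is \(J^{\amb}=\langle\lambda,\mu^2\rangle\subsetneq\langle\lambda,\mu\rangle=J^{\intr}\) for independent linear forms \(\lambda\) and \(\mu\), giving \(\dim\fD=1\).

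The conclusions then follow from earlier results:
\begin{itemize}
\item the strict inclusion \(\Phi(Q:e)\subsetneq\Pi:m\) follows from Theorem~\ref{thm:finite-boundary-classification};
\item the missing logarithmic coefficient is the degree-one element of \(V^{\amb}_{1}\setminus V^{\intr}_{1}\) dual to the class of \(\mu\), via Proposition~\ref{prop:defect-coefficient-duality} and Fact~\ref{fact:ambient-realization}, since \(\cN^{\circ}\) is ordered.
\end{itemize}

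The main obstacle is finding the numbers. A single \((A,\bw,\bv)\) must meet all of the following at once:
\begin{itemize}
\item \(\cC(\bw)\) is normal;
\item \(\bv\) is a fake exponent;
\item the proportional-row nonvanishing mechanism is present;
\item an ordered subfamily exists with the same data.
\end{itemize}
I would first search small \(B\) with entries in \(\{-2,\dots,2\}\) and two proportional rows, with \(\bv\) having a non-binary entry at a proportional index, as in Theorem~\ref{thm:ordered-distinguished-counterexample}. I would check each candidate by machine before writing the hand verification.
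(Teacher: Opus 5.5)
\textbf{Gap: the example itself is never produced.} Your verification template is sound, and the shape you aim for is exactly the one the paper realizes:
\begin{itemize}
\item \(\cG_{\cN^{\circ}}=\{\{3\},\{4\}\}\), with \(E=\{3\}=G_1\);
\item \(\fa_1=\langle\ell_5\rangle\) and \(\fa_2=\langle\ell_2\rangle\);
\item the proportional rows are \(\bb_3=-3\bb_4\), so \(h_0=\ell_4\), \(\omega=\ell_2\), and \(\delta_{\omega}=\ell_5\);
\item \(J_{\cN^{\circ}}^{\amb}=\langle\ell_5,\ell_2\ell_4\rangle=\langle3s_1+s_2,(s_1+s_2)^2\rangle\subsetneq\langle s_1,s_2\rangle=J_{\cN^{\circ}}^{\intr}\).
\end{itemize}
But the statement is an existence claim, and its whole content is that one configuration meets all these requirements at once. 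You explicitly leave ``finding the numbers'' as the main obstacle, to be settled by a search you have not carried out. Without explicit \(A\), \(B\), \(\bw\), \(\bv\), and \(\cN^{\circ}\), none of your steps (1)--(6) can be run. Nor does anything show the constraints are compatible. A fake exponent with normal \(\cC(\bw)\), a non-ordered \(\cN_{\bv}\), and an ordered subfamily inducing the same \(K\), \(M\), \(P\), and \(e\) must coexist with a proportional pair \(i\in G_1\setminus G_2\), \(j\in G_2\setminus G_1\) that makes \(h_0\) and \(\delta_{\omega}\) independent linear forms. As written, this is a strategy, not a proof.

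\textbf{What closes the gap is the data.} The paper uses:
\begin{itemize}
\item \(A\) with rows \((1,1,1,1,1)\), \((2,0,0,3,3)\), \((3,3,2,0,3)\);
\item \(B\) with rows \((3,0),(2,3),(-3,-3),(1,1),(-3,-1)\);
\item \(\bw B=(-1,40+\sqrt2)\), \(\bv=\tp{(-1,0,-1,0,0)}\), and \(\cN^{\circ}=\{\{1,3\},\{1,4\}\}\).
\end{itemize}
The reduced Gr\"obner basis has lattice vectors \((-1,0)\), \((0,1)\), \((-1,1)\), so \(\cC(\bw)\) is the saturated quadrant \(\{(p,q)\in\Z^2\mid p\le0,\ q\ge0\}\). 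The collection \(\cN_{\bv}=\{\{1,3\},\{1,4\},\{1,3,5\}\}\) is not ordered: the realized support \(\{3,5\}\), whose fiber contains \((1,0)\) of weight \(-1\), lies inside \(\{1,3,5\}\). Both families give \(M=\langle t_3,t_4\rangle\), \(P=\langle t_2t_4,t_3t_5\rangle\), and \(e=t_3\).

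Your proposed search would not find this example. Its \(B\) has entries \(\pm3\), outside \(\{-2,\dots,2\}\), and its fake exponent is binary, whereas you require a non-binary entry at a proportional index. That requirement is unnecessary. Here every hypothesis of Theorem~\ref{thm:rank-two-moving-binary-vanishing} holds except nonproportionality of rows \(3\) and \(4\), and that single failure already produces the obstruction.
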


\begin{proof}
	Let
	\begin{equation}
		A=
		\begin{bmatrix}
			1 & 1 & 1 & 1 & 1 \\
			2 & 0 & 0 & 3 & 3 \\
			3 & 3 & 2 & 0 & 3
		\end{bmatrix},
		\qquad
		B=
		\begin{bmatrix}
			3  & 0  \\
			2  & 3  \\
			-3 & -3 \\
			1  & 1  \\
			-3 & -1
		\end{bmatrix}.
		\label{eq:lattice-counterexample-AB}
	\end{equation}
	We have \(AB=0\), and the minor of \(B\) on rows two and four is \(-1\), so the columns of \(B\) form a \(\Z\)-basis of \(L\).
	The first row of \(A\) also shows that the configuration is homogeneous.
	Fix \(\vartheta=40+\sqrt2\) and \(\bw=(-(2\vartheta+3)/9,\ \vartheta/3,\ 0,0,0)\), so that \(\bw B=(-1,\vartheta)\); the irrationality of \(\vartheta\) makes this direction generic.
	The reduced toric Gr\"obner basis is
	\begin{equation}
		\begin{aligned}
			f_1&=\underline{\partial_3^3\partial_5^3}
			-\partial_1^3\partial_2^2\partial_4,\\
			f_2&=\underline{\partial_2^3\partial_4}
			-\partial_3^3\partial_5,\\
			f_3&=\underline{\partial_2\partial_5^2}-\partial_1^3.
		\end{aligned}
		\label{eq:lattice-counterexample-GB}
	\end{equation}
	The first two initial monomials are relatively prime.
	The remaining two \(S\)-polynomials satisfy \(S(f_1,f_3)=-\partial_1^3f_2\) and \(S(f_2,f_3)=-f_1\), so Buchberger's criterion applies \cite[Chapter~2]{CLO15}.
	To see that the ideal generated by \(f_1,f_2,f_3\) is all of \(I_A\), write the exponent difference of two standard monomials of the same \(A\)-degree as \(B\tp{(p_{\mathrm{lat}},q_{\mathrm{lat}})}\).
	In each of the cases \(p_{\mathrm{lat}}+q_{\mathrm{lat}}>0\), \(p_{\mathrm{lat}}+q_{\mathrm{lat}}<0\), and \(p_{\mathrm{lat}}+q_{\mathrm{lat}}=0\), the three displayed initial monomials force \(p_{\mathrm{lat}}=q_{\mathrm{lat}}=0\), so distinct standard monomials cannot have the same \(A\)-degree.
	Thus every lattice binomial reduces to zero, which proves that \(f_1,f_2,f_3\) generate \(I_A\).
	The three binomials in \eqref{eq:lattice-counterexample-GB} have lattice coordinates \((-1,0)\), \((0,1)\), and \((-1,1)\), and hence \(\cC(\bw)=\N(-1,0)+\N(0,1)=\{(p_{\mathrm{lat}},q_{\mathrm{lat}})\in\Z^2\mid p_{\mathrm{lat}}\leq0,\ q_{\mathrm{lat}}\geq0\}\).
	This affine semigroup is saturated and therefore normal.
	Take \(\bv=\tp{(-1,0,-1,0,0)}\) and \(\bbeta=A\bv=\tp{(-2,-2,-5)}\).
	Every initial monomial in \eqref{eq:lattice-counterexample-GB} contains \(\partial_2\) or \(\partial_5\), so the distraction of that monomial vanishes at \(\bv\), and \(\bv\) is a fake exponent.
	If we write a lattice point as \(B\tp{(p_{\mathrm{lat}},q_{\mathrm{lat}})}\), the negative support tests are
	\begin{equation}
		\begin{array}{c|c}
			1\in I_{(p_{\mathrm{lat}},q_{\mathrm{lat}})}&p_{\mathrm{lat}}\leq0\\
			2\in I_{(p_{\mathrm{lat}},q_{\mathrm{lat}})}&2p_{\mathrm{lat}}+3q_{\mathrm{lat}}\leq-1\\
			3\in I_{(p_{\mathrm{lat}},q_{\mathrm{lat}})}&p_{\mathrm{lat}}+q_{\mathrm{lat}}\geq0\\
			4\in I_{(p_{\mathrm{lat}},q_{\mathrm{lat}})}&p_{\mathrm{lat}}+q_{\mathrm{lat}}\leq-1\\
			5\in I_{(p_{\mathrm{lat}},q_{\mathrm{lat}})}&3p_{\mathrm{lat}}+q_{\mathrm{lat}}\geq1.
		\end{array}
		\label{eq:lattice-counterexample-signs}
	\end{equation}
	A direct case distinction using these five integral inequalities gives exactly the following eight supports.
	\begin{equation}
		\begin{array}{c|c|c}
			I&(p_{\mathrm{lat}},q_{\mathrm{lat}})&\text{semigroup containment or negative weight}\\ \hline
			\{1,3\}&(0,0)&\cF_I\subseteq\cC(\bw)\\
			\{1,4\}&(-3,2)&\cF_I\subseteq\cC(\bw)\\
			\{1,3,5\}&(0,1)&\cF_I\subseteq\cC(\bw)\\ \hline
			\{2,4\}&(1,-3)&-1-3\vartheta<0\\
			\{1,2,4\}&(0,-1)&-\vartheta<0\\
			\{3,5\}&(1,0)&-1<0\\
			\{2,3,5\}&(1,-1)&-1-\vartheta<0\\
			\{2,4,5\}&(1,-2)&-1-2\vartheta<0.
		\end{array}
		\label{eq:lattice-counterexample-table}
	\end{equation}
	The inequalities in \eqref{eq:lattice-counterexample-signs} also show that the first three fibers lie in \(\cC(\bw)\).
	In every other fiber, the displayed point has negative weight.
	Consequently,
	\begin{equation}
		\cN_{\bv}
		=
		\{\{1,3\},\{1,4\},\{1,3,5\}\},
		\qquad
		K_{\cN_{\bv}}=\{1\}.
		\label{eq:lattice-counterexample-N}
	\end{equation}
	The realized negative support \(\{3,5\}\) is properly contained in \(\{1,3,5\}\), so \(\cN_{\bv}\) is not ordered.
	Set \(\cN^{\circ}=\{\{1,3\},\{1,4\}\}\).
	The same support table shows that \(\cN^{\circ}\) is ordered and that the two families give the same two monomial ideals and the same monomial \(e\):
	\begin{equation}
		M_{\cN^{\circ}}=M_{\cN_{\bv}}=\langle t_3,t_4\rangle,
		\qquad
		P_{\cN^{\circ}}(\bt)=P_{\cN_{\bv}}(\bt)=\langle t_2t_4,t_3t_5\rangle,
		\qquad
		e=t_3.
		\label{eq:lattice-counterexample-MP}
	\end{equation}
	Write \(\cN=\cN^{\circ}\), and let \((\ell_1,\ldots,\ell_5)=(3s_1,2s_1+3s_2,-3s_1-3s_2,s_1+s_2,-3s_1-s_2)\).
	The linear forms satisfy \(\ell_4=(2\ell_2-\ell_5)/7\).
	The two antichains and the set \(E\) are \(\cG_{\cN}=\{\{3\},\{4\}\}\), \(\cH_{\cN}=\{\{2,4\},\{3,5\}\}\), and \(E=\{3\}\).
	Corollary~\ref{cor:two-support-principal-formula} therefore gives
	\begin{equation}
		J_{\cN}^{\amb}=\langle\ell_5,\ell_2\ell_4\rangle=\langle3s_1+s_2,(s_1+s_2)^2\rangle, \qquad J_{\cN}^{\intr}=\langle\ell_2,\ell_5\rangle=\langle s_1,s_2\rangle.
		\label{eq:lattice-counterexample-extended-ideal}
	\end{equation}
	Equivalently, we take inverse images under \(\Phi\) and reduce the linear generators of \(U=\langle A\bt\rangle\) to obtain
	\begin{align}
		(UM_{\cN}+P_{\cN}(\bt)):t_3
		&=
		\langle2t_1+3t_4,2t_2-7t_4,t_3+3t_4,t_5,t_4^2\rangle,
		\label{eq:lattice-counterexample-left-colon}\\
		(U+P_{\cN}(\bt)):t_3
		&=
		\langle t_1,t_2,t_3,t_4,t_5\rangle.
	\end{align}
	Theorem~\ref{thm:finite-boundary-classification} now yields \(\fD_{\cN}(e)\simeq\langle s_1,s_2\rangle/\langle3s_1+s_2,(s_1+s_2)^2\rangle=\C\,\overline{s_1+s_2}\), of dimension one.
	The inverse systems of \(J_{\cN}^{\intr}\) and \(J_{\cN}^{\amb}\) are \(\C\) and \(\Span\{1,\partial_{s_1}-3\partial_{s_2}\}\), respectively, so the additional coefficient supplied by ambient perturbation has the leading logarithmic term \((\bb^{(1)}-3\bb^{(2)})\cdot\log\bx=3\log x_1-7\log x_2+6\log x_3-2\log x_4\).
	Theorem~5.5 and Proposition~6.2 of \cite{OS25} identify this coefficient as an element of the ambient coefficient space that does not lie in the intrinsic coefficient space.
\end{proof}

\begin{proposition}
	\label{prop:lattice-counterexample-koszul-class}
	In the notation of Theorem~\ref{thm:lattice-counterexample}, let \(\theta_1,\theta_2,\theta_3\) be the entries of \(A\bt\), and recall that \(R_0=\C[t_1,\ldots,t_5]\).
	The element
	\[
		z=
		\left(-2t_1,0,\frac{2}{3}t_1\right)
		\in
		K_1(\theta_1,\theta_2,\theta_3;R_0/M_{\cN}^0)
	\]
	is a cycle whose image under the connecting map used in \eqref{eq:boundary-tor-cokernel} is
	\[
		7\overline{t_3t_4}
		\in
		\frac{(U_0\cap M_{\cN}^0)+P_{\cN}^0(\bt)}
		{U_0M_{\cN}^0+P_{\cN}^0(\bt)}.
	\]
	This class is nonzero.
	The class of \(t_4\) in \(\Tor^{R_0}_1 \left( R_0/\langle t_3\rangle,R_0/(U_0+P_{\cN}^0(\bt)) \right)\) maps to a generator of the cokernel in \eqref{eq:polynomial-tor-cokernel}.
	The image of the class of \(t_4\) under \(\Phi_0\) is \(s_1+s_2\), and \(\partial_{s_1}-3\partial_{s_2}\) represents a class in \(V_{\cN,1}^{\amb}/V_{\cN,1}^{\intr}\) that pairs nontrivially with it.
\end{proposition}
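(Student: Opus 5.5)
The plan is a direct verification in the polynomial ring \(R_0\). Throughout we use \(M_{\cN}^0=\langle t_3,t_4\rangle\), \(P_{\cN}^0(\bt)=\langle t_2t_4,t_3t_5\rangle\) and \(e=t_3\) from \eqref{eq:lattice-counterexample-MP}, together with
\[
\theta_1=t_1+t_2+t_3+t_4+t_5,\qquad \theta_2=2t_1+3t_4+3t_5,\qquad \theta_3=3t_1+3t_2+2t_3+3t_5 .
\]

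\emph{Cycle condition.} First, \(z\) is a cycle because
\[
-2t_1\theta_1+\tfrac23t_1\theta_3=-\tfrac23t_1t_3-2t_1t_4\in M_{\cN}^0 .
\]

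\emph{Connecting map.} The connecting map in the proof of Theorem~\ref{thm:koszul-cokernels} sends \(z\) to the class of \(-\frac23t_1t_3-2t_1t_4\) in \(M/(UM+P)\). This element lies in \(U_0\cap M_{\cN}^0\), so its class lies in \(\fT_{\cN}\). To identify it with \(7\overline{t_3t_4}\), I reduce modulo \(U_0M_{\cN}^0+P_{\cN}^0(\bt)\), using that we may substitute linear forms modulo \(U_0\) whenever the substituted factor multiplies \(t_3\) or \(t_4\). Reading off \(\Phi_0\) gives, modulo \(U_0\),
\[
t_1\equiv-\tfrac32(t_4+t_5),\qquad t_3\equiv-3t_4,\qquad t_2\equiv\tfrac12(7t_4+t_5).
\]
These congruences yield the following reductions modulo \(U_0M_{\cN}^0+P_{\cN}^0(\bt)\):
\begin{itemize}
\item \(t_1t_3\equiv-\frac32t_3t_4\), using \(t_3t_5\in P\);
\item \(t_3t_4\equiv-3t_4^2\);
\item \(t_4t_5\equiv-7t_4^2\), using \(t_2t_4\in P\);
\item \(t_1t_4\equiv-\frac32(t_4^2+t_4t_5)=9t_4^2=-3t_3t_4\).
\end{itemize}
Substituting gives \(-\frac23t_1t_3-2t_1t_4\equiv t_3t_4+6t_3t_4=7t_3t_4\), as claimed.

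\emph{Nonvanishing.} The main point is that this class is nonzero. Here the image under \(\Phi_0\) is useless, because \(\ell_3\ell_4\) does lie in \(\Phi_0(P)\). Instead, \(t_3t_4\in Q_{\cN}^0\) is equivalent to \(t_4\in Q_{\cN}^0:t_3\). By \eqref{eq:lattice-counterexample-left-colon} (computed over \(R_0\)), this colon ideal is
\[
\langle2t_1+3t_4,\ 2t_2-7t_4,\ t_3+3t_4,\ t_5,\ t_4^2\rangle .
\]
Its degree-one part is a four-dimensional space of linear forms. If \(t_4\) belonged to it, then all five variables would, which is impossible. Hence \(7\overline{t_3t_4}\neq0\).

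\emph{Cokernel generator.} Next I use the one-element Koszul description from the proof of Corollary~\ref{cor:tor-coefficient-exact-sequence}. It gives
\[
\Tor_1^{R_0}\bigl(R_0/\langle t_3\rangle,R_0/(U_0+P^0)\bigr)\simeq((U_0+P^0):t_3)/(U_0+P^0),
\]
with \((U_0+P^0):t_3=\langle t_1,\ldots,t_5\rangle\) by the displayed computation in Theorem~\ref{thm:lattice-counterexample}. The cokernel \(\fC_{\cN}\) is therefore \(\langle t_1,\ldots,t_5\rangle/(Q^0:t_3)\). This is one-dimensional, by the degree-one count above together with the fact that \(t_4^2\in Q^0:t_3\). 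It is spanned by \(\overline{t_4}\). Moreover, \(\partial_e\) sends \(\overline{t_4}\) to the nonzero class \(\overline{t_3t_4}\), which links this generator to the class of \(z\).

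\emph{Pairing.} Finally, \(\Phi_0(t_4)=s_1+s_2\). The operator \(\partial_{s_1}-3\partial_{s_2}\) annihilates \(3s_1+s_2\) and every quadric in degree one, so it lies in \(V_{\cN,1}^{\amb}=(J_{\cN}^{\amb})_1^\perp\). It is not in \(V_{\cN,1}^{\intr}=0\), and its pairing with \(s_1+s_2\) is \(1-3=-2\neq0\). Proposition~\ref{prop:defect-coefficient-duality} then gives the last assertion.
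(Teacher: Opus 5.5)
Your proof is correct and follows essentially the paper's route. You verify everything directly from the colon ideals \eqref{eq:lattice-counterexample-left-colon} and \((U_0+P_{\cN}^0(\bt)):t_3=\langle t_1,\ldots,t_5\rangle\), use the one-element Koszul description of the cokernel, and then use the duality pairing. The only real difference is in the connecting-map step. You take the naive lift of \(z\) and reduce \(-\tfrac23t_1t_3-2t_1t_4\) modulo \(U_0M_{\cN}^0+P_{\cN}^0(\bt)\) by linear substitutions. The paper instead chooses the adjusted lift \(\widetilde z=(-2t_1+6t_4,\tfrac13t_3-2t_4,\tfrac23t_1)\), whose boundary \(7t_3t_4+6t_2t_4+t_3t_5\) yields in one identity both the image \(7\overline{t_3t_4}\) and the membership \(t_3t_4\in U_0+P_{\cN}^0(\bt)\).
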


\begin{proof}
	The element \(z\) has the lift \(\widetilde z=\left(-2t_1+6t_4,\frac{1}{3}t_3-2t_4,\frac{2}{3}t_1\right)\in K_1(\theta_1,\theta_2,\theta_3;R_0)\), and a direct calculation gives
	\begin{equation}
		\theta_1(-2t_1+6t_4)+\theta_2\left(\frac{1}{3}t_3-2t_4\right)+\theta_3\left(\frac{2}{3}t_1\right)=7t_3t_4+6t_2t_4+t_3t_5.
		\label{eq:lattice-counterexample-koszul-boundary}
	\end{equation}
	The right-hand side belongs to \(M_{\cN}^0\), so \(z\) is a cycle; the right-hand side differs from \(7t_3t_4\) by \(6t_2t_4+t_3t_5\), which belongs to \(P_{\cN}^0(\bt)=\langle t_2t_4,t_3t_5\rangle\), and this containment gives the stated image under the connecting map.
	Equation \eqref{eq:lattice-counterexample-left-colon} and the relation \(\Phi_0(t_4)=s_1+s_2\notin J_{\cN}^{\amb}\) show that \(t_3t_4\notin U_0M_{\cN}^0+P_{\cN}^0(\bt)\), so the class is nonzero.
	Equation \eqref{eq:lattice-counterexample-koszul-boundary} also gives \(t_3t_4\in U_0+P_{\cN}^0(\bt)\), so \(t_4\) represents the stated \(\Tor_1\) class.
	This class does not come from the first term in \eqref{eq:polynomial-tor-cokernel}, because otherwise we would have \(t_4\in Q_{\cN}^0(\bt):t_3\), contrary to \eqref{eq:lattice-counterexample-left-colon}.
	Since the cokernel is one-dimensional, this class generates the cokernel.
	Finally, \((\partial_{s_1}-3\partial_{s_2})(s_1+s_2)=-2\), and the last assertion follows from \eqref{eq:tor-coefficient-exact-sequence}.
\end{proof}

\begin{remark}
	The strict inclusion in Theorem~\ref{thm:lattice-counterexample} answers the question of nonequality in \cite[Proposition~6.2 and Question~7.3]{OS25} for the fixed fake exponent \(\bv\) and the ordered family \(\cN^{\circ}\).
	The distinguished collection \(\cN_{\bv}\) is not ordered.
	We obtain the same intrinsic-perturbation obstruction module from \(\cN_{\bv}\) as from \(\cN^{\circ}\), without using the coefficient spaces of \(\cN_{\bv}\).
	Theorem~\ref{thm:ordered-distinguished-counterexample} answers the same question for an ordered distinguished collection.
\end{remark}

\begin{corollary}
	\label{cor:lattice-counterexample-formal-codimension}
	For the system in Theorem~\ref{thm:lattice-counterexample}, \(\dim_{\C}\cV_{\bw}=12\), \(\dim_{\C}\cV_{\bw}^{L}=11\), and \(\dim_{\C} \left( \cV_{\bw}/\cV_{\bw}^{L} \right) =1\).
	There is a nonempty simply connected open set \(\Omega\), contained in the nonsingular locus, on which the canonical series converge, and on this set we have
	\[
		\begin{aligned}
			\dim_{\C}\Sol_{\Omega}(H_A(\bbeta))
			&=12,
			&
			\dim_{\C}\cS_{\bw}^{L}(\Omega)
			&=11,\\
			\dim_{\C}
			\frac{\Sol_{\Omega}(H_A(\bbeta))}
			{\cS_{\bw}^{L}(\Omega)}
			&=1.
		\end{aligned}
	\]
	A nonzero class in \(\cV_{\bw}/\cV_{\bw}^{L}\) has a representative whose \(\bw\)-initial term is
	\[
		\begin{aligned}
			&\bx^{\bv}
			\left(
			(\bb^{(1)}-3\bb^{(2)})\cdot\log\bx
			\right)\\
			&\qquad
			=x_1^{-1}x_3^{-1}
			\left(
			3\log x_1-7\log x_2+6\log x_3-2\log x_4
			\right).
		\end{aligned}
	\]
\end{corollary}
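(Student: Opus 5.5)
The plan is to compute \(\dim_{\C}\cV_{\bw}\) from the rank of \(H_A(\bbeta)\), to compute the codimension of \(\cV_{\bw}^{L}\) with Corollary~\ref{cor:formal-solution-exact-codimension}, and then to pass to holomorphic solutions with Theorem~\ref{thm:analytic-realization}.

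\textbf{Step 1: the rank.} The configuration is homogeneous with \(d=3\), so the rank of \(H_A(\bbeta)\) is at least the normalized volume of the convex hull of the columns. First I compute this volume from the triangulation induced by \(\bw\), that is, from the standard pairs of \(\inw(I_A)=\langle\partial_3^3\partial_5^3,\partial_2^3\partial_4,\partial_2\partial_5^2\rangle\). I expect the degree of \(\C[\partial]/\inw(I_A)\) to be \(12\). Next I must show that \(\bbeta=\tp{(-2,-2,-5)}\) is not a rank-jumping parameter, so that \(\rank H_A(\bbeta)=12\). Since \(\beta_1=-2\), \(\bbeta\) lies in \(\Z A\) and need not be generic. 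I would check this by the exceptional-set description, using the local cohomology of \(\C[\N A]\), or by computing the fake exponents and the multiplicities of the indicial ideal directly. Fact~\ref{fact:canonical-series-construction} then gives \(\dim\cV_{\bw}=12\) as the total multiplicity.

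\textbf{Step 2: the codimension.} I enumerate all fake exponents \(\bv'\in\sE_{\bbeta,\bw}\) from the standard pairs, group them into \(L\)-cosets and \(\bw\)-classes, and compute each \(\delta_{\bv'}^{L}=\dim(J_{\bv'}^{L}/\cE_{\bv'}^{\perp})\) via Proposition~\ref{prop:all-ordered-local-quotient}.
\begin{itemize}
\item For most exponents I expect either \(\#\cE_{\bv'}=1\), which gives \(\delta=0\) trivially, or an applicable vanishing criterion (Theorem~\ref{thm:CM-regular-sequence} or Corollary~\ref{cor:principal-boundary-flag}), which gives \(\delta=0\).
\item At \(\bv\), Theorem~\ref{thm:lattice-counterexample} gives \(\dim\fD_{\cN^{\circ}}(e)=1\). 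I must check that \(\cN^{\circ}=\cN^{\ord}_{\bv}\) is the only relevant maximal ordered family and that smaller ordered families contribute nothing new, so that \(J_{\bv}^{L}=\langle s_1,s_2\rangle\). I must also check that \(\cE_{\bv}^{\perp}=J^{\amb}_{\bv,\cN^{\circ}}\) has colength \(2\), using Lemma~\ref{lem:largest-ordered-family}. This gives \(\delta_{\bv}^{L}=1\).
\end{itemize}
If \(\bv\) lies in a least class \(P\in\sP^{\min}_{\bbeta,\bw}\), Corollary~\ref{cor:formal-solution-completeness} gives codimension exactly \(1\) once every other \(\delta\) vanishes. If \(\bv\) is not in a least class, I must show \(\tau_{C,k}=0\) by computing the blocks \(\mathsf T_C\) in Theorem~\ref{thm:formal-solution-exact-cokernel}. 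Either way the result is \(\dim\cV_{\bw}^{L}=11\).

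\textbf{Step 3: transfer and the initial term.} Theorem~\ref{thm:analytic-realization} supplies \(\Omega\) and transfers the three dimensions to \(\Sol_{\Omega}\) and \(\cS_{\bw}^{L}(\Omega)\). For the representative, I take the canonical series based at \(\bv\) whose initial coefficient is the ambient element \(\partial_{s_1}-3\partial_{s_2}\) of \(\cE_{\bv}\setminus\cI_{\bv}^{L}\). Substituting \(\bs\mapsto\) logarithmic coordinates turns this coefficient into \((\bb^{(1)}-3\bb^{(2)})\cdot\log\bx\), which is the stated expression. Its class is nonzero because the image of \(\cV_{\bw}^{L}\) in \(\cE_{\bv}\) is \(\cI_{\bv}^{L}\) (this uses the least-class property, or \(\tau=0\)).

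\textbf{Main obstacle.} The hardest step is the exhaustive computation: enumerating all exponents and their local inverse systems, confirming that the rank is \(12\) at this possibly non-generic \(\bbeta\), and ruling out other nonzero \(\delta_{\bv'}^{L}\). I would do this with the antichain presentation of Theorem~\ref{thm:finite-boundary-classification} at each exponent, verified with exact computer algebra.
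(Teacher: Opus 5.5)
Your Steps~2 and~3 follow the paper: compute \(\delta^{L}\) at each exponent, show \(\{\bv\}\) is a least class and apply Theorem~\ref{thm:formal-solution-codimension-bounds}, obtain the nonzero class from \(\cW_P=\cI_P^{L}\), and transfer with Theorem~\ref{thm:analytic-realization}. The real difference is how you get \(\dim_{\C}\cV_{\bw}=12\).

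You go through the holonomic rank. That needs the external bound \(\rank(H_A(\bbeta))\geq\operatorname{vol}(A)=12\), plus a proof that \(\bbeta\) is not rank-jumping, which you flag as the main obstacle. The paper never computes the rank or the volume. It reads \(\dim_{\C}\cV_{\bw}=\sum\dim_{\C}\cE_{\widetilde\bv}\) off \eqref{eq:formal-filtration-dimensions} and squeezes each \(\cE_{\widetilde\bv}\) between two bounds:
\begin{itemize}
\item a lower bound produced by perturbation;
\item the upper bound \(\cE_{\widetilde\bv}\subseteq P_B(\widetilde\bv)^\perp\) from \eqref{eq:actual-fake-local-inclusions}.
\end{itemize}
At the ten fake exponents whose local fake component is reduced, the singleton family \(\{I_{\bzero}\}\) supplies the constant; it is ordered because \(I_{\bzero}\) is minimal in \(\sS(\widetilde\bv)\). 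At \(\bv\), Lemma~\ref{lem:largest-ordered-family} with \(\cN_{\bv}^{\ord}=\cN^{\circ}\) gives \(\cE_{\bv}=V^{\amb}_{\bv,\cN^{\circ}}\), which is two-dimensional. Hence \(12=2+10\), and non-rank-jumping of \(\bbeta\) is a byproduct rather than an input.

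Your route also closes without local cohomology, using the same upper bound. The fake indicial ideal at \(\bbeta\) has total length \(12\), every exponent is a fake exponent, and \(\cE_{\widetilde\bv}\subseteq P_B(\widetilde\bv)^\perp\). So \(\rank(H_A(\bbeta))\leq12\leq\rank(H_A(\bbeta))\). This even gives \(\cE_{\widetilde\bv}=P_B(\widetilde\bv)^\perp\) at every fake exponent without Lemma~\ref{lem:largest-ordered-family}, at the price of importing the volume bound.

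Two points in Step~2 need tightening.
\begin{itemize}
\item \(\dim_{\C}\cE_{\widetilde\bv}=1\) does not by itself give \(\delta^{L}_{\widetilde\bv}=0\). You need an ordered family whose intrinsic coefficient space contains the constant, and an ordered family can have zero intrinsic space (Theorem~\ref{thm:rank-one-ordered-counterexample}). The paper gets the constant from the minimality of every \(I_{\bzero}\).
\item Your ``either way'' presupposes the answer. If \(\{\bv\}\) were not least, \(\tau\) could equal \(\delta^{L}_{\bv}=1\) and the codimension would be \(0\). What settles it is that the only other exponent in the \(L\)-coset of \(\bv\) is \(\bv+B\tp{(-3,2)}=\tp{(-10,0,2,-1,7)}\), whose relative weight \(3+2\vartheta\) is positive. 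The classes are singletons because \(\vartheta\) is irrational, so \(\{\bv\}\in\sP_{\bbeta,\bw}^{\min}\) and the two bounds in \eqref{eq:formal-solution-codimension-bounds} coincide.
\end{itemize}
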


\begin{proof}
	The twelve standard pairs of \(\inw(I_A)\) give eleven distinct fake exponents: \(\bv\) occurs twice, and each of the other ten occurs once.
	In the coordinates \(\bv+B\bs\), the local component of the fake indicial ideal at \(\bs=\bzero\) is \(\langle3s_1+s_2,(s_1+s_2)^2\rangle\), which has length two, and the other ten local components are reduced.
	The inverse system of this component is \(\Span\{1,\partial_{s_1}-3\partial_{s_2}\}\), and each of the other ten inverse systems is spanned by \(1\).
	All eleven fake exponents have minimal negative support.
	At \(\bv\), the only ordered negative support families are \(\{\{1,3\}\}\) and \(\cN^{\circ}\), and both intrinsic coefficient spaces are \(\C\).
	Hence \(\cN_{\bv}^{\ord}=\cN^{\circ}\), and \eqref{eq:lattice-counterexample-extended-ideal} and Lemma~\ref{lem:largest-ordered-family} give \(\cE_{\bv}=\Span\{1,\partial_{s_1}-3\partial_{s_2}\}\).
	It follows that \(\cI_{\bv}^{L}=\C\) and \(\delta_{\bv}^{L}=1\).
	For each of the other ten fake exponents \(\widetilde{\bv}\), the singleton family \(\{I_{\bzero}\}\) produces the constant coefficient, and the inclusions in \eqref{eq:actual-fake-local-inclusions} show that \(\cE_{\widetilde{\bv}}\) is contained in the one-dimensional fake indicial inverse system.
	Hence \(\cE_{\widetilde{\bv}}=\cI_{\widetilde{\bv}}^{L}=\C\) and \(\delta_{\widetilde{\bv}}^{L}=0\) for each of these ten fake exponents.
	Thus all eleven fake exponents are exponents, or equivalently, they belong to \(\sE_{\bbeta,\bw}\).
	The only two exponents lying in the same \(L\)-coset are \(\bv\) and \(\tp{(-10,0,2,-1,7)}=\bv+B\tp{(-3,2)}\).
	Since \(\bw B=(-1,\vartheta)\) and \(\vartheta\) is irrational, \(\bw\) is nonzero on every nonzero element of \(L\), so the classes in \(\sP_{\bbeta,\bw}\) are singletons.
	The inequality \(\bw\cdot\bigl(B\tp{(-3,2)}\bigr)=3+2\vartheta>0\) shows that \(\{\bv\}\in\sP_{\bbeta,\bw}^{\min}\).
	The lower and upper bounds in Theorem~\ref{thm:formal-solution-codimension-bounds} are therefore both equal to one.
	The first identity in \eqref{eq:formal-filtration-dimensions} gives \(\dim_{\C}\cV_{\bw}=2+10=12\), and hence \(\dim_{\C}\cV_{\bw}^{L}=11\).
	The displayed initial term represents the nonzero local quotient at \(\bv\), so the corresponding canonical series gives a nonzero class in \(\cV_{\bw}/\cV_{\bw}^{L}\).
	The analytic assertions follow from Theorem~\ref{thm:analytic-realization}.
	The ancillary scripts \texttt{verify\_rank\_two\_solution\_codimension.sage} and \texttt{verify\_rank\_two\_solution\_codimension.m2} check, on exact inputs, the standard pairs, the fake exponents, the local lengths, the minimal negative supports, the ordered families at \(\bv\), and the resulting codimension.
\end{proof}

\begin{remark}
	For the ideal \(M_{\cN}\) in \eqref{eq:lattice-counterexample-MP}, the complex \(\Delta_{\cN}\) is the simplex with facet \(F=\{1,2,5\}\).
	Thus the Stanley--Reisner ring of \(\Delta_{\cN}\) is \(\C[t_1,t_2,t_5]\), which is Cohen--Macaulay.
	On the other hand,
	\[
		A_F=
		\begin{bmatrix}
			1 & 1 & 1 \\
			2 & 0 & 3 \\
			3 & 3 & 3
		\end{bmatrix},
		\qquad
		\rank(A_F)=2<3.
	\]
	Hence the linear forms \(\theta_1,\ldots,\theta_d\) do not form a system of parameters for the Stanley--Reisner ring \(\C[\Delta_{\cN}]\).
	The nonzero class \(\overline{s_1+s_2}\in J_{\cN}^{\intr}/J_{\cN}^{\amb}\) computed above shows that the conclusion of Theorem~\ref{thm:CM-regular-sequence} fails here.
\end{remark}

\begin{remark}
	The fake exponent in Theorem~\ref{thm:lattice-counterexample} is binary, and the direction is nonzero on every nonzero element of \(L\).
	Condition \eqref{eq:principal-boundary-slice} holds with a single step:
	\[
		P_{\cN}^0(\bt)+\langle t_3\rangle=\langle t_3,t_2t_4\rangle.
	\]
	The Stanley--Reisner complex of \(\langle t_3,t_2t_4\rangle\) has facets \(\{1,4,5\}\) and \(\{1,2,5\}\).
	The submatrix of \(A\) on the facet \(\{1,2,5\}\) has rank 2, as displayed above.
	Equivalently, the third and fourth rows of \(B\) in \eqref{eq:lattice-counterexample-AB} are proportional.
	Since \(\bw B=(-1,\vartheta)\) with \(\vartheta\) irrational, and since every row of \(B\) is rational and nonzero, condition \eqref{eq:moving-direction-genericity} holds.
	Thus every hypothesis of Theorem~\ref{thm:rank-two-moving-binary-vanishing} holds except the nonproportionality of the third and fourth rows of \(B\), and this failure is the reason that the rank condition in Theorem~\ref{thm:factorwise-regularity} does not hold here.
\end{remark}

\subsection{A three-connected counterexample}

The following theorem shows that three-connectivity of the column matroid does not force the obstruction to vanish.

\begin{theorem}
	\label{thm:three-connected-obstruction}
	There exist a homogeneous \(A\)-hypergeometric system of lattice rank 3, a generic direction, a fake exponent \(\bv\), and an ordered negative support family \(\cN^{\circ}\subseteq\cN_{\bv}\) such that the following properties hold.
	\begin{enumerate}
		\item The column matroid of \(A\) is three-connected.
		\item The affine semigroup \(\cC(\bw)\) is generated by a \(\Z\)-basis of \(L\).
		      In particular, this affine semigroup is normal and free, and its cone is unimodular.
		\item The Stanley--Reisner ring \(\C[\Delta_{\cN^{\circ}}]\) is a polynomial ring.
		\item The intrinsic-perturbation obstruction module satisfies \(\Hilb\bigl(\fD_{\cN^{\circ}}(e);u\bigr)=u\).
		\item The fake exponent \(\bv\) is binary.
		\item The direction is nonzero on every nonzero element of \(L\).
	\end{enumerate}
	The distinguished collection \(\cN_{\bv}\) is not ordered, but it gives the same two monomial ideals, the same monomial \(e\), and hence the same obstruction module as the ordered family \(\cN^{\circ}\).
	For the ordered family \(\cN^{\circ}\), ambient perturbation supplies a coefficient with the leading logarithmic term \(\log x_2-2\log x_3+\log x_6\), and intrinsic perturbation does not supply this coefficient at the fake exponent \(\bv\).
\end{theorem}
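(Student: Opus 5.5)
This is an existence statement, so the plan is to exhibit an explicit six-column configuration of lattice rank \(3\) and verify the listed properties one by one, following the pattern of the proof of Theorem~\ref{thm:lattice-counterexample}.

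\textbf{Choosing the configuration.} I would design the example backwards from the obstruction mechanism of Corollary~\ref{cor:two-support-principal-formula}. The target is \(\cG_{\cN^{\circ}}=\{G_1,G_2\}\) with \(E=G_1\), principal \(\fa_1\) and \(\fa_2\), and a common linear factor \(h_0\) of \(\alpha\) and \(\beta\). The data should give \(J^{\amb}=\langle\text{two linear forms},\,\text{a quadric}\rangle\) and \(J^{\intr}=\fm\) in \(S_0=\C[s_1,s_2,s_3]\), so that the quotient is one-dimensional in degree \(1\) and the Hilbert series is \(u\).

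Corollary~\ref{cor:two-support-proportional-rows} requires proportional rows of \(B\) in the rank-\(2\) situation, and proportional rows would break cosimplicity. That is why I move to rank \(3\). There, a linear dependence among three rows of \(B\) (a nontrivial common factor after the colon by \(\chi\) or through \(\fa_i\)) can play the role of proportionality while the row matroid, and hence the column matroid, stays three-connected.

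Concretely, I would take \(B\in\Z^{6\times 3}\) whose rows are in general position except for one prescribed dependence, and let \(A\) be a homogeneous Gale dual containing the all-ones row. I would choose \(\bw=\bc\,\tp{B}\)-type data with irrationally independent entries, so that \(\bw B\) vanishes on no nonzero integer vector. This gives item~(6) and genericity. I would take \(\bv\) with entries in \(\{-1,0\}\) (item~(5)), with zero entries at indices meeting every initial monomial, so that \(\bv\) is a fake exponent.

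\textbf{Verification steps, in order.}

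First, check \(AB=0\), a unimodular \(3\times3\) minor of \(B\), and homogeneity.

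Second, prove three-connectivity. For six elements of rank \(3\), it suffices to check that there are no loops, coloops, or parallel pairs, and no \(2\)-separations; the latter reduces to checking that no \(3\)-subset of columns splits the rank as \(2+2\). This is a finite rank check on minors.

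Third, compute the reduced Gr\"obner basis of \(I_A\) by Buchberger's criterion. Arrange that it has exactly three elements whose lattice vectors form a \(\Z\)-basis of \(L\), which gives item~(2): \(\cC(\bw)\cong\N^3\).

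Fourth, enumerate \(\sS(\bv)\) through the integral sign inequalities of Lemma~\ref{lem:rank-two-affine-support-bound}'s rank-\(3\) analogue (a plane arrangement in \(\Z^3\)). For each support, exhibit either a point of negative weight or containment of the fiber in \(\cC(\bw)\); this determines \(\cN_{\bv}\). Then choose \(\cN^{\circ}\subseteq\cN_{\bv}\) closed under realized subsets, check orderedness, and check that it yields the same \(M\), \(P\), and \(e\) as \(\cN_{\bv}\), and that \(\cN_{\bv}\) itself contains a realized proper subset outside it.

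Fifth, read off \(M_{\cN^{\circ}}\). If it is generated by variables, then \(\C[\Delta]\) is a polynomial ring, giving item~(3).

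Sixth, compute \(J^{\amb}\) and \(J^{\intr}\) by Theorem~\ref{thm:two-minimal-support-formula} and Corollary~\ref{cor:two-support-principal-formula}, or directly via \(\Gamma_{\cN}\) in Theorem~\ref{thm:finite-boundary-classification}. Apply \eqref{eq:boundary-classification-Hilbert} to get the Hilbert series \(u\).

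Finally, compute the inverse systems as in the last step of Theorem~\ref{thm:lattice-counterexample}. The extra degree-one element \(q(\bpartial_{\bs})\) of \(V^{\amb}\) gives the logarithmic term \(\sum_k q_k\,\bb^{(k)}\cdot\log\bx\). I would arrange \(B\) so that this term equals \(\log x_2-2\log x_3+\log x_6\), and then cite Proposition~\ref{prop:defect-coefficient-duality} with \cite[Theorem~5.5, Proposition~6.2]{OS25}.

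\textbf{Main obstacle.} The hard part is finding the explicit \(B\) that simultaneously satisfies three-connectivity, the requirement that the Gr\"obner basis vectors form a lattice basis, and a nonzero obstruction. Corollary~\ref{cor:principal-boundary-flag} warns that for rank \(2\) three-connectivity forces vanishing, so the degenerate common factor must come genuinely from rank \(3\) geometry: three coplanar rows, rather than two parallel ones. I would first search small-entry integer \(B\) by computer for this combination, then confirm the Gr\"obner basis and the support table by hand, or point to the ancillary scripts.
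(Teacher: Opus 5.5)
Your verification checklist (explicit \(A\) and \(B\), an irrational \(\bw B\), integral sign inequalities, fiber weights, colon ideals, inverse systems) matches the paper's. There is a genuine gap, however: the recipe for \emph{finding} the configuration is the substance of an existence proof, and it cannot succeed. You target \(\cG_{\cN^{\circ}}=\{G_1,G_2\}\), \(E=G_1\), \(\fa_2\) principal, and a nonconstant common factor \(h_0\) of \(\alpha=\ell^{G_1\setminus G_2}\) and \(\beta=\ell^{G_2\setminus G_1}\). These are exactly the hypotheses of Corollary~\ref{cor:two-support-proportional-rows}, and that corollary carries no rank restriction. The reason is that \(S_0=\C[s_1,\ldots,s_r]\) is a UFD, each \(\ell_j\) is irreducible, and \(\alpha,\beta\) are products of the \(\ell_j\). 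So any common factor forces \(\ell_i\) to be proportional to \(\ell_j\) for some \(i\in G_1\setminus G_2\) and \(j\in G_2\setminus G_1\).

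A dependence among three rows of \(B\) cannot produce a nontrivial gcd of two products of linear forms. With \(E=G_1\) there is no colon by \(\chi\), and the \(\fa_i\) cannot help, since the cyclic model \eqref{eq:two-support-cyclic-model} vanishes once \(h_0\) is a unit. Proportional nonzero rows of \(B\) form a parallel pair of the dual matroid, i.e.\ a series pair of the column matroid of \(A\), and a three-connected matroid on six elements has none. So your target shape contradicts item~(1) in every lattice rank, and passing to rank \(3\) changes nothing. Your connectivity check also misses exactly this case: a \(2\)-separation \((X,Y)\) with \(|X|=2\) and \(r(X)=r(Y)=2\) is a series pair. This is why the paper checks that every four-column submatrix has rank \(3\).

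Your intuition that coplanar rows should replace a parallel pair is correct, but it has to enter through an antichain with at least three members (or a non-principal \(\fa_2\)). In the paper, \(\cN^{\circ}=\{\{1,3\},\{1,4\},\{1,6\},\{1,3,6\},\{1,4,6\}\}\) and \(K_{\cN^{\circ}}=\{1\}\), so \(\cG_{\cN^{\circ}}=\{\{3\},\{4\},\{6\}\}\), \(M=\langle t_3,t_4,t_6\rangle\), and \(e=t_3\). The rows \(\bb_3,\bb_4,\bb_6\) are pairwise independent but satisfy \(\ell_3+3\ell_4+2\ell_6=0\). Hence \(t_3+3t_4+2t_6=\theta_1-\theta_2+\theta_3\) lies in \((U\cap M)\setminus UM\); equivalently, the facet \(F=\{1,2,5\}\) of \(\Delta_{\cN^{\circ}}\) has \(\rank(A_F)=2\).

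Theorem~\ref{thm:two-minimal-support-formula} therefore does not apply, and the paper computes \(Q:t_3=\langle t_6^2,t_1,t_2-t_6,t_3+2t_6,t_4,t_5\rangle\) directly. It certifies \(t_6\notin Q:t_3\) by an explicit linear functional on quadrics that vanishes on \(Q_2\) and equals \(1\) at \(t_3t_6\). This gives \(J_{\cN^{\circ}}^{\amb}=\langle s_1-s_3,s_2,s_3^2\rangle\subsetneq\fm=J_{\cN^{\circ}}^{\intr}\).

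Two smaller points. First, the reduced Gr\"obner basis in the paper has eight binomials, not three. Item~(2) only requires the other five lattice vectors to be nonnegative combinations of three that form a lattice basis. Your demand for exactly three elements would make \(I_A\) a complete intersection and needlessly narrows the search. Second, Buchberger's criterion alone only shows that these binomials form a Gr\"obner basis of the ideal \(J\) they generate. The paper adds the saturation argument \(J:\partial_1^\infty=J\), \(J[\partial_1^{-1}]=I_A[\partial_1^{-1}]\) to conclude \(J=I_A\).
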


\begin{proof}
	Set
	\[
		A=
		\begin{bmatrix}
			1 & 1 & 1 & 1  & 1 & 1 \\
			2 & 4 & 2 & -5 & 1 & 0 \\
			1 & 3 & 2 & -3 & 0 & 1
		\end{bmatrix},
		\qquad
		B=
		\begin{bmatrix}
			3  & 0  & -3 \\
			2  & 3  & -1 \\
			-3 & -3 & 1  \\
			1  & 1  & -1 \\
			-3 & -1 & 3  \\
			0  & 0  & 1
		\end{bmatrix}.
	\]
	We have \(AB=0\), and the minor of \(B\) on rows two, four, and six is \(-1\), so the columns of \(B\) form a \(\Z\)-basis of \(L\).
	The first row of \(A\) also shows that the configuration is homogeneous.
	Put \(\varepsilon=1/1000\), \(\vartheta=41+\varepsilon\sqrt2\), and \(\varpi=3+\varepsilon\sqrt3\), and choose \(\bw=(-\varpi/3-\vartheta/9,\ 1-\varpi+2\vartheta/3,\ 1-\varpi+\vartheta/3,\ 0,0,0)\).
	Then \(\bw B=(-1,\vartheta,\varpi)\), whose entries are linearly independent over \(\Q\), so the direction is generic and is nonzero on every nonzero element of \(L\).
	Consider the eight binomials
	\[
		\begin{aligned}
			f_1&=\underline{\partial_2^2\partial_4}-\partial_3\partial_5\partial_6,&
			f_2&=\underline{\partial_3^3\partial_4}-\partial_5\partial_6^3,&
			f_3&=\underline{\partial_2\partial_3\partial_4}-\partial_5\partial_6^2,\\
			f_4&=\underline{\partial_3^2\partial_5^2}-\partial_1^3\partial_6,&
			f_5&=\underline{\partial_2\partial_5^2}-\partial_1^3,&
			f_6&=\underline{\partial_3\partial_5^3\partial_6}-\partial_1^3\partial_2\partial_4,\\
			f_7&=\underline{\partial_2\partial_6}-\partial_3^2,&
			f_8&=\underline{\partial_5^3\partial_6^2}-\partial_1^3\partial_3\partial_4.
		\end{aligned}
	\]
	The underlined monomials are the initial monomials for the weighted-degree order with weight \((1,279,156,51,51,51)\) and lexicographic tie-breaking, and also for the \(\bw\)-weight order.
	The \(S\)-polynomials of the pairs not covered by the product criterion reduce to zero, so Buchberger's criterion \cite[Chapter~2]{CLO15} shows that these binomials form a reduced Gr\"obner basis of the ideal \(J\) that they generate.
	No initial monomial is divisible by \(\partial_1\), and the division algorithm therefore gives \(J:\partial_1^\infty=J\).
	After we invert \(\partial_1\), the binomial \(f_5\) makes \(\partial_2\partial_5^2\) a unit, and each factor of a unit in a commutative ring is a unit, so \(f_5,f_7,f_1\) give \(\partial_2=\partial_1^3\partial_5^{-2}\), \(\partial_6=\partial_1^{-3}\partial_3^2\partial_5^2\), and \(\partial_4=\partial_1^{-9}\partial_3^3\partial_5^7\), after which the remaining five binomials vanish.
	The minor of \(A\) on columns one, three, and five is \(1\), so the toric monomials of these three columns are algebraically independent, whence \(J[\partial_1^{-1}]=I_A[\partial_1^{-1}]\) and \(J:\partial_1^{\infty}=J\) give \(J=I_A\).
	Among the lattice coordinates of the eight binomials are \(\bg_1=(-3,1,-3)\), \(\bg_2=(1,0,1)\), and \(\bg_3=(0,0,1)\).
	All eight lattice coordinates are nonnegative integral combinations of these three vectors, whose matrix has determinant of absolute value one, so \(\cC(\bw)=\N\bg_1+\N\bg_2+\N\bg_3=\{(p_{\mathrm{lat}},q_{\mathrm{lat}},r_{\mathrm{lat}})\in\Z^3\mid q_{\mathrm{lat}}\geq0,\ p_{\mathrm{lat}}+3q_{\mathrm{lat}}\geq0,\ r_{\mathrm{lat}}-p_{\mathrm{lat}}\geq0\}\).
	This affine semigroup is normal and free, and it is generated by a lattice basis.

	Take \(\bv=\tp{(-1,0,-1,0,0,0)}\) and \(\bbeta=A\bv=\tp{(-2,-4,-3)}\).
	Every initial monomial contains a variable at which \(\bv\) is zero, so \(\bv\) is a fake exponent, and the coordinates of \(\bv\) lie in \(\{-1,0\}\), so \(\bv\) is binary.
	For \(B\tp{(p_{\mathrm{lat}},q_{\mathrm{lat}},r_{\mathrm{lat}})}\), the index \(j\) belongs to \(I_{(p_{\mathrm{lat}},q_{\mathrm{lat}},r_{\mathrm{lat}})}\) exactly when \(p_{\mathrm{lat}}\leq r_{\mathrm{lat}}\), \(2p_{\mathrm{lat}}+3q_{\mathrm{lat}}-r_{\mathrm{lat}}\leq-1\), \(r_{\mathrm{lat}}\leq3p_{\mathrm{lat}}+3q_{\mathrm{lat}}\), \(p_{\mathrm{lat}}+q_{\mathrm{lat}}-r_{\mathrm{lat}}\leq-1\), \(-3p_{\mathrm{lat}}-q_{\mathrm{lat}}+3r_{\mathrm{lat}}\leq-1\), and \(r_{\mathrm{lat}}\leq-1\) for \(j=1,\ldots,6\), respectively.
	A direct case distinction using these six inequalities gives exactly twenty-eight realized negative supports, of which exactly the eight fibers indexed by \(\{1,3\}\), \(\{1,4\}\), \(\{1,3,5\}\), \(\{1,6\}\), \(\{1,3,6\}\), \(\{1,4,6\}\), \(\{1,5,6\}\), and \(\{1,3,5,6\}\) are contained in \(\cC(\bw)\); these eight sets form \(\cN_{\bv}\).
	For each of the remaining twenty fibers, direct substitution gives an integral point whose \((-1,41,3)\)-weight is at most \(-1\), and the perturbation to \((-1,\vartheta,\varpi)\) changes each of these weights by less than \(48/1000\), so every remaining fiber contains a negative-weight point.
	The realized negative support \(\{3,5\}\) is properly contained in \(\{1,3,5\}\), so \(\cN_{\bv}\) is not ordered.
	Set \(\cN^{\circ}=\{\{1,3\},\{1,4\},\{1,6\},\{1,3,6\},\{1,4,6\}\}\).
	The case distinction shows that \(\cN^{\circ}\) is ordered and that it gives the same two monomial ideals \(M=\langle t_3,t_4,t_6\rangle\) and \(P=\langle t_2t_4,t_3t_4,t_3t_5,t_2t_6,t_5t_6\rangle\) and the same monomial \(e=t_3\) as \(\cN_{\bv}\).

	Put \(U=\langle A\bt\rangle\), \(Q=UM+P\), and \(Q'=\langle t_6^2,t_1,t_2-t_6,t_3+2t_6,t_4,t_5\rangle\).
	Multiplying each generator of \(Q'\) by \(t_3\) and reducing by the three linear generators of \(U\) and by the generators of \(P\) gives \(Q'\subseteq Q:t_3\).
	To prove equality, let \(\varphi\) be the linear functional on quadratic forms that vanishes on every quadratic monomial except the nine monomials \(t_1t_4,t_1t_6,t_2t_3,t_3^2,t_3t_6,t_4^2,t_4t_5,t_4t_6,t_6^2\), on which it takes the values \(-3/4,-3/8,1,-2,1,-1/6,2/3,1/4,-7/8\), respectively.
	Direct substitution gives \(\varphi(Q_2)=0\) and \(\varphi(t_3t_6)=1\), so \(t_6\notin Q:t_3\), and \(1\notin Q:t_3\) by degree.
	Since \(\Rhat/Q'\simeq\C[t_6]/\langle t_6^2\rangle\), these exclusions prove \(Q:t_3=Q'\), and the same reductions show that every variable lies in \((U+P):t_3\), so \((U+P):t_3=\langle t_1,\ldots,t_6\rangle\).
	Applying \(\Phi\) to these two identities yields \(\Phi(Q:t_3)=\langle s_1-s_3,s_2,s_3^2\rangle\) and \(\Pi_{\cN^{\circ}}:m_{\bv,\cN^{\circ}}=\langle s_1,s_2,s_3\rangle\), whence \(\fD_{\cN^{\circ}}(e)\simeq\langle s_1,s_2,s_3\rangle/\langle s_1-s_3,s_2,s_3^2\rangle=\C\,\overline{s_3}\) with \(\deg\overline{s_3}=1\).
	The complex \(\Delta_{\cN^{\circ}}\) is the simplex on \(\{1,2,5\}\), so \(\C[\Delta_{\cN^{\circ}}]\) is the polynomial ring \(\C[t_1,t_2,t_5]\).
	Every pair of columns of \(A\) is independent, the only dependent triples are \(\{1,2,5\}\) and \(\{2,3,6\}\), and every four-column submatrix has rank 3, so the connectivity function of the column matroid has value at least two on every subset of size between two and four, which proves that this matroid is three-connected.
	Finally, the inverse systems of \(\langle s_1-s_3,s_2,s_3^2\rangle\) and \(\langle s_1,s_2,s_3\rangle\) are \(\Span\{1,\partial_{s_1}+\partial_{s_3}\}\) and \(\C\), respectively.
	Since \(\bb^{(1)}+\bb^{(3)}=\tp{(0,1,-2,0,0,1)}\), Theorem~5.5 and Proposition~6.2 of \cite{OS25} yield the logarithmic coefficient asserted in the statement.
\end{proof}

Theorem~\ref{thm:three-connected-obstruction} yields the following corollary to Theorem~\ref{thm:CM-regular-sequence}.

\begin{corollary}
	\label{cor:three-connectivity-not-enough}
	In Theorem~\ref{thm:CM-regular-sequence}, the hypothesis that \(\rank(A_F)=d\) on every facet can be neither omitted nor replaced by three-connectivity of the column matroid, even when the affine semigroup \(\cC(\bw)\) is generated by a lattice basis and the Stanley--Reisner ring \(\C[\Delta_{\cN^{\circ}}]\) is a polynomial ring.
\end{corollary}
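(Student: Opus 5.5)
The corollary follows from Theorem~\ref{thm:three-connected-obstruction}, read against the hypotheses and conclusion of Theorem~\ref{thm:CM-regular-sequence}. I would take the configuration \(A\), the generic direction \(\bw\), the binary fake exponent \(\bv\), and the ordered family \(\cN^{\circ}\) supplied there. By item~(3), \(\C[\Delta_{\cN^{\circ}}]=\C[t_1,t_2,t_5]\) is a polynomial ring and hence Cohen--Macaulay, so the Cohen--Macaulay hypothesis of Theorem~\ref{thm:CM-regular-sequence} holds. By item~(1), the column matroid is three-connected. By item~(2), \(\cC(\bw)\) is generated by a lattice basis.

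I then compare the conclusion. Item~(4) gives \(\Hilb(\fD_{\cN^{\circ}}(e);u)=u\), so \(\fD_{\cN^{\circ}}(e)\ne0\). Theorem~\ref{thm:CM-regular-sequence} would force \(\fD_{\cN^{\circ}}(e)=0\), so its remaining hypothesis must fail. I would check this failure directly. The complex \(\Delta_{\cN^{\circ}}\) is the simplex with the single facet \(F=\{1,2,5\}\). The proof of Theorem~\ref{thm:three-connected-obstruction} records \(\{1,2,5\}\) as a dependent triple of columns, so \(\rank(A_F)=2<3=d\). One can also see this from the displayed \(A\), since \(2\ba_1-\ba_2-\ba_5=0\).

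Two conclusions follow. First, the facet rank hypothesis cannot be omitted: with only the Cohen--Macaulay hypothesis and the extra structural conditions (lattice-basis semigroup, polynomial Stanley--Reisner ring), the conclusion of Theorem~\ref{thm:CM-regular-sequence} fails. Second, three-connectivity cannot replace the facet rank hypothesis, because the same example is three-connected and still has a nonzero obstruction. The conclusions \(U\cap M_{\cN^{\circ}}=UM_{\cN^{\circ}}\) and \(\fT_{\cN^{\circ}}=0\) also fail, since Theorem~\ref{thm:lattice-obstruction} shows that their validity would force \(\fD_{\cN^{\circ}}(e)=0\).

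There is no real obstacle here. The only step needing care is confirming that \(\{1,2,5\}\) is the unique facet of \(\Delta_{\cN^{\circ}}\), that is, that \(M=\langle t_3,t_4,t_6\rangle\), and that this facet is rank-deficient. Both are read off directly from the proof of Theorem~\ref{thm:three-connected-obstruction}.
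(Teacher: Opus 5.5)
Your proposal is correct and is the paper's own argument. The example of Theorem~\ref{thm:three-connected-obstruction} satisfies the Cohen--Macaulay hypothesis, three-connectivity, and the lattice-basis condition, it has \(\fD_{\cN^{\circ}}(e)\ne0\), and its unique facet \(F=\{1,2,5\}\) has \(\rank(A_F)=2<3\).

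Your side check contains an arithmetic slip. In fact \(2\ba_1-\ba_2-\ba_5=\tp{(0,-1,-1)}\ne0\). The actual dependency is \(3\ba_1-\ba_2-2\ba_5=0\), which comes from the lattice vector \(\bb^{(2)}-\bb^{(1)}=\tp{(-3,1,0,0,2,0)}\). The rank deficiency, and hence your conclusion, is unaffected.
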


\begin{proof}
	The system in Theorem~\ref{thm:three-connected-obstruction} satisfies all the stated conditions and has a nonzero intrinsic-perturbation obstruction module.
	The unique facet \(F=\{1,2,5\}\) of \(\Delta_{\cN^{\circ}}\) satisfies \(\rank(A_F)=2<3\).
\end{proof}

\section{Families whose obstruction modules have unbounded dimension}
\label{sec:unbounded-families}

In this section, we construct a family of lattice rank 2 whose obstruction modules have unbounded dimension.

Write \(J_0=\{1,3\}\), \(J_1=\{1,4\}\), and \(J_2=\{1,3,5\}\) for the three members of the distinguished collection \eqref{eq:lattice-counterexample-N} of the five-column example.

We prepare a lemma on images of Gr\"obner bases under a monomial homomorphism.

\begin{lemma}
	\label{lem:replicated-Buchberger}
	Let \(\rho:\C[\bx]\to\C[\bz]\) be a monomial homomorphism that is injective on monomials.
	Suppose that
	\begin{equation}
		\operatorname{lcm}(\rho(m),\rho(m'))
		=
		\rho(\operatorname{lcm}(m,m'))
		\label{eq:replicated-lcm}
	\end{equation}
	for all monomials \(m\) and \(m'\), and that monomial orders \(\prec\) on \(\C[\bx]\) and \(\prec'\) on \(\C[\bz]\) satisfy
	\[
		m\prec m' \quad\Longleftrightarrow\quad \rho(m)\prec'\rho(m').
	\]
	If \(G\) is a monic Gr\"obner basis with respect to \(\prec\), then \(\rho(G)\) is a Gr\"obner basis with respect to \(\prec'\) of the ideal generated by \(\rho(G)\).
	If \(G\) is reduced, then \(\rho(G)\) is reduced.
\end{lemma}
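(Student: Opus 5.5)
The plan is to use Buchberger's criterion: a finite set is a Gröbner basis of the ideal it generates exactly when every \(S\)-polynomial reduces to zero modulo the set. All of this will be transported through \(\rho\).

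First I record the basic compatibilities. Since \(\rho\) is a ring homomorphism sending monomials to monomials injectively, and it preserves and reflects the order, it commutes with taking leading terms: \(\operatorname{in}_{\prec'}(\rho(f))=\rho(\operatorname{in}_{\prec}(f))\) for every \(f\). Indeed, the monomials of \(\rho(f)\) are the images of the distinct monomials of \(f\), with the same coefficients, so no cancellation occurs and the \(\prec'\)-largest image is the image of the \(\prec\)-largest monomial. In particular \(\rho(G)\) is monic. Moreover, divisibility is reflected: \(\rho(m)\mid\rho(m')\) iff \(\operatorname{lcm}(\rho(m),\rho(m'))=\rho(m')\), iff \(\rho(\operatorname{lcm}(m,m'))=\rho(m')\) by \eqref{eq:replicated-lcm}, iff \(m\mid m'\) by injectivity on monomials. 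Consequently, for \(g,g'\in G\),
\[
S(\rho(g),\rho(g'))=\rho\bigl(S(g,g')\bigr).
\]
This holds because the cofactors \(\operatorname{lcm}/\operatorname{in}\) are images of the original cofactors. To see that, write \(\rho(\operatorname{lcm}(m,m'))=\rho(m)\rho(u)\) with \(u=\operatorname{lcm}(m,m')/m\); this uses multiplicativity of \(\rho\).

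Next I transport reductions. Take a standard representation \(S(g,g')=\sum_i c_i u_i g_i\) with \(u_i\) monomials, \(g_i\in G\), and \(u_i\operatorname{in}(g_i)\preceq\operatorname{in}(S(g,g'))\). This is what "reduces to zero" yields. Applying \(\rho\) gives \(\rho(S(g,g'))=\sum_i c_i\rho(u_i)\rho(g_i)\). Order preservation together with the leading-term compatibility then shows \(\rho(u_i)\operatorname{in}(\rho(g_i))\preceq'\operatorname{in}(\rho(S(g,g')))\). So every \(S\)-polynomial of \(\rho(G)\) has a standard representation, and the standard-representation form of Buchberger's criterion \cite[Chapter~2]{CLO15} shows that \(\rho(G)\) is a Gröbner basis of \(\langle\rho(G)\rangle\). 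Working with standard representations rather than literal division steps avoids having to check that the division algorithm in \(\C[\bz]\) only ever uses divisors from the image. I expect this to be the only delicate step, and the one where I will be careful to use the standard-representation version of the criterion.

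Finally, for reducedness: the leading coefficients are \(1\), and I must show that no monomial of \(\rho(g)\) is divisible by \(\operatorname{in}(\rho(g'))\) for \(g'\ne g\), and likewise for the non-leading monomials of \(\rho(g)\) against \(\operatorname{in}(\rho(g))\). Every monomial of \(\rho(g)\) has the form \(\rho(m)\) with \(m\) a monomial of \(g\), and \(\operatorname{in}(\rho(g'))=\rho(\operatorname{in}(g'))\). By the reflection of divisibility above, a divisibility \(\rho(\operatorname{in}(g'))\mid\rho(m)\) would force \(\operatorname{in}(g')\mid m\), contradicting reducedness of \(G\). Distinct elements of \(G\) have distinct images by injectivity on monomials, so \(\rho(G)\) is reduced.
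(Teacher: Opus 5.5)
Your proof is correct and follows the paper's argument. The lcm identity and the order compatibility make \(\rho\) commute with \(S\)-polynomials, reductions modulo \(G\) transport through \(\rho\), Buchberger's criterion gives the Gr\"obner property, and reflection of divisibility gives reducedness; your use of standard representations is just a more careful version of the paper's remark that every reduction by \(G\) maps to a reduction by \(\rho(G)\).
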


\begin{proof}
	The lcm identity and the compatibility of the two orders give \(S(\rho(g),\rho(h))=\rho(S(g,h))\), and every reduction by \(G\) maps to a reduction by \(\rho(G)\).
	Buchberger's criterion \cite[Chapter~2]{CLO15} proves the first assertion.
	The lcm identity and the injectivity of \(\rho\) also show that \(\rho\) reflects divisibility of monomials, and hence that \(\rho(G)\) is reduced.
\end{proof}

\subsection{A family of lattice rank 2}

The next construction repeats the rows of \(B\) instead of the lattice coordinates.
Thus the number of columns increases, and the lattice rank remains 2.
In \(\C[[s_1,s_2]]\), put \(\lambda_1=2s_1+3s_2\), \(\lambda_2=3s_1+s_2\), and \(\lambda_3=s_1+s_2\).

\begin{theorem}
	\label{thm:fixed-rank-unbounded}
	For every integer \(q\geq1\), there exist a homogeneous \(A\)-hypergeometric system of lattice rank 2 with \(5q\) columns, with a connected column matroid and with a normal affine semigroup generated by the reduced Gr\"obner basis vectors, a generic direction that is nonzero on every nonzero element of \(L\), a binary fake exponent \(\bv_q^\triangle\), and an ordered negative support family \(\cN_q^{\triangle,\circ}\), such that \(\Hilb\bigl( \fD_{\cN_q^{\triangle,\circ}}(e_q^\triangle);u \bigr)=u^q(1+u+\cdots+u^{q-1})^2\).
	In particular, \(\dim_{\C} \fD_{\cN_q^{\triangle,\circ}}(e_q^\triangle)=q^2\).
	The two ideals in \eqref{eq:defect-image} are
	\begin{equation}
		\Phi\bigl(Q_{\cN_q^{\triangle,\circ}}(\bt):e_q^\triangle\bigr)=\langle\lambda_2^q,\lambda_1^q\lambda_3^q\rangle, \qquad \Pi_{\cN_q^{\triangle,\circ}}:m_{\bv_q^\triangle,\cN_q^{\triangle,\circ}}=\langle\lambda_1^q,\lambda_2^q\rangle.
		\label{eq:fixed-rank-ideals}
	\end{equation}
	There is a graded isomorphism
	\[
		\fD_{\cN_q^{\triangle,\circ}}(e_q^\triangle)
		\simeq
		\frac{\C[s_1,s_2]}{\langle\lambda_2^q,\lambda_3^q\rangle}(-q).
	\]
\end{theorem}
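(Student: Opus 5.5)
Starting from the five-column example of Theorem~\ref{thm:lattice-counterexample}, I will build the family by replicating each row of the Gale dual \(B\) in \eqref{eq:lattice-counterexample-AB} \(q\) times. Let \(B_q\) be the \(5q\times2\) matrix with rows \(\bb_{j,k}=\bb_j\) for \(j=1,\ldots,5\) and \(k=1,\ldots,q\), and let \(A_q\) be a \((5q-2)\times5q\) matrix whose rows span the left kernel of \(B_q\), chosen to contain the all-ones row. This is possible because each row sum of \(B\) being replicated keeps the column sums of \(B_q\) equal to \(q\) times those of \(B\), and those vanish. Homogeneity follows, and the columns of \(B_q\) form a \(\Z\)-basis of \(L_q=\ker_{\Z}A_q\), since a unimodular \(2\times2\) minor of \(B\) survives. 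Connectivity of the column matroid will come from the row matroid of \(B_q\): parallel classes of size \(q\) glued onto the connected rank-2 matroid of \(B\) stay connected. I take the same direction \(\bc=\bw B_q=(-1,\vartheta)\), realized by a weight supported on the first copies of columns \(1,2\), and the binary fake exponent \(\bv_q^\triangle\) with \(v_{j,k}=v_j\). Then \(\nsupp(\bv_q^\triangle+B_q\bz)\) is the \(q\)-fold replication of \(\nsupp(\bv+B\bz)\). Consequently \(\sS(\bv_q^\triangle)\), the support fibers, their weights, \(\cN_{\bv_q^\triangle}\), and orderedness all transfer from Table~\eqref{eq:lattice-counterexample-table}, and I set \(\cN_q^{\triangle,\circ}\) to be the replication of \(\cN^{\circ}\).

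For the Gr\"obner basis, I apply Lemma~\ref{lem:replicated-Buchberger} with the monomial map \(\rho:\partial_j\mapsto\prod_k\partial_{j,k}\). The lcm condition \eqref{eq:replicated-lcm} holds because the variable blocks are disjoint and every exponent is replicated uniformly. This gives a reduced Gr\"obner basis of the ideal generated by \(\rho(f_1),\rho(f_2),\rho(f_3)\). I expect the main obstacle to be showing that this ideal is all of \(I_{A_q}\). The lattice \(L_q\) still has rank 2, with generators equal to the replications of the generators of \(L\), so \(\rho\) is a lattice isomorphism \(L\to L_q\). I would rerun the standard-monomial argument of Theorem~\ref{thm:lattice-counterexample}: two standard monomials of \(\C[\partial_{j,k}]\) with the same \(A_q\)-degree differ by \(B_q\bz\), and the same case analysis on \(p+q\) forces \(\bz=\bzero\). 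Saturation, for instance \(J:(\prod\partial)^\infty=J\), would be the fallback. The vectors of \(\cC(\bw)\) are the same lattice vectors as before, so normality is inherited. The fake-exponent property holds because each initial monomial contains \(\partial_{2,k}\) or \(\partial_{5,k}\), where \(\bv_q^\triangle\) vanishes.

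The obstruction computation is then mechanical. We have \(\cG=\{\{3\}^{\times q},\{4\}^{\times q}\}\) (the replicated sets), \(\cH=\{\{2,4\}^{\times q},\{3,5\}^{\times q}\}\), and \(E=\{3\}^{\times q}\), with \(\ell_{j,k}=\ell_j\). Applying Corollary~\ref{cor:two-support-principal-formula} with \(\ell_3=-3\lambda_3\), \(\ell_4=\lambda_3\), \(\ell_2=\lambda_1\), and \(\ell_5=-\lambda_2\), we get \(\alpha\sim\lambda_3^q\), \(\beta\sim\lambda_3^q\), \(\fa_1=\langle\lambda_2^q\rangle\), and \(\fa_2=\langle\lambda_1^q\rangle\). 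Hence \(g_0=1\), \(h_0=\lambda_3^q\), and \(\omega=\lambda_1^q\) up to units, so \(J^{\amb}=\langle\lambda_2^q,\lambda_1^q\lambda_3^q\rangle\) and \(J^{\intr}=\langle\lambda_2^q,\lambda_1^q\rangle\), which is \eqref{eq:fixed-rank-ideals} via Theorem~\ref{thm:finite-boundary-classification}. The pairwise coprimality of \(\lambda_1,\lambda_2,\lambda_3\) gives \(\delta_\omega=\lambda_2^q\) and \(d_{h,\delta}=0\). Then \eqref{eq:two-support-gcd-model} yields \(\C[s_1,s_2]/\langle\lambda_3^q,\lambda_2^q\rangle(-q)\), and \eqref{eq:two-support-gcd-Hilbert} gives the Hilbert series \(u^q(1+\cdots+u^{q-1})^2\), whose value at \(u=1\) is \(q^2\).
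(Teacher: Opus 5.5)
Your proposal is correct and follows the paper's proof essentially step by step: replicate the rows of \(B_0\), carry the reduced Gr\"obner basis, the binary fake exponent, the support table, and the ordered family through the block replication, and then apply Corollary~\ref{cor:two-support-principal-formula} with \(h_0=\lambda_3^q\), \(\omega=\lambda_1^q\), and \(\delta=\lambda_2^q\). Your weight concentrated on the first copies works as well as the paper's uniform weight \(w_{ij}=q^{-1}w_{0j}\).

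The step you flag as the main obstacle has a one-line proof in the paper. Each lattice binomial \(\bpartial^{\bu_+}-\bpartial^{\bu_-}\) with \(\bu\in L_q\) is the image under \(\rho\) of a lattice binomial for \(A_0\), so the images of \(f_1,f_2,f_3\) generate \(I_{A_q}\) directly. When you invoke Lemma~\ref{lem:replicated-Buchberger}, you should also name a target monomial order compatible with \(\rho\), for example comparing the block sums of exponents in the base order and then breaking ties lexicographically.
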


\begin{proof}
	Let \(A_0\) and \(B_0\) be the matrices in \eqref{eq:lattice-counterexample-AB}, and let \(B_q^\triangle=\tp{(\tp{B_0}\ \cdots\ \tp{B_0})}\), with \(q\) copies of \(B_0\).
	The unimodular minor on rows two and four of any block shows that the columns of \(B_q^\triangle\) span a saturated lattice of rank 2, and since the all-ones row annihilates \(B_q^\triangle\), a basis of \(\ker_{\Z}(\tp{(B_q^\triangle)})\) beginning with this row gives a homogeneous matrix \(A_q^\triangle\) with \(\ker_{\Z}(A_q^\triangle)=\im_{\Z}(B_q^\triangle)\).
	Choose \(\bw_0^\triangle\) with \(\bw_0^\triangle B_0=(-1,\vartheta)\) for \(\vartheta=40+\sqrt2\), and set \(w_{ij}^\triangle=q^{-1}w_{0j}^\triangle\), so that \(\bw_q^\triangle B_q^\triangle=(-1,\vartheta)\) and the direction is generic and nonzero on every nonzero element of \(L\).
	Consider the monomial homomorphism \(\rho_q\) sending \(\partial_j\) to \(\prod_{i=1}^q\partial_{ij}\), from \(\C[\partial_1,\ldots,\partial_5]\) to \(\C[\partial_{ij}\mid1\leq i\leq q,\ 1\leq j\leq5]\).
	After removal of a common monomial factor, every lattice binomial for \(A_q^\triangle\) is the image under \(\rho_q\) of a lattice binomial for \(A_0\), because the exponent difference of a lattice binomial for \(A_q^\triangle\) repeats in all \(q\) blocks, so the images of the three binomials in \eqref{eq:lattice-counterexample-GB} generate \(I_{A_q^\triangle}\).
	The map \(\rho_q\) is injective on monomials and preserves least common multiples, and a compatible target monomial order is obtained by first comparing the exponent sums over the \(q\) blocks in the base monomial order and then breaking ties lexicographically.
	For a monomial \(m\), the exponent sum of \(\rho_q(m)\) over the \(q\) blocks is \(q\) times the exponent vector of \(m\), so the base and target monomial orders agree on the image of \(\rho_q\), and Lemma~\ref{lem:replicated-Buchberger} shows that the three images form the reduced toric Gr\"obner basis of \(I_{A_q^\triangle}\).
	Their lattice coordinates are \((-1,0)\), \((0,1)\), and \((-1,1)\), so \(\cC(\bw_q^\triangle)=\{(p_{\mathrm{lat}},q_{\mathrm{lat}})\in\Z^2\mid p_{\mathrm{lat}}\leq0,\ q_{\mathrm{lat}}\geq0\}\), whose primitive generators form a lattice basis; this semigroup is therefore normal.

	Define \(\bv_q^\triangle\) by repeating \(\bv_0=\tp{(-1,0,-1,0,0)}\) in the \(q\) coordinate blocks.
	Every initial monomial of the reduced Gr\"obner basis annihilates \(\bx^{\bv_q^\triangle}\), so \(\bv_q^\triangle\) is a binary fake exponent, and for each \((p_{\mathrm{lat}},q_{\mathrm{lat}})\) the negative support at \((p_{\mathrm{lat}},q_{\mathrm{lat}})\) is the union of the \(q\) indexed copies of the one-block negative support.
	Writing \(J_k^\triangle\) for the union of the \(q\) indexed copies of \(J_k\), we obtain \(\cN_q^\triangle=\{J_0^\triangle,J_1^\triangle,J_2^\triangle\}\) and \(\cN_q^{\triangle,\circ}=\{J_0^\triangle,J_1^\triangle\}\), and the one-block support table shows that \(\cN_q^{\triangle,\circ}\) is ordered but that \(\cN_q^\triangle\) is not.
	Put \(T_j=\prod_{i=1}^qt_{ij}\).
	The definitions and \eqref{eq:lattice-counterexample-MP} give \(e_q^\triangle=T_3\), \(M_{\cN_q^{\triangle,\circ}}=M_{\cN_q^\triangle}=\langle T_3,T_4\rangle\), and \(P_{\cN_q^{\triangle,\circ}}(\bt)=P_{\cN_q^\triangle}(\bt)=\langle T_2T_4,T_3T_5\rangle\).

	We use the map \(\Phi\) that sends \((t_{i1},t_{i2},t_{i3},t_{i4},t_{i5})\) to \((3s_1,\lambda_1,-3\lambda_3,\lambda_3,-\lambda_2)\) on every block.
	Up to nonzero constants, the two monomials \(\bt^{G}\) with \(G\in\cG_{\cN_q^{\triangle,\circ}}\) both map to \(\lambda_3^q\), and the complementary factors of the two monomials \(\bt^{H}\) with \(H\in\cH_{\cN_q^{\triangle,\circ}}\) generate \(\langle\lambda_2^q\rangle\) and \(\langle\lambda_1^q\rangle\), respectively.
	Corollary~\ref{cor:two-support-principal-formula} gives the two identities in \eqref{eq:fixed-rank-ideals} and the graded isomorphism \(\fD_{\cN_q^{\triangle,\circ}}(e_q^\triangle)\simeq S_0/\langle\lambda_2^q,\lambda_3^q\rangle(-q)\).
	Since \(\lambda_2\) and \(\lambda_3\) are linearly independent linear forms, this quotient has Hilbert series \(u^q(1-u^q)^2/(1-u)^2=u^q(1+u+\cdots+u^{q-1})^2\) and dimension \(q^2\), and these identities persist under localization and completion.
	Finally, the nonzero rows of \(B_q^\triangle\) form four parallel classes in a connected matroid of rank 2: two rows in one parallel class form a two-element circuit, and two rows in distinct parallel classes, together with a row in a third parallel class, form a three-element circuit.
	The column matroid of \(A_q^\triangle\) is the dual of this matroid and is therefore connected \cite[Chapters~2 and~8]{Oxl11}.
\end{proof}

\begin{lemma}
	\label{lem:three-linear-form-powers-length}
	Let \(\lambda_1,\lambda_2,\lambda_3\) be pairwise nonproportional linear forms in \(S_0=\C[s_1,s_2]\).
	For every integer \(q\geq1\),
	\begin{equation}
		\dim_{\C}
		\frac{S_0}{\langle\lambda_1^q,\lambda_2^q,\lambda_3^q\rangle}
		=
		\left\lceil\frac{3q^2}{4}\right\rceil.
		\label{eq:three-linear-form-powers-length}
	\end{equation}
\end{lemma}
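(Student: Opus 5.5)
The plan is to compute the Hilbert function of \(S_0/\langle\lambda_1^q,\lambda_2^q,\lambda_3^q\rangle\) degree by degree using Macaulay inverse systems, and then sum it. Write \(I=\langle\lambda_1^q,\lambda_2^q,\lambda_3^q\rangle\) and let \(\langle\cdot,\cdot\rangle_{\bs}\) be the apolarity pairing already used in the paper. In degree \(i\) the pairing \(S_{0,i}\times\C[\bpartial_{\bs}]_i\to\C\) is perfect, so
\[
\dim_{\C}(S_0/I)_i=\dim_{\C}(I_i)^\perp .
\]
For \(i<q\) we have \(I_i=0\), so this dimension is \(i+1\). The real work is degrees \(i\geq q\).

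For \(i\geq q\), \(I_i=\sum_{k=1}^3\lambda_k^q S_{0,i-q}\). A degree-\(i\) element \(F(\bpartial_{\bs})\) annihilates \(\lambda_k^qS_{0,i-q}\) exactly when \(\lambda_k(\bpartial_{\bs})^qF=0\). In two variables this says that \(F\), viewed as a binary form, is divisible by \(\mu_k^{\,i-q+1}\). Here \(\mu_k\) is the linear form dual to \(\lambda_k\), that is, the form vanishing at the point of \(\mathbb P^1\) defined by \(\lambda_k\). To verify this, take coordinates with \(\lambda_k=s_1\); then \(\partial^q_{s_1}F=0\) means \(F\) has degree at most \(q-1\) in \(\partial_{s_1}\), so \(F\) is divisible by \(\partial_{s_2}^{\,i-q+1}\). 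Because \(\lambda_1,\lambda_2,\lambda_3\) are pairwise nonproportional, the \(\mu_k\) are pairwise coprime. Hence \((I_i)^\perp\) consists of the multiples of \(\prod_k\mu_k^{\,i-q+1}\) in degree \(i\), and unique factorization gives
\[
\dim_{\C}(S_0/I)_i=\max\bigl(0,\ i+1-3(i-q+1)\bigr)=\max(0,\ 3q-2-2i)\qquad(i\geq q).
\]
This divisibility characterization is the one step that needs care; I would check it in explicit coordinates as just described.

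It then remains to sum. The degrees \(i<q\) contribute \(q(q+1)/2\). Substituting \(i=q+j\), the degrees \(i\geq q\) contribute \(\sum_{j\geq0}\max(0,\,q-2-2j)\). I will split by parity.

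For \(q=2m\), the tail is \(\sum_{j=0}^{m-1}2(m-1-j)=m(m-1)\). The total is \(m(2m+1)+m^2-m=3m^2=3q^2/4\).

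For \(q=2m+1\), the tail is \(\sum_{j=0}^{m-1}(2m-1-2j)=m^2\). The total is \((2m+1)(m+1)+m^2=3m^2+3m+1\), and this equals \(\lceil 3q^2/4\rceil\) since \(3q^2/4=3m^2+3m+3/4\).

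I expect no genuine obstacle: the only nonformal input is the divisibility description of the inverse system in two variables.
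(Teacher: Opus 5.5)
Your proof is correct, but it takes a different route from the paper.

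\textbf{The paper's route.} It normalizes to \(\lambda_1=s_1\), \(\lambda_2=s_2\), \(\lambda_3=s_1+s_2\) and works in the monomial complete intersection \(\overline S_q=S_0/\langle s_1^q,s_2^q\rangle\), which has length \(q^2\). It then invokes the strong Lefschetz theorem for monomial complete intersections in characteristic zero. This gives that multiplication by \((s_1+s_2)^q\) from degree \(d\) to degree \(d+q\) has maximal rank \(\min\{d+1,q-1-d\}\). The length is \(q^2\) minus the sum of these ranks.

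\textbf{Your route.} You compute the Hilbert function of \(S_0/\langle\lambda_1^q,\lambda_2^q,\lambda_3^q\rangle\) directly by apolarity. In two variables, the degree-\(i\) part of the inverse system of \(\lambda_k^q\) is the set of multiples of \(\mu_k^{\,i-q+1}\), and unique factorization intersects the three conditions. The two computations agree degree by degree: the paper's cokernel in degree \(d+q\) has dimension \((q-1-d)-\min\{d+1,q-1-d\}=\max(0,q-2-2d)\), which is exactly your tail term.

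\textbf{What each buys.} Your argument is self-contained: it uses only the perfect pairing and factoriality of a polynomial ring in two variables, not the Lefschetz theorem. It needs no normalization of the three forms. It also extends verbatim to any number of pairwise nonproportional forms with arbitrary exponents, giving
\(\dim_{\C}(S_0/\langle\lambda_1^{a_1},\ldots,\lambda_t^{a_t}\rangle)_i=\max\bigl(0,\,i+1-\sum_k\max(0,i-a_k+1)\bigr)\).
The paper's route is shorter on the page because the rank statement is outsourced to a citation.

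\textbf{One point to make explicit.} In the paper's pairing, the inverse-system elements are polynomials in the symbols \(\partial_{s_1},\partial_{s_2}\). So your condition \(\lambda_k(\bpartial_{\bs})^qF=0\) must be read with \(s_j\) acting on \(F\) as differentiation with respect to the symbol \(\partial_{s_j}\). This operator is the adjoint of multiplication by \(s_j\) under \(\langle\cdot,\cdot\rangle_{\bs}\), and that adjointness is exactly what justifies your reduction, so it is worth stating.
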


\begin{proof}
	After a linear change of variables and two nonzero rescalings, it is enough to prove the formula for \(\lambda_1=s_1\), \(\lambda_2=s_2\), and \(\lambda_3=s_1+s_2\).
	The formula is immediate when \(q=1\), so assume that \(q\geq2\).
	Put \(\overline S_q=S_0/\langle s_1^q,s_2^q\rangle\).
	The Hilbert function of \(\overline S_q\) is
	\[
		h_{\overline S_q}(d)
		=
		\begin{cases}
			d+1,    & 0\leq d\leq q-1, \\
			2q-1-d, & q\leq d\leq2q-2.
		\end{cases}
	\]
	The strong Lefschetz theorem for monomial complete intersections in characteristic zero \cite[Theorem~5 and Proposition~9]{RRR91} shows that multiplication by \((s_1+s_2)^q\) has maximal rank in every degree.
	For \(0\leq d\leq q-2\), the rank of multiplication by \((s_1+s_2)^q\) from \((\overline S_q)_d\) to \((\overline S_q)_{d+q}\) is therefore \(\min\{d+1,q-1-d\}\).
	Consequently,
	\begin{align*}
		\dim_{\C}\overline S_q/\langle(s_1+s_2)^q\rangle
		 & =q^2-
		\sum_{d=0}^{q-2}\min\{d+1,q-1-d\}          \\
		 & =
		\begin{cases}
			3q^2/4,     & q\equiv0\pmod2, \\
			(3q^2+1)/4, & q\equiv1\pmod2,
		\end{cases}                       \\
		 & =\left\lceil\frac{3q^2}{4}\right\rceil.
	\end{align*}
\end{proof}

\begin{theorem}
	\label{thm:fixed-rank-unbounded-solution-codimension}
	For the system in Theorem~\ref{thm:fixed-rank-unbounded}, put \(\bbeta_q^\triangle=A_q^\triangle\bv_q^\triangle\).
	The vector \(\bv_q^\triangle\) is an exponent of \(H_{A_q^\triangle}(\bbeta_q^\triangle)\) with respect to \(\bw_q^\triangle\), its class is least in its \(L\)-coset, and
	\begin{equation}
		\delta_{\bv_q^\triangle}^{L}
		=
		\left\lceil\frac{3q^2}{4}\right\rceil.
		\label{eq:fixed-rank-local-all-family-codimension}
	\end{equation}
	Hence,
	\begin{equation}
		\dim_{\C}
		\frac{\cV_{\bw_q^\triangle}}
		{\cV_{\bw_q^\triangle}^{L}}
		\geq
		\left\lceil\frac{3q^2}{4}\right\rceil.
		\label{eq:fixed-rank-formal-solution-codimension}
	\end{equation}
	There is a nonempty simply connected open set \(\Omega_q \subseteq (\C^*)^{5q}\setminus \Sing(H_{A_q^\triangle}(\bbeta_q^\triangle))\) such that
	\begin{equation}
		\dim_{\C}
		\frac{
		\Sol_{\Omega_q}(H_{A_q^\triangle}(\bbeta_q^\triangle))
		}{
			\cS_{\bw_q^\triangle}^{L}(\Omega_q)
		}
		\geq
		\left\lceil\frac{3q^2}{4}\right\rceil.
		\label{eq:fixed-rank-holomorphic-solution-codimension}
	\end{equation}
	Thus, even after all exponents occurring in the canonical formal solution space and all ordered negative support families are included, the codimension of \(\cS_{\bw_q^\triangle}^{L}(\Omega_q)\) is unbounded among homogeneous systems of lattice rank 2 with the other properties in Theorem~\ref{thm:fixed-rank-unbounded}.
\end{theorem}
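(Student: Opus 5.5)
The strategy is to compute \(\delta_{\bv_q^\triangle}^{L}\) through Proposition~\ref{prop:all-ordered-local-quotient}, which gives \(\delta_{\bv}^{L}=\dim_{\C}(J_{\bv}^{L}/\cE_{\bv}^{\perp})\). Once that number is known, Theorem~\ref{thm:formal-solution-codimension-bounds} and Theorem~\ref{thm:analytic-realization} supply the two inequalities.

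\textbf{Step 1: the fake indicial component and the ordered families.} Write \(\bv=\bv_q^\triangle\) and work in the lattice coordinates of \(B_q^\triangle\), with the linear forms \(\ell_{ij}\) equal to \((3s_1,\lambda_1,-3\lambda_3,\lambda_3,-\lambda_2)\) on each block.
\begin{itemize}
\item The three reduced Gr\"obner basis elements are the images under \(\rho_q\) of \(f_1,f_2,f_3\).
\item By the one-block support table, the sets \(G^{(i)}(\bv)=I_{-\bg^{(i)}}\setminus I_{\bzero}\) are the unions of the \(q\) copies of \(\{5\}\), \(\{2,4\}\) and \(\{2\}\).
\item Hence \(P_B(\bv)=\langle\lambda_2^q,\lambda_1^q\lambda_3^q,\lambda_1^q\rangle=\langle\lambda_1^q,\lambda_2^q\rangle\).
\item The ordered negative support families at \(\bv\) are exactly \(\{J_0^\triangle\}\) and \(\cN_q^{\triangle,\circ}\). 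Indeed, \(J_2^\triangle\) is excluded because it contains the realized support coming from \(\{3,5\}\), which lies outside \(\cN_{\bv}\). This is the same as in Corollary~\ref{cor:lattice-counterexample-formal-codimension}.
\end{itemize}
Consequently \(\cN_{\bv}^{\ord}=\cN_q^{\triangle,\circ}\). Lemma~\ref{lem:largest-ordered-family} and \eqref{eq:fixed-rank-ideals} then give \(\cE_{\bv}^{\perp}=J^{\amb}_{\bv,\cN_q^{\triangle,\circ}}=\langle\lambda_2^q,\lambda_1^q\lambda_3^q\rangle\), and this space is nonzero.

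\textbf{Step 2: the intrinsic ideals and their intersection.} For the singleton family \(\{J_0^\triangle\}\) we have \(K=J_0^\triangle\) and \(E=\varnothing\). The set \(\cH\) consists of the minimal sets \(J\setminus J_0^\triangle\) over the realized supports outside the family. So \(J^{\intr}\) is generated by the products \(\ell^{J\setminus J_0^\triangle}\).

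I expect this intersection to be \(\langle\lambda_1^q,\lambda_2^q,\lambda_3^q\rangle\), arising from the three minimal differences. Minimality of the support \(J_0^\triangle\) should make these differences the copies of \(\{2,4\}\) (or \(\{2\}\)), \(\{5\}\), and \(\{4\}\) coming from \(J_1^\triangle\). Every other realized support then contributes a multiple of one of these three powers.

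For \(\cN_q^{\triangle,\circ}\), equation \eqref{eq:fixed-rank-ideals} gives \(J^{\intr}=\langle\lambda_1^q,\lambda_2^q\rangle\). The intersection \(J_{\bv}^L\) of the two intrinsic ideals is then computed as follows.
\begin{itemize}
\item Since \(\lambda_1,\lambda_2\) are independent, \(\langle\lambda_1^q,\lambda_2^q\rangle\) is a complete intersection containing \(\langle\lambda_1^q,\lambda_2^q,\lambda_3^q\rangle\).
\item So the intersection is just the smaller ideal, \(J_{\bv}^{L}=\langle\lambda_1^q,\lambda_2^q,\lambda_3^q\rangle\).
\end{itemize}
This singleton computation is the main obstacle. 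One must verify from the sign table \eqref{eq:lattice-counterexample-signs} which realized supports differ minimally from \(J_0^\triangle\), and check that no support yields a generator of lower degree. I would go through the eight supports of the one-block table \eqref{eq:lattice-counterexample-table} and replicate the result blockwise.

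\textbf{Step 3: counting the length.} Both \(J_{\bv}^{L}\) and \(\cE_{\bv}^{\perp}\) contain \(\lambda_2^q\) and are \(\fm\)-primary. Therefore
\[
\dim_{\C}\bigl(J_{\bv}^{L}/\cE_{\bv}^{\perp}\bigr)=\dim_{\C}\frac{S_0}{\langle\lambda_2^q,\lambda_1^q\lambda_3^q\rangle}-\dim_{\C}\frac{S_0}{\langle\lambda_1^q,\lambda_2^q,\lambda_3^q\rangle}=2q^2-\dim_{\C}\frac{S_0}{\langle\lambda_1^q,\lambda_2^q,\lambda_3^q\rangle}.
\]
The first length is \(q\cdot 2q=2q^2\) because \(\lambda_2^q\) and \(\lambda_1^q\lambda_3^q\) form a regular sequence.

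If the computation is instead organized through \(\cI_{\bv}^{L}=(J_{\bv}^L)^\perp\) and \(\cE_{\bv}=(\cE_{\bv}^\perp)^\perp\), the count becomes \(\dim\cE_{\bv}-\dim\cI_{\bv}^{L}=2q^2-\lceil 3q^2/4\rceil\) by Lemma~\ref{lem:three-linear-form-powers-length}. This does not match the claimed value, so Step 2 must be re-examined.

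The correct identification should be \(\cI^L_{\bv}\subseteq\cE_{\bv}\) with quotient of dimension \(\lceil3q^2/4\rceil\). I would resolve this by recomputing the intrinsic spaces. The intrinsic space of \(\cN_q^{\triangle,\circ}\) is \(\langle\lambda_1^q,\lambda_2^q\rangle^\perp\), which has dimension \(q^2\). The singleton family contributes its inverse system. The target is
\[
\dim_{\C}\bigl(\langle\lambda_1^q,\lambda_2^q\rangle^\perp+J_{\{J_0^\triangle\}}^{\intr\,\perp}\bigr)=2q^2-\lceil 3q^2/4\rceil.
\]
Equivalently, the intersection \(J_{\bv}^L\) should have colength \(2q^2-\lceil3q^2/4\rceil\). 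This forces the singleton ideal to be such that \(\langle\lambda_1^q,\lambda_2^q\rangle\cap J^{\intr}_{\{J_0^\triangle\}}\) has that colength. I would compute this using the exact sequence for the sum and intersection of two ideals together with Lemma~\ref{lem:three-linear-form-powers-length}: if the singleton ideal is \(\langle\lambda_3^q,\lambda_2^q\rangle\), the colength of the intersection is \(q^2+q^2-\lceil3q^2/4\rceil\), as required.

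\textbf{Step 4: minimality and the codimension bounds.} The class \(\{\bv\}\) is least in its \(L\)-coset. To see this, replicate the one-block argument: the only other exponent in the coset is \(\bv+B\tp{(-3,2)}\), whose weight is \(3+2\vartheta>0\). The exponent claim holds because \(\cE_{\bv}\ne0\). The lower bound in \eqref{eq:formal-solution-codimension-bounds} then gives \eqref{eq:fixed-rank-formal-solution-codimension}, and Theorem~\ref{thm:analytic-realization} gives \(\Omega_q\) and \eqref{eq:fixed-rank-holomorphic-solution-codimension}.
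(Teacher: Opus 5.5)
Your architecture matches the paper's. With \(\bv=\bv_q^\triangle\), you get \(\cE_{\bv}^{\perp}=J^{\amb}_{\cN_q^{\triangle,\circ}}=\langle\lambda_2^q,\lambda_1^q\lambda_3^q\rangle\) from Lemma~\ref{lem:largest-ordered-family}. You take \(J^{L}_{\bv}\) as the intersection of the intrinsic ideals of the two ordered families \(\{J_0^\triangle\}\) and \(\cN_q^{\triangle,\circ}\), count lengths with the sum--intersection sequence and Lemma~\ref{lem:three-linear-form-powers-length}, and finish with Theorems~\ref{thm:formal-solution-codimension-bounds} and~\ref{thm:analytic-realization}.

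\textbf{The gap: the singleton ideal is never derived.} The decisive input is the intrinsic ideal of the singleton family \(\{J_0^\triangle\}\), and you never compute it.
\begin{itemize}
\item Your Step~2 value \(\langle\lambda_1^q,\lambda_2^q,\lambda_3^q\rangle\) is wrong. The set \(\{2\}\) is not of the form \(J\setminus J_0\) for any realized one-block support \(J\). The set \(\{2,4\}\) contains \(\{4\}\), so it is not minimal, and it would contribute \(\lambda_1^q\lambda_3^q\), not \(\lambda_1^q\).
\item Your intersection step reverses the inclusion: in fact \(\langle\lambda_1^q,\lambda_2^q\rangle\subseteq\langle\lambda_1^q,\lambda_2^q,\lambda_3^q\rangle\).
\item In Step~3 you adopt \(\langle\lambda_3^q,\lambda_2^q\rangle\) only because it reproduces the target value. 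That is circular.
\end{itemize}
The missing computation is short. For \(\{J_0^\triangle\}\) one has \(K=J_0^\triangle\) and \(e=1\), so \(\cH\) consists of the minimal sets \(J\setminus J_0^\triangle\). In the one-block table \eqref{eq:lattice-counterexample-table} the differences \(J\setminus J_0\) are \(\{4\},\{5\},\{2,4\},\{2,4\},\{5\},\{2,5\},\{2,4,5\}\), whose minimal members are \(\{4\}\) and \(\{5\}\). The negative supports of the replicated system are unions of the \(q\) copies, so \(\cH\) yields \(T_4\) and \(T_5\). Since \(\ell_{i4}=\lambda_3\) and \(\ell_{i5}=-\lambda_2\), this gives \(J^{\intr}_{\{J_0^\triangle\}}=\langle\lambda_3^q,\lambda_2^q\rangle\). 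With this input your length count goes through exactly as in the paper.

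\textbf{Three smaller points need repair.}
\begin{itemize}
\item In Step~1, \(-\bg^{(3)}\) has lattice coordinates \((1,-1)\) and \(I_{(1,-1)}=\{2,3,5\}\). So \(G^{(3)}(\bv)\) is the union of the copies of \(\{2,5\}\), not \(\{2\}\), and \(P_B(\bv)=\langle\lambda_2^q,\lambda_1^q\lambda_3^q\rangle\). Your value \(\langle\lambda_1^q,\lambda_2^q\rangle\) is impossible, because it would violate \(P_B(\bv)\subseteq\cE_{\bv}^{\perp}\) in \eqref{eq:actual-fake-local-inclusions}. You do not use it later, but it should be corrected.
\item Lemma~\ref{lem:largest-ordered-family} is stated for \(\bv\in\sE_{\bbeta,\bw}\). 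Concluding that \(\bv\) is an exponent from the \(\cE_{\bv}^{\perp}\) that this lemma supplies is therefore circular. Argue first, as the paper does, that \(J^{\amb}_{\cN_q^{\triangle,\circ}}\) is proper and homogeneous. Then \(1\) lies in the ambient coefficient space, and Theorem~5.5 of \cite{OS25} gives a nonzero canonical series based at \(\bv\).
\item For minimality, ``replicate the one-block argument'' needs one observation. At \(\bv_q^\triangle+B_q^\triangle\bz\), the distractions of the three initial monomials are the \(q\)-th powers of the one-block distractions. Hence the fake exponents in the coset are exactly those indexed by \((0,0)\) and \((-3,2)\). The paper verifies this by a direct case analysis.
\end{itemize}
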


\begin{proof}
	We use the notation \(J_0^\triangle,J_1^\triangle,J_2^\triangle\) from the proof of Theorem~\ref{thm:fixed-rank-unbounded} and the linear forms \(\lambda_1,\lambda_2,\lambda_3\) defined before that theorem.
	The two ideals in \eqref{eq:fixed-rank-ideals} are the completions of the homogeneous polynomial ideals with the same generators, and below we use the contractions of these completions to \(S_0\).
	The support table in \eqref{eq:lattice-counterexample-table}, repeated in the \(q\) blocks, shows that the ordered negative support families for \(\bv_q^\triangle\) are exactly \(\{J_0^\triangle\}\) and \(\cN_q^{\triangle,\circ}=\{J_0^\triangle,J_1^\triangle\}\), since a family containing \(J_2^\triangle\) is not ordered: the realized negative support formed by the \(q\) copies of \(\{3,5\}\) is a proper subset of \(J_2^\triangle\).
	Since \(J_{\cN_q^{\triangle,\circ}}^{\amb}=\langle\lambda_2^q,\lambda_1^q\lambda_3^q\rangle\) is proper and homogeneous, its inverse system contains the constant polynomial \(1\), which is a nonzero element of the ambient coefficient space at \(\bx^{\bv_q^\triangle}\).
	Theorem~5.5 of \cite{OS25}, applied to \(\cN_q^{\triangle,\circ}\), gives a nonzero canonical series based at \(\bv_q^\triangle\), so \(\bv_q^\triangle\in\sE_{\bbeta_q^\triangle,\bw_q^\triangle}\).
	Thus \(\cN_q^{\triangle,\circ}\) is the largest ordered family, and Lemma~\ref{lem:largest-ordered-family} and the first identity in \eqref{eq:fixed-rank-ideals} give
	\begin{equation}
		\cE_{\bv_q^\triangle}^{\perp}
		=
		\langle\lambda_2^q,\lambda_1^q\lambda_3^q\rangle.
		\label{eq:fixed-rank-full-local-ideal}
	\end{equation}
	For the singleton family \(\{J_0^\triangle\}\), we have \(K_{\{J_0^\triangle\}}=J_0^\triangle\), \(e=1\), and \(M_{\{J_0^\triangle\}}=\langle1\rangle\), and the monomials \(\bt^{H}\) with \(H\in\cH_{\{J_0^\triangle\}}\) are \(T_4\) and \(T_5\), so the ideals \(J_{\{J_0^\triangle\}}^{\amb}\) and \(J_{\{J_0^\triangle\}}^{\intr}\) both equal
	\begin{equation}
		\langle\lambda_3^q,\lambda_2^q\rangle.
		\label{eq:fixed-rank-singleton-ideal}
	\end{equation}
	Proposition~\ref{prop:all-ordered-local-quotient}, the second identity in \eqref{eq:fixed-rank-ideals}, and \eqref{eq:fixed-rank-singleton-ideal} now give
	\begin{equation}
		J_{\bv_q^\triangle}^{L}
		=
		\langle\lambda_1^q,\lambda_2^q\rangle
		\cap
		\langle\lambda_3^q,\lambda_2^q\rangle.
		\label{eq:fixed-rank-all-family-intrinsic-ideal}
	\end{equation}

	In the polynomial ring \(S_0\), define \(\cJ_1\), \(\cJ_3\), and \(\cJ_{13}\) by \(\cJ_1=\langle\lambda_1^q,\lambda_2^q\rangle\), \(\cJ_3=\langle\lambda_3^q,\lambda_2^q\rangle\), and \(\cJ_{13}=\langle\lambda_2^q,\lambda_1^q\lambda_3^q\rangle\).
	The three linear forms \(\lambda_1,\lambda_2,\lambda_3\) are pairwise nonproportional, so \(S_0/\cJ_1\) and \(S_0/\cJ_3\) both have length \(q^2\) and \(S_0/\cJ_{13}\) has length \(2q^2\).
	The standard exact sequence \(0\to S_0/(\cJ_1\cap\cJ_3)\to(S_0/\cJ_1)\oplus(S_0/\cJ_3)\to S_0/(\cJ_1+\cJ_3)\to0\) gives \(\dim_{\C}S_0/(\cJ_1\cap\cJ_3)=2q^2-\dim_{\C}S_0/(\cJ_1+\cJ_3)\).
	Using \(\cJ_{13}\subseteq\cJ_1\cap\cJ_3\), \(\dim_{\C}S_0/\cJ_{13}=2q^2\), and \(\cJ_1+\cJ_3=\langle\lambda_1^q,\lambda_2^q,\lambda_3^q\rangle\), we obtain \(\dim_{\C}\bigl((\cJ_1\cap\cJ_3)/\cJ_{13}\bigr)=\dim_{\C}S_0/\langle\lambda_1^q,\lambda_2^q,\lambda_3^q\rangle=\lceil3q^2/4\rceil\), where Lemma~\ref{lem:three-linear-form-powers-length} gives the last equality.
	Completion preserves these finite lengths, so \eqref{eq:fixed-rank-full-local-ideal}, \eqref{eq:fixed-rank-all-family-intrinsic-ideal}, and Proposition~\ref{prop:all-ordered-local-quotient} prove \eqref{eq:fixed-rank-local-all-family-codimension}.

	It remains to locate the class of \(\bv_q^\triangle\) in its \(L\)-coset.
	For an integer pair \((p_{\mathrm{lat}},q_{\mathrm{lat}})\), set \(\zeta_2=2p_{\mathrm{lat}}+3q_{\mathrm{lat}}\), \(\zeta_3=-3p_{\mathrm{lat}}-3q_{\mathrm{lat}}-1\), \(\zeta_4=p_{\mathrm{lat}}+q_{\mathrm{lat}}\), and \(\zeta_5=-3p_{\mathrm{lat}}-q_{\mathrm{lat}}\).
	At \(\bv_q^\triangle+B_q^\triangle\tp{(p_{\mathrm{lat}},q_{\mathrm{lat}})}\), the Euler equations hold identically, and the distractions of the three initial monomials in the reduced Gr\"obner basis are the \(q\)-th powers of the three products in
	\begin{equation}
		[\zeta_3]_3[\zeta_5]_3=0,
		\qquad
		[\zeta_2]_3\zeta_4=0,
		\qquad
		\zeta_2[\zeta_5]_2=0.
		\label{eq:fixed-rank-same-coset-fake-exponents}
	\end{equation}
	If \(\zeta_2=0\), write \(p_{\mathrm{lat}}=3k\) and \(q_{\mathrm{lat}}=-2k\), so that \(\zeta_3=-3k-1\) and \(\zeta_5=-7k\); the first equation in \eqref{eq:fixed-rank-same-coset-fake-exponents} then gives \(k=0\) or \(k=-1\).
	If \(\zeta_2\ne0\), the last equation gives \(\zeta_5\in\{0,1\}\) and the middle equation forces \(\zeta_4=0\) or \(\zeta_2\in\{1,2\}\), and substitution of \(q_{\mathrm{lat}}=-3p_{\mathrm{lat}}-\zeta_5\) gives \(\zeta_2=-7p_{\mathrm{lat}}-3\zeta_5\) and \(\zeta_4=-2p_{\mathrm{lat}}-\zeta_5\); no integer pair \((p_{\mathrm{lat}},q_{\mathrm{lat}})\) with \(\zeta_2\ne0\) then satisfies these conditions.
	Therefore the only fake exponents in this \(L\)-coset are indexed by \((p_{\mathrm{lat}},q_{\mathrm{lat}})=(0,0)\) and \((p_{\mathrm{lat}},q_{\mathrm{lat}})=(-3,2)\), and the relative weight \((-1,\vartheta)\cdot(-3,2)=3+2\vartheta\) of the second exponent is positive, so the class of \(\bv_q^\triangle\) is least in its \(L\)-coset.
	The lower bound in Theorem~\ref{thm:formal-solution-codimension-bounds} now gives \eqref{eq:fixed-rank-formal-solution-codimension}, and Theorem~\ref{thm:analytic-realization} gives \eqref{eq:fixed-rank-holomorphic-solution-codimension} on a suitable open set \(\Omega_q\).
\end{proof}

We summarize the least lattice ranks and the growth of the dimension of the obstruction module.
For ordered negative support families, Corollary~\ref{cor:minimal-lattice-codimension} shows that lattice rank 1 is the least rank in which a nonzero obstruction can occur, and Theorems~\ref{thm:fixed-rank-unbounded} and \ref{thm:fixed-rank-unbounded-solution-codimension} show that at lattice rank 2 the dimensions of the obstruction modules and the codimensions of the subspaces \(\cS_{\bw_q^\triangle}^{L}(\Omega_q)\) are unbounded.
For the distinguished collection, Corollary~\ref{cor:minimal-lattice-codimension} shows that the obstruction vanishes in lattice rank at most 1, and Corollary~\ref{cor:cosimple-binary-rank-threshold} shows that a nonzero obstruction forces lattice rank at least 3 when the fake exponent is moving-binary for a generic direction, the direction is nonzero on every nonzero element of \(L\), and the column matroid is cosimple.
The example in Theorem~\ref{thm:three-connected-obstruction} attains this bound with a three-connected column matroid and an affine semigroup \(\cC(\bw)\) generated by a lattice basis.

\begin{remark}
	Corollary~\ref{cor:cosimple-binary-rank-threshold} is restricted to moving-binary fake exponents.
	No reduction from arbitrary integral exponents to the moving-binary case is used.
	Remark~\ref{rem:rank-two-decision} treats every specified fake exponent in lattice rank 2 by means of the matrix \(\Gamma_{\cN_{\bv}}\).
	Proposition~\ref{prop:equal-support-different-obstruction} shows that two fake exponents with the same negative support can have different obstruction modules.
\end{remark}

By Theorems~\ref{thm:fixed-rank-unbounded} and \ref{thm:fixed-rank-unbounded-solution-codimension}, we obtain the following corollary.

\begin{corollary}
	\label{cor:no-rank-only-obstruction-bound}
	Even among homogeneous \(A\)-hypergeometric systems for which the affine semigroup \(\cC(\bw)\) is normal, the column matroid is connected, and the obstruction module for an ordered negative support family has finite length, the dimension of that module admits no upper bound that depends only on \(\rank L\).
	After all exponents occurring in the canonical formal solution space and all ordered negative support families are included, the codimension of the resulting subspace of the holomorphic solution space likewise admits no upper bound that depends only on \(\rank L\).
	More precisely, the family in Theorem~\ref{thm:fixed-rank-unbounded} has
	\[
		\rank L = 2,\quad
		\dim_{\C}\fD_{\cN_q^{\triangle,\circ}}(e_q^\triangle) =q^2,\quad
		\dim_{\C}\frac{\Sol_{\Omega_q}(H_{A_q^\triangle}(\bbeta_q^\triangle))}{\cS_{\bw_q^\triangle}^{L}(\Omega_q)}\geq\left\lceil\frac{3q^2}{4}\right\rceil.
	\]
\end{corollary}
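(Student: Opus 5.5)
This corollary follows by assembling the two preceding theorems; no new computation is needed. For each \(q\geq1\), I take the system \((A_q^\triangle,\bw_q^\triangle,\bv_q^\triangle,\cN_q^{\triangle,\circ})\) of Theorem~\ref{thm:fixed-rank-unbounded}. That theorem already records the required qualitative properties: lattice rank 2, a connected column matroid, a normal semigroup \(\cC(\bw_q^\triangle)\), and an ordered family \(\cN_q^{\triangle,\circ}\). It also gives the Hilbert series \(u^q(1+u+\cdots+u^{q-1})^2\). Since this Hilbert series is a polynomial, the module \(\fD_{\cN_q^{\triangle,\circ}}(e_q^\triangle)\) has finite length. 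Evaluating at \(u=1\) gives dimension \(q^2\), in line with \eqref{eq:boundary-classification-Hilbert} and the graded isomorphism with \(\C[s_1,s_2]/\langle\lambda_2^q,\lambda_3^q\rangle(-q)\).

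For the first assertion, I argue by contradiction. Suppose some function \(f\) bounded the dimension of every such finite-length obstruction module by \(f(\rank L)\). Then the family above would satisfy \(q^2\leq f(2)\) for every \(q\), which is impossible.

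For the second assertion, I invoke Theorem~\ref{thm:fixed-rank-unbounded-solution-codimension}. It supplies a nonempty simply connected open set \(\Omega_q\) in the nonsingular locus on which the codimension of \(\cS_{\bw_q^\triangle}^{L}(\Omega_q)\) in \(\Sol_{\Omega_q}(H_{A_q^\triangle}(\bbeta_q^\triangle))\) is at least \(\lceil3q^2/4\rceil\). This subspace is by definition the image under summation of \(\cV_{\bw_q^\triangle}^{L}\), the span taken over all exponents in \(\sE_{\bbeta,\bw}\) and all ordered negative support families. The lower bound \(\lceil3q^2/4\rceil\) tends to infinity, so again no bound depending only on \(\rank L=2\) can exist.

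The displayed triple of facts is then read off directly from the two theorems. The only point that needs care is that the hypotheses listed in the corollary match those actually verified in Theorem~\ref{thm:fixed-rank-unbounded}: normality, connectedness, and finite length. The last of these comes from the polynomial Hilbert series noted above.
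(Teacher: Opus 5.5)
Your proposal is correct and matches the paper's proof: both combine Theorem~\ref{thm:fixed-rank-unbounded} (lattice rank $2$, normal $\cC(\bw)$, connected column matroid, obstruction of dimension $q^2$) with Theorem~\ref{thm:fixed-rank-unbounded-solution-codimension} (codimension at least $\lceil 3q^2/4\rceil$) and let $q\to\infty$. Your remark that finite length follows from the polynomial Hilbert series, or equivalently from the graded isomorphism with $\C[s_1,s_2]/\langle\lambda_2^q,\lambda_3^q\rangle(-q)$, only spells out a hypothesis that the paper leaves implicit.
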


\begin{proof}
	The lattice rank is 2 for every \(q\), but by Theorems~\ref{thm:fixed-rank-unbounded} and \ref{thm:fixed-rank-unbounded-solution-codimension} the dimension of the obstruction module and the codimension of \(\cS_{\bw_q^\triangle}^{L}(\Omega_q)\) tend to infinity with \(q\).
\end{proof}

\section*{Acknowledgments}

This work was supported by JST SPRING, Grant Number JPMJSP2119.
Large language models were used in the preparation of this paper.
Codex was used to search for the configuration of Theorem~\ref{thm:ordered-distinguished-counterexample} and to recompute its data independently, and Claude was used to draft and to revise the exposition.
Every mathematical statement was checked by the author against the definitions and the cited results, and the author is responsible for the content.
The computations reported here are reproduced in exact arithmetic by the ancillary scripts, with the independent cross-checks in SageMath, Macaulay2, and Risa/Asir detailed in the ancillary README.

\end{document}